\def\max{\operatorname{max}}
\newcommand{\IC}[0]{\mathbb{C}}
 \newcommand{\IN}[0]{\mathbb{N}}
 \newcommand{\IZ}[0]{\mathbb{Z}}
 \newcommand{\CF}[0]{\mathcal{F}}
\newcommand{\CS}[0]{\mathcal{S}} \newcommand{\CT}[0]{\mathcal{T}}
\newcommand {\stab}{{\rm Stab}}
\newtheorem{theorem}{Theorem}[section]
\newtheorem*{theorem*}{Theorem}
\newtheorem*{proposition*}{Proposition}
\newtheorem{proposition}[theorem]{Proposition}
\newtheorem{lemma}[theorem]{Lemma}
\newtheorem*{lemma*}{Lemma}
\newtheorem{definition}[theorem]{Definition}
\newtheorem*{question}{Question}
\newtheorem{corollary}[theorem]{Corollary}
\newtheorem{remark}[theorem]{Remark}
\newtheorem{convention}[theorem]{Convention}
\numberwithin{equation}{section}
\begin{document}
\title[On $\mathcal{F}$ and maximal subgroups of $F$]{On the $3$-colorable subgroup $\mathcal{F}$ and maximal subgroups of Thompson's group $F$}
\author{Valeriano Aiello} 
\address{Valeriano Aiello,
Mathematisches Institut, Universit\"at Bern,  Alpeneggstrasse 22, 3012 Bern, Switzerland
}\email{valerianoaiello@gmail.com}
\author{Tatiana Nagnibeda} 
\address{Tatiana Nagnibeda,
Section de Math\'  ematiques, Universit\' e de Gen\` eve, 2-4 rue du Li\` evre, Case Postale 64,
1211 Gen\` eve 4, Switzerland
}\email{tatiana.smirnova-nagnibeda@unige.ch}

\begin{abstract}
In his work on representations of Thompson's group $F$, Vaughan Jones defined and studied the $3$-\emph{colorable subgroup} $\mathcal{F}$ of $F$. Later, Ren showed that it is isomorphic with the Brown-Thompson group $F_4$.
In this paper we continue with the study of the $3$-colorable  subgroup and prove that the quasi-regular representation of $F$ associated with the $3$-colorable subgroup is irreducible.
We show moreover that the preimage of $\mathcal{F}$ under a certain injective endomorphism of $F$ is contained in three (explicit) maximal subgroups of $F$ of infinite index. These subgroups are different from the previously known infinite index maximal subgroups of $F$, namely the parabolic subgroups that fix a point in $(0,1)$, (up to isomorphism) the Jones' oriented subgroup $\vec{F}$,
and the explicit examples found by Golan.
\end{abstract}

\maketitle

\centerline{\it Dedicated to the memory of Vaughan F. R. Jones}\bigskip


\section*{Introduction}

In \cite{Jo14}, Vaughan Jones initiated a research program revolving around Thompson's groups and their  unitary representations.
In particular, he introduced two big families of such representations, one arising from planar algebras \cite{jo2} and one arising from the Pythagorean C$^*$-algebra \cite{BJ}.
The stabilizers of a canonical vector (the so-called vacuum vector) in these representations turn out to be    interesting subgroups of Thompson's groups. 
For example, for Thompson's group $F$,  
 the parabolic subgroups $\stab(t)\leq F$ (with $t\in (0,1)$) arise in this way from the  Pythagorean representations \cite{BJ}.
 Similarly, a certain representation related to the Temperley-Lieb planar algebra gives rise to the so-called oriented subgroup $\vec{F}\leq F$, which corresponds to the
 oriented links in Jones's encoding of  knots and links by elements of $F$, see \cite{Jo14, A}.

Another interesting example of similar origin is the so-called  $3$--colorable subgroup $\CF\leq F$, \cite{Jo16, Ren}. 
It is the main object of this paper 
and it is worth recalling its story.
 Roughly speaking, one of the original aims of Jones's project was to obtain representations of ${\rm Diff}^+(S^1)$
as limits of representations of Thompson's group $T$, seen as a group of homeomorphisms of $S^1$.
However, in \cite{Jo16} Jones discovered that, in general, this is not possible. 
In fact,   he defined two families of  unitary representations for Thompson's groups by means of the planar algebras of quantum $SO(3)$,  one for $F$ and one for $T$ (see \cite{MPS} for more information on these planar algebras). 
Then, he computed the weak limit of the rotations  by angle $2^{-n}$ in the representations of $T$,    when $n$ tends to infinity, and saw
that contrary to what one would hope, the limit is not the identity.
The $3$-colorable subgroup $\CF$ arises as the stabiliser of the vacuum vector in one of these representations of $F$, and we will now explain
its construction. 

Denote by $\CT_k$  the set of rooted planar finite $k$-regular trees. It is well known that the elements of Thompson's group $F$ can be viewed as equivalence classes of pairs of finite planar binary trees. 
Brown \cite{Brown} introduced a family of groups that share many properties with $F$. These groups, sometimes called Brown-Thompson groups $F_k$, $k\geq 2$, are groups of piecewise linear homeomorphisms of the interval $[0,1]$ with slopes powers of $k$ and points of non-continuity of the derivative $k$-adic rationals. Their elements can be described as equivalence classes of pairs of trees from $\CT_k$, $k\geq 2$. 
Besides the Thompson's groups $F$ and $T$, a third group was also introduced by Richard Thompson, namely $V$. 
This group can be described both as a group of homeomorphisms of the Cantor set and in terms of binary trees. 
The groups $F$ and $T$ are proper subgroups of $V$.
 After the work of Thompson and before that of Brown, Higman \cite{HIG} introduced a family of groups generalising $V$. 
 These are the groups $V_{n,r}$ and they form an 
  infinite family of finitely presented, infinite, simple  groups, 
with the additional parameter $r$ for the number of roots of the $n$-ary trees.

In \cite{Ren} Ren introduced the following construction of subgroups of $F$ isomorphic with $F_k$.
Let $T\in \CT_2$ be a rooted planar binary tree with $k$ leaves.
Define an injective map $\alpha_T: F_k \to F$:
given a pair of rooted $k$-regular trees $(T_+,T_-)\in F_k$, replace any vertex of degree $k+1$ with the tree $T$.

For a tree $T\in \CT_2$, denote by $\ell_T(0)$ the number of left edges in the path from the left-most leaf to the root and by $\ell_T(k-1)$ the number of right edges in the path from the right-most leaf to the root.
Recall that   the abelianization map  $\pi: F\to F/[F,F]=\mathbb{Z}\oplus \mathbb{Z}$ can be described as $\pi(f)=(\log_2 f'(0),\log_2 f'(1))$, see \cite{CFP}. 
If $f\in F$ is represented by a pair of trees $(T_+,T_-)$, then $\log_2 f'(0)$ is equal to 
$\ell_{T_+}(0)-\ell_{T_-}(0)$.
Similarly, $\log_2 f'(1)$ is equal to $\ell_{T_+}(k-1)-\ell_{T_-}(k-1)$.

Bleak and Wassink considered in \cite{BW}, for any $a$, $b\in \mathbb{N}$,  the rectangular subgroups of $F$ defined as
$$
K_{(a,b)}:=\{f\in F\; | \;  \log_2 f'(0)\in a \mathbb{Z},  \log_2 f'(1)\in b \mathbb{Z} \}
$$
All these subgroups are finite index subgroups  of $F$ isomorphic with it.  
It is not difficult to see that the subgroup $\alpha_T(F_k)$ sits inside   $K_{(\ell_T(0),\ell_T(k-1))}$.

The first example of this construction is Jones's oriented subgroup $\vec{F}$. It corresponds to the case $k=3$ where there is essentially the unique map $\alpha_T: F_3 \rightarrow F$ depicted on Figure \ref{fig-ren-map} and its image is precisely the oriented subgroup $\vec{F}$. 
In fact, Golan and
Sapir were the first to prove that $\vec F$ is isomorphic with $F_3$, see
 \cite{GS}.  

\begin{figure}
\phantom{This text will be invisible} 
\[\begin{tikzpicture}[x=.75cm, y=.75cm,
    every edge/.style={
        draw,
      postaction={decorate,
                    decoration={markings}
                   }
        }
]

\draw[thick] (0,0)--(.5,.5)--(1,0);
\draw[thick] (0.5,0.75)--(.5,0);
\node at (0,-1.2) {$\;$};
\node at (1.75,0.25) {$\scalebox{1}{$\mapsto$}$};

\end{tikzpicture}
\begin{tikzpicture}[x=.75cm, y=.75cm,
    every edge/.style={
        draw,
      postaction={decorate,
                    decoration={markings}
                   }
        }
]

\draw[thick] (0.5,0.75)--(.5,.5);
\draw[thick] (0.5,0)--(.75,.25);
\draw[thick] (0,0)--(.5,.5)--(1,0);
\node at (0,-1.2) {$\;$};
\end{tikzpicture}
\]
 \caption{Ren's map 
 for $\vec{F}$.} 
  \label{fig-ren-map}
\end{figure}
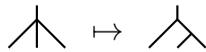 
\begin{figure}
\[\begin{tikzpicture}[x=.75cm, y=.75cm,
    every edge/.style={
        draw,
      postaction={decorate,
                    decoration={markings}
                   }
        }
]

\draw[thick] (0,0)--(.5,.5)--(1,0);
\draw[thick] (0.5,0.5)--(.5,.75);
\draw[thick] (0.5,0.5)--(.35,0);
\draw[thick] (0.5,0.5)--(.65,0);
\node at (0,-1.2) {$\;$};
\node at (1.75,0.25) {$\scalebox{1}{$\mapsto$}$};

\end{tikzpicture}
\begin{tikzpicture}[x=.75cm, y=.75cm,
    every edge/.style={
        draw,
      postaction={decorate,
                    decoration={markings}
                   }
        }
]

\draw[thick] (0.5,0.75)--(.5,.5);
\draw[thick] (0.65,0)--(.75,.25);
\draw[thick] (0.35,0)--(.25,.25);
\draw[thick] (0,0)--(.5,.5)--(1,0);
\node at (0,-1.2) {$\;$};
\end{tikzpicture}
\]
\caption{The map 
 for  $\CF$.}
  \label{fig-ren-map-2}
\end{figure} 

The oriented subgroup $\vec{F}$ was originally defined as the set of pairs of trees $(T_+,T_-)$ such that the value of the chromatic polynomial ${\rm Chr}_{\Gamma(T_+,T_-)}(2)$ is non-zero, where $\Gamma(T_+,T_-)$ is 
a certain graph associated
with $(T_+,T_-)$, 
 see \cite{Jo14}. Since $\Gamma(T_+,T_-)$ is always connected, this specialisation of the chromatic polynomial takes only two values: $0$ and $2$.
In passing, we briefly describe how Ren proved that $\vec{F}$ can be realised as the image of $\alpha_T$ described in Figure \ref{fig-ren-map}, \cite{Ren}. 
This is done with  an inductive argument on the number of the leaves. 
First, by investigating the $2$-colorings of $\Gamma(T_+,T_-)$,
 he showed that every element of $\vec{F}$ is generated by the \emph{positive}\footnote{See Section \ref{sec1} for a definition of positive elements of $F$.} elements in $\vec{F}$. 
Then he proved that each positive element in $\vec{F}$ contains a binary tree of the form depicted in Figure \ref{fig-ren-map}, and so by multiplying this element by $(x_ix_{i+1})^{-1}$, for a
suitable $i$, it was possible to cancel this subtree (thus reducing the number of leaves and ending the inductive argument).  

More generally, by using the Temperley-Lieb planar algebra, for every $t\in \{4\cos^2(\pi/n)\; | \; n\geq 4\}\cup [4,\infty)$ one can define 
a family of representations $(\pi_t)_t$ of $F$ such that 
$$
\langle \pi_t(T_+,T_-)\Omega,\Omega\rangle=\frac{{\rm Chr}_{\Gamma(T_+,T_-)}(t)}{t(t-1)^{n-1}} 
$$
where $n$ is the number of leaves of $T_+$ and $\Omega$ is a canonical unit vector which is usually called the vacuum vector.
The stabiliser of $\Omega$ consists of the elements $(T_+,T_-)$ in $F$ such that
$$
{\rm Chr}_{\Gamma(T_+,T_-)}(t)=t(t-1)^{n-1} 
$$
For non-integer values of $t$, the computation of the chromatic polynomial is often more demanding. However, it was shown in  \cite{ABC} that for $t>4.63$ these subgroups are all trivial. In passing, we mention that several other knot and graph invariants can also be used to define unitary representations of Thompson's groups $F$ and $\vec{F}$, see e.g. \cite{Jo14} and \cite{AiCo1, AiCo2, ACJ, ABC}.

Similarly, the $3$-colorable subgroup $\CF$ is the image of the map $\alpha_T:F_4\rightarrow F$ depicted in Figure  \ref{fig-ren-map-2}.
It can also  be defined in terms of a specialisation of the chromatic polynomial of the following graph. Take an element $(T_+,T_-)$ in $F$, join the two roots by an edge and draw its dual graph $\tilde \Gamma(T_+,T_-)$. The elements of  $\mathcal{F}$ are exactly those for which the chromatic polynomial of $\tilde \Gamma(T_+,T_-)$ evaluated at $3$ is non-zero
(and in this case the only possible value is $6$). See Section \ref{sec1} for more details. 

It turns out that the subgroups arising as stabilizers of the vacuum vector in these various representations are also interesting 
from the viewpoint of understanding the maximal subgroups in Thompson's groups.
Recall that parabolic subgroups are natural examples of maximal subgroups of infinite index in $F$, \cite{Sav, Sav2}. The oriented subgroup provided, up to an isomorphism, the first explicit example of a maximal subgroup 
of infinite index of $F$ without fixed points in the open unit interval $(0,1)$, as proven by Golan and Sapir in \cite{GS2}. 
Moreover, in the same paper a method for potentially producing more examples of maximal subgroups of infinite index in $F$ was introduced (see Section 4).
Later, three further explicit examples of maximal infinite index subgroups without fixed points appeared in \cite[Section 10.3]{G}. 
One might wonder whether all maximal subgroups of infinite index arise as stabilizers of suitable subsets.
However, this is not the case, at least when we restrict to the subset of dyadic rationals.
Indeed, Golan showed  in \cite{G} that one of the aforementioned examples of maximal subgroups acts transitively on this set.
A more subtle problem is to   determine the isomorphism classes of infinite index maximal subgroups.
Despite being all distinct, the parabolic subgroups actually reduce to only three isomorphism classes as shown in \cite{GS3} 
(see also the recent paper \cite{DF} for a similar
classification in the wide context of groups with micro-supported actions).
The general problem of describing and classifying the maximal subgroups in $F$ remains very much open.

Maximal subgroups are also of interest in the study of unitary representations by means of quasi-regular representations. By a classical result of Mackey \cite{Ma}, 
the quasi-regular representations associated with a subgroup is irreducible if the subgroup coincides with its commensurator.
For a maximal subgroup it is then enough to exclude its commensurator being equal to the whole group
to conclude that the quasi-regular representations is irreducible.
It is an important problem to determine whether the unitary representations of Thompson's group defined by Jones are irreducible.
Positive evidence for that includes Golan and Sapir's work \cite{GS},
where they showed that $\vec{F}$ coincides with its commensurator,
as well as the recent paper of Jones \cite{Jo19}.

Let us now say a few words on the results of our paper. 
The first   result is that, 
 the $3$-colorable subgroup can be described as the intersection of the stabilizers of certains subsets of dyadic rationals, namely  of
$$
S_i:=\{t\in (0,1)\cap \IZ[1/2]\; | \; \omega(t)= i\} \qquad i=0, 1, 2\; 
$$
where 
 $\omega(t)$ is some natural function on binary words with values in $\{0,1,2\}$ (see Section \ref{sec2} for a precise definition).
 In fact, in 
 Lemma \ref{lemma-inclu} 
 we prove that   $\CF$ is equal to the intersection of the subgroups ${\rm Stab}(S_i)$, for $i=0, 1, 2$.
This allows us to show in Theorem \ref{theo2} that $\CF$   coincides with its commensurator and, in turn, this implies that the quasi-regular representation of $F$ associated with $\CF$ is irreducibile (see Corollary \ref{cor-irred}).
The second main result of this paper is contained in Theorem 
 \ref{thmM1}, where we exhibit three subgroups $M_0$, $M_1$, $M_2$ between $\CF$ and the rectangular subgroup $K_{(2,2)}$.  These turn out to be maximal subgroups of infinite index in $K_{(2,2)}$.
By means of 
an isomorphism between $F$ and $K_{(2,2)}$, we obtain three infinite index maximal subgroups of $F$, which are shown to be distinct from
all previously known examples of maximal infinite index subgroups of $F$:
 the parabolic subgroups, the oriented subgroup, as well as the examples in \cite[Section 3.2]{GS} and 
\cite[Section 10.3.B]{G}.
We also provide partial evidence that the only subgroups between $\CF$ and $K_{(2,2)}$ are $M_0$, $M_1$, $M_2$. 

We end this introduction with the structure of the paper. 
In Section \ref{sec1} we recall the definitions of Thompson's group $F$ and of the $3$-colorable subgroup, along with some of their main properties. 
In Section \ref{sec2} we provide a description of $\CF$ as the homeomorphisms preserving certain subsets of dyadic rationals. 
We exploit this description 
 to prove the irreducibility of the 
quasi-regular representation of $F$ associated with $\CF$.
In Section \ref{sec4} we exhibit an explicit isomorphism $\theta$ between $F$ and the rectangular subgroup $K_{(2,2)}$ consisting of the homeomorphisms whose derivatives at $0$ and $1$ are in $2^{2\IZ}$.
In Section \ref{sec4b} we first show that $\CF$ is contained in $K_{(2,2)}$, then we exhibit  three infinite index maximal subgroups of $K_{(2,2)}$.  
In Section \ref{sec5} we compare $\theta^{-1}(M_0)$, $\theta^{-1}(M_1)$, and $\theta^{-1}(M_2)$ to other infinite index maximal subgroups of $F$ that have been identified before.

\section{Preliminaries and notation}\label{sec1}
In this section we recall the definitions and basic properties of Thompson's group $F$ and of Jones's $3$-colorable subgroup. 
The interested reader is referred to \cite{CFP} and \cite{B} for a general introduction on Thompson’s groups and its basic properties, 
to \cite{Ren} for further information on the $3$-colorable subgroup.

Thompson's group $F$ is the group of all piecewise linear homeomorphisms of the unit interval $[0,1]$ that are differentiable everywhere except at finitely many dyadic rationals numbers and such that on the intervals of differentiability the derivatives are powers of $2$. We adopt the standard notation: $f\cdot g(t)=g(f(t))$.

Thompson's group   has the following infinite presentation
$$
F=\langle x_0, x_1, \ldots \; | \; x_nx_k=x_kx_{n+1} \quad \forall \; k<n\rangle\, .
$$
The monoid generated by $x_0, x_1, \ldots$ is denoted by $F_+$. Its elements are said to be positive. Note that $x_0$ and $x_1$ are enough to generate $F$ (see Figure \ref{genThompsonF} for their description in terms of pairs of binary trees).
\begin{figure}
\phantom{This text will be invisible} 
\[
\begin{tikzpicture}[x=.35cm, y=.35cm,
    every edge/.style={
        draw,
      postaction={decorate,
                    decoration={markings}
                   }
        }
]

\node at (-1.5,0) {$\scalebox{1}{$x_0=$}$};
\node at (-1.25,-3) {\;};

\draw[thick] (0,0) -- (2,2)--(4,0)--(2,-2)--(0,0);
 \draw[thick] (1,1) -- (2,0)--(3,-1);

 \draw[thick] (2,2)--(2,2.5);

 \draw[thick] (2,-2)--(2,-2.5);

\end{tikzpicture}\qquad
\;\;
\begin{tikzpicture}[x=.35cm, y=.35cm,
    every edge/.style={
        draw,
      postaction={decorate,
                    decoration={markings}
                   }
        }
]

\node at (-3.5,0) {$\scalebox{1}{$x_1=$}$};
\node at (-1.25,-3.25) {\;};

\draw[thick] (2,2)--(1,3)--(-2,0)--(1,-3)--(2,-2);

\draw[thick] (0,0) -- (2,2)--(4,0)--(2,-2)--(0,0);
 \draw[thick] (1,1) -- (2,0)--(3,-1);

 \draw[thick] (1,3)--(1,3.5);
 \draw[thick] (1,-3)--(1,-3.5);

\end{tikzpicture}
%
%
%
%
%
%
%
\]
\caption{The generators of $F=F_2$.}\label{genThompsonF}
\end{figure}
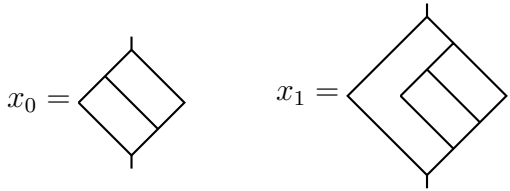
 Similarly,
for any $k\geq 2$, the Brown-Thompson group $F_k$ admits the following presentation  
$$
\langle y_0, y_1, \ldots \; | \; y_ny_l=y_ly_{n+k-1} \quad \forall \; l<n\rangle\, .
$$
The elements $y_0, y_1, \ldots , y_{k-1}$ generate $F_k$ (see Figure \ref{genThompsonF4} for the generators of $F_4$). 
 \begin{figure}
\phantom{This text will be invisible} 
\[
\begin{tikzpicture}[x=1.25cm, y=1.25cm,
    every edge/.style={
        draw,
      postaction={decorate,
                    decoration={markings}
                   }
        }
]

\draw[thick] (0,0)--(.5,.5)--(1,0);
\draw[thick] (0.5,0.5)--(.35,0);
\draw[thick] (0.5,0.5)--(.65,0);
\node at (0,-1.2) {$\;$};
\node at (-.75,0) {$\scalebox{1}{$y_0=$}$};

\draw[thick] (1.25,1.5)--(1.25,1.25);
\draw[thick] (1.25,-1.5)--(1.25,-1.25);

\draw[thick] (0.5,0.5)--(1.25,1.25);
\draw[thick] (1.25,0)--(1.25,1.25);
\draw[thick] (1.75,0)--(1.25,1.25);
\draw[thick] (2.5,0)--(1.25,1.25);

\draw[thick] (2.5,0)--(1.25,-1.25)--(0,0);
\draw[thick] (1.25,-1.25)--(0.35,0);
\draw[thick] (1.25,-1.25)--(0.65,0);

\draw[thick] (1.75,-.75)--(1,0);
\draw[thick] (1.75,-.75)--(1.25,0);
\draw[thick] (1.75,-.75)--(1.75,0);

\end{tikzpicture}
\qquad
\begin{tikzpicture}[x=1.25cm, y=1.25cm,
    every edge/.style={
        draw,
      postaction={decorate,
                    decoration={markings}
                   }
        }
]

\draw[thick] (0.5,0)--(1,.5)--(1.5,0);
\draw[thick] (1,0.5)--(.85,0);
\draw[thick] (1,0.5)--(1.05,0);
\node at (0,-1.2) {$\;$};
\node at (-.75,0) {$\scalebox{1}{$y_1=$}$};

\draw[thick] (0,0)--(1.25,1.25);
\draw[thick] (1,.5)--(1.25,1.25);
\draw[thick] (1.75,0)--(1.25,1.25);
\draw[thick] (2.5,0)--(1.25,1.25);

\draw[thick] (2.5,0)--(1.25,-1.25)--(0,0);
\draw[thick] (1.25,-1.25)--(0.5,0);
\draw[thick] (1.25,-1.25)--(0.85,0);

\draw[thick] (1.75,-.75)--(1.05,0);
\draw[thick] (1.75,-.75)--(1.5,0);
\draw[thick] (1.75,-.75)--(1.75,0);

\draw[thick] (1.25,1.5)--(1.25,1.25);
\draw[thick] (1.25,-1.5)--(1.25,-1.25);

\end{tikzpicture}
\]
\[
\begin{tikzpicture}[x=1.25cm, y=1.25cm,
    every edge/.style={
        draw,
      postaction={decorate,
                    decoration={markings}
                   }
        }
]

\draw[thick] (1,0)--(1.5,.5)--(2,0);
\draw[thick] (1.5,0.5)--(1.35,0);
\draw[thick] (1.5,0.5)--(1.55,0);
\node at (0,-1.2) {$\;$};
\node at (-.75,0) {$\scalebox{1}{$y_2=$}$};

\draw[thick] (0,0)--(1.25,1.25);
\draw[thick] (.5,0)--(1.25,1.25);
\draw[thick] (1.5,.5)--(1.25,1.25);
\draw[thick] (2.5,0)--(1.25,1.25);

\draw[thick] (2.5,0)--(1.25,-1.25)--(0,0);
\draw[thick] (1.25,-1.25)--(0.5,0);
\draw[thick] (1.25,-1.25)--(1,0);

\draw[thick] (1.75,-.75)--(1.35,0);
\draw[thick] (1.75,-.75)--(1.55,0);
\draw[thick] (1.75,-.75)--(2,0);

\node at (1.75,.75) {$\;$};

\draw[thick] (1.25,1.5)--(1.25,1.25);
\draw[thick] (1.25,-1.5)--(1.25,-1.25);

\end{tikzpicture}
\qquad
\begin{tikzpicture}[x=1cm, y=1cm,
    every edge/.style={
        draw,
      postaction={decorate,
                    decoration={markings}
                   }
        }
]

\draw[thick] (.75,1.75)--(.75,2);
\draw[thick] (.75,-1.75)--(.75,-2);

\draw[thick] (0,0)--(.5,.5)--(1,0);
\draw[thick] (0.5,0.5)--(.35,0);
\draw[thick] (0.5,0.5)--(.65,0);
\node at (1.75,-.75) {$\;$};
\node at (-1.75,0) {$\scalebox{1}{$y_3=$}$};

\draw[thick] (0.5,0.5)--(1.25,1.25);
\draw[thick] (1.25,0)--(1.25,1.25);
\draw[thick] (1.75,0)--(1.25,1.25);
\draw[thick] (2.5,0)--(1.25,1.25);

\draw[thick] (-1,0)--(.75,1.75)--(1.25,1.25);
\draw[thick] (-1,0)--(.75,-1.75)--(1.25,-1.25);
\draw[thick] (-.65,0)--(0.75,1.75);
\draw[thick] (-.25,0)--(0.75,1.75);
\draw[thick] (-.65,0)--(0.75,-1.75);
\draw[thick] (-.25,0)--(0.75,-1.75);

\draw[thick] (2.5,0)--(1.25,-1.25)--(0,0);
\draw[thick] (1.25,-1.25)--(0.35,0);
\draw[thick] (1.25,-1.25)--(0.65,0);

\draw[thick] (1.75,-.75)--(1,0);
\draw[thick] (1.75,-.75)--(1.25,0);
\draw[thick] (1.75,-.75)--(1.75,0);

\end{tikzpicture}
\]
\caption{The generators of $F_4$.}\label{genThompsonF4}
\end{figure}
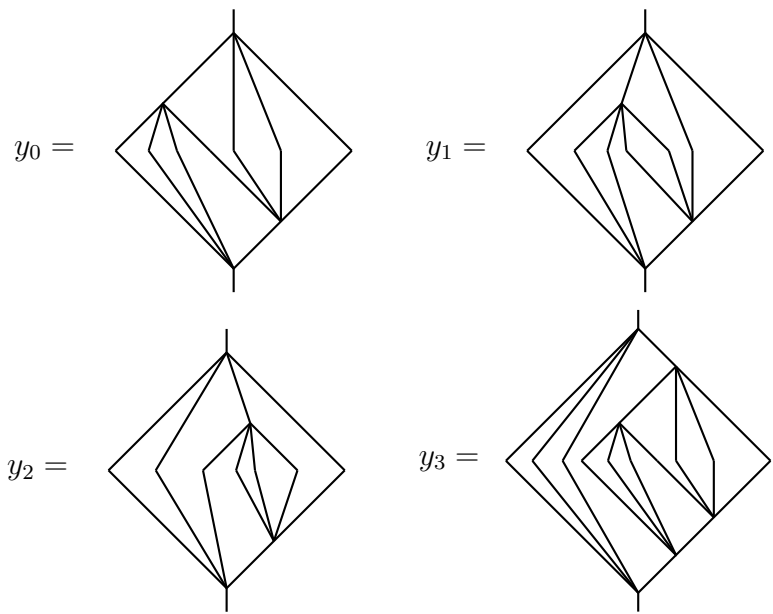

The projection of $F$ onto its abelianisation is denoted by $\pi: F\to F/[F,F]=\IZ\oplus \IZ$ and it admits a nice interpretation when $F$ is seen as group of homeomorphisms: $\pi(f)=(\log_2 f'(0),\log_2 f'(1))$. 
 
There is still another description of $F$  which is relevant to this paper: the elements of $F$ can be seen as pairs $(T_+,T_-)$ of planar binary rooted trees (with the same number of leaves). 
We draw one tree upside down on top of the other; $T_+$ is  the top tree, while $T_-$ is the bottom tree.
Any pair of trees $(T_+,T_-)$ represented in this way is called a tree diagram. %
Two pairs of trees are said to be equivalent if they differ by pairs of opposing carets, namely
\[\begin{tikzpicture}[x=.5cm, y=.5cm,
    every edge/.style={
        draw,
      postaction={decorate,
                    decoration={markings}
                   }
        }
]

 \draw[thick] (0,0)--(1,1)--(2,0)--(1,-1)--(0,0); 
 \draw[thick] (1,1.5)--(1,1); 
 \draw[thick] (1,-1.5)--(1,-1); 
\node at (0,-1.2) {$\;$};
\node at (3.5,0) {$\scalebox{1}{$\leftrightarrow$}$};

\end{tikzpicture}
\begin{tikzpicture}[x=.5cm, y=.5cm,
    every edge/.style={
        draw,
      postaction={decorate,
                    decoration={markings}
                   }
        }
]

  \draw[thick] (1,1.5)--(1,-1.5); 
\node at (0,-1.2) {$\;$};
 
\end{tikzpicture}
\]
Every equivalence class of pairs of trees (i.e. an element of $F$) gives rise to exactly one tree diagram which is reduced, in the sense that 
the number of its vertices is minimal, \cite{B}.

\begin{convention}\label{conventiondrawings}
We make a convention about how we draw trees on the plane.
The roots of our planar binary trees are drawn as vertices of degree $3$ and hence each tree diagram has the uppermost and lowermost vertices of degree  $1$, which lie respectively on the lines   $y=1$ and $y=-1$.
 The leaves of the trees sit on the $x$-axis, precisely on the non-negative integers. 
\end{convention}

Any tree diagram partitions the strip bounded by the lines $y=1$ and $y=-1$ in regions.
This strip may or may not be $3$-colorable, i.e., it may or may not be 
 possible to assign the colors $\IZ_3=\{0,1, 2\}$ to the regions of the strip in such a way that if two regions share an edge, they   have different colors. 
  By convention, we assign the following colours to the regions near the roots 
  \begin{eqnarray}\label{convention-colors}
\begin{tikzpicture}[x=.5cm, y=.5cm,
    every edge/.style={
        draw,
      postaction={decorate,
                    decoration={markings}
                   }
        }
] 

 \draw[thick] (0,0) -- (1,1)--(2,0);
 \draw[thick] (1,1) -- (1,2);



\node at (0,1) {$0$};
\node at (2,1) {$1$};
\node at (1,0.2) {$2$};

 \draw[thick] (-1,2) -- (3,2);


\end{tikzpicture}
\qquad\qquad 
\begin{tikzpicture}[x=.5cm, y=.5cm,
    every edge/.style={
        draw,
      postaction={decorate,
                    decoration={markings}
                   }
        }
] 

 \draw[thick] (-1,0) -- (3,0);

 \draw[thick] (0,2) -- (1,1)--(2,2);
 \draw[thick] (1,1) -- (1,0);



\node at (0,1) {$0$};
\node at (2,1) {$1$};
\node at (1,1.8) {$2$};

\end{tikzpicture}
\end{eqnarray}
Once we make this convention, if the strip is $3$-colourable, 
there exists a unique colouring. 
The \textbf{$3$-colorable subgroup} $\CF$  consists of the elements of $F$ for which the corresponding  strip is $3$-colorable. 
For example, this is the strip corresponding to $x_0$ (which is not $3$-colorable)
\[
\begin{tikzpicture}[x=.35cm, y=.35cm,
    every edge/.style={
        draw,
      postaction={decorate,
                    decoration={markings}
                   }
        }
]

 \draw[thick] (-1,2.5)--(5,2.5);
 \draw[thick] (-1,-2.5)--(5,-2.5);

 \node at (-1.25,-3) {\;};

\draw[thick] (0,0) -- (2,2)--(4,0)--(2,-2)--(0,0);
 \draw[thick] (1,1) -- (2,0)--(3,-1);

 \draw[thick] (2,2)--(2,2.5);

 \draw[thick] (2,-2)--(2,-2.5);

\end{tikzpicture}
\]
The $3$-colorable subgroup subgroup was introduced by Jones and studied by Ren, who proved the following result.
\begin{theorem}[\cite{Ren}] \label{generatoriCF}
The map $\alpha_T$ with $T$ depicted in Figure \ref{fig-ren-map}
is an isomorphism between $F_4$ and the
 $3$-colorable subgroup. 
In particular, the  $3$-colorable subgroup
is generated by the following elements
$w_0:=x_0^2x_1x_2^{-1}$,
$w_1:=x_0x_1^2x_0^{-1}$,
$w_2:=x_1^2x_3x_2^{-1}$,
$w_3:=x_2^2x_3x_4^{-1}$ (see Figure \ref{genCF}), which are the images of the generators of $F_4$. 
\end{theorem}
 \begin{figure}
\phantom{This text will be invisible} 
\[
\begin{tikzpicture}[x=.75cm, y=.75cm,
    every edge/.style={
        draw,
      postaction={decorate,
                    decoration={markings}
                   }
        }
]

\node at (-1.5,-.25) {$\scalebox{1}{$w_0=$}$};

\draw[thick] (0,0)--(2,2)--(4,0);
\draw[thick] (0,0)--(2,-2)--(4,0);
\draw[thick] (1,0)--(1.5,0.5)--(2,0);
\draw[thick] (1.5,0.5)--(1,1);
\draw[thick] (3,0)--(1.5,1.5);
\draw[thick] (3,0)--(2.5,-.5); 
\draw[thick] (4,0)--(2,2);
\draw[thick] (4,0)--(3.5,-.5);
\draw[thick] (1,0)--(2.5,-1.5);
\draw[thick] (2,0)--(3,-1);

\draw[thick] (2,2)--(2,2.25);
\draw[thick] (2,-2)--(2,-2.25);

\node at (0,-1.2) {$\;$};
\end{tikzpicture}
\qquad 
\begin{tikzpicture}[x=.75cm, y=.75cm,
    every edge/.style={
        draw,
      postaction={decorate,
                    decoration={markings}
                   }
        }
]

\node at (-1.5,-.25) {$\scalebox{1}{$w_1=$}$};

\draw[thick] (0,0)--(2,2)--(4,0);
\draw[thick] (0,0)--(2,-2)--(4,0);

\draw[thick] (1,0)--(1.5,0.5)--(2,0);
\draw[thick] (2,0)--(3,-1);
\draw[thick] (1,0)--(.5,-.5);
\draw[thick] (1.5,.5)--(2,1)--(3,0);
\draw[thick] (3,0)--(3.5,-.5);
\draw[thick] (2,1)--(1.5,1.5);

\draw[thick] (2,2)--(2,2.25);
\draw[thick] (2,-2)--(2,-2.25);

\node at (0,-1.2) {$\;$};
\end{tikzpicture}
\]

\[
\begin{tikzpicture}[x=.75cm, y=.75cm,
    every edge/.style={
        draw,
      postaction={decorate,
                    decoration={markings}
                   }
        }
]

\node at (-2.5,-.25) {$\scalebox{1}{$w_2=$}$};
 
\draw[thick] (-1,0)--(1.5,2.5)--(2,2);
\draw[thick] (-1,0)--(1.5,-2.5)--(2,-2);

\draw[thick] (0,0)--(2,2)--(4,0);
\draw[thick] (0,0)--(2,-2)--(4,0);

\draw[thick] (2,0)--(2.5,.5)--(3,0);

\draw[thick] (1,0)--(.5,.5);
\draw[thick] (2.5,.5)--(1.5,1.5);

\draw[thick] (3,0)--(3.5,-.5);
\draw[thick] (2,0)--(1.5,-.5)--(1,0);
\draw[thick] (1.5,-.5)--(2.5,-1.5);

\draw[thick] (1.5,2.5)--(1.5,2.75);
\draw[thick] (1.5,-2.5)--(1.5,-2.75);

\node at (0,-1.2) {$\;$};
\end{tikzpicture}
\qquad 
\begin{tikzpicture}[x=.75cm, y=.75cm,
    every edge/.style={
        draw,
      postaction={decorate,
                    decoration={markings}
                   }
        }
]

\node at (-3.5,-.25) {$\scalebox{1}{$w_3=$}$};
 
\draw[thick] (-1,0)--(1.5,2.5)--(2,2);
\draw[thick] (-1,0)--(1.5,-2.5)--(2,-2);

\draw[thick] (-2,0)--(1,3)--(1.5,2.5);
\draw[thick] (-2,0)--(1,-3)--(1.5,-2.5);

\draw[thick] (0,0)--(2,2)--(4,0);
\draw[thick] (0,0)--(2,-2)--(4,0);
\draw[thick] (1,0)--(1.5,0.5)--(2,0);
\draw[thick] (1.5,0.5)--(1,1);
\draw[thick] (3,0)--(1.5,1.5);
\draw[thick] (3,0)--(2.5,-.5); 
\draw[thick] (4,0)--(2,2);
\draw[thick] (4,0)--(3.5,-.5);
\draw[thick] (1,0)--(2.5,-1.5);
\draw[thick] (2,0)--(3,-1);

\draw[thick] (1,3)--(1,3.25);
\draw[thick] (1,-3)--(1,-3.25);

\node at (0,-1.2) {$\;$};
\end{tikzpicture}
\]
 \caption{The generators of $\CF$. 
 }\label{genCF}
\end{figure} 

The next couple of easy lemmas will come in handy in the other sections of the paper.
\begin{lemma}\label{lemmasigma}
Let $\sigma: F\to F$ be the order $2$ automorphism  obtained by reflecting tree diagrams about a vertical line.
For any $g\in F$, we have $g\in\CF$ if and only if $\sigma(g)\in\CF$.
\end{lemma}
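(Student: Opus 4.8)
The plan is to transport a proper $3$-colouring of the strip of $g$ to one of the strip of $\sigma(g)$ by pushing it forward along the reflection and then composing with the colour transposition $\tau=(0\,1)\in\mathrm{Sym}(\IZ_3)$. First I would record that, by the very definition of $\sigma$, if $(T_+,T_-)$ is the reduced tree diagram of $g$ then its mirror image $(\rho T_+,\rho T_-)$ under the reflection $\rho$ about the vertical axis of the strip is the reduced tree diagram of $\sigma(g)$: reflection preserves the number of vertices, and a cancellable caret pair is sent to a cancellable caret pair, so reducedness is preserved. Moreover $\rho$ is a homeomorphism of the strip carrying edges to edges and regions to regions, hence it induces an isomorphism between the region-adjacency graph of $(T_+,T_-)$ and that of $(\rho T_+,\rho T_-)$; consequently a region colouring is proper for one diagram if and only if its pushforward along $\rho$ is proper for the other. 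So $3$-colourability as an abstract property is already transported; what remains is to match the normalisation \eqref{convention-colors} near the roots.

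This is a short bookkeeping step. The reflection $\rho$ swaps the left region adjacent to a root with the right region adjacent to that root, and fixes the middle region adjacent to the root. Thus, starting from the unique proper colouring $c$ of the strip of $g$ that obeys the convention, I would set $c':=\tau\circ c\circ\rho^{-1}$ with $\tau=(0\,1)$: it is a proper colouring of the strip of $\sigma(g)$, and it obeys the convention because the left region near a root of $(\rho T_+,\rho T_-)$ is $\rho$ applied to the right region near the corresponding root of $(T_+,T_-)$, which $c$ coloured $1$, so $c'$ colours it $\tau(1)=0$; symmetrically the right region gets $\tau(0)=1$ and the middle region gets $\tau(2)=2$. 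Hence the strip of $\sigma(g)$ is $3$-colourable, i.e.\ $\sigma(g)\in\CF$. The reverse implication is then free: since $\sigma^2=\id$, applying the above to $\sigma(g)$ gives $g=\sigma(\sigma(g))\in\CF$ whenever $\sigma(g)\in\CF$.

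The only point demanding any attention — the ``main obstacle'', modest as it is — is the identification in the previous paragraph of the correct colour permutation: one must check it is the transposition $(0\,1)$ rather than, say, a $3$-cycle. This reduces precisely to the observation that $\rho$ fixes the distinguished middle region at each root and interchanges the other two regions incident to it, which is immediate from the pictures in \eqref{convention-colors}.
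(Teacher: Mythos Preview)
Your proof is correct and follows the same idea as the paper's one-line argument: the strip of $\sigma(g)$ is the reflection of the strip of $g$, so $3$-colourability is preserved. Note, however, that your bookkeeping step about matching the normalisation \eqref{convention-colors} via the transposition $(0\,1)$ is unnecessary: membership in $\CF$ is defined purely by whether the strip admits \emph{some} proper $3$-colouring, not by the existence of the normalised one, so once you observed that ``$3$-colourability as an abstract property is already transported'' you were done.
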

\begin{proof}
Clearly the strip associated with $g$ is $3$-colorable if and only if the strip associated with $\sigma(g)$ is $3$-colorable (the second strip is obtained by the first after a reflection about a vertical line). 
\end{proof}

\begin{lemma}\label{lemmashift}
Consider the shift homomorphism  $\varphi : F\to  F$ defined graphically  as
\[
\begin{tikzpicture}[x=1cm, y=1cm,
    every edge/.style={
        draw,
      postaction={decorate,
                    decoration={markings}
                   }
        }
]

\node at (-.45,0) {$\scalebox{1}{$\varphi$:}$};

\draw[thick] (0,0)--(.5,.5)--(1,0)--(.5,-.5)--(0,0);
\node at (1.5,0) {$\scalebox{1}{$\mapsto$}$};

\node at (.5,0) {$\scalebox{1}{$g$}$};
\node at (.5,.-.75) {$\scalebox{1}{}$};

 \draw[thick] (.5,.65)--(.5,.5);
 \draw[thick] (.5,-.65)--(.5,-.5);
 
\end{tikzpicture}
\begin{tikzpicture}[x=1cm, y=1cm,
    every edge/.style={
        draw,
      postaction={decorate,
                    decoration={markings}
                   }
        }
]
 
\node at (2.5,0) {$\scalebox{1}{$g$}$};

\draw[thick] (1.5,0)--(2.25,.75)--(3,0)--(2.25,-.75)--(1.5,0);
\draw[thick] (2,0)--(2.5,.5)--(3,0)--(2.5,-.5)--(2,0);
 
 \node at (1.5,.-.75) {$\scalebox{1}{}$};


 \draw[thick] (2.25,.75)--(2.25,.9);
 \draw[thick] (2.25,-.75)--(2.25,-.9);

\end{tikzpicture}
\]
Then $g\in\CF$ if and only if $\varphi(g)\in\CF$.
\end{lemma}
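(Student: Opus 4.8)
The plan is to analyze how the shift homomorphism $\varphi$ acts on the strip diagram and its $3$-coloring. Recall that $\varphi(g)$ is obtained from $g$ by attaching a caret on the right of both the top and bottom trees, i.e. the new tree diagram for $\varphi(g)$ has a copy of the old diagram sitting over the subinterval corresponding to the first leaf, with an extra region on the far right (and the caret structure near the right root). First I would set up the correspondence between the regions of the strip of $g$ and the regions of the strip of $\varphi(g)$: all the interior regions of the $g$-strip appear as regions of the $\varphi(g)$-strip, together with exactly one new region on the right created by the added carets. I would check, using the coloring convention \eqref{convention-colors}, how the colors near the roots transform — the point is that prepending a caret on the right shifts which region touches the roots, but the convention fixes the colors $0,1,2$ around the roots in a way that is consistent with a global relabeling (a cyclic permutation, or the identity) of $\IZ_3$.

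The key step is the observation that a proper $3$-coloring of the $g$-strip exists if and only if a proper $3$-coloring of the $\varphi(g)$-strip exists. One direction: given a coloring of the $\varphi(g)$-strip, restrict it to the sub-strip that is a (rescaled) copy of the $g$-strip; this restriction is a proper coloring of the $g$-strip, possibly after composing with a fixed permutation of $\IZ_3$ to match the convention \eqref{convention-colors}, and since the convention admits a unique coloring when one exists, this forces $g\in\CF$. Conversely, given the unique coloring of the $g$-strip, I would extend it across the two added carets: the added caret on the right forces the color of the single new region to be the third color, different from the two regions it is adjacent to (the outer region on the right of the $g$-copy and the region below/above coming from the other caret), and one checks this is always possible and compatible with the convention, so $\varphi(g)\in\CF$.

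The main obstacle is bookkeeping the colors near the roots carefully: one must verify that the forced colors at the roots of $\varphi(g)$ under convention \eqref{convention-colors} are exactly the colors that the sub-copy of the $g$-strip receives under the same convention, up to the fixed $\IZ_3$-symmetry of the coloring problem — otherwise the "unique coloring" on one side might not obviously extend to the other. A clean way to handle this is to note that the full automorphism group of the coloring constraint graph includes the $S_3$ of relabelings, so it suffices to check that the new caret regions and the old diagram can be colored \emph{simultaneously} and \emph{consistently}, without pinning down the convention until the very end; then invoke uniqueness. Once this local analysis at the added carets is done, the equivalence $g\in\CF \iff \varphi(g)\in\CF$ follows immediately, and one should also remark (as in the previous lemma) that this is consistent with Ren's description of $\CF$ and with $\varphi$ being a well-defined homomorphism.
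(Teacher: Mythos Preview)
Your approach is essentially the same as the paper's, which simply says ``the claim is clear after drawing the strips corresponding to $g$ and $\varphi(g)$''; you have just spelled out what ``clear'' means. Two small points: first, the caret in $\varphi$ is added so that the new leaf is on the \emph{left} (the old diagram is squeezed onto $[1/2,1]$, as the paper notes), so your ``extra region on the far right'' should be on the far left---this does not affect the argument. Second, your ``main obstacle'' about matching the convention \eqref{convention-colors} is unnecessary: membership in $\CF$ is defined by the \emph{existence} of a $3$-coloring of the strip, and the convention only serves to single out a canonical one; so it suffices to observe that the sub-strip for $g$ inside the strip for $\varphi(g)$ is $3$-colorable iff the whole strip is (the one new region always receives the third color), with no need to track the specific labels.
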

\begin{proof}
The claim is clear after drawing the strips corresponding to $g$ and $\varphi(g)$.
\end{proof}
In terms of homeomorphisms of $[0,1]$, the shift homomorphism $\varphi$ takes elements of $F$ and squeezes them onto $[1/2,1]$.
In particular,  $\varphi$ maps $x_i$ to $x_{i+1}$ for all $i\geq 0$. We observe that the range of $\varphi$ is the subgroup of elements of $F$ that act trivially on $[0, 1/2]$.

\section{The $3$-colorable subgroup as a stabiliser subgroup}\label{sec2}
The goal of this section is to provide a description of $\CF$ as the intersection of stabilisers of certains subsets of the dyadic rationals.
This result is analogous to the one obtained by Golan and Sapir for $\vec{F}$, where it was realised as the stabiliser of a subset of dyadic rationals, namely that consisting of elements with an even number of digits  equal to $1$ in the binary expansion. Our approach is similar, but the proof is more involved as our subset of dyadic rationals is more complicated and in order to describe it we introduce a suitable \emph{weight} function $\omega$ on dyadic rationals viewed as paths from the root of the tree of standard dyadic intervals, which is intimately  related to the coloring of the regions in the strips associated  with the elements $\CF$.  
  
Consider a   rooted binary planar tree $T$. We draw it in the upper-half plane with its leaves on the $x$-axis   and 
the highest vertex (of degree $1$) on the line $y=1$.
Given a vertex of  a tree, there exists a unique minimal path from the root of the tree to the vertex. This path is made by a collection of left, right edges, and may be represented by a word  in the letters $\{0,1\}$ ($0$ stands for a left edge, $1$ for a right edge). 
An easy inductive argument on the number of vertices shows that
the strip bounded by the line $y=1$ and the $x$-axis is always $3$-colourable. We adopt the same convention as in \eqref{convention-colors}, that is the region to the left of the root is colored with $0$, the region to the right with $1$ and the region below with $2$. After this choice, there is a unique coloring of the strip.

Given a vertex $v$ of $T$, we denote by $\omega(v)$ the color of the region to the left of $v$. 
We call $\omega(\cdot)$ the weight associated with $T$.
The weight $\omega$ can be actually defined on the infinite binary rooted planar tree (where every vertex has two descendants). 
In fact, given a rooted subtree $S$ of $T$, the restriction of the weight of $T$ to the vertices of $S$ is the weight of $S$. 
This allows one to consider the weight of the infinite rooted regular binary tree.
If we denote by $W_2$ the set of finite binary words, 
this yields a function $\omega: W_2\to \IZ_3$.
When we consider a tree diagram $(T_+,T_-)$, we denote by $\omega_+(\cdot)$ and $\omega_-(\cdot)$ the weights associated with the top tree and the bottom tree (after a reflection about the $x$-axis), respectively.

The following result follows at once from the definitions.
\begin{proposition}\label{prop-descr-3col}
It holds
$$
\CF=\{(T_+,T_-)\in F\; |\; \omega_+(i)=\omega_-(i) \; \forall i\geq 0\}
$$
where $\omega_+(i)$ and $\omega_-(i)$  
stand for the colors associated with the $i$-th leaf of the trees $T_+$ and $T_-$, respectively.
\end{proposition}

The next couple of lemmas are instrumental for obtaining a description of $\CF$ as a stabiliser subgroup.
\begin{lemma}\label{lemma1} 
For all $\alpha$, $\beta\in W_2$,
it holds 
\begin{align}
& \omega(\alpha 00 \beta)=\omega(\alpha \beta) \label{formula2}\\
& \omega(\alpha 11 \beta)=\omega(\alpha \beta)\label{formula3}\\
& \omega(\alpha 0)=\omega(\alpha) \label{formula4}
\end{align}
\end{lemma}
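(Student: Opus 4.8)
The plan is to understand how the weight function $\omega$ behaves under insertion of the patterns $00$, $11$, and $0$ by tracking the colors of the regions in the $3$-colored strip, and then to argue that these insertions do not disturb the global coloring beyond the affected vertices. First I would set up notation: for a word $z \in W_2$, the vertex $v_z$ is reached from the root by following the edges spelled by $z$; the region immediately to the left of $v_z$ has color $\omega(z)$, and the region immediately to the right has some color that I will call $\omega'(z)$. A key preliminary observation is the local rule at a single vertex: if a vertex has left-region color $a$ and right-region color $b$, then the region below it (between its two child edges) must have the third color, which I will write as $c = -a-b \bmod 3$ (equivalently $\{a,b,c\} = \mathbb{Z}_3$ when $a \neq b$). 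Descending along the left child from $v_z$ to $v_{z0}$: the region to the left of $v_{z0}$ is still the region to the left of $v_z$, so $\omega(z0) = \omega(z)$; this already gives \eqref{formula4}. Descending along the right child to $v_{z1}$: the region to the left of $v_{z1}$ is the region below $v_z$, so $\omega(z1) = -\omega(z) - \omega'(z) \bmod 3$, and dually $\omega'(z0)$ is that same below-region color while $\omega'(z1) = \omega'(z)$.

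The main computation is then to chase these rules twice. For $\omega(\alpha 00 \beta) = \omega(\alpha\beta)$: write $a = \omega(\alpha)$, $b = \omega'(\alpha)$. Applying the left-descent rule twice, the vertex $v_{\alpha 00}$ has left-region color $a$ and right-region color equal to the below-region color of $v_{\alpha 0}$, which is the below-region color of $v_\alpha$, namely $c = -a-b$; so at $v_{\alpha 00}$ the pair of colors is $(a, c)$. I then need the colors of the left and right regions at $v_\alpha$ in the original tree to match the colors of the left and right regions at $v_{\alpha 00}$ in the new tree, because the subtree hanging below (spelled by $\beta$) is colored deterministically by these boundary colors and our global convention. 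But here there is a subtlety: the pair at $v_\alpha$ is $(a,b)$ while the pair at $v_{\alpha 00}$ is $(a,c)$, and $b \neq c$ in general — so the two subtrees are NOT colored identically vertex-by-vertex, yet the claim asserts the left-colors of the $\beta$-vertex agree. The resolution is that the insertion of $00$ (a caret subdivided once on the left, i.e. two carets forming a specific shape) also changes the colors of everything to the right, and the correct bookkeeping is to verify by induction on $|\beta|$ that $\omega(\alpha 00 \gamma) = \omega(\alpha\gamma)$ for every prefix $\gamma$ of $\beta$, propagating a consistent shift. Concretely I would prove the stronger two-sided statement: inserting $00$ after position $\alpha$ leaves $\omega$ unchanged on all words of the form $\alpha \gamma$ and sends $\omega(\alpha 00 \gamma) \mapsto$ (something determined) on words passing through the inserted part — but the cleanest route is the inductive claim that the pair $(\omega, \omega')$ at $v_{\alpha\gamma}$ (old tree) equals the pair at $v_{\alpha 00 \gamma}$ (new tree) for all $\gamma$. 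I expect this to reduce to checking the base case $\gamma = \emptyset$, i.e. that $(\omega(\alpha 00), \omega'(\alpha 00)) = (\omega(\alpha), \omega'(\alpha))$, which follows from applying the single-vertex rule twice and using $3a + \text{stuff} \equiv 0$ in $\mathbb{Z}_3$, and then the inductive step is immediate since both trees obey the same left/right descent rules.

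For \eqref{formula3}, $\omega(\alpha 11 \beta) = \omega(\alpha\beta)$, the argument is the mirror image: apply the right-descent rule $\omega(z1) = -\omega(z)-\omega'(z)$, $\omega'(z1) = \omega'(z)$ twice starting from $v_\alpha$, and check that $(\omega(\alpha 11), \omega'(\alpha 11)) = (\omega(\alpha), \omega'(\alpha))$ in $\mathbb{Z}_3$; then the same inductive propagation along $\beta$ gives the result. Both \eqref{formula2} and \eqref{formula3} thus hinge on the identity that iterating the relevant affine map on $\mathbb{Z}_3^2$ twice returns to the start, which is a one-line mod-$3$ check: the left-descent map is $(a,b) \mapsto (a, -a-b)$, which has order $2$; the right-descent map is $(a,b) \mapsto (-a-b, b)$, which likewise has order $2$.

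The main obstacle I anticipate is purely bookkeeping: making precise the claim that "the rest of the coloring is unaffected" when we insert a pattern in the middle of a word, since inserting $00$ or $11$ does change the global tree and hence relabels many regions. The way around it is exactly the stronger inductive statement above — tracking the \emph{pair} of adjacent colors $(\omega, \omega')$ at each vertex rather than $\omega$ alone, and observing that this pair evolves by a fixed order-$2$ affine map as one descends. Once that framework is set up, each of the three formulas is a short verification; \eqref{formula4} is immediate, and \eqref{formula2}, \eqref{formula3} follow from the order-$2$ property of the two descent maps on $\mathbb{Z}_3^2$.
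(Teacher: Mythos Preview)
Your approach is essentially the paper's: both arguments show that after two consecutive left (resp.\ right) edges the pair of adjacent region-colors at the current vertex returns to its starting value, which is exactly the order-$2$ property of the maps $(a,b)\mapsto(a,-a-b)$ and $(a,b)\mapsto(-a-b,b)$ that you identify in your final paragraph; the paper simply draws this as a picture with labels $i,j,k$.

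One slip worth flagging: your intermediate computation of the pair at $v_{\alpha 00}$ is wrong. The right-region color of $v_{\alpha 00}$ is the below-color of $v_{\alpha 0}$, and since the pair at $v_{\alpha 0}$ is $(a,c)$ with $c=-a-b$, its below-color is $-a-c=b$, not $c$. So the pair at $v_{\alpha 00}$ is already $(a,b)$, the ``subtlety'' you raise does not exist, and the subsequent discussion about propagating shifts along $\beta$ is unnecessary --- the induction on $|\beta|$ is trivial once the base case matches exactly, as you yourself note at the end.
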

\begin{proof}
As \eqref{formula2} and \eqref{formula3},
the figure below shows that after two consecutive left and right edges, the colours labelling the regions surrounding a vertex are the same (the black letters denote the colours of the regions, whereas the red ones describe the path from the root to the vertex).
\[
\begin{tikzpicture}[x=.75cm, y=.75cm,
    every edge/.style={
        draw,
      postaction={decorate,
                    decoration={markings}
                   }
        }
]
\draw[thick] (0,0) -- (1,1)--(2,0);
 \draw[thick] (1,1) -- (1,2);
 \draw[thick] (-1,-1)--(0,0)--(1,-1);
 \draw[thick] (-2,-2)--(-1,-1)--(0,-2);
 
  \fill (-1,-1)  circle[radius=1.5pt];

\node at (0,1) {$i$};
\node at (2,1) {$j$};
\node at (1,0.2) {$k$};
\node at (0,-.75) {$j$};
\node at (-1,-1.75) {$k$};

\node[red] at (.6,1) {$\alpha$};
\node[red] at (-1.75,-1) {$\alpha 00$};


\end{tikzpicture}
\qquad 
\begin{tikzpicture}[x=.75cm, y=.75cm,
    every edge/.style={
        draw,
      postaction={decorate,
                    decoration={markings}
                   }
        }
]
\draw[thick] (0,0) -- (1,1)--(2,0);
 \draw[thick] (1,1) -- (1,2);
 \draw[thick] (1,-1)--(2,0)--(3,-1);
  \draw[thick] (2,-2)--(3,-1)--(4,-2);

  \fill (3,-1)  circle[radius=1.5pt];

\node[red] at (.6,1) {$\alpha$};
\node[red] at (3.75,-1) {$\alpha 11$};

\node at (0,1) {$i$};
\node at (2,1) {$j$};
\node at (1,0.2) {$k$};
\node at (2,-.75) {$i$};
\node at (3,-1.75) {$k$};


\end{tikzpicture}
\]
Formula \eqref{formula4} is obvious.
\end{proof}

\begin{lemma}\label{lemma2}
For any $n\in\IN$, it holds 
\begin{align}\label{formula1}
& \omega((10)^n)=\left\{ \begin{array}{cc} 
0 & \text{if } n\equiv_3 0\\
2 & \text{if }  n\equiv_3 1\\
1 & \text{if }  n\equiv_3 2
\end{array}
\right. 
\qquad 
\omega((01)^n)=\left\{ \begin{array}{cc} 
0 & \text{if } n\equiv_3 0\\
1 & \text{if }  n\equiv_3 1\\
2 & \text{if }  n\equiv_3 2
\end{array}
\right. 
\end{align}
where, for a finite word $w$ in $0$ and $1$, $w^n$ denotes the word obtained by concatenating $n$ copies of $w$, and the symbol $\equiv_3$ denotes the equivalence modulo $3$.
\end{lemma}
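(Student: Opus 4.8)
The plan is to prove both formulas in \eqref{formula1} by induction on $n$, using the local colouring rules established in the proof of Lemma \ref{lemma1}. First I would record the elementary observation that if a vertex $v$ has surrounding colours $i$ (left), $j$ (right), $k$ (below), then its left child $v0$ has left-colour $i$ and below-colour $j$ (so the region to the left of $v0$ is the old left region, and below it is the old right region, forcing its right-colour to be $k$), while its right child $v1$ has left-colour $k$ and right-colour $j$. In $\IZ_3$ notation: reading off $\omega$, a left step sends the triple $(i,j,k)$ to $(i,k,j)$ and a right step sends $(i,j,k)$ to $(k,j,i)$ — in each case the new weight $\omega(v\epsilon)$ is the entry that is ``left'' in the child. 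Concretely, $\omega(v0)=\omega(v)$ (already formula \eqref{formula4}), and $\omega(v1)$ equals the below-colour of $v$, which is the unique element of $\IZ_3\setminus\{\omega(v),\text{right-colour of }v\}$.

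Next I would set up the induction for $\omega((10)^n)$. Write $(i_n,j_n,k_n)$ for the colour triple surrounding the vertex reached by the word $(10)^n$, with base case $(i_0,j_0,k_0)=(0,1,2)$ coming from the root convention \eqref{convention-colors}. Applying one right step then one left step to $(i,j,k)$: the right step gives $(k,j,i)$, and the left step gives $(k,i,j)$. So the map $(i,j,k)\mapsto(k,i,j)$ is a cyclic rotation of the triple, and iterating $n$ times rotates $(0,1,2)$ by $n$ positions. Hence $\omega((10)^n)=i_n$ cycles through $0,2,1,0,2,1,\dots$ as claimed, and the value depends only on $n\bmod 3$. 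The computation for $(01)^n$ is symmetric: a left step followed by a right step sends $(i,j,k)\mapsto(i,k,j)\mapsto(j,k,i)$, the opposite rotation, giving the sequence $0,1,2,0,1,2,\dots$ for $\omega((01)^n)=i_n$. I would present this as a short table or a one-line inductive verification of the three residue classes, checking the base case $n=0$ (empty word, weight $0$) and $n=1$ directly against Figure-style pictures if a picture is wanted.

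There is essentially no obstacle here — the only point requiring a little care is pinning down the exact bookkeeping of which of the three surrounding colours becomes the new ``left'' colour after a left versus a right edge, i.e. getting the two permutations $(i,j,k)\mapsto(i,k,j)$ and $(i,j,k)\mapsto(k,j,i)$ right and hence the correct direction of rotation in each of the two cases (this is what distinguishes the $2,1$ pattern for $(10)^n$ from the $1,2$ pattern for $(01)^n$). This is already implicit in the figures in the proof of Lemma \ref{lemma1}, so I would simply reuse that analysis. Once the single-step rule is fixed, the statement is an immediate consequence of the fact that composing the two steps in either order produces a $3$-cycle on the colour triple, so the period is $3$ and the three listed values are obtained by evaluating at $n=0,1,2$.
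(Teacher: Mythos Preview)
Your argument is correct. The paper's own proof is purely pictorial: it draws the colourings explicitly for $n=1,2,3$ in each of the two cases and then observes from the picture that at $(10)^3$ (respectively $(01)^3$) the surrounding colours match those at the root, so the values repeat with period $3$.

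Your route is the same in spirit---compute the first few values and invoke periodicity---but you make the mechanism explicit by tracking the full colour triple $(i,j,k)$ and showing that the single-letter rules $0:(i,j,k)\mapsto(i,k,j)$ and $1:(i,j,k)\mapsto(k,j,i)$ compose (in either order) to a $3$-cycle on the triple. This is a cleaner and more reusable formulation: the period $3$ is explained as the order of a permutation rather than read off a picture, and the same bookkeeping immediately recovers formulas \eqref{formula2}--\eqref{formula4} as well. The paper's version, on the other hand, is shorter to write down and requires no notation beyond the figures already present. Either is perfectly adequate for this lemma.
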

\begin{proof}
We begin with the formula for $\omega((10)^n)$. 
The next figure shows that the region to the left of the marked vertices are $2$, $1$, $0$ for the cases $n=1$, $2$, $3$, respectively.
\[
\begin{tikzpicture}[x=.75cm, y=.75cm,
    every edge/.style={
        draw,
      postaction={decorate,
                    decoration={markings}
                   }
        }
]
\draw[thick] (0,0) -- (1,1)--(2,0);
 \draw[thick] (1,1) -- (1,2);
 \draw[thick] (1,-1)--(2,0)--(3,-1);
 
  \fill (1,-1)  circle[radius=1.5pt];

\node at (0,1) {$0$};
\node at (2,1) {$1$};
\node at (1,0.2) {$2$};
\node at (2,-.75) {$0$};

\node at (1,-5) {$\;$};

\end{tikzpicture}
\qquad
\begin{tikzpicture}[x=.75cm, y=.75cm,
    every edge/.style={
        draw,
      postaction={decorate,
                    decoration={markings}
                   }
        }
]
\draw[thick] (0,0) -- (1,1)--(2,0);
 \draw[thick] (1,1) -- (1,2);
 \draw[thick] (1,-1)--(2,0)--(3,-1);
 \draw[thick] (0,-2)--(1,-1)--(2,-2);
 \draw[thick] (1,-3)--(2,-2)--(3,-3);
 
  \fill (1,-3)  circle[radius=1.5pt];

\node at (0,1) {$0$};
\node at (2,1) {$1$};
\node at (1,0.2) {$2$};
\node at (2,-.75) {$0$};
\node at (1,-1.75) {$1$};
\node at (2,-2.75) {$2$};

\node at (1,-5) {$\;$};

\end{tikzpicture}
\qquad
\begin{tikzpicture}[x=.75cm, y=.75cm,
    every edge/.style={
        draw,
      postaction={decorate,
                    decoration={markings}
                   }
        }
]
\draw[thick] (0,0) -- (1,1)--(2,0);
 \draw[thick] (1,1) -- (1,2);
 \draw[thick] (1,-1)--(2,0)--(3,-1);
 \draw[thick] (0,-2)--(1,-1)--(2,-2);
 \draw[thick] (1,-3)--(2,-2)--(3,-3);
  \draw[thick] (0,-4)--(1,-3)--(2,-4);
  \draw[thick] (1,-5)--(2,-4)--(3,-5);

  \fill (1,-5)  circle[radius=1.5pt];

\node at (0,1) {$0$};
\node at (2,1) {$1$};
\node at (1,0.2) {$2$};
\node at (2,-.75) {$0$};
\node at (1,-1.75) {$1$};
\node at (2,-2.75) {$2$};
\node at (1,-3.75) {$0$};
\node at (2,-4.75) {$1$};

\end{tikzpicture}
\]
Since the regions of the regions surrounding the vertex $101010$ have the same colours as those near the root, the former cases suffice to prove the formula.

We now take care of the formula for $\omega((01)^n)$. The next figure displays the cases $n=1, 2, 3$ which are enough to prove our claim.
\[
\begin{tikzpicture}[x=.75cm, y=.75cm,
    every edge/.style={
        draw,
      postaction={decorate,
                    decoration={markings}
                   }
        }
]
\draw[thick] (0,0) -- (1,1)--(2,0);
 \draw[thick] (1,1) -- (1,2);
 \draw[thick] (-1,-1)--(0,0)--(1,-1);
 
  \fill (1,-1)  circle[radius=1.5pt];

\node at (0,1) {$0$};
\node at (2,1) {$1$};
\node at (1,0.2) {$2$};
\node at (0,-.75) {$1$};

\node at (1,-5) {$\;$};

\end{tikzpicture}
\qquad
\begin{tikzpicture}[x=.75cm, y=.75cm,
    every edge/.style={
        draw,
      postaction={decorate,
                    decoration={markings}
                   }
        }
]
\draw[thick] (0,0) -- (1,1)--(2,0);
 \draw[thick] (1,1) -- (1,2);
 \draw[thick] (-1,-1)--(0,0)--(1,-1);
\draw[thick] (0,-2) -- (1,-1)--(2,-2);
 \draw[thick] (-1,-3)--(0,-2)--(1,-3);
 
  \fill (1,-3)  circle[radius=1.5pt];

\node at (0,1) {$0$};
\node at (2,1) {$1$};
\node at (1,0.2) {$2$};
\node at (0,-.75) {$1$};
\node at (1,-1.75) {$0$};
\node at (0,-2.75) {$2$};

\node at (1,-5) {$\;$};

\end{tikzpicture}
\qquad
\begin{tikzpicture}[x=.75cm, y=.75cm,
    every edge/.style={
        draw,
      postaction={decorate,
                    decoration={markings}
                   }
        }
]
\draw[thick] (0,0) -- (1,1)--(2,0);
 \draw[thick] (1,1) -- (1,2);
 \draw[thick] (-1,-1)--(0,0)--(1,-1);
\draw[thick] (0,-2) -- (1,-1)--(2,-2);
 \draw[thick] (-1,-3)--(0,-2)--(1,-3);
\draw[thick] (0,-4) -- (1,-3)--(2,-4);
 \draw[thick] (-1,-5)--(0,-4)--(1,-5);
 
  \fill (1,-5)  circle[radius=1.5pt];

\node at (0,1) {$0$};
\node at (2,1) {$1$};
\node at (1,0.2) {$2$};
\node at (0,-.75) {$1$};
\node at (1,-1.75) {$0$};
\node at (0,-2.75) {$2$};
\node at (1,-3.75) {$1$};
\node at (0,-4.75) {$0$};

\node at (1,-5) {$\;$};

\end{tikzpicture}
\]

\end{proof}

\begin{remark}
The properties \eqref{formula2}, \eqref{formula3}, \eqref{formula4}, \eqref{formula1}  completely determine the function $\omega: W_2 \to\IZ_3$.  
\end{remark}
The next result provides a simple formula for the weight $\omega$. 
\begin{proposition}\label{prop-form-weight}
For a binary word $a_1a_2\ldots a_n$  it holds 
$$\omega(a_1a_2\ldots a_n)\equiv_3 \sum_{i=1}^n (-1)^ia_i \, .$$
\end{proposition}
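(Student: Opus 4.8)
The idea is to show that the formula $\omega(a_1\ldots a_n)\equiv_3\sum_{i=1}^n(-1)^ia_i$ is forced by the three types of identities recorded in Lemmas \ref{lemma1} and \ref{lemma2}, together with the base value $\omega(\varnothing)=0$ (the empty word, corresponding to the root, is colored $0$ by our convention in \eqref{convention-colors}). So first I would introduce the candidate function $\psi:W_2\to\IZ_3$, $\psi(a_1\ldots a_n):=\sum_{i=1}^n(-1)^ia_i\bmod 3$, and argue that it is enough to verify that $\psi$ satisfies the same recursions that (by the Remark following Lemma \ref{lemma2}) uniquely characterize $\omega$. The cleanest route, however, avoids even invoking uniqueness and instead proves $\omega=\psi$ directly by induction on the word length $n$.

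\medskip

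For the induction, I would write a word of length $n\ge 1$ as $z=a_1\ldots a_n$ and split into cases according to whether $z$ ends in a pattern that lets me shorten it. If $a_n=0$, then \eqref{formula4} gives $\omega(a_1\ldots a_n)=\omega(a_1\ldots a_{n-1})$, and one checks $\psi(a_1\ldots a_n)=\psi(a_1\ldots a_{n-1})+(-1)^n\cdot 0=\psi(a_1\ldots a_{n-1})$, so the inductive hypothesis closes this case. If $a_n=1$, look at $a_{n-1}$: if $a_{n-1}=1$ as well, then \eqref{formula3} with $\alpha=a_1\ldots a_{n-2}$ and $\beta$ empty gives $\omega(z)=\omega(a_1\ldots a_{n-2})$, while $\psi(z)-\psi(a_1\ldots a_{n-2})=(-1)^{n-1}+(-1)^n=0$; and if $a_{n-1}=0$, then $z$ ends in $01$, so writing $z=a_1\ldots a_{n-2}\,01$ I would peel off successive $01$-blocks from the right until I either reach the root or hit a letter that breaks the alternation. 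Concretely, there is a maximal suffix of $z$ of the form $(01)^m$ preceded either by nothing or by the letter $1$; in the first case $z=(01)^m$ and Lemma \ref{lemma2} gives $\omega(z)$ directly, matching $\psi((01)^m)=\sum_{j=1}^m((-1)^{2j-1}\cdot 0+(-1)^{2j}\cdot 1)=m\bmod 3$, which is exactly the table in \eqref{formula1}; in the second case $z=y\,1\,(01)^m=y\,(10)^m\,1$, hmm — this reindexing is a little delicate, so in practice it is smoother to instead reduce from the \emph{left}: any word either starts with $00$ or $11$ (removable by \eqref{formula2}, \eqref{formula3}) or is an alternating word $(01)^m$, $(10)^m$, $(01)^m0$, or $(10)^m1$ possibly after such removals, and all four alternating shapes are handled by Lemmas \ref{lemma1}\eqref{formula4} and \ref{lemma2}. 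I would organize the proof around this left-reduction: strip a leading $00$ or $11$ whenever present — this strictly shortens the word and, since $\psi(00\beta)=\psi(\beta)$ and $\psi(11\beta)=\psi(\beta)$ (the two new signs are opposite), preserves $\psi$ too — until the word is alternating, then finish with Lemma \ref{lemma2} and \eqref{formula4}.

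\medskip

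There is one more bookkeeping point to settle: after removing a leading $00$ or $11$, the \emph{positions} of the remaining letters shift by $2$, so their signs $(-1)^i$ are unchanged, which is exactly why $\psi$ is invariant — worth stating explicitly since it is the crux of why $\psi$ is the "right" function. The main obstacle, such as it is, is purely organizational: making the case analysis on the reduced alternating words exhaustive and checking that $\psi$ evaluates correctly on each of $(01)^m$, $(10)^m$, $(01)^m 0$, $(10)^m 1$. For instance $\psi((10)^m)=\sum_{j=1}^m((-1)^{2j-1}\cdot 1+(-1)^{2j}\cdot 0)=-m\equiv_3 -m$, and $-m\bmod 3$ is $0,2,1$ for $m\equiv_3 0,1,2$, matching the first table in \eqref{formula1}; the $(01)^m0$ and $(10)^m1$ cases reduce to $(01)^m$ and $(10)^m1$ — the latter needing one more step: $(10)^m1$ cannot be shortened by \eqref{formula2}–\eqref{formula4} directly, but $\psi((10)^m1)=\psi((10)^m)+(-1)^{2m+1}=\psi((10)^m)-1$, and on the $\omega$ side one computes the color to the left of the vertex $(10)^m1$ from the figure in Lemma \ref{lemma2} (it is the right child of $(10)^m$), giving the matching shift. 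Once these finitely many verifications are in place the induction is complete. No step requires anything beyond the lemmas already proved, so I expect the whole argument to be short.
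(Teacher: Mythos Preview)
Your approach is the same as the paper's in spirit: introduce the candidate $\psi(a_1\ldots a_n)=\sum_i(-1)^ia_i\bmod 3$, check it satisfies the identities of Lemmas~\ref{lemma1} and~\ref{lemma2}, and conclude $\psi=\omega$. The paper does exactly this and finishes in one line by invoking the Remark that \eqref{formula2}--\eqref{formula1} determine $\omega$ uniquely; the explicit evaluations listed there ($f(1),f(101),\ldots$) are just the verification that $\psi$ reproduces the table~\eqref{formula1}.

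Where your write-up diverges is in opting to reprove that uniqueness by a direct reduction/induction, and there you have a small but genuine gap. You propose to ``strip a leading $00$ or $11$'' repeatedly until the word is alternating, but this does not terminate in an alternating word in general: for instance $011$ has no leading $00$ or $11$ and is not of the form $(01)^m$, $(10)^m$, $(01)^m0$, or $(10)^m1$. The fix is immediate --- allow deletion of a consecutive pair $00$ or $11$ \emph{anywhere} in the word, which is precisely what \eqref{formula2}--\eqref{formula3} provide for general $\alpha$. Your invariance check for $\psi$ extends with no extra work: deleting equal letters at positions $k{+}1,k{+}2$ removes the contribution $(-1)^{k+1}a+(-1)^{k+2}a=0$ and shifts all later indices by $2$, preserving their signs. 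With that correction the reduction does terminate at an alternating word and the remaining case checks (including the one for $(10)^m1$) go through as you outlined.
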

\begin{proof}
We define the function $f: W_2\to \IZ_3$ by the formula $f(a_1\ldots n_n):=\sum_{i=1}^n (-1)^ia_i$. It is easy to check that $f$ satisfies the properties of Lemmas \ref{lemma1} and \ref{lemma2}. Moreover, $f(1)=-1\equiv_3 2=\omega(1)$, $f(101)=-2\equiv_3 1=\omega(101)$, $f(10101)=-3\equiv_3 0=\omega(10101)$, $f(01)=1=\omega(01)$,  $f(0101)=2=\omega(0101)$,   $f(010101)=3\equiv_3 0=\omega(010101)$ and so the functions $f$ and $\omega$ coincide.
\end{proof}
Note that the symbols $0$ and $1$ appearing in the formula for $\omega(\cdot)$ have
several meanings, namely those in the word $a_1a_2\ldots a_n$ are related to left/right edges, while those in the output of $\omega$ are interpreted as elements of $\IZ_3$ and denote the color of the  regions.
\begin{lemma}
Let $(T_+,T_-)$ be an element in $\CF$. Then,
for any leaf, the parity of the length of the paths to the roots is the same.
\end{lemma}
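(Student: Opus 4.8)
\emph{The plan.} Fix a leaf; let $u$ and $v$ be the words over $\{0,1\}$ recording the path from the root of $T_+$, respectively from the root of $T_-$, to that leaf. By Proposition \ref{prop-descr-3col} we have $\omega(u)=\omega(v)$, and the goal is to deduce $|u|\equiv|v|\pmod 2$. The key point I would isolate is that the \emph{parity} of the length of a leaf-path is encoded, modulo $3$, in the colours of the regions flanking the leaf. Precisely, for a single binary tree I would prove the following: if $w$ and $w'$ are the paths to the $i$-th and $(i+1)$-st leaves (counted from the left), then
\begin{equation*}
\omega(w')-\omega(w)\equiv_3(-1)^{|w|},
\end{equation*}
and since $1\not\equiv_3 2$ this congruence recovers the parity of $|w|$ from $\omega(w)$ and $\omega(w')$.

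To establish this I would combine the closed formula $\omega(a_1\cdots a_n)\equiv_3\sum_j(-1)^ja_j$ of Proposition \ref{prop-form-weight} with the standard description of two consecutive leaves: writing $\alpha$ for the path to their deepest common ancestor, the $i$-th leaf is the right-most leaf of the left subtree hanging at $\alpha$ and the $(i+1)$-st leaf is the left-most leaf of the right subtree, so
\begin{equation*}
w=\alpha\,0\,1^{k},\qquad w'=\alpha\,1\,0^{l}\qquad (k,l\ge 0).
\end{equation*}
Feeding this into the formula, the block $0^l$ contributes nothing, the block $1^k$ contributes an alternating sum equal to $0$ or to $\pm 1$ depending on the parity of $k$, and one obtains $\omega(w')-\omega(w)\equiv_3 -(-1)^{|\alpha|}(1+\epsilon_k)$, where $\epsilon_k\in\{0,1\}$ is the parity of $k$; on the other hand $(-1)^{|w|}=(-1)^{|\alpha|+1+k}=-(-1)^{|\alpha|}(1-2\epsilon_k)$, and these two expressions agree modulo $3$ because $-2\equiv_3 1$. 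I expect the only delicate part to be precisely this mod-$3$ bookkeeping, together with checking the degenerate cases ($\alpha$ empty, $k=0$ or $l=0$), all of which are covered by the same computation.

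Granting the displayed congruence, the proof is immediate. Let $N$ be the common number of leaves and, for $0\le i\le N-1$, let $u_i$ (resp.\ $v_i$) be the path from the root of $T_+$ (resp.\ $T_-$) to the $i$-th leaf; membership in $\CF$ gives $\omega(u_i)=\omega(v_i)$ for every $i$ (Proposition \ref{prop-descr-3col}). For $0\le i\le N-2$, applying the congruence to $T_+$ and to $T_-$ yields
\begin{equation*}
(-1)^{|u_i|}\equiv_3\omega(u_{i+1})-\omega(u_i)=\omega(v_{i+1})-\omega(v_i)\equiv_3(-1)^{|v_i|},
\end{equation*}
hence $|u_i|\equiv|v_i|\pmod 2$. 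For the right-most leaf $i=N-1$ the paths $u_{N-1}$ and $v_{N-1}$ are strings of $1$'s, and Proposition \ref{prop-form-weight} gives $\omega(1^m)\equiv_3 0$ or $-1$ according as $m$ is even or odd; since $\omega(u_{N-1})=\omega(v_{N-1})$, the parities match here too. (Alternatively the case distinction can be avoided: reflecting the tree shows that the colour of the region to the right of a vertex $z$ equals $1-\omega(\bar z)$, with $\bar z$ the bitwise complement of $z$, and combined with $\omega(z)+\omega(\bar z)\equiv_3\sum_{j=1}^{|z|}(-1)^j$ this shows that for every leaf the difference of the colours of its right and left neighbouring regions is $(-1)^{|z|}\bmod 3$; as both neighbouring regions receive the same colour from $T_+$ and from $T_-$ when $(T_+,T_-)\in\CF$, the two path-lengths have equal parity.)
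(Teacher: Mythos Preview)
Your argument is correct and takes a genuinely different route from the paper. The paper's proof is structural: it checks the parity statement on Ren's four generators $w_0,\dots,w_3$ by inspection, and then observes that the property is preserved under the tree-pair multiplication (equivalently, under the two reduction moves of \cite[Chapter~7]{B}). Your proof instead works entirely with the weight function and Proposition~\ref{prop-descr-3col}, extracting the parity of $|w|$ from the congruence $\omega(w')-\omega(w)\equiv_3(-1)^{|w|}$ for consecutive leaves $w,w'$; the alternative formulation via $\rho(z)-\omega(z)\equiv_3(-1)^{|z|}$ (difference of the colours on the two sides of a leaf) is even cleaner and treats all leaves uniformly.

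What each approach buys: the paper's argument is shorter but leans on the explicit generating set and on an external reference for the multiplication algorithm. Your argument is self-contained---it uses only Propositions~\ref{prop-descr-3col} and~\ref{prop-form-weight}---and it explains \emph{why} the parity is forced: in a $3$-colourable strip the left/right colours at each leaf already determine the parity of the path, so equality of colours across the two trees forces equality of parities. Your computation $-(-1)^{|\alpha|}(1+\epsilon_k)\equiv_3 -(-1)^{|\alpha|}(1-2\epsilon_k)$ is correct (it reduces to $3\epsilon_k\equiv_3 0$), and the degenerate cases are indeed absorbed by the same formula.
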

\begin{proof}
The claim is true for the generators of $\CF$, see Figure \ref{genCF}. 
By using the multiplication algorithm described in 
\cite{B}, one can easily see that this property is preserved by multiplication (it suffices to check that the 
the addition/deletion of carets preserves the property) and we are done.
\end{proof}

There exists a  bijection $\rho$ between 
the finite sequences of $0$ and $1$ and
the dyadic rationals in the open unit interval, namely the map $\rho(a_1\ldots a_n):=\sum_{i=1}^n a_i 2^{-i}$, where $.a_1\ldots a_n$ is word in $W_2$.
For $i\in\IZ_3$ consider the subsets $S_i$ of the dyadic rationals consisting of the numbers whose corresponding binary word has 
weight $i$, namely
$$
S_i:=\{t\in (0,1)\cap \IZ[1/2]\; | \; \omega(t)= i\} \qquad i\in \IZ_3\; .
$$
\begin{lemma}\label{lemma-inclu}
The $3$-colorable subgroup $\CF$ coincides with $\cap_{i\in\IZ_3}{\rm Stab}(S_i)$.
\end{lemma}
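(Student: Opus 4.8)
The plan is to prove the two inclusions $\CF \subseteq \cap_i \stab(S_i)$ and $\cap_i \stab(S_i) \subseteq \CF$ separately, working throughout with the tree-pair picture and the combinatorial description of $\omega$ from Proposition~\ref{prop-form-weight}. The key dictionary is that a dyadic rational $t = .a_1\ldots a_n \in (0,1)$ corresponds to the leaf reached by the path $a_1\ldots a_n$ in a binary tree, and an element $g \in F$ given by a tree diagram $(T_+,T_-)$ acts on dyadic rationals by matching leaves of $T_+$ with leaves of $T_-$: if leaf $\ell$ of $T_+$ is labelled by word $u$ and the corresponding leaf of $T_-$ is labelled by word $v$, then $g$ sends the subinterval coded by $u$ affinely onto the one coded by $v$, and in particular $g(.u) = .v$ when these are the left endpoints. (One should first note that $S_i$ is well-defined, i.e. $\omega$ and $|\cdot|$ are genuinely functions of $t$ and not of the chosen binary representation — which follows from formula~\eqref{formula4}, $\omega(\alpha 0) = \omega(\alpha)$, since the ambiguity $.u1 = .u10\overline{0}$ only ever appends $0$'s, and one must restrict attention to the ``canonical'' finite expansion ending in $1$, or argue that for $t$ with $|t| \in 2\IN$ the relevant data is representation-independent.)

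For the inclusion $\CF \subseteq \cap_i \stab(S_i)$: take $g = (T_+,T_-) \in \CF$ and a dyadic $t \in S_i$. By refining the diagram (adding carets to both trees, which by Lemma~\ref{lemma1} changes none of the weights via \eqref{formula2}--\eqref{formula4}) I may assume the path $u$ coding $t$ is a leaf-path of $T_+$; let $v$ be the matched leaf-path of $T_-$, so $g(t) = .v$. Proposition~\ref{prop-descr-3col} gives $\omega_+(\ell) = \omega_-(\ell)$ for every leaf $\ell$; the weight of a leaf is the colour of the region immediately to its left, and for the leaf reached by $u$ this colour is exactly $\omega(u)$ computed in the ambient tree (this is the content of the ``weight associated with $T$'' discussion in Section~\ref{sec2}). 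Hence $\omega(u) = \omega(v)$, i.e. $\omega(t) = \omega(g(t))$, so $g(t) \in S_{\omega(t)} = S_i$ provided $|v| \in 2\IN$; but $|v| \in 2\IN$ is guaranteed by the ``parity of path lengths'' Lemma (the last unnumbered lemma before the statement), which says that for $(T_+,T_-) \in \CF$ matched leaves have path-lengths of equal parity, and $|u| \in 2\IN$. Running the same argument for $g^{-1} = (T_-,T_+)$ shows $g(S_i) = S_i$, so $g \in \stab(S_i)$ for all $i$.

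For the reverse inclusion $\cap_i \stab(S_i) \subseteq \CF$: let $g = (T_+,T_-)$ be reduced and suppose $g \notin \CF$, so by Proposition~\ref{prop-descr-3col} there is a leaf $\ell$, coded by $u$ in $T_+$ and $v$ in $T_-$, with $\omega(u) \neq \omega(v)$. I want to produce a dyadic $t \in S_i$ for some $i$ with $g(t) \notin S_i$. The natural candidate is $t = .u$ (or a point just inside the interval coded by $u$), for which $g(t)$ lies in the interval coded by $v$; if $|u| \in 2\IN$ I can take $t = .u$ directly and then $\omega(t) = \omega(u) \neq \omega(v) = \omega(g(t))$ shows $t$ and $g(t)$ lie in different $S_i$'s (or $g(t)$ lies in none if its length has the wrong parity), contradicting $g \in \cap_i \stab(S_i)$. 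If $|u| \notin 2\IN$ one extends $u$ by one edge: the leaf $u$ of $T_+$ has two ``children'' $u0, u1$ in the infinite tree, and by \eqref{formula4} $\omega(u0) = \omega(u)$, while the corresponding points map into the interval coded by $v0$ resp.\ $v1$ with $\omega(v0) = \omega(v)$ — so replacing $(u,v)$ by $(u0,v0)$ restores even length without changing the inequality of weights. (If instead $|u| \in 2\IN$ but the issue is that $g(t) \notin \IZ[1/2]\cap(0,1)$, i.e. $t$ or $g(t)$ is an endpoint $0$ or $1$, pass to an interior dyadic point of the $u$-interval whose length has the same parity, using \eqref{formula2}.)

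The main obstacle I anticipate is the bookkeeping around \emph{which} binary representation of a dyadic rational to use and the parity condition $|t| \in 2\IN$ built into the definition of $S_i$: one must make sure that ``$\omega(t)$'' and ``$|t| \in 2\IN$'' are unambiguous, and that when passing between the endpoint $.u$ of an interval and an interior point, or when refining trees, both $\omega$ and the parity are controlled — but all of this is exactly what Lemmas~\ref{lemma1} and the parity lemma are set up to deliver, so the argument should go through cleanly once the dictionary ``leaf $u$ $\leftrightarrow$ colour $\omega(u)$ to its left'' is stated carefully. A secondary point to get right is that $g$ maps the interval coded by $u$ \emph{onto} the interval coded by $v$ affinely, so that one really can find, for every parity, a dyadic point of the source interval landing in the target interval; this is immediate from the definition of the $F$-action on $[0,1]$.
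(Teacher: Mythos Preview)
Your overall strategy matches the paper's for the reverse inclusion, and differs only in the forward inclusion: the paper simply checks that the four generators $w_0,\ldots,w_3$ preserve each $S_i$, while you argue directly from Proposition~\ref{prop-descr-3col} and the parity lemma. Both routes are fine in principle.

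There is, however, a genuine gap in your execution, and it is precisely the point you flag as ``the main obstacle'' but then dismiss too quickly. You repeatedly identify the canonical length $|t|$ of a dyadic rational with the length $|u|$ of a leaf-word in the tree diagram. These need not agree: if the matching leaf $v$ in $T_-$ ends in a zero, then $g(t)=.v$ has canonical length strictly less than $|v|$, so the parity lemma (which controls $|v|\bmod 2$) says nothing about $|g(t)|\bmod 2$. Your proposed repair in the reverse direction, ``replace $(u,v)$ by $(u0,v0)$'', does not help, because $.u0=.u$ as real numbers and the canonical length is unchanged by appending zeros. Lemma~\ref{lemma1} guarantees that $\omega$ is insensitive to trailing zeros; it says nothing about the parity of the canonical length.

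The fix is to work with \emph{interior} points of the leaf intervals rather than endpoints: choose a suffix $w$ ending in $1$ (for instance $w=1$ or $w=11$, adjusting for parity) and set $t=.uw$. Then $t$ and $g(t)=.vw$ are both in canonical form, so $|t|=|u|+|w|$ and $|g(t)|=|v|+|w|$ exactly, and the parity and weight computations go through via Proposition~\ref{prop-form-weight}. This is essentially what the tables in the proof of Theorem~\ref{theo-stab} encode, and once you make this adjustment both halves of your argument become correct.
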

\begin{proof}
The inclusion $\CF\subset\cap_{i\in\IZ_3}{\rm Stab}(S_i)$ follows easily by checking that the generators of $\CF$, namely 
$w_0:=x_0^2x_1x_2^{-1}$,
$w_1:=x_0x_1^2x_0^{-1}$,
$w_2:=x_1^2x_3x_2^{-1}$,
$w_3:=x_2^2x_3x_4^{-1}$ 
 preserve the sets $S_i$ for all $i\in\IZ_3$.
For the converse inclusion, let $f=(T_+,T_-)$ be an element of $\cap_{i\in\IZ_3}\stab(S_i)$.
Denote by  $\alpha_+(k)$ and $\alpha_-(k)$ the words associated with the $k$-th leaf of the top and bottom trees, respectively.
Since $f\in \cap_{i\in\IZ_3}\stab(S_i)$, it follows that $\omega_+(\alpha_+(k))\equiv_3 \omega_-(\alpha_-(k))$ for all $k$. 
By Proposition \ref{prop-descr-3col} we have that $f\in\CF$.
\end{proof}
\begin{remark}
We observe that the  previous lemma implies that $\CF\leq K_{(1,2)}=\{f\in F\; | \; \log_2f'(1)\in 2\IZ\}$. 
Indeed, given $f=(T_+,T_-)\in\CF$, let $\alpha_+$ and $\alpha_-$ be the words corresponding to the right most leaves of $T_+$ and $T_-$ (with $k_\pm=|\alpha_\pm|$).
These words do not contain any occurrences of the letter $0$. Therefore, $\omega(\alpha_\pm)\in\{-1,0\}$ (as integers). More precisely, 
$\omega(\alpha_\pm)=-1$ if and only if $k_\pm \in 2\IZ +1$. Similarly, $\omega(\alpha_\pm)=0$ if and only if $k_\pm \in 2\IZ$. 
Since $\log_2 f'(1)=k_+-k_-$, this implies that $\log_2f'(1)\in 2\IZ$.
\end{remark} 
\begin{remark}
All three stabilizers are distinct from one another.
Indeed, we have
$x_0x_1\in {\rm Stab}(S_0)\setminus ({\rm Stab} (S_1)\cup  {\rm Stab}(S_2))$,
$x_1x_0^{-2}\in {\rm Stab}(S_1)\setminus ({\rm Stab} (S_0)\cup  {\rm Stab}(S_2))$,
$x_0x_2(x_0^2x_1)^{-1}\in {\rm Stab}(S_2)\setminus ({\rm Stab} (S_0)\cup  {\rm Stab}(S_1))$.
\end{remark}
As a consequence of the previous lemma, we obtain the following theorem. 
\begin{theorem}\label{theo2}
The $3$-colorable subgroup coincides with its commensurator.
\end{theorem}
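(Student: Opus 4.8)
The plan is to deduce Theorem \ref{theo2} from Theorem \ref{theo-stab} by exhibiting, for any $g \in F \setminus \CF$, a sequence of elements in $\CF$ that conjugates $\CF \cap g\CF g^{-1}$ down to something of infinite index, thereby showing $g$ does not commensurate $\CF$. Concretely, recall that the commensurator $\operatorname{Comm}_F(\CF)$ consists of those $g \in F$ for which $\CF \cap g \CF g^{-1}$ has finite index both in $\CF$ and in $g\CF g^{-1}$. Since $\CF \leq \operatorname{Comm}_F(\CF)$ always, and $\operatorname{Comm}_F(\CF)$ is a subgroup, it suffices to show that whenever $g \notin \CF$ the subgroup $\CF \cap g\CF g^{-1}$ has infinite index in $\CF$. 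The natural tool is Theorem \ref{theo-stab}: $\CF = \operatorname{Stab}(S_0)$, so $g\CF g^{-1} = \operatorname{Stab}(g(S_0))$, and $\CF \cap g\CF g^{-1} = \operatorname{Stab}(S_0) \cap \operatorname{Stab}(g(S_0))$ fixes the symmetric difference data of $S_0$ and $g(S_0)$ setwise.

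First I would make precise that $g \notin \CF$ forces $g(S_0) \neq S_0$ (this is exactly the content of Theorem \ref{theo-stab}: $\CF = \operatorname{Stab}(S_0)$), so there is a dyadic rational $t_0 \in (0,1)$ witnessing the difference — say $t_0 \in S_0$ but $g(t_0) \notin S_0$, or vice versa. Second, I would use the standard transitivity properties of $F$ together with the combinatorial description of $S_0$ (weight $\omega \equiv_3 0$ and even length) to produce, for each $n$, an element $h_n \in \CF$ such that the orbit under $\langle h_n : n \rangle$ forces any element stabilising both $S_0$ and $g(S_0)$ to lie in a subgroup of index growing without bound. The cleanest route is to observe that $F$ acts highly transitively enough on dyadic intervals that one can find infinitely many pairwise ``independent'' configurations: intervals $I_n \subset (0,1)$ with dyadic endpoints on which $S_0$ looks like $S_0$ restricted to $(0,1)$ but $g(S_0)$ differs, and then the elements of $\CF$ supported on $I_n$ that do not stabilise $g(S_0)$ generate distinct cosets, so $[\CF : \CF \cap g\CF g^{-1}] = \infty$. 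Then $g \notin \operatorname{Comm}_F(\CF)$, and since this holds for all $g \notin \CF$ we conclude $\operatorname{Comm}_F(\CF) = \CF$.

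Alternatively — and this is probably the slicker argument to write — I would invoke the fact that $\CF \cong F_4$ is isomorphic to $F$ and hence is not commensurated by any strictly larger subgroup of $F$ unless it has finite index; but $\CF$ has infinite index in $F$ (for instance because $\CF \leq K_{(1,2)}$ by the Remark, and more to the point because $F_4$-subgroups of this Ren-type have infinite index — one checks the covolume via the $4$-ary versus binary tree count). So the real work is: show that if $H \leq F$ satisfies $\CF \lneq H \leq \operatorname{Comm}_F(\CF)$ then $[H : \CF] = \infty$ is impossible, i.e. that $\operatorname{Comm}_F(\CF)/\CF$ contains no nontrivial element. I would do this by picking $g \in \operatorname{Comm}_F(\CF) \setminus \CF$, noting $K := \CF \cap g\CF g^{-1}$ has finite index in $\CF$, and then deriving a contradiction from the fact that both $\CF = \operatorname{Stab}(S_0)$ and $g\CF g^{-1} = \operatorname{Stab}(g S_0)$ are ``large'' stabilisers: a finite-index subgroup of $\operatorname{Stab}(S_0)$ still acts with the same orbits on $(0,1) \cap \IZ[1/2]$ up to finite partition, which pins down $S_0$ and $gS_0$ as the same set, forcing $g \in \operatorname{Stab}(S_0) = \CF$.

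The main obstacle is the orbit/index bookkeeping in the middle step: turning ``$g(S_0) \neq S_0$'' into ``infinitely many independent obstructions to a common stabiliser'' requires knowing enough about how $\CF$ (equivalently $F_4$ under Ren's embedding) acts on dyadic rationals — in particular that $\CF$ acts with infinite orbits on $(0,1)\cap\IZ[1/2]$ and that one can localise elements of $\CF$ into disjoint subintervals while keeping the $3$-colouring constraint satisfied. I expect this to follow from Lemma \ref{lemmashift} (the shift homomorphism $\varphi$ maps $\CF$ into $\CF$, letting one transplant configurations into $[1/2,1]$, $[3/4,1]$, etc.) combined with the generators in Figure \ref{genCF}, but writing it carefully is where the effort goes; the rest is formal, given Theorem \ref{theo-stab} and Mackey's criterion as already cited in the introduction.
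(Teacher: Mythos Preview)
Your overall reduction is right and matches the paper: to show the commensurator of $\CF$ equals $\CF$, it suffices to prove $[\CF:\CF\cap h\CF h^{-1}]=\infty$ for every $h\in F\setminus\CF$, and Theorem~\ref{theo-stab} gives $\CF=\stab(S_0)$ so that $h\CF h^{-1}=\stab(h(S_0))$. But your two proposed executions both have problems.

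Your ``slicker'' alternative contains an error: $\CF\cong F_4$ is \emph{not} isomorphic to $F$ --- the paper uses exactly this non-isomorphism to distinguish $\CF$ from $K_{(2,2)}$ --- and even if it were, ``isomorphic to $F$'' would not by itself rule out being commensurated by a strictly larger infinite-index overgroup; that implication is unjustified. The orbit argument you append (a finite-index subgroup of $\stab(S_0)$ ``pins down'' $S_0$) is likewise a gap: a subgroup preserving both $S_0$ and $h(S_0)$ does not obviously force $S_0=h(S_0)$ merely from having finite index in $\stab(S_0)$.

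Your first approach (disjoint intervals $I_n$ and localised $h_n\in\CF$) is plausible in spirit but you have not addressed a real difficulty: when $h\in K_{(2,2)}$, near $0$ and near $1$ the map $h$ preserves both the weight and the length-parity, so $h(S_0)$ and $S_0$ agree on neighbourhoods of the endpoints and all your $I_n$ would have to live in a fixed compact subinterval --- manufacturing infinitely many disjoint obstructions there via $\varphi$-shifts alone is not clear. The paper (following Golan--Sapir's argument for $\vec F$) instead uses a single contracting element. If $[\CF:\CF\cap h\CF h^{-1}]<\infty$ then some $w_0^{-r}\in h\CF h^{-1}$, hence all powers $w_0^{-kr}$. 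Lemma~\ref{lemma3} (the ingredient you are missing) shows that on $(0,2^{-m})$ the map $h$ respects membership in $S_0$; since $w_0^{-1}$ attracts toward $0$ while preserving the complement of $S_0$, one picks $t\in S_0$ with $t_1:=h^{-1}(t)\notin S_0$, iterates $w_0^{-n}(t_1)$ into $(0,2^{-m})$, and concludes $h(w_0^{-n}(t_1))\notin S_0$, i.e.\ $h^{-1}w_0^{-n}h\notin\stab(S_0)=\CF$ for all large $n$ --- contradicting the finite-index assumption.
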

\begin{corollary}\label{cor-irred}
The quasi-regular representation of $F$ associated with $\CF$ is irreducible.
\end{corollary}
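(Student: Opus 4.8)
The plan is to obtain the corollary as an immediate consequence of Theorem~\ref{theo2} together with the classical criterion of Mackey recalled in the introduction. Recall that for a subgroup $H$ of a discrete group $G$ the commensurator $\operatorname{Comm}_G(H)$ consists of those $g\in G$ for which $H\cap gHg^{-1}$ has finite index both in $H$ and in $gHg^{-1}$; it is a subgroup of $G$ containing $H$. Mackey's theorem \cite{Ma} asserts that if $\operatorname{Comm}_G(H)=H$, then the quasi-regular representation $\lambda_{G/H}$ of $G$ on $\ell^2(G/H)$ is irreducible — the point being that self-commensuration forces every nonscalar element of the commutant of $\lambda_{G/H}$ to produce a nontrivial finite $H$-orbit on $G/H$, and hence an element of $G\setminus H$ commensurating $H$, so that the commutant must in fact reduce to the scalars.

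The argument then consists of two steps: first, recording (or citing) this form of Mackey's theorem in the discrete setting; second, applying it with $G=F$ and $H=\CF$. By Theorem~\ref{theo2} we have $\operatorname{Comm}_F(\CF)=\CF$, hence $\lambda_{F/\CF}$ is irreducible, which is exactly the claim.

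There is essentially no obstacle in the corollary itself: it is a black-box application of Mackey's criterion, and I expect its proof to be one or two lines. All the real content has been pushed into Theorem~\ref{theo2} — the self-commensuration of $\CF$ — which in turn rests on Theorem~\ref{theo-stab}: the identification of $\CF$ with the stabilizer $\mathrm{Stab}(S_i)$ of the explicit dyadic set $S_i$ gives concrete control over conjugates $g\CF g^{-1}$ and over their intersections with $\CF$, which is precisely what one needs to exclude the commensurator from being strictly larger than $\CF$.
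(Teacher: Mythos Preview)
Your proposal is correct and matches the paper's own treatment exactly: the corollary is deduced in one line from Theorem~\ref{theo2} by invoking Mackey's criterion \cite{Ma}, with all the substantive work already contained in the self-commensuration result.
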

The irreducibility of the quasi-regular representation associated with $\CF$ follows from \cite{Ma}.
The proof of the Theorem is similar to that for the oriented subgroup $\vec{F}$ done by Golan and Sapir    \cite[Theorem 4.15]{GS}.

We now recall  the natural action of an element $f\in F$ on the numbers in $[0,1]$
expressed in binary expansion. 
A number $t$ enters into the top of the tree diagram, follows a path towards the root of the bottom tree according to the rules portrayed in 
Figure \ref{compute}. What emerges at the bottom is the image of $t$ under the homeomorphism represented by the tree diagram, \cite{BM}. 
Note that there is a change of direction only when the number comes across a vertex of degree $3$ (i.e., the number is unchanged when it comes across a leaf). 

\begin{figure}
\phantom{This text will be invisible} 
 \[
 \begin{tikzpicture}[x=1cm, y=1cm,
    every edge/.style={
        draw,
      postaction={decorate,
                    decoration={markings}
                   }
        }
]

\draw[thick] (0,0) -- (.5,1)--(1,0);
 \draw[thick] (.5,1)--(.5,1.5);

\fill (0,0)  circle[radius=.75pt];
 \fill (1,0)  circle[radius=.75pt];
\fill (.5,1.5)  circle[radius=.75pt];

\node at (.5,1.85) {$\scalebox{1}{$.0\alpha$}$};
\node at (0,-.25) {$\scalebox{1}{$.\alpha$}$};

\end{tikzpicture}
\qquad\qquad
\begin{tikzpicture}[x=1cm, y=1cm,
    every edge/.style={
        draw,
      postaction={decorate,
                    decoration={markings}
                   }
        }
]

\draw[thick] (0,0) -- (.5,1)--(1,0);
 \draw[thick] (.5,1)--(.5,1.5);

\fill (0,0)  circle[radius=.75pt];
 \fill (1,0)  circle[radius=.75pt];
\fill (.5,1.5)  circle[radius=.75pt];

\node at (.5,1.85) {$\scalebox{1}{$.1\alpha$}$};
\node at (1,-.25) {$\scalebox{1}{$.\alpha$}$};

\end{tikzpicture}
\qquad
\qquad
\begin{tikzpicture}[x=1cm, y=1cm,
    every edge/.style={
        draw,
      postaction={decorate,
                    decoration={markings}
                   }
        }
]

\draw[thick] (0,1.5) -- (.5,.5)--(1,1.5);
 \draw[thick] (.5,0)--(.5,.5);

\fill (0,1.5)  circle[radius=.75pt];
\fill (.5,0)  circle[radius=.75pt];
\fill (1,1.5)  circle[radius=.75pt];
 
\node at (.5,-.25) {$\scalebox{1}{$.0\alpha$}$};
\node at (0,1.75) {$\scalebox{1}{$.\alpha$}$};
\node at (1,-.25) {$\scalebox{1}{}$};

\end{tikzpicture}
\qquad\qquad
\begin{tikzpicture}[x=1cm, y=1cm,
    every edge/.style={
        draw,
      postaction={decorate,
                    decoration={markings}
                   }
        }
]

\draw[thick] (0,1.5) -- (.5,.5)--(1,1.5);
 \draw[thick] (.5,0)--(.5,.5);

\fill (0,1.5)  circle[radius=.75pt];
\fill (.5,0)  circle[radius=.75pt];
\fill (1,1.5)  circle[radius=.75pt];
 
\node at (.5,-.25) {$\scalebox{1}{$.1\alpha$}$};
\node at (1,1.75) {$\scalebox{1}{$.\alpha$}$};
\node at (1,-.25) {$\scalebox{1}{}$};

\end{tikzpicture}
\]
\caption{The local rules for computing the action of $F$ on numbers expressed in binary expansion.}\label{compute}
\end{figure}
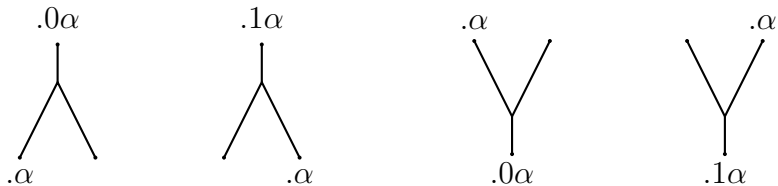

The next result is analogous to \cite[Lemma 4.14]{GS}.
\begin{lemma}\label{lemma3}
Let $g\in F$. Then there exists $m\in\IN$ such that
\begin{enumerate} 
\item if $\log_2 g'(0)\in 2\IZ$, for any $t\in (0,1/2^m)\cap S_i$ and any $i\in \IZ_3$, $g(t)\in S_i$;
\item if $\log_2 g'(0)\in 2\IZ +1$, for any $t\in (0,1/2^m)\cap S_0$, $g(t)\in S_0$;
\item if $\log_2 g'(0)\in 2\IZ +1$, for any $t\in (0,1/2^m)\cap S_1$, $g(t)\in S_2$; 
\item if $\log_2 g'(0)\in 2\IZ +1$, for any $t\in (0,1/2^m)\cap S_2$, $g(t)\in S_1$.
\end{enumerate}
\end{lemma}
\begin{proof}
By definition, $g(t)=2^l t$ for all $t\in I=[0,2^{-r}]$, where $r\in\IN$, $l\in\IZ$.
If $l\leq 0$ $g(t)$ simply adds $l$ zeros to the beginning of the binary form of $t$. Therefore, we have two cases depending on whether $\log_2 g'(0)$ is in $2\IZ$ or  $2\IZ +1$. In the first case $\omega(t)=\omega(g(t))$, while in the second $\omega(t)\equiv_3 -\omega(g(t))$. In both cases, it suffices to take $m=r$.

If $l>0$, take $m=\max\{r,l\}$. Since $m\geq r$, the binary word for $t$ begins with at least $l$ zeroes and $g$ erases $l$ of them.  We have two cases to deal with: $\log_2 g'(0)$ is in $2\IZ$ or in  $2\IZ +1$. In the first case $\omega(t)=\omega(g(t))$, while in the second $\omega(t)\equiv_3 -\omega(g(t))$. 
\end{proof} 

\begin{proof}[Proof of Theorem \ref{theo2}]
Let $h\in F\setminus\mathcal{F}$ and set $I:=|\CF:\CF\cap h\CF h^{-1}|$. If $I<\infty$, then there is an $r\in \IN$ such that $w_0^{-r}\in h\CF h^{-1}$, or equivalently $h^{-1}w_0^{-r}h\in\CF$ (here $w_0$ is one of the generators of $\CF$ depicted in Figure \ref{genCF}).
We will show that for all $n$ big enough, $h^{-1}w_0^{-n}h\not\in\CF$ (and thus reach a contradiction). 

If   $\log_2 h'(0)\in 2\IZ$, by Lemma \ref{lemma3}-(1), there is an $m$ such $\omega(h(t))\equiv_3\omega(t)$ for all $t\in [0,2^{-m}]\cap S_i$.
Since $h\not\in\CF$, there exists $t\in S_i$ (for some $i\in \IZ_3$) such that $t_1:=h^{-1}(t)\not\in S_i$. We observe that for all $l\in\IN$, we have $w_0^{-l}(t_1)\not\in S_i$.
There exists an $n\in\IN$ such that $w_0^{-n}(t_1)<2^{-m}$. 
Indeed, 
$$
w_0^{-1}(t)=\left\{ 
\begin{array}{ll}
.000\alpha & \text{ if } t=.0\alpha\\
.0010\alpha & \text{ if } t=.10\alpha\\
.0011\alpha & \text{ if } t=.1100\alpha\\
.01\alpha & \text{ if } t=.1101\alpha\\
.1\alpha & \text{ if } t=.111\alpha\\
\end{array}
\right.
$$

We observe that $h(w_0^{-n}(t_1))\not\in S_i$  by Lemma \ref{lemma3}-(1). Then, $h^{-1}w_0^{-n}h(t)=h(w_0^{-n}(t_1))\not\in S_i$ and so $h^{-1}w_0^{-n}h\not\in {\rm Stab}(S_i)$.

If $\log_2 h'(0)\in 2\IZ+1$ and $\log_2 h'(1)\in 2\IZ$, then we may apply the automorphism $\sigma$, which by Lemma \ref{lemmasigma}  preserves both $\CF$  and the index (that is, $|\CF:\CF\cap h\CF h^{-1}|$ is finite if and only if $|\CF:\CF\cap \sigma(h)\CF \sigma(h)^{-1}|$). 
Therefore, we may assume that $\log_2 h'(0)$ and $\log_2 h'(1)$ have the same parity (otherwise, replace $h$ with $\sigma(h)$ and repeat the argument of the previous paragraph).

We are left to consider the case where $\log_2 h'(0), \log_2 h'(1) \in 2\IZ+1$. We claim that there is a $t\in S_0$ such that $t_1:=h^{-1}(t)\in S_2$.
By definition, there exist a standard dyadic partitions of $[0,1]$ such that, on the standard dyadic interval of the domain containing $1$, the function $h^{-1}$ reads as $h^{-1}(.1_p \alpha)=.1_q\alpha$, for any binary word $\alpha$. Set $t:=.1_p \alpha$ and $t_1:=h^{-1}(.1_p\alpha )=.1_q\alpha$.
If $p\in 2\IN_0+1$ and $q\in 2\IN_0$, we have $\omega(t_1)=\omega(h^{-1}(t))=\omega(h^{-1}(.1_p\alpha ))=\omega(.1_q \alpha)=\omega(.\alpha)$,
$\omega(t)=\omega(.1_p \alpha)=-1-\omega(.\alpha)$, then $\omega(t_1)=-1-\omega(t)$. In particular, if $\omega(t)=0$, we have $\omega(t_1)=2$.

If $p\in 2\IN_0$ and $q\in 2\IN_0+1$, we have $\omega(t_1)=\omega(h^{-1}(t))=\omega(h^{-1}(1_p\alpha ))=\omega(.1_q \alpha)=-1-\omega(.\alpha)$,
$\omega(t)=\omega(.1_p \alpha)=\omega(.\alpha)$, then $\omega(t_1)=-1-\omega(t)$. In particular, if $\omega(t)=0$, we have $\omega(t_1)=2$.

Therefore, we may assume that  $\log_2 h'(0), \log_2 h'(1) \in 2\IZ+1$ and that there is a $t\in S_0$ such that $t_1:=h^{-1}(t)\in S_2$.
We have $h(w_0^{-n}(t_1))\not\in S_0$  by Lemma \ref{lemma3}-(3) and $h^{-1}w_0^{-n}h(t)=h(w_0^{-n}(t_1))\not\in S_0$, so $h^{-1}w_0^{-n}h\not\in {\rm Stab}(S_0)$ and we are done.
%
%
%
%
\end{proof} 
In general, Jones's representations are not well understood.
The previous result joins a series of investigations \cite{GS, ABC, Jo19, AJ, TV}, where suitable families of representations were studied.

\section{The rectangular subgroup $K_{(2,2)}$} \label{sec4}
The rectangular subgroups of $F$ were introduced in \cite{BW} as
\begin{align*}
K_{(a,b)}&:=\{f\in F\; | \; \log_2f'(0)\in a\IZ, \log_2f'(1)\in b\mathbb{Z}\} \qquad a, b\in\IN
\end{align*}
These are the only finite index subgroups isomorphic with $F$ \cite[Theorem 1.1]{BW}.

As mentioned in the Introduction, the 3-colorable subgroup $\CF$ sits inside the rectangular subgroup $K_{(2,2)}$.
In this section we are going to define an explicit isomorphism between $F$ and $K_{(2,2)}$ that will on one hand, provide us with a 
pair of elements generating 
$K_{(2,2)}$, 
and on the other hand, will later help to identify new maximal subgroup in $F$ by finding the maximal subgroups in $K_{(2,2)}$ that contain $\CF$.
First we recall from \cite{GS2} that the subgroup $K_{(1,2)}$ is generated by $x_0x_2$ and $x_1x_2$ (see Figure \ref{genK12}).
Moreover, the following map is an isomorphism
\begin{align*}
 \beta: \; & F\to K_{(1,2)}\\
& x_0\mapsto x_0x_2\\
& x_1\mapsto x_1x_2
\end{align*}
Recall that there is an order $2$ automorphism $\sigma: F\to F$ obtained by reflecting tree diagrams about a vertical line.
Clearly, $\sigma(K_{(1,2)})=K_{(2,1)}$ and, therefore, the subgroup $K_{(2,1)}$ is generated by $\sigma(x_0x_2)=x_0x_1x_0^{-3}$ and $\sigma(x_1x_2)=x_0x_1^2x_0^{-3}$ (see Figure \ref{genK21}). We define the following isomorphism
\begin{align*}
\alpha: \; & F\to K_{(2,1)}\\
& x_0\mapsto x_0x_1x_0^{-3}\\
& x_1\mapsto x_0x_1^2x_0^{-3}
\end{align*}
The isomorphism $\beta$ first appeared in \cite{GS2}, though we use a different notation from Golan and Sapir: their map $\Psi$ coincides with our map $\beta$.

\begin{figure}
\phantom{This text will be invisible} 
\[
\begin{tikzpicture}[x=.75cm, y=.75cm,
    every edge/.style={
        draw,
      postaction={decorate,
                    decoration={markings}
                   }
        }
]

\node at (-.5,-.25) {$\scalebox{1}{$x_0x_2=$}$};

\draw[thick] (1,0)--(3,2)--(5,0)--(3,-2)--(1,0);
\draw[thick] (3,2)--(3,2.25);
\draw[thick] (3,-2)--(3,-2.25);

\draw[thick] (1.5,.5)--(3.5,-1.5);
\draw[thick] (4,1)--(3,0)--(4,-1);
\draw[thick] (3.5,.5)--(4.5,-.5);

\node at (0,-1.2) {$\;$};
\end{tikzpicture}
\qquad 
\begin{tikzpicture}[x=.75cm, y=.75cm,
    every edge/.style={
        draw,
      postaction={decorate,
                    decoration={markings}
                   }
        }
]

\node at (-.5,-.25) {$\scalebox{1}{$x_1x_2=$}$};

\draw[thick] (1,0)--(3,2)--(5,0)--(3,-2)--(1,0);
\draw[thick] (3,2)--(3,2.25);
\draw[thick] (3,-2)--(3,-2.25);

\draw[thick] (3.5,1.5)--(2,0)--(3.5,-1.5);
\draw[thick] (3,1)--(4.5,-.5);
\draw[thick] (3.5,.5)--(3,0)--(4,-1);

\node at (0,-1.2) {$\;$};
\end{tikzpicture}
\]
 \caption{The generators of $K_{(1,2)}$.}\label{genK12}
\end{figure}
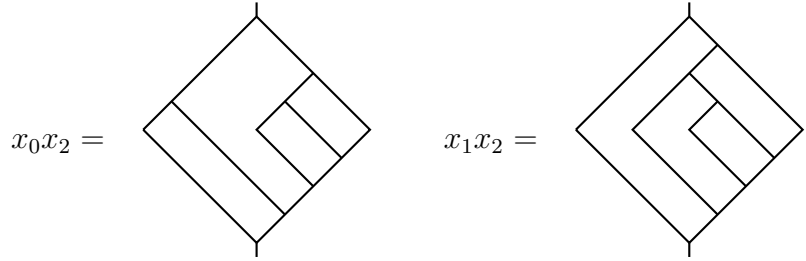

\begin{figure}
\phantom{This text will be invisible} 
\[
\begin{tikzpicture}[x=.75cm, y=.75cm,
    every edge/.style={
        draw,
      postaction={decorate,
                    decoration={markings}
                   }
        }
]

\node at (-.5,-.25) {$\scalebox{1}{$\sigma(x_0x_2)=$}$};

\draw[thick] (1,0)--(3,2)--(5,0)--(3,-2)--(1,0);
\draw[thick] (3,2)--(3,2.25);
\draw[thick] (3,-2)--(3,-2.25);

\draw[thick] (2.5,-1.5)--(4.5,.5);
\draw[thick] (2,1)--(3,0)--(2,-1);
\draw[thick] (1.5,-.5)--(2.5,.5);

\node at (0,-1.2) {$\;$};
\end{tikzpicture}
\qquad 
\begin{tikzpicture}[x=.75cm, y=.75cm,
    every edge/.style={
        draw,
      postaction={decorate,
                    decoration={markings}
                   }
        }
]

\node at (-.5,-.25) {$\scalebox{1}{$\sigma(x_1x_2)=$}$};

\draw[thick] (1,0)--(3,2)--(5,0)--(3,-2)--(1,0);
\draw[thick] (3,2)--(3,2.25);
\draw[thick] (3,-2)--(3,-2.25);

\draw[thick] (2.5,1.5)--(4,0)--(2.5,-1.5);
\draw[thick] (1.5,-.5)--(3,1);
\draw[thick] (2.5,.5)--(3,0)--(2,-1);

\node at (0,-1.2) {$\;$};
\end{tikzpicture}
\]
 \caption{The generators of $K_{(2,1)}$.}\label{genK21}
\end{figure}
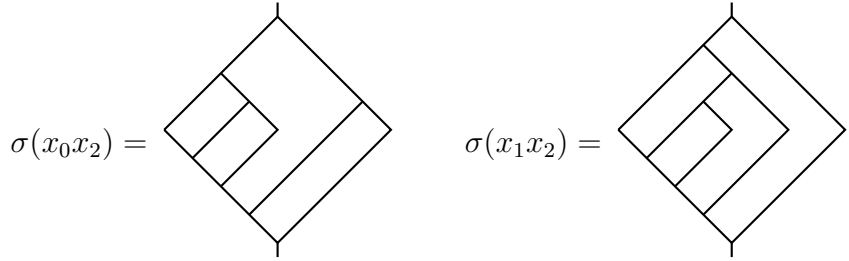

In the following result we give an explicit isomorphism between $F$ and $K_{(2,2)}$ which will allow us, in Section 4, to 
obtain maximal subgroups of $F$ from those of $K_{(2,2)}$.
\begin{proposition}
Let $\theta: F\to F$ be the monomorphism 
$\beta\circ \alpha$.  
Then, the image of $\theta$ is  $K_{(2,2)}$.
\end{proposition}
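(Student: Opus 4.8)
The plan is to compute the composite $\theta = \beta \circ \alpha$ explicitly on the generators $x_0, x_1$ of $F$ and then argue that its image is exactly $K_{(2,2)}$. First I would observe that since $\beta$ is an isomorphism $F \to K_{(1,2)}$ and $\alpha$ is an isomorphism $F \to K_{(2,1)}$, the composite $\theta = \beta \circ \alpha$ is a monomorphism, so the only real content is the identification of its image. Its image is $\beta(\alpha(F)) = \beta(K_{(2,1)})$, so the statement is equivalent to showing that $\beta$ carries $K_{(2,1)}$ onto $K_{(2,2)}$. I would check this via the abelianization description $\pi(f) = (\log_2 f'(0), \log_2 f'(1))$: for $g \in K_{(2,1)}$ we have $\log_2 g'(0) \in 2\IZ$ and $\log_2 g'(1) \in \IZ$, and I need to track what $\beta$ does to the pair of slopes.

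Concretely, I would record that $\beta(x_0) = x_0 x_2$ has $\pi(x_0x_2) = (1,0)$ while $\beta(x_1) = x_1 x_2$ has $\pi(x_1 x_2) = (0,-1)$ (reading off slopes at $0$ and $1$ from the tree diagrams in Figure \ref{genK12}, or directly: $x_0$ contributes $(1,-1)$, $x_1$ contributes $(0,-1)$, $x_2$ contributes $(0,0)$ to the abelianization, and $\pi(x_0x_2)=\pi(x_0)=(1,-1)$ — so I must instead compute slopes at the endpoints carefully rather than relying only on $\pi$). The cleaner route: since $\beta$ maps $F$ isomorphically onto $K_{(1,2)}$ and reflection $\sigma$ conjugates $K_{(1,2)}$ to $K_{(2,1)}$ and $\alpha = \sigma \circ \beta \circ \sigma^{-1}$ up to the identification of generators (indeed $\alpha(x_i) = \sigma(\beta(x_i))$ by construction), I would instead directly compose: compute $\theta(x_0) = \beta(\alpha(x_0)) = \beta(x_0 x_1 x_0^{-3})$ and $\theta(x_1) = \beta(x_0 x_1^2 x_0^{-3})$ by substituting $x_0 \mapsto x_0 x_2$, $x_1 \mapsto x_1 x_2$ and simplifying using the relations of $F$. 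From these two words one reads off the slopes at $0$ and at $1$; one checks both lie in $2\IZ$, giving $\theta(F) \subseteq K_{(2,2)}$.

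For the reverse inclusion $K_{(2,2)} \subseteq \theta(F)$, I would use the index count. Since $\beta: F \to K_{(1,2)}$ is an isomorphism and $K_{(1,2)}$ has index $2$ in $F$ (as $\log_2 f'(1) \bmod 2$ is the only constraint), and $\alpha: F \to K_{(2,1)}$ is an isomorphism with $K_{(2,1)}$ of index $2$ in $F$, the image $\beta(K_{(2,1)})$ has index $2$ in $\beta(F) = K_{(1,2)}$, hence index $4$ in $F$. On the other hand $K_{(2,2)}$ has index $4$ in $F$ (the constraints $\log_2 f'(0) \bmod 2$ and $\log_2 f'(1) \bmod 2$ are independent, each contributing a factor of $2$). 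Since $\theta(F) = \beta(K_{(2,1)}) \subseteq K_{(2,2)}$ and both have index $4$ in $F$, they coincide.

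The main obstacle I anticipate is the bookkeeping in the second paragraph: correctly simplifying $\beta(x_0 x_1 x_0^{-3})$ and $\beta(x_0 x_1^2 x_0^{-3})$ using the infinite presentation relations $x_n x_k = x_k x_{n+1}$ for $k < n$, and then reliably extracting $\log_2(\theta(x_i))'(0)$ and $\log_2(\theta(x_i))'(1)$ — it is easy to make a sign or off-by-one error in the exponents of $x_0$ when pushing generators past one another, so I would double-check the slope computation against the tree-diagram pictures. The index argument itself is routine once the inclusion $\theta(F) \subseteq K_{(2,2)}$ is secured, but it does rely on knowing $[F:K_{(a,b)}] = ab$, which follows from $K_{(a,b)}$ being the kernel of $F \to \IZ/a \oplus \IZ/b$ composed with $\pi$; I would state this explicitly.
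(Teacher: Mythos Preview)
Your proposal is correct and follows essentially the same approach as the paper: compute $\theta(x_0)=\beta(x_0x_1x_0^{-3})$ and $\theta(x_1)=\beta(x_0x_1^2x_0^{-3})$ explicitly to verify $\theta(F)\le K_{(2,2)}$, then conclude equality by an index-$4$ argument. The only minor difference is that the paper writes out an explicit coset decomposition of $F$ over $\theta(F)$ to see $|F:\theta(F)|=4$, whereas your argument that $\beta$ preserves the index $[F:K_{(2,1)}]=2$ (so $[K_{(1,2)}:\theta(F)]=2$ and $[F:\theta(F)]=4$) is a bit cleaner and avoids that bookkeeping.
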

\begin{proof}
Easy computations yield the following formulas for $\theta(x_0)$ and $\theta(x_1)$
\begin{align*}
\theta(x_0) &= \beta(x_0x_1x_0^{-3})=x_0x_2x_1x_2x_2^{-1}x_0^{-1}x_2^{-1}x_0^{-1}x_2^{-1}x_0^{-1}\\
&=x_0x_2x_1x_0^{-1}x_2^{-1}x_0^{-1}x_2^{-1}x_0^{-1}=x_0x_1x_3x_0^{-1}x_2^{-1}x_3^{-1}x_0^{-2}\\
&=x_0x_1x_0^{-1}x_2x_0x_0^{-1}x_2^{-1}x_3^{-1}x_0^{-2}=x_0x_1x_0^{-1}x_3^{-1}x_0^{-2}=x_0x_1x_4^{-1}x_0^{-3}\\
&= x_0x_1x_0^{-3}x_1^{-1}=\alpha(x_0)x_1^{-1}\\
\theta(x_1) &= \beta(x_0x_1^2x_0^{-3})=x_0x_2x_1x_2x_1x_2x_2^{-1}x_0^{-1}x_2^{-1}x_0^{-1}x_2^{-1}x_0^{-1}\\
&= x_0x_2x_1x_2x_1 x_0^{-1}x_2^{-1}x_0^{-1}x_2^{-1}x_0^{-1}\\
&= x_0x_1x_3x_1x_3 x_0^{-1}x_2^{-1}x_0^{-1}x_2^{-1}x_0^{-1}\\
&= x_0x_1^2x_4x_3 x_0^{-1}x_2^{-1}x_0^{-1}x_2^{-1}x_0^{-1}\\
&= x_0x_1^2x_4(x_0^{-1}x_2x_0) x_0^{-1}x_2^{-1}x_0^{-1}x_2^{-1}x_0^{-1}\\
&= x_0x_1^2x_4x_0^{-1}x_0^{-1}x_2^{-1}x_0^{-1}\\
&= x_0x_1^2x_0^{-1}x_0^{-1}x_2x_2^{-1}x_0^{-1}\\
&= x_0x_1^2x_0^{-3}=\alpha(x_1)
\end{align*}
In particular, we have $\theta(F)\leq K_{(2,2)}$. We now want to show that these two subgroups actually coincide.

First, we observe that $|F: \theta(F)|=4$. Indeed, $|F:K_{(1,2)}|=|F:K_{(2,1)}|=|K_{(1,2)}:K_{(2,2)}|=2$. In particular,   $F=K_{(1,2)}\cup x_0 K_{(1,2)}=K_{(2,1)}\cup x_0^{-1}K_{(2,1)}$, $K_{(1,2)}=K_{(2,2)}\cup gK_{(2,2)}$ for some $g\in K_{(1,2)}$. 
It holds $F= \alpha(F)\cup x_0^{-1} \alpha(F)$ and $K_{(1,2)}=\beta(F)= \beta(\alpha(F))\cup \beta(x_0^{-1}) \beta(\alpha(F))= \theta(F)\cup \beta(x_0^{-1}) \theta(F)$. Then
\begin{align*}
F&= K_{(1,2)}\cup x_0 K_{(1,2)} =  \theta(F)\cup \beta(x_0^{-1})\theta(F)\cup x_0\theta(F)\cup x_0\beta(x_0^{-1})\theta(F)
\end{align*}
 
 Since $|F: K_{(2,2)}|=|F:\theta(F)|=4$ and $\theta(F)\leq K_{(2,2)}$, we have that $\theta(F)=K_{(2,2)}$.
 \end{proof}
 \begin{proposition}
 The rectangular subgroup $K_{(2,2)}$ is generated by $x_0x_1x_4^{-1}x_0^{-3}= x_0x_1x_0^{-3}x_1^{-1}$ and $x_0x_1^2x_0^{-3}$. Moreover, its elements have  normal form of even length and  an even number of occurrences of $x_0^{\pm 1}$.
 \end{proposition}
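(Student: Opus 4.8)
The first assertion requires no real work: the preceding proposition shows $\theta(F)=K_{(2,2)}$, and since $F=\langle x_0,x_1\rangle$ and $\theta$ is a homomorphism, $K_{(2,2)}=\langle\theta(x_0),\theta(x_1)\rangle$, with $\theta(x_0)=x_0x_1x_4^{-1}x_0^{-3}=x_0x_1x_0^{-3}x_1^{-1}$ and $\theta(x_1)=x_0x_1^2x_0^{-3}$ as computed there.

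For the statement on normal forms, the plan is to read off the two parities from the abelianisation. I would introduce the homomorphisms $E:F\to\IZ$ and $N_0:F\to\IZ$ determined on generators by $E(x_i)=1$ for every $i$ and by $N_0(x_0)=1$, $N_0(x_i)=0$ for $i\geq1$; both are well defined on the given presentation because every relator $x_nx_kx_{n+1}^{-1}x_k^{-1}$ has vanishing $E$- and $N_0$-exponent sum (for $N_0$: if $k=0$ then $x_0$ occurs once with each sign, and if $k\geq1$ none of the four letters equals $x_0$). Being homomorphisms into an abelian group, $E$ and $N_0$ factor through $\pi$; comparing their values on $x_0$ and $x_1$ with $\pi(x_0)=(-1,1)$ and $\pi(x_1)=(0,1)$ (a direct computation from the description of $x_0,x_1$ as homeomorphisms) gives $E(f)=\log_2 f'(1)$ and $N_0(f)=-\log_2 f'(0)$ for every $f\in F$.

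Now take $f\in K_{(2,2)}$ and write its unique normal form $x_{i_1}\cdots x_{i_r}x_{j_s}^{-1}\cdots x_{j_1}^{-1}$ (indices weakly increasing in each block, multiplicities allowed). Its length is $r+s$, while $E(f)=r-s$, so the length has the same parity as $E(f)=\log_2 f'(1)$, which is even because $f\in K_{(2,2)}$. Likewise $x_0$ can appear only in the positive block, say $p$ times, and $x_0^{-1}$ only in the negative block, say $q$ times, so the number of occurrences of $x_0^{\pm1}$ equals $p+q$, which has the same parity as $N_0(f)=p-q=-\log_2 f'(0)$, again even. This gives both claims.

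I expect the only slightly delicate point to be the bookkeeping in the middle paragraph: one must check that $E$ and $N_0$ really are consistent with the full infinite presentation of $F$ (not merely with relators of small index) and that the identity $N_0(f)=-\log_2 f'(0)$ is compatible with the sign convention chosen for $x_0$; everything else is an elementary parity count. If one prefers to bypass $\pi$ altogether, the same conclusion follows by evaluating $E$ and $N_0$ directly on the generators of $K_{(2,2)}=\theta(F)$: from the formulas above $E(\theta(x_0))=-2$, $E(\theta(x_1))=0$, $N_0(\theta(x_0))=-2$ and $N_0(\theta(x_1))=-2$, all even, so $E$ and $N_0$ take only even values on $K_{(2,2)}$.
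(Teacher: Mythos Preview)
Your proof is correct. The paper states this proposition without proof, treating it as an immediate consequence of the previous proposition (for the generators) and of the description of $\pi$ (for the parity claims); your argument via the exponent-sum homomorphisms $E$ and $N_0$ makes the implicit parity count explicit and is exactly the kind of verification the reader is expected to supply.
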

 \begin{remark}
 Since $\sigma(K_{(2,2)})=K_{(2,2)}$, the elements $x_0^3x_4x_1^{-1}x_0^{-1}=\sigma(x_0x_1x_0^{-3}x_1^{-1})$ and $x_1x_2=\sigma(x_0x_1^2x_0^{-3})$ generate $K_{(2,2)}$ as well
 \begin{align*}
 \sigma(x_0)&=x_0^{-1}\\
 \sigma(x_1)&=x_0x_1x_0^{-2}\\
 \sigma(x_0x_1x_0^{-3}x_1^{-1})&=x_0^{-1}x_0x_1x_0^{-2}x_0^3x_0^2x_1^{-1}x_0^{-1}\\
 &=x_1x_0^3x_1^{-1}x_0^{-1}=x_0^3x_4x_1^{-1}x_0^{-1}\\
 \sigma(x_0x_1^2x_0^{-3})&=x_0^{-1}x_0x_1x_0^{-2}x_0x_1x_0^{-2}x_0^3\\
 &=x_1x_0^{-1}x_1x_0=x_1x_2\\
 \end{align*}
 \end{remark}
 Therefore
 \begin{align*}
 K_{(2,2)}& =\langle x_0x_1x_0^{-3}x_1^{-1}, x_0x_1^2x_0^{-3}\rangle\\
&=\langle x_0x_1x_0^{-3}x_1^{-1}, x_1x_2\rangle
 \end{align*}
 
\section{Maximal infinite index subgroups of $F$ containing an isomorphic image of $\CF$} \label{sec4b}
We have already mentioned in Section \ref{sec1} that the $3$-colorable subgroup $\CF$ is generated by 
$w_0=x_0^2x_1x_2^{-1}$,
$w_1=x_0x_1^2x_0^{-1}$,
$w_2=x_1^2x_3x_2^{-1}$,
$w_3=x_2^2x_3x_4^{-1}$,
 \cite{Ren}.
See Figure \ref{genCF} for the tree diagrams of the generators of $\CF$.

\begin{proposition}\label{propKCF}
The group $\CF$ sits inside 
$$
K_{(2,2)}=\{f\in F\; | \; \log_2f'(0), \log_2f'(1)\in 2\mathbb{Z}\}\cong F \, .
$$
Moreover, it holds $\pi(\CF)=2\IZ\oplus 2\IZ$.
\end{proposition}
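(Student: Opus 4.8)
The plan is to verify both claims at the level of generators, using the explicit list $w_0=x_0^2x_1x_2^{-1}$, $w_1=x_0x_1^2x_0^{-1}$, $w_2=x_1^2x_3x_2^{-1}$, $w_3=x_2^2x_3x_4^{-1}$ of Ren. For the containment $\CF\leq K_{(2,2)}$, I would compute $\pi(w_j)=(\log_2 w_j'(0),\log_2 w_j'(1))$ for each $j$, reading off the slopes at the endpoints either directly from the words in the $x_i$ (using that $x_i$ contributes to the slope at $0$ only when $i=0$, and to the slope at $1$ only through the rightmost branch) or from the tree diagrams in Figure \ref{genCF}. One finds $\pi(w_0)=(2,0)$, $\pi(w_1)=(0,0)$, $\pi(w_2)=(0,-2)$, $\pi(w_3)=(0,0)$ — in each case both coordinates are even — so every generator lies in $K_{(2,2)}$, hence so does the whole subgroup $\CF=\langle w_0,w_1,w_2,w_3\rangle$. (Alternatively, the remark following Lemma \ref{lemma-inclu} already gives $\CF\leq K_{(1,2)}$, and the symmetric argument using the rightmost-leaf words applied to the leftmost leaves, or Lemma 2.1 together with $\sigma(K_{(1,2)})=K_{(2,1)}$, gives $\CF\leq K_{(2,1)}$; intersecting yields $\CF\leq K_{(1,2)}\cap K_{(2,1)}=K_{(2,2)}$.)

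For the statement $\pi(\CF)=2\IZ\oplus 2\IZ$, the inclusion $\pi(\CF)\subseteq 2\IZ\oplus 2\IZ$ is exactly the content of $\CF\leq K_{(2,2)}$ just proved. For the reverse inclusion it suffices to exhibit elements of $\CF$ whose images generate $2\IZ\oplus 2\IZ$: from the computation above, $\pi(w_0)=(2,0)$ and $\pi(w_2)=(0,-2)$, and $(2,0)$ together with $(0,-2)$ generate $2\IZ\oplus 2\IZ$. Hence $\pi(\CF)\supseteq 2\IZ\oplus 2\IZ$, and equality follows.

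I do not anticipate a genuine obstacle here; the only thing requiring care is the bookkeeping of the endpoint slopes, i.e. making sure the convention $f\cdot g(t)=g(f(t))$ and the identification $\pi(f)=(\log_2 f'(0),\log_2 f'(1))$ are applied consistently when reading $\pi$ off a word in the $x_i$ (equivalently, counting left edges on the leftmost root-to-leaf path and right edges on the rightmost one, as recalled in the Introduction via $\ell_{T_\pm}(0)$ and $\ell_{T_\pm}(k-1)$). Once the four values $\pi(w_j)$ are in hand, both assertions are immediate.
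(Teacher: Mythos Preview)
Your approach is exactly the paper's: compute $\pi$ on Ren's four generators and check that the resulting values lie in and generate $2\IZ\oplus 2\IZ$. However, three of the four values you list are wrong. In the abelianisation of $F$ one has $x_i\equiv x_1$ for all $i\ge 1$, so $w_1$, $w_2$, $w_3$ all map to the same element $2\pi(x_1)$; your values $(0,0),(0,-2),(0,0)$ for these three are therefore internally inconsistent. Using $\pi(x_0)=(1,-1)$ and $\pi(x_i)=(0,-1)$ for $i\ge 1$ one obtains, as the paper does,
\[
\pi(w_0)=(2,-2),\qquad \pi(w_1)=\pi(w_2)=\pi(w_3)=(0,-2).
\]
By luck your erroneous values still land in $2\IZ\oplus 2\IZ$ and the pair you picked still generates it, so the overall logic survives; but the actual computations should be redone. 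Your alternative route to the containment $\CF\le K_{(2,2)}$, via the $\sigma$-invariance of $\CF$ together with the remark after Lemma~\ref{lemma-inclu} giving $\CF\le K_{(1,2)}$, is correct and is a slightly different argument from the paper's direct computation --- though you still need two correct values of $\pi(w_j)$ to establish the surjection onto $2\IZ\oplus 2\IZ$.
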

\begin{proof}
In order to prove the claim it suffices to compute the images of
the generators of $\CF$ under the map $\pi$ and 
this can be done as explained in the Introduction. 
Thanks to Figure \ref{genCF}, we see that
$\pi(w_0)=(2,-2)$, $\pi(w_1)=(0,-2)$, $\pi(w_2)=(0,-2)$, $\pi(w_3)=(0,-2)$.
\end{proof}
Observe that $\CF$ is a proper subgroup of $K_{(2,2)}$ as, for example, $K_{(2,2)}$ is isomorphic with $F$, whereas $\CF$ is isomorphic with $F_4$.
This can also be shown by checking that $x_0^2$ is in $K_{(2,2)}$, but not in $\CF$.
In \cite[Section 3.2]{GS2} Golan and Sapir pointed out that every proper subgroup of $F$ that projects onto $F/[F,F]$ is contained in a maximal infinite index  
subgroup of $F$. Let us implement this idea on the example of $\CF$
and investigate in which infinite index maximal subgroups of $F$ 
the $3$-colorable subgroup is contained, see Corollary \ref{cor41}, Corollary \ref{cor42}, Theorem \ref{teo43}.

Recall that every finite-index subgroup contains a normal subgroup. 
Since  every normal subgroup of $F$ contains   
contains the commutator subgroup \cite[Theorem 4.3]{CFP},
the map $\pi: F\to \IZ\oplus \IZ$ induces a bijective correspondence between the finite index subgroups of $F$ and the finite index subgroups $\IZ\oplus \IZ$. 
As mentioned in Section \ref{sec4}, the only finite index subgroups isomorphic with $F$ are precisely $K_{(a,b)}=\pi^{-1}(a\IZ\oplus b\IZ)$, \cite[Theorem 1.1]{BW}.
Here are some easy consequences of this discussion. 
\begin{lemma}\label{Lemmamax}
Let $G$ be a subgroup of $F$ such that $\pi(G)=a\IZ\oplus b\IZ$ and $G\neq K_{(a,b)}$. Then, the index of $G$ in $F$ is infinite. 
\end{lemma}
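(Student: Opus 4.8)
The plan is to argue by contradiction: assume that the index $[F:G]$ is finite and derive that $G$ must equal $K_{(a,b)}$. The only input needed is the structural fact about $F$ recalled just before the statement, namely that every finite index subgroup of $F$ contains the commutator subgroup $[F,F] = \ker\pi$, so that $\pi$ induces a bijection between finite index subgroups of $F$ and finite index subgroups of $\IZ\oplus\IZ$, with inverse $H \mapsto \pi^{-1}(H)$; in particular $K_{(a,b)} = \pi^{-1}(a\IZ\oplus b\IZ)$.

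First I would suppose $[F:G]<\infty$. Then by the cited fact $[F,F]\subseteq G$. Next I would check the elementary equality $G=\pi^{-1}(\pi(G))$: the inclusion $\subseteq$ is automatic, and for $\supseteq$, if $x\in F$ satisfies $\pi(x)\in\pi(G)$, pick $g\in G$ with $\pi(x)=\pi(g)$; then $xg^{-1}\in\ker\pi=[F,F]\subseteq G$, hence $x\in G$. Combining this with the hypothesis $\pi(G)=a\IZ\oplus b\IZ$ gives $G=\pi^{-1}(a\IZ\oplus b\IZ)=K_{(a,b)}$, contradicting $G\neq K_{(a,b)}$. Therefore $[F:G]=\infty$.

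There is essentially no obstacle here beyond invoking the correspondence between finite index subgroups of $F$ and of its abelianization; the rest is a one-line diagram chase. I would simply remark that the hypothesis $\pi(G)=a\IZ\oplus b\IZ$ ensures $\pi(G)$ itself has finite index in $\IZ\oplus\IZ$, so the statement is genuinely about distinguishing $G$ from the unique finite index subgroup lying over $a\IZ\oplus b\IZ$.
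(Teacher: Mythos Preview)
Your proof is correct and is precisely the argument the paper has in mind: the lemma is stated without proof because it is an immediate consequence of the bijective correspondence between finite index subgroups of $F$ and of $\IZ\oplus\IZ$ recalled just before it, and your write-up spells out exactly that implication.
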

\begin{corollary}
The index of $\CF$ in $F$ is infinite. 
\end{corollary}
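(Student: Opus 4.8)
The plan is to deduce the corollary directly from Lemma \ref{Lemmamax} together with Proposition \ref{propKCF}. Recall that Proposition \ref{propKCF} establishes two facts: first, $\CF \leq K_{(2,2)}$, and second, $\pi(\CF) = 2\IZ \oplus 2\IZ$. So I would apply Lemma \ref{Lemmamax} with $G = \CF$ and $(a,b) = (2,2)$: the hypothesis $\pi(G) = a\IZ \oplus b\IZ$ holds by the second part of Proposition \ref{propKCF}, and it remains only to check that $\CF \neq K_{(2,2)}$.

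For the inequality $\CF \neq K_{(2,2)}$, I would invoke the observation already recorded right after the proof of Proposition \ref{propKCF}: $K_{(2,2)}$ is isomorphic with $F$ (by \cite[Theorem 1.1]{BW}, or simply because it is the image of the isomorphism $\theta$ of Section \ref{sec4}), whereas $\CF$ is isomorphic with the Brown--Thompson group $F_4$ by Ren's theorem \cite{Ren}. Since $F \not\cong F_4$ (for instance, their abelianizations $\IZ \oplus \IZ$ and $\IZ^{4-1}=\IZ^3$ — or more carefully, one checks $F_k$ has abelianization $\IZ^{k-1}$, see \cite{Brown} — are non-isomorphic, so $F = F_2 \not\cong F_4$), the two subgroups $\CF$ and $K_{(2,2)}$ cannot coincide. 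Hence the hypotheses of Lemma \ref{Lemmamax} are met, and the index of $\CF$ in $F$ is infinite.

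Strictly speaking this corollary is already essentially contained in the discussion preceding it, so there is no real obstacle; the only point requiring a word of care is making sure the abelianization computation is invoked correctly — $F_4/[F_4,F_4] \cong \IZ^3$ while $F/[F,F]\cong \IZ^2$ — so that $\CF \ncong K_{(2,2)}$, which in turn feeds Lemma \ref{Lemmamax}. Alternatively, and perhaps more cleanly, one avoids the isomorphism-type argument entirely by exhibiting an explicit element of $K_{(2,2)}$ not lying in $\CF$: for example $x_0^2 \in K_{(2,2)}$ since $\pi(x_0^2) = (2,0)$, and one checks from the $3$-colorability criterion (Proposition \ref{prop-descr-3col}) that $x_0^2$ is not $3$-colorable. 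Either route closes the argument.

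\begin{proof}
By Proposition \ref{propKCF}, $\CF \leq K_{(2,2)}$ and $\pi(\CF) = 2\IZ \oplus 2\IZ$. Moreover $\CF \neq K_{(2,2)}$: indeed $K_{(2,2)} \cong F$ by \cite[Theorem 1.1]{BW}, while $\CF \cong F_4$ by \cite{Ren}, and $F \ncong F_4$ since their abelianizations are $\IZ^2$ and $\IZ^3$, respectively. Applying Lemma \ref{Lemmamax} with $G = \CF$ and $(a,b) = (2,2)$, we conclude that the index of $\CF$ in $F$ is infinite.
\end{proof}
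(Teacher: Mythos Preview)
Your proof is correct and follows exactly the paper's approach: apply Lemma~\ref{Lemmamax} with $G=\CF$, $(a,b)=(2,2)$, using Proposition~\ref{propKCF} for $\pi(\CF)=2\IZ\oplus 2\IZ$ and the fact that $\CF\cong F_4\not\cong F\cong K_{(2,2)}$ to get $\CF\neq K_{(2,2)}$. One small slip: the abelianization of $F_k$ is $\IZ^{k}$, not $\IZ^{k-1}$ (check this against $F=F_2$, where your formula would give $\IZ$ rather than $\IZ^2$), so $F_4^{ab}\cong\IZ^4$; this does not affect the argument, since $\IZ^2\not\cong\IZ^4$ just as well.
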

\begin{proof}
The claim follows from the fact that, thanks to Proposition \ref{propKCF}, $\pi(K_{(2,2)})=\pi(\CF)$ and $K_{(2,2)}\neq \CF$ . 
The two groups are distinct because $\CF\cong F_4$, while $K_{(2,2)}\cong F$. 
\end{proof}
\begin{remark}
There is also another way to prove the previous result. All the irreducible finite dimensional representations of $F$ are one dimensional (this follows from \cite{DM}). This means that if $\CF$ were of finite index in $F$, then
for the quasi-regular representation of $F$ associated with $\CF$ we would have $\ell_2(F/\CF)=\IC$, that is $F=\CF$, which is absurd.  
\end{remark}
\begin{corollary}
There exists a maximal infinite index subgroup $M$ of $K_{(2,2)}$  containing $\CF$. 
\end{corollary}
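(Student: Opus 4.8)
The plan is to invoke Zorn's Lemma. Consider the set $\CP$ of all proper subgroups of $K_{(2,2)}$ that contain $\CF$, partially ordered by inclusion. This set is nonempty since $\CF$ itself belongs to it (it is a proper subgroup by the preceding Corollary, or because $\CF\cong F_4\not\cong F\cong K_{(2,2)}$). To apply Zorn's Lemma I must check that every chain in $\CP$ has an upper bound in $\CP$. Given a chain $\{G_\lambda\}_{\lambda\in\Lambda}$ in $\CP$, its union $G:=\bigcup_\lambda G_\lambda$ is a subgroup of $K_{(2,2)}$ containing $\CF$; the only thing to verify is that $G$ is still proper. Here is where the projection $\pi$ enters: by Proposition \ref{propKCF} we have $\pi(\CF)=2\IZ\oplus 2\IZ=\pi(K_{(2,2)})$, so every $G_\lambda$ (and hence $G$) satisfies $\pi(G_\lambda)=\pi(G)=2\IZ\oplus 2\IZ$. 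If $G$ were equal to $K_{(2,2)}$ it would in particular be of finite index in $K_{(2,2)}$; but a subgroup $G$ with $\pi(G)=2\IZ\oplus 2\IZ$ equal to $\pi(K_{(2,2)})$ cannot be a proper finite index subgroup of $K_{(2,2)}$ — more to the point, any proper subgroup of $K_{(2,2)}$ is proper, so I just need $G\neq K_{(2,2)}$.

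The cleanest way to see that the union stays proper is to produce a single element of $K_{(2,2)}$ lying outside every member of $\CP$, which is impossible in general; instead the right argument is the standard one used with the abelianization. Transport the situation through the isomorphism $\theta^{-1}$ (or work directly): by Lemma \ref{Lemmamax} applied inside $K_{(2,2)}\cong F$, any subgroup $G\le K_{(2,2)}$ with $\pi(G)=2\IZ\oplus2\IZ$ and $G\ne K_{(2,2)}$ has infinite index in $K_{(2,2)}$. Now suppose for contradiction that the union $G=\bigcup_\lambda G_\lambda$ equals $K_{(2,2)}$. Then the two generators of $K_{(2,2)}$ exhibited in Section \ref{sec4}, namely $x_0x_1x_0^{-3}x_1^{-1}$ and $x_0x_1^2x_0^{-3}$, each lie in some $G_{\lambda_1}$, $G_{\lambda_2}$; since $\{G_\lambda\}$ is a chain, both lie in $G_{\lambda_0}$ for $\lambda_0=\max\{\lambda_1,\lambda_2\}$, forcing $G_{\lambda_0}=K_{(2,2)}$, contradicting $G_{\lambda_0}\in\CP$. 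Hence $G$ is a proper subgroup of $K_{(2,2)}$ containing $\CF$, i.e. $G\in\CP$, and it is an upper bound for the chain.

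By Zorn's Lemma $\CP$ has a maximal element $M$: a proper subgroup of $K_{(2,2)}$ containing $\CF$ that is not contained in any strictly larger proper subgroup, which is precisely the definition of a maximal subgroup of $K_{(2,2)}$. Finally, $M$ has infinite index in $K_{(2,2)}$: since $\CF\le M$ we have $\pi(M)\supseteq\pi(\CF)=2\IZ\oplus2\IZ=\pi(K_{(2,2)})$, so $\pi(M)=2\IZ\oplus2\IZ$, and $M\ne K_{(2,2)}$, so Lemma \ref{Lemmamax} (read inside $K_{(2,2)}\cong F$ via $\theta$, with the rectangular subgroups of $K_{(2,2)}$ playing the role of the $K_{(a,b)}$) gives that $M$ has infinite index. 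This completes the proof.

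The only real subtlety — the "main obstacle" — is the verification that the union of a chain remains proper; everything else is the routine Zorn's Lemma template. That subtlety is handled exactly as above by the finite-generation of $K_{(2,2)}$ (so that membership of both generators in one chain element would collapse the union), together with the abelianization bookkeeping ensuring any proper overgroup of $\CF$ inside $K_{(2,2)}$ with full abelianized image automatically has infinite index.
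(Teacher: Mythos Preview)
Your proof is correct and follows essentially the same approach as the paper: apply Zorn's Lemma to obtain a maximal subgroup $M$ of $K_{(2,2)}$ containing $\CF$, observe that $\pi(M)=2\IZ\oplus 2\IZ$ because $\pi(\CF)=2\IZ\oplus 2\IZ=\pi(K_{(2,2)})$, and conclude infinite index from Lemma~\ref{Lemmamax}. You spell out the Zorn's Lemma verification (properness of the union of a chain via finite generation of $K_{(2,2)}$) that the paper leaves implicit; note also that Lemma~\ref{Lemmamax} applies directly to $M$ as a subgroup of $F$ (giving infinite index in $F$, hence in the finite-index subgroup $K_{(2,2)}$), so the transport via $\theta$ is not needed.
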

\begin{proof}
Thanks to the Zorn Lemma there exists a maximal subgroup $M$ contained in $K_{(2,2)}$ containing $\CF$. 
Since $\pi(\CF)=2\IZ\oplus 2\IZ\leq\pi(M)\leq 2\IZ\oplus 2\IZ$, we have $\pi(M)=2\IZ\oplus 2\IZ$.
The subgroup $M$ cannot have finite index by Lemma \ref{Lemmamax}.
\end{proof}

We are now ready to exhibit \textbf{three   infinite index maximal subgroups of  $K_{(2,2)}$ containing $\CF$}: 
$$
M_0:=\langle \CF, x_0^2\rangle \qquad M_1:=\langle \CF, x_1^2\rangle \qquad 
M_2:=\langle \CF, \sigma(x_1)^2\rangle
$$

\begin{theorem}\label{thmM1}
For any $g\in K_{(2,2)}\setminus M_i$, the group $\langle M_i, g\rangle$ contains $K_{(2,2)}$, with $i=0, 1, 2$. In particular, $M_0$, $M_1$ and $M_2$ are maximal infinite index subgroups of $K_{(2,2)}$.
\end{theorem}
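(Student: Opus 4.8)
The plan is to show that for each $i$, adjoining any element $g \in K_{(2,2)} \setminus M_i$ to $M_i$ generates all of $K_{(2,2)}$; maximality of $M_i$ (and infinite index, via Lemma \ref{Lemmamax}, since $\pi(M_i) = 2\IZ \oplus 2\IZ$ while $M_i \neq K_{(2,2)}$ because $M_i \supseteq \CF \cong F_4$ and one checks $M_i$ is a proper subgroup) then follows immediately. The key structural fact I would exploit is that $K_{(2,2)} \cong F$ via $\theta$ from Section \ref{sec4}, and that $\CF$ is a \emph{large} subgroup of $K_{(2,2)}$: it surjects onto the abelianization $2\IZ \oplus 2\IZ = \pi(K_{(2,2)})$ by Proposition \ref{propKCF}. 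So the first step is to transport everything through $\theta^{-1}$ to $F$ itself, where $\theta^{-1}(\CF)$ becomes a subgroup surjecting onto $\IZ \oplus \IZ$, and $\theta^{-1}(M_i)$ becomes $\langle \theta^{-1}(\CF), \theta^{-1}(x_?^2)\rangle$. Then I would invoke the Golan--Sapir machinery (\cite[Section 3.2]{GS2}, referenced in the excerpt): a subgroup of $F$ that projects onto $F/[F,F]$ and is not contained in any $K_{(a,b)}$ with $(a,b)\ne(1,1)$ is either all of $F$ or sits in a maximal subgroup; more precisely their method gives a criterion, in terms of the action on dyadic intervals, for when adding one element forces the whole group.

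The core of the argument is a concrete "implication/closure" computation in the spirit of Golan--Sapir's study of $\vec F$. Here is the sequence of steps I would carry out. (1) Identify $M_i$ explicitly as a subgroup of $F$ via its generators $w_0, w_1, w_2, w_3$ together with $x_0^2$ (resp. $x_1^2$, $\sigma(x_1)^2$), and compute $\pi(M_i) = 2\IZ \oplus 2\IZ$ — immediate since all the $w_j$ and the extra generator already have even coordinates, and $\pi(x_0^2)=(2,0)$, $\pi(x_1^2)=(0,0)$ wait, recheck; $\pi(x_1^2)=(0,0)$ would be a problem, so in fact $\pi(x_1^2)=(0,0)$ and the abelianization is still $2\IZ\oplus 2\IZ$ generated by $\pi(w_0)=(2,-2)$ and $\pi(w_1)=(0,-2)$, fine. (2) Show $M_i \subsetneq K_{(2,2)}$ by exhibiting an explicit element of $K_{(2,2)}$ not in $M_i$, or more cleanly, by a $3$-colorability-type invariant: $\CF = \stab(S_j)$ by Theorem \ref{theo-stab}, so one shows $M_i$ still preserves a coset structure on dyadic rationals that $K_{(2,2)}$ does not. (3) The main step: take $g \in K_{(2,2)} \setminus M_i$ and show $\langle M_i, g\rangle = K_{(2,2)}$. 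For this I would use the description of $\CF$ as a stabilizer of the sets $S_0, S_1, S_2$ of dyadic rationals, mirror Golan--Sapir's proof that $\langle \vec F, g\rangle = F$, and reduce to: the action of $M_i$ on $(0,1) \cap \IZ[1/2]$ has finitely many orbits (in fact a controlled orbit structure coming from the weight function $\omega$ and the length parity), and $g$, not preserving the relevant partition, merges orbits so that $\langle M_i, g\rangle$ acts "transitively enough" to contain a generating set of $K_{(2,2)}$; one then checks the two generators $x_0x_1x_0^{-3}x_1^{-1}$ and $x_1x_2$ of $K_{(2,2)}$ lie in $\langle M_i, g\rangle$.

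Concretely for step (3) I expect the argument to run: since $\pi(g) \in 2\IZ \oplus 2\IZ = \pi(\CF)$, after multiplying $g$ by an element of $\CF$ we may assume $g \in [K_{(2,2)}, K_{(2,2)}]$ — wait, more carefully, we may assume $g$ lies in the kernel of the relevant "obstruction homomorphism" distinguishing $M_i$ inside $K_{(2,2)}$, and then we must derive a contradiction with $g \notin M_i$ unless $\langle M_i, g\rangle = K_{(2,2)}$. The cleanest route: identify the obstruction as a homomorphism $\chi_i \colon K_{(2,2)} \to \IZ/2$ (or to a small finite/abelian group) with $M_i = \ker \chi_i$; then $M_i$ is automatically maximal of index $2$ — but that would contradict "infinite index," so the obstruction cannot be a single homomorphism, and the real content is that $M_i$ is \emph{not} normal and the quotient-like structure is more subtle. \textbf{The main obstacle}, which I would flag as the technical heart of the paper, is precisely this: proving that $\langle M_i, g \rangle$ is \emph{all} of $K_{(2,2)}$ and not some intermediate subgroup, i.e. that there are no subgroups strictly between $M_i$ and $K_{(2,2)}$. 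This requires a delicate analysis — essentially showing that the only subgroups of $K_{(2,2)}$ containing $\CF$ are $\CF$ (no, $\CF$ need not be maximal...), the three $M_i$, and $K_{(2,2)}$ itself; the excerpt itself only claims "partial evidence" for the full classification, so for the theorem as stated one needs the weaker but still substantial fact that each $M_i$ is a \emph{maximal} element of this poset. I would prove this by the orbit-merging argument above combined with an explicit check, case by case on the "type" of $g$ (determined by how $g$ fails to lie in $M_i$), that $x_0x_1x_0^{-3}x_1^{-1}$ and $x_1x_2$ can be written as words in $M_i \cup \{g\}$ — the bookkeeping for these explicit tree-diagram computations being the laborious but routine part.
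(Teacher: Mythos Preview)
Your proposal is not a proof: it is a sequence of speculations that never lands on a mechanism for the main step. You correctly identify that the crux is ``no subgroup strictly between $M_i$ and $K_{(2,2)}$,'' but your proposed tools do not address it. The orbit-merging idea is just a restatement of what has to be shown; the homomorphism $\chi_i$ you float cannot exist (as you yourself note), so it contributes nothing; and the final claim that one does an ``explicit check, case by case on the type of $g$'' founders on the fact that you have not produced any finite classification of such types --- there are infinitely many $M_i$-double cosets in $K_{(2,2)}$, and nothing in your outline reduces to a finite list. Transporting through $\theta^{-1}$ is also a detour: the paper works inside $K_{(2,2)}$ as a subgroup of $F$ and uses the normal-form combinatorics of $F$ directly, which is simpler than rewriting everything in terms of the $\theta$-preimages.

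What the paper actually does, and what your proposal is missing entirely, is a normal-form induction in the style of Golan--Sapir's proof for $\vec F$. The key reduction (Lemmas~\ref{positivity} and~\ref{positivity2}) is that every double coset $M_igM_i$ contains a \emph{positive} element of minimal length whose normal form contains no \emph{block} (a block being a subword $x_{i_1}\cdots x_{i_n}$ over which $x_{i_1+1}$ skips). This is obtained by repeatedly multiplying on the left by the length-two elements $x_{2k}^2, x_{2k+1}x_{2k+2}$ (resp.\ $x_{2k+1}^2, x_{2k+2}x_{2k+3}$) which one first shows lie in $M_0$ (resp.\ $M_1$), using Proposition~\ref{lemmaH} and Lemma~\ref{lemma-M}. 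Once $g$ is positive, block-free, and of minimal length, one inducts on $|g|$: the base case $|g|=2$ is a finite (seven-case) computation (Lemmas~\ref{lemmaH1}, \ref{lemma-length-two}), and the inductive step exploits the block-freeness to conjugate a short element of $M_i$ through $g$ and land on a positive length-two element outside $M_i$, reducing to the base case. None of this structure --- the positivity reduction, the block notion, the length induction, or the concrete base case --- appears in your outline, and without it there is no argument.
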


We will adopt the same strategy as the one deployed in \cite{GS2} to prove the maximality of the oriented subgroup $\vec{F}$ in $K_{(1,2)}$. 
We start with the proof of Theorem \ref{thmM1} for $M_0$. Let us start recalling some   definitions and present a couple of preliminary lemmas.
We will need the notion of closure of a subgroup in $F$.
It  first appeared in \cite{GS2}, but  we will need 
the equivalent description from \cite{NR}.
It will allow us to prove that certain inclusions of subgroups are strict.

Let $(T_+,T_-)$ be an element in $F$.
First direct all the edges in $T_+$ and $T_-$ away from the roots. 
Now let us consider the graph obtained by identifying the leaves pairwise.
This directed graph with two roots associated with an element of $F$ is called the element's diagram. 
Such diagram is said to be reduced if 
the corresponding tree diagram is reduced.
The vertices coming from the leaves are the only $2$-valent vertices of this graph.
Note that for any of these vertices there is exactly one directed path from each root to it.

\begin{definition}\label{defNR}
The \textbf{core} of a finitely generated subgroup $H=\langle g_1, \ldots , g_k\rangle \leq F$ is a vertex-labeled directed graph constructed in the following way. Begin with  the reduced  diagrams for $g_1, \ldots , g_k$ and identify all the roots of  these diagrams together. Proceed 
with identifying other vertices 
 according to   the following two rules  for as long as possible
\begin{enumerate}
\item If two vertices are identified, identify their left children and left incident edges, along with their right children and right  incident edges.
\item If two vertices have their left children and their right children identified respectively, then identify the vertices and the edges that connect them to their children.
\end{enumerate}
As there are only finitely many edges and vertices, this process will eventually end.
\end{definition}
See Figure \ref{CORElabel} for a graphical description of these steps.
An example with $M_0$ will be provided in the proof of Proposition \ref{lemmaH}.
This definition can be extended to arbitrary subgroups of $F$,  \cite{GS2}.
An element $g$ of $F$ is said to be \textbf{accepted by the core} of a subgroup $H$
 if there exists a homomorphism  
 of labeled directed graphs from the 
 core of $\langle g\rangle$ 
 to the core of $H$. 
The subset of $F$ accepted by the core of a   subgroup $H$ is actually a subgroup of $F$ (see \cite[Lemma 18]{NR}) and is denoted by Cl$(H)$. 
This subgroup is called the \textbf{closure} of $H$. A subgroup is called \textbf{closed} if it coincides with its closure.

 \begin{figure}
\phantom{This text will be invisible} 
\[
\begin{tikzpicture}[x=.75cm, y=.75cm,
    every edge/.style={
        draw,
      postaction={decorate,
                    decoration={markings}
                   }
        }
]

\draw[thick] (0,0)--(1,1)--(2,0);
\draw[thick] (1,2)--(1,1);
\node at (0.5,.4) {\rotatebox[origin=tr]{-135}{$\scalebox{.75}{$>$}$}};
\node at (1.5,.5) {\rotatebox[origin=tr]{-45}{$\scalebox{.75}{$>$}$}};
\node at (1,1.5) {\rotatebox[origin=tr]{-90}{$\scalebox{.75}{$>$}$}};
 \node[red] at (.8,1.1) {$\scalebox{.75}{$a$}$};
 \node[red] at (-.3,0) {$\scalebox{.75}{$b$}$};
 \node[red] at (2.3,0) {$\scalebox{.75}{$c$}$};

\draw[thick] (4,0)--(5,1)--(6,0);
\draw[thick] (5,2)--(5,1);
\node at (4.5,.4) {\rotatebox[origin=tr]{-135}{$\scalebox{.75}{$>$}$}};
\node at (5.5,.5) {\rotatebox[origin=tr]{-45}{$\scalebox{.75}{$>$}$}};
\node at (5,1.5) {\rotatebox[origin=tr]{-90}{$\scalebox{.75}{$>$}$}};
 \node[red] at (4.8,1.1) {$\scalebox{.75}{$a$}$};
 \node[red] at (3.7,0) {$\scalebox{.75}{$d$}$};
 \node[red] at (6.3,0) {$\scalebox{.75}{$e$}$};

\node at (8,.5) {$\scalebox{1}{$\Rightarrow$}$};
\node at (11,.5) {$\scalebox{1}{$b=d \qquad c=e$}$};


\node at (0,-1.2) {$\;$};
\end{tikzpicture}
\]\[
\begin{tikzpicture}[x=.75cm, y=.75cm,
    every edge/.style={
        draw,
      postaction={decorate,
                    decoration={markings}
                   }
        }
]

\draw[thick] (0,0)--(1,1)--(2,0);
\draw[thick] (1,2)--(1,1);
\node at (0.5,.4) {\rotatebox[origin=tr]{-135}{$\scalebox{.75}{$>$}$}};
\node at (1.5,.5) {\rotatebox[origin=tr]{-45}{$\scalebox{.75}{$>$}$}};
\node at (1,1.5) {\rotatebox[origin=tr]{-90}{$\scalebox{.75}{$>$}$}};
 \node[red] at (.8,1.1) {$\scalebox{.75}{$a$}$};
 \node[red] at (-.3,0) {$\scalebox{.75}{$c$}$};
 \node[red] at (2.3,0) {$\scalebox{.75}{$d$}$};

\draw[thick] (4,0)--(5,1)--(6,0);
\draw[thick] (5,2)--(5,1);
\node at (4.5,.4) {\rotatebox[origin=tr]{-135}{$\scalebox{.75}{$>$}$}};
\node at (5.5,.5) {\rotatebox[origin=tr]{-45}{$\scalebox{.75}{$>$}$}};
\node at (5,1.5) {\rotatebox[origin=tr]{-90}{$\scalebox{.75}{$>$}$}};
 \node[red] at (4.8,1.1) {$\scalebox{.75}{$b$}$};
 \node[red] at (3.7,0) {$\scalebox{.75}{$c$}$};
 \node[red] at (6.3,0) {$\scalebox{.75}{$d$}$};

\phantom{\node at (11,.5) {$\scalebox{1}{$b=d \qquad c=e$}$};}
\node at (8,.5) {$\scalebox{1}{$\Rightarrow$}$};
\node at (10,.5) {$\scalebox{1}{$a=b$}$};


\node at (0,-1.2) {$\;$};
\end{tikzpicture}
\]
 \caption{Rules for labelling the vertices.}\label{CORElabel}
\end{figure}
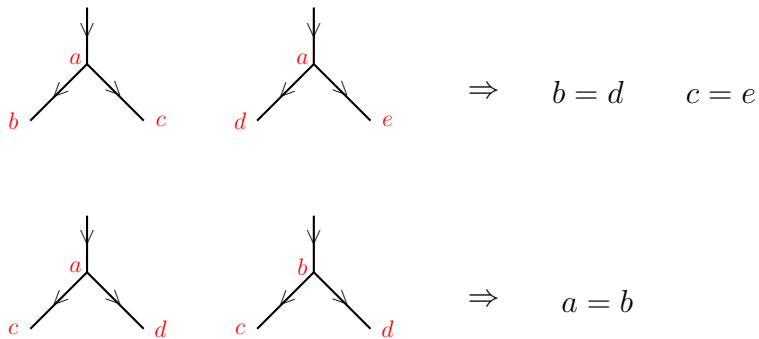

\begin{remark}
It was shown in 
\cite[Theorem 5.01]{GS2} that Cl$(H)$ 
consists of functions in $F$, where every linear piece of $f$ is a restriction of some function from $H$.
It follows from Lemma \ref{lemma-inclu} that $\CF$ is closed.
\end{remark}

 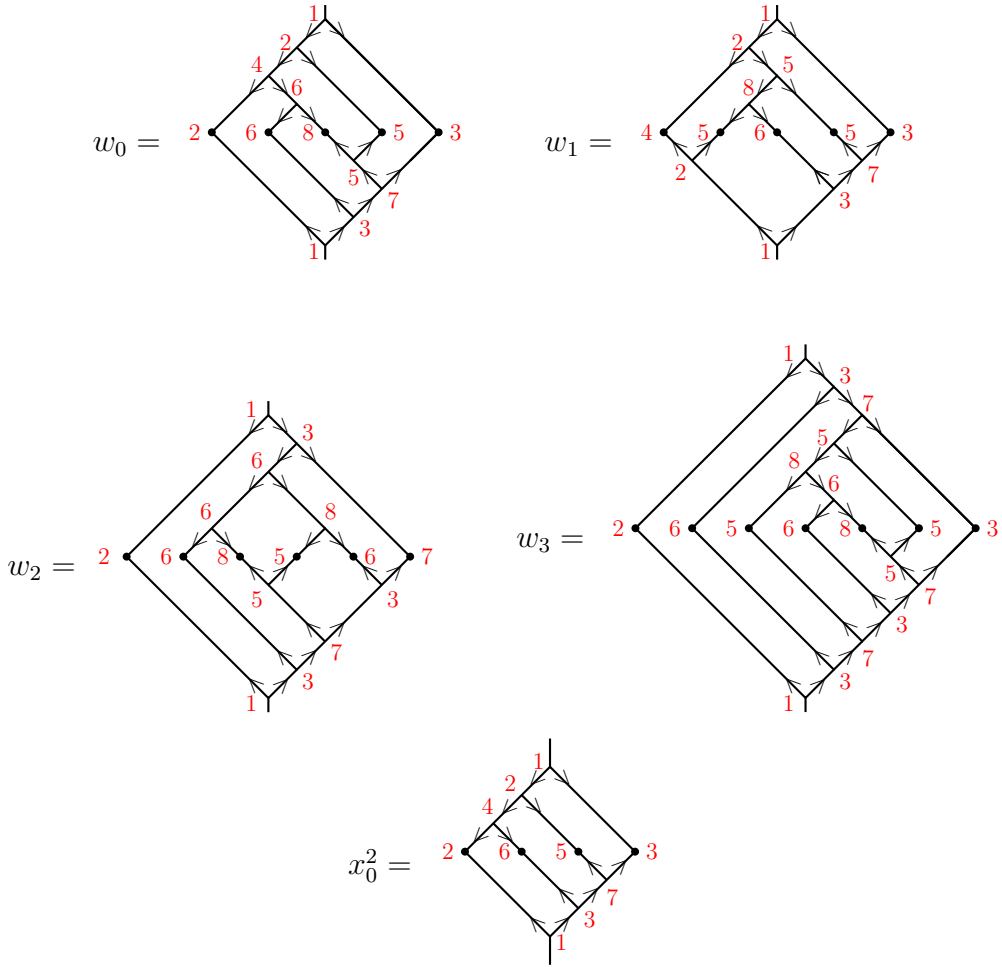
\begin{figure}
\phantom{This text will be invisible} 
\[
\begin{tikzpicture}[x=.75cm, y=.75cm,
    every edge/.style={
        draw,
      postaction={decorate,
                    decoration={markings}
                   }
        }
]

\node at (-1.5,-.25) {$\scalebox{1}{$w_0=$}$};

\draw[thick] (0,0)--(2,2)--(4,0);
\draw[thick] (0,0)--(2,-2)--(4,0);
\draw[thick] (1,0)--(1.5,0.5)--(2,0);
\draw[thick] (1.5,0.5)--(1,1);
\draw[thick] (3,0)--(1.5,1.5);
\draw[thick] (3,0)--(2.5,-.5); 
\draw[thick] (4,0)--(2,2);
\draw[thick] (4,0)--(3.5,-.5);
\draw[thick] (1,0)--(2.5,-1.5);
\draw[thick] (2,0)--(3,-1);

\draw[thick] (2,2)--(2,2.25);
\draw[thick] (2,-2)--(2,-2.25);

\node at (1.75,1.65) {\rotatebox[origin=tr]{-135}{$\scalebox{.75}{$>$}$}};
\node at (1.25,1.15) {\rotatebox[origin=tr]{-135}{$\scalebox{.75}{$>$}$}};
\node at (.75,.65) {\rotatebox[origin=tr]{-135}{$\scalebox{.75}{$>$}$}};
\node at (1.25,.75) {\rotatebox[origin=tr]{-45}{$\scalebox{.75}{$>$}$}};
\node at (2.25,1.75) {\rotatebox[origin=tr]{-45}{$\scalebox{.75}{$>$}$}};
\node at (1.75,1.25) {\rotatebox[origin=tr]{-45}{$\scalebox{.75}{$>$}$}};
\node at (1.75,.25) {\rotatebox[origin=tr]{-45}{$\scalebox{.75}{$>$}$}};
\node at (1.25,.15) {\rotatebox[origin=tr]{-135}{$\scalebox{.75}{$>$}$}};

\node at (1.75,-1.75) {\rotatebox[origin=tr]{135}{$\scalebox{.75}{$>$}$}};
\node at (2.25,-1.25) {\rotatebox[origin=tr]{135}{$\scalebox{.75}{$>$}$}};
\node at (2.25,-.25) {\rotatebox[origin=tr]{135}{$\scalebox{.75}{$>$}$}};
\node at (2.75,-.25) {\rotatebox[origin=tr]{45}{$\scalebox{.75}{$>$}$}};
\node at (3.25,-.75) {\rotatebox[origin=tr]{45}{$\scalebox{.75}{$>$}$}};
\node at (2.75,-.75) {\rotatebox[origin=tr]{135}{$\scalebox{.75}{$>$}$}};
\node at (2.75,-1.25) {\rotatebox[origin=tr]{45}{$\scalebox{.75}{$>$}$}};
\node at (2.25,-1.75) {\rotatebox[origin=tr]{45}{$\scalebox{.75}{$>$}$}};

  \fill (0,0)  circle[radius=1.5pt];
  \fill (1,0)  circle[radius=1.5pt];  
  \fill (2,0)  circle[radius=1.5pt];
  \fill (3,0)  circle[radius=1.5pt];
  \fill (4,0)  circle[radius=1.5pt];

 \node[red] at (1.8,2.1) {$\scalebox{.75}{$1$}$};
 \node[red] at (1.8,-2.1) {$\scalebox{.75}{$1$}$};
 \node[red] at (1.3,1.6) {$\scalebox{.75}{$2$}$};
 \node[red] at (-.3,0) {$\scalebox{.75}{$2$}$};
 \node[red] at (4.3,0) {$\scalebox{.75}{$3$}$};
 \node[red] at (2.7,-1.7) {$\scalebox{.75}{$3$}$};
 \node[red] at (.8,1.2) {$\scalebox{.75}{$4$}$};
  \node[red] at (1.7,0) {$\scalebox{.75}{$8$}$};
  \node[red] at (3.3,0) {$\scalebox{.75}{$5$}$};
  \node[red] at (2.5,-.8) {$\scalebox{.75}{$5$}$};
 \node[red] at (3.2,-1.2) {$\scalebox{.75}{$7$}$};
 \node[red] at (1.5,.8) {$\scalebox{.75}{$6$}$};
 \node[red] at (.7,0) {$\scalebox{.75}{$6$}$};

\node at (0,-1.2) {$\;$};
\end{tikzpicture}
\qquad 
\begin{tikzpicture}[x=.75cm, y=.75cm,
    every edge/.style={
        draw,
      postaction={decorate,
                    decoration={markings}
                   }
        }
]

\node at (-1.5,-.25) {$\scalebox{1}{$w_1=$}$};

\draw[thick] (0,0)--(2,2)--(4,0);
\draw[thick] (0,0)--(2,-2)--(4,0);

\draw[thick] (1,0)--(1.5,0.5)--(2,0);
\draw[thick] (2,0)--(3,-1);
\draw[thick] (1,0)--(.5,-.5);
\draw[thick] (1.5,.5)--(2,1)--(3,0);
\draw[thick] (3,0)--(3.5,-.5);
\draw[thick] (2,1)--(1.5,1.5);

\draw[thick] (2,2)--(2,2.25);
\draw[thick] (2,-2)--(2,-2.25);

\node at (0,-1.2) {$\;$};

\node at (1.75,1.65) {\rotatebox[origin=tr]{-135}{$\scalebox{.75}{$>$}$}};
\node at (1.75,.65) {\rotatebox[origin=tr]{-135}{$\scalebox{.75}{$>$}$}};
\node at (1.25,1.15) {\rotatebox[origin=tr]{-135}{$\scalebox{.75}{$>$}$}};
\node at (1.75,-1.75) {\rotatebox[origin=tr]{135}{$\scalebox{.75}{$>$}$}};
\node at (2.75,-.75) {\rotatebox[origin=tr]{135}{$\scalebox{.75}{$>$}$}};
\node at (3.25,-.25) {\rotatebox[origin=tr]{135}{$\scalebox{.75}{$>$}$}};
\node at (.25,-.25) {\rotatebox[origin=tr]{135}{$\scalebox{.75}{$>$}$}};
\node at (3.25,-.75) {\rotatebox[origin=tr]{45}{$\scalebox{.75}{$>$}$}};
\node at (3.75,-.25) {\rotatebox[origin=tr]{45}{$\scalebox{.75}{$>$}$}};
\node at (2.75,-.75) {\rotatebox[origin=tr]{135}{$\scalebox{.75}{$>$}$}};
\node at (.75,-.25) {\rotatebox[origin=tr]{45}{$\scalebox{.75}{$>$}$}};
\node at (2.25,-1.75) {\rotatebox[origin=tr]{45}{$\scalebox{.75}{$>$}$}};
\node at (2.25,1.75) {\rotatebox[origin=tr]{-45}{$\scalebox{.75}{$>$}$}};
\node at (2.25,.75) {\rotatebox[origin=tr]{-45}{$\scalebox{.75}{$>$}$}};
\node at (1.75,.25) {\rotatebox[origin=tr]{-45}{$\scalebox{.75}{$>$}$}};
\node at (1.25,.15) {\rotatebox[origin=tr]{-135}{$\scalebox{.75}{$>$}$}};
\node at (1.75,1.25) {\rotatebox[origin=tr]{-45}{$\scalebox{.75}{$>$}$}};

  \fill (0,0)  circle[radius=1.5pt];
  \fill (1,0)  circle[radius=1.5pt];  
  \fill (2,0)  circle[radius=1.5pt];
  \fill (3,0)  circle[radius=1.5pt];
  \fill (4,0)  circle[radius=1.5pt];

 \node[red] at (1.8,2.1) {$\scalebox{.75}{$1$}$};
 \node[red] at (1.8,-2.1) {$\scalebox{.75}{$1$}$};
 \node[red] at (1.3,1.6) {$\scalebox{.75}{$2$}$};
 \node[red] at (-.3,0) {$\scalebox{.75}{$4$}$};
 \node[red] at (4.3,0) {$\scalebox{.75}{$3$}$};
 \node[red] at (.3,-.7) {$\scalebox{.75}{$2$}$};
 \node[red] at (2.2,1.2) {$\scalebox{.75}{$5$}$};
  \node[red] at (1.7,0) {$\scalebox{.75}{$6$}$};
  \node[red] at (3.3,0) {$\scalebox{.75}{$5$}$};
  \node[red] at (3.7,-.75) {$\scalebox{.75}{$7$}$};
  \node[red] at (3.2,-1.2) {$\scalebox{.75}{$3$}$};
 \node[red] at (1.5,.8) {$\scalebox{.75}{$8$}$};
 \node[red] at (.7,0) {$\scalebox{.75}{$5$}$};
  
\end{tikzpicture}
\]

\[
\begin{tikzpicture}[x=.75cm, y=.75cm,
    every edge/.style={
        draw,
      postaction={decorate,
                    decoration={markings}
                   }
        }
]

\node at (-2.5,-.25) {$\scalebox{1}{$w_2=$}$};
 
\draw[thick] (-1,0)--(1.5,2.5)--(2,2);
\draw[thick] (-1,0)--(1.5,-2.5)--(2,-2);

\draw[thick] (0,0)--(2,2)--(4,0);
\draw[thick] (0,0)--(2,-2)--(4,0);

\draw[thick] (2,0)--(2.5,.5)--(3,0);

\draw[thick] (1,0)--(.5,.5);
\draw[thick] (2.5,.5)--(1.5,1.5);

\draw[thick] (3,0)--(3.5,-.5);
\draw[thick] (2,0)--(1.5,-.5)--(1,0);
\draw[thick] (1.5,-.5)--(2.5,-1.5);

\draw[thick] (1.5,2.5)--(1.5,2.75);
\draw[thick] (1.5,-2.5)--(1.5,-2.75);

\node at (1.75,1.65) {\rotatebox[origin=tr]{-135}{$\scalebox{.75}{$>$}$}};
\node at (2.25,.15) {\rotatebox[origin=tr]{-135}{$\scalebox{.75}{$>$}$}};
\node at (2.25,1.75) {\rotatebox[origin=tr]{-45}{$\scalebox{.75}{$>$}$}};
\node at (1.25,2.15) {\rotatebox[origin=tr]{-135}{$\scalebox{.75}{$>$}$}};
\node at (1.75,2.25) {\rotatebox[origin=tr]{-45}{$\scalebox{.75}{$>$}$}};
\node at (.75,.25) {\rotatebox[origin=tr]{-45}{$\scalebox{.75}{$>$}$}};
\node at (2.75,.25) {\rotatebox[origin=tr]{-45}{$\scalebox{.75}{$>$}$}};
\node at (.25,.15) {\rotatebox[origin=tr]{-135}{$\scalebox{.75}{$>$}$}};
\node at (1.75,1.25) {\rotatebox[origin=tr]{-45}{$\scalebox{.75}{$>$}$}};
\node at (1.25,1.15) {\rotatebox[origin=tr]{-135}{$\scalebox{.75}{$>$}$}};

\node at (1.75,-1.75) {\rotatebox[origin=tr]{135}{$\scalebox{.75}{$>$}$}};
\node at (2.25,-1.25) {\rotatebox[origin=tr]{135}{$\scalebox{.75}{$>$}$}};
\node at (3.25,-.25) {\rotatebox[origin=tr]{135}{$\scalebox{.75}{$>$}$}};
\node at (2.75,-1.25) {\rotatebox[origin=tr]{45}{$\scalebox{.75}{$>$}$}};
\node at (3.75,-.25) {\rotatebox[origin=tr]{45}{$\scalebox{.75}{$>$}$}};
\node at (1.25,-.25) {\rotatebox[origin=tr]{135}{$\scalebox{.75}{$>$}$}};
\node at (1.25,-2.25) {\rotatebox[origin=tr]{135}{$\scalebox{.75}{$>$}$}};
\node at (1.75,-2.25) {\rotatebox[origin=tr]{45}{$\scalebox{.75}{$>$}$}};
\node at (1.75,-.25) {\rotatebox[origin=tr]{45}{$\scalebox{.75}{$>$}$}};
\node at (2.25,-1.75) {\rotatebox[origin=tr]{45}{$\scalebox{.75}{$>$}$}};

  \fill (0,0)  circle[radius=1.5pt];
  \fill (1,0)  circle[radius=1.5pt];  
  \fill (2,0)  circle[radius=1.5pt];
  \fill (3,0)  circle[radius=1.5pt];
  \fill (4,0)  circle[radius=1.5pt];
  \fill (-1,0)  circle[radius=1.5pt];

 \node[red] at (1.2,2.6) {$\scalebox{.75}{$1$}$};
 \node[red] at (1.2,-2.6) {$\scalebox{.75}{$1$}$};
  \node[red] at (2.2,2.2) {$\scalebox{.75}{$3$}$};
 \node[red] at (1.3,1.7) {$\scalebox{.75}{$6$}$};
 \node[red] at (.4,.8) {$\scalebox{.75}{$6$}$};
 \node[red] at (-.3,0) {$\scalebox{.75}{$6$}$};
   \node[red] at (1.7,0) {$\scalebox{.75}{$5$}$};
  \node[red] at (3.3,0) {$\scalebox{.75}{$6$}$};
 \node[red] at (.7,0) {$\scalebox{.75}{$8$}$};
 \node[red] at (-1.4,0) {$\scalebox{.75}{$2$}$};
 \node[red] at (4.3,0) {$\scalebox{.75}{$7$}$};
 \node[red] at (2.6,.8) {$\scalebox{.75}{$8$}$};
  \node[red] at (2.7,-1.7) {$\scalebox{.75}{$7$}$};
  \node[red] at (2.2,-2.2) {$\scalebox{.75}{$3$}$};
  \node[red] at (3.7,-.75) {$\scalebox{.75}{$3$}$};
  \node[red] at (1.3,-.75) {$\scalebox{.75}{$5$}$};

\node at (0,-1.2) {$\;$};
\end{tikzpicture}
\qquad 
\begin{tikzpicture}[x=.75cm, y=.75cm,
    every edge/.style={
        draw,
      postaction={decorate,
                    decoration={markings}
                   }
        }
]

\node at (-3.5,-.25) {$\scalebox{1}{$w_3=$}$};
 
\draw[thick] (-1,0)--(1.5,2.5)--(2,2);
\draw[thick] (-1,0)--(1.5,-2.5)--(2,-2);

\draw[thick] (-2,0)--(1,3)--(1.5,2.5);
\draw[thick] (-2,0)--(1,-3)--(1.5,-2.5);

\draw[thick] (0,0)--(2,2)--(4,0);
\draw[thick] (0,0)--(2,-2)--(4,0);
\draw[thick] (1,0)--(1.5,0.5)--(2,0);
\draw[thick] (1.5,0.5)--(1,1);
\draw[thick] (3,0)--(1.5,1.5);
\draw[thick] (3,0)--(2.5,-.5); 
\draw[thick] (4,0)--(2,2);
\draw[thick] (4,0)--(3.5,-.5);
\draw[thick] (1,0)--(2.5,-1.5);
\draw[thick] (2,0)--(3,-1);

\draw[thick] (1,3)--(1,3.25);
\draw[thick] (1,-3)--(1,-3.25);

\node at (1.75,1.65) {\rotatebox[origin=tr]{-135}{$\scalebox{.75}{$>$}$}};
\node at (2.25,1.75) {\rotatebox[origin=tr]{-45}{$\scalebox{.75}{$>$}$}};
\node at (1.25,2.15) {\rotatebox[origin=tr]{-135}{$\scalebox{.75}{$>$}$}};
\node at (1.75,2.25) {\rotatebox[origin=tr]{-45}{$\scalebox{.75}{$>$}$}};
\node at (.75,2.65) {\rotatebox[origin=tr]{-135}{$\scalebox{.75}{$>$}$}};
\node at (1.25,2.75) {\rotatebox[origin=tr]{-45}{$\scalebox{.75}{$>$}$}};
\node at (.75,.65) {\rotatebox[origin=tr]{-135}{$\scalebox{.75}{$>$}$}};
\node at (1.25,.15) {\rotatebox[origin=tr]{-135}{$\scalebox{.75}{$>$}$}};
\node at (1.75,.25) {\rotatebox[origin=tr]{-45}{$\scalebox{.75}{$>$}$}};
\node at (1.25,.75) {\rotatebox[origin=tr]{-45}{$\scalebox{.75}{$>$}$}};
\node at (1.75,1.25) {\rotatebox[origin=tr]{-45}{$\scalebox{.75}{$>$}$}};
\node at (1.25,1.15) {\rotatebox[origin=tr]{-135}{$\scalebox{.75}{$>$}$}};

\node at (2.25,-.25) {\rotatebox[origin=tr]{135}{$\scalebox{.75}{$>$}$}};
\node at (1.75,-1.75) {\rotatebox[origin=tr]{135}{$\scalebox{.75}{$>$}$}};
\node at (2.25,-1.25) {\rotatebox[origin=tr]{135}{$\scalebox{.75}{$>$}$}};
\node at (2.75,-.75) {\rotatebox[origin=tr]{135}{$\scalebox{.75}{$>$}$}};
\node at (.75,-2.75) {\rotatebox[origin=tr]{135}{$\scalebox{.75}{$>$}$}};
\node at (2.75,-1.25) {\rotatebox[origin=tr]{45}{$\scalebox{.75}{$>$}$}};
\node at (3.25,-.75) {\rotatebox[origin=tr]{45}{$\scalebox{.75}{$>$}$}};
\node at (1.25,-2.25) {\rotatebox[origin=tr]{135}{$\scalebox{.75}{$>$}$}};
\node at (1.25,-2.75) {\rotatebox[origin=tr]{45}{$\scalebox{.75}{$>$}$}};
\node at (1.75,-2.25) {\rotatebox[origin=tr]{45}{$\scalebox{.75}{$>$}$}};
\node at (2.75,-.25) {\rotatebox[origin=tr]{45}{$\scalebox{.75}{$>$}$}};
\node at (2.25,-1.75) {\rotatebox[origin=tr]{45}{$\scalebox{.75}{$>$}$}};

  \fill (0,0)  circle[radius=1.5pt];
  \fill (1,0)  circle[radius=1.5pt];  
  \fill (2,0)  circle[radius=1.5pt];
  \fill (3,0)  circle[radius=1.5pt];
  \fill (4,0)  circle[radius=1.5pt];
  \fill (-1,0)  circle[radius=1.5pt];
  \fill (-2,0)  circle[radius=1.5pt];

 \node[red] at (2.1,2.2) {$\scalebox{.75}{$7$}$};
 \node[red] at (2.1,-2.3) {$\scalebox{.75}{$7$}$};
 \node[red] at (1.3,1.6) {$\scalebox{.75}{$5$}$};
 \node[red] at (-.3,0) {$\scalebox{.75}{$5$}$};
 \node[red] at (4.3,0) {$\scalebox{.75}{$3$}$};
 \node[red] at (2.7,-1.7) {$\scalebox{.75}{$3$}$};
 \node[red] at (.8,1.2) {$\scalebox{.75}{$8$}$};
  \node[red] at (1.7,0) {$\scalebox{.75}{$8$}$};
  \node[red] at (3.3,0) {$\scalebox{.75}{$5$}$};
  \node[red] at (2.5,-.8) {$\scalebox{.75}{$5$}$};
 \node[red] at (3.2,-1.2) {$\scalebox{.75}{$7$}$};
 \node[red] at (1.5,.8) {$\scalebox{.75}{$6$}$};
 \node[red] at (.7,0) {$\scalebox{.75}{$6$}$};
 \node[red] at (-1.3,0) {$\scalebox{.75}{$6$}$};
 \node[red] at (-2.3,0) {$\scalebox{.75}{$2$}$};
 \node[red] at (.7,-3.1) {$\scalebox{.75}{$1$}$};
 \node[red] at (.7,3.1) {$\scalebox{.75}{$1$}$};
 \node[red] at (1.7,2.7) {$\scalebox{.75}{$3$}$};
 \node[red] at (1.7,-2.7) {$\scalebox{.75}{$3$}$};

\node at (0,-1.2) {$\;$};
\end{tikzpicture}
\]
\[
\begin{tikzpicture}[x=.75cm, y=.75cm,
    every edge/.style={
        draw,
      postaction={decorate,
                    decoration={markings}
                   }
        }
]

\node at (-1.5,-.25) {$\scalebox{1}{$x_0^2=$}$};

\draw[thick] (0,0)--(1.5,1.5)--(3,0)--(1.5,-1.5)--(0,0);
\draw[thick] (.5,.5)--(2,-1);
\draw[thick] (1,1)--(2.5,-.5);
\draw[thick] (1.5,1.5)--(1.5,2);
\draw[thick] (1.5,-1.5)--(1.5,-2);

\node at (.25,.15) {\rotatebox[origin=tr]{-135}{$\scalebox{.75}{$>$}$}};
\node at (1.25,1.15) {\rotatebox[origin=tr]{-135}{$\scalebox{.75}{$>$}$}};
\node at (.75,.65) {\rotatebox[origin=tr]{-135}{$\scalebox{.75}{$>$}$}};
 \node at (1.75,1.25) {\rotatebox[origin=tr]{-45}{$\scalebox{.75}{$>$}$}};
\node at (1.25,.75) {\rotatebox[origin=tr]{-45}{$\scalebox{.75}{$>$}$}};
\node at (.75,.25) {\rotatebox[origin=tr]{-45}{$\scalebox{.75}{$>$}$}};

\node at (1.25,-1.25) {\rotatebox[origin=tr]{135}{$\scalebox{.75}{$>$}$}};
\node at (1.75,-.75) {\rotatebox[origin=tr]{135}{$\scalebox{.75}{$>$}$}};
\node at (2.25,-.25) {\rotatebox[origin=tr]{135}{$\scalebox{.75}{$>$}$}};
\node at (2.75,-.25) {\rotatebox[origin=tr]{45}{$\scalebox{.75}{$>$}$}};
\node at (2.25,-.75) {\rotatebox[origin=tr]{45}{$\scalebox{.75}{$>$}$}};
\node at (1.75,-1.25) {\rotatebox[origin=tr]{45}{$\scalebox{.75}{$>$}$}};

  \fill (0,0)  circle[radius=1.5pt];
  \fill (1,0)  circle[radius=1.5pt];  
  \fill (2,0)  circle[radius=1.5pt];
  \fill (3,0)  circle[radius=1.5pt];

 \node[red] at (1.7,-1.6) {$\scalebox{.75}{$1$}$};
 \node[red] at (1.3,1.6) {$\scalebox{.75}{$1$}$};
 \node[red] at (-.3,0) {$\scalebox{.75}{$2$}$};
 \node[red] at (2.2,-1.2) {$\scalebox{.75}{$3$}$};
 \node[red] at (.8,1.2) {$\scalebox{.75}{$2$}$};
  \node[red] at (1.7,0) {$\scalebox{.75}{$5$}$};
  \node[red] at (3.3,0) {$\scalebox{.75}{$3$}$};
  \node[red] at (2.6,-.8) {$\scalebox{.75}{$7$}$};
 \node[red] at (.4,.8) {$\scalebox{.75}{$4$}$};
 \node[red] at (.7,0) {$\scalebox{.75}{$6$}$};

\node at (0,-1.2) {$\;$};
\end{tikzpicture}
\]
 \caption{The labellings of the generators of $M_0$.}\label{labelsGENH}
\end{figure}

As a preliminary step to proving the maximality of $M_0$, we show that adding a positive element of length $2$ to it either doesn't change it (Lemma \ref{M0even})
or gives the whole $K_{(2,2)}$ (Lemma \ref{lemmaH1}).

Let us start by understanding better the subgroup $M_0$.
We first show that $M_0$ is a proper subgroup of $K_{(2,2)}$.

\begin{lemma}	\label{non-max-H}
The subgroup $M_0$ is a proper subgroup of $K_{(2,2)}$. 
\end{lemma}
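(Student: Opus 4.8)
The plan is to exploit the closure operator $\operatorname{Cl}(\cdot)$ recalled before the statement. Every $h$ in a subgroup $H$ is accepted by the core of $H$ (the core of $\langle h\rangle$ maps into it by construction), so $H\subseteq\operatorname{Cl}(H)$; see \cite[Lemma 18]{NR}. Hence it suffices to exhibit a single element $g$ that lies in $K_{(2,2)}$ but is \emph{not} accepted by the core of $M_0$: then $g\notin\operatorname{Cl}(M_0)\supseteq M_0$ while $g\in K_{(2,2)}$, and therefore $M_0\subsetneq K_{(2,2)}$.

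The first step is to compute the core of $M_0=\langle w_0,w_1,w_2,w_3,x_0^2\rangle$. One starts from the reduced diagrams of the five generators — the diagrams of $w_0,w_1,w_2,w_3$ of Figure~\ref{genCF} and the diagram of $x_0^2$ — identifies all the roots to a single vertex, and then iterates the two identification rules of Definition~\ref{defNR} until the process stabilises. The outcome of this purely combinatorial bookkeeping is exactly what is encoded by the vertex labels of Figure~\ref{labelsGENH}: two vertices carry the same label precisely when they get identified. Reading off these labels produces the finite labelled directed graph $\Gamma$, the core of $M_0$, with vertices labelled $1,\dots,8$ and vertex $1$ the common root.

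For the second step I would take $g=x_1^2$. It lies in $K_{(2,2)}$, since $\pi(x_1^2)=(0,2)\in 2\IZ\oplus 2\IZ$. To see that $g$ is not accepted by $\Gamma$, one writes down the reduced diagram of $x_1^2$ and tries to label its vertices by those of $\Gamma$ compatibly with left and right edges, starting from the common root; one checks that this is impossible, because somewhere a vertex is forced to receive two different labels, or two leaves that the diagram identifies are forced onto distinct vertices of $\Gamma$. Equivalently, there is no morphism of labelled directed graphs from the core of $\langle x_1^2\rangle$ to $\Gamma$. Thus $x_1^2\notin\operatorname{Cl}(M_0)$, so $x_1^2\in K_{(2,2)}\setminus M_0$ and the lemma follows. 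As a bonus this already shows $M_1\not\subseteq M_0$, which will be convenient when comparing $M_0,M_1,M_2$.

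The main obstacle is the bookkeeping in these two steps. In the first one must be sure that rules (1) and (2) of Definition~\ref{defNR} have been applied to exhaustion, so that $\Gamma$ really is the core of $M_0$ and not a proper quotient of it; in the second one must carry out, carefully but in finitely many steps, the verification that $x_1^2$ cannot be threaded through $\Gamma$. The only genuine design choice is picking the witness $g$ so as to keep this last check short — a short positive element of $K_{(2,2)}$ such as $x_1^2$ (or $x_1x_2$) is the natural candidate. Everything else (membership of $g$ in $K_{(2,2)}$ via $\pi$, and the inclusion $M_0\subseteq\operatorname{Cl}(M_0)$) is immediate.
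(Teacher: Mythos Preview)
Your approach is essentially identical to the paper's: both show $x_1^2\in K_{(2,2)}\setminus\operatorname{Cl}(M_0)$ by computing the core via the labellings of Figure~\ref{labelsGENH} and checking that the reduced diagram of $x_1^2$ cannot be consistently labelled. One small correction to your aside: $x_1x_2$ would \emph{not} work as a witness, since by Proposition~\ref{lemmaH} the subgroup $M_0$ is generated by $x_0^2,\,x_2^2,\,x_1x_2$, so $x_1x_2\in M_0$.
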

\begin{proof}
It suffices to show that $x_1^2\not\in M_0$.
To this end, we will actually prove an a priori stronger result: $x_1^2$ does not belong to the closure of $M_0$.
Our main tool is the core of $M_0$.
In order to do this, it suffices to show that $x_1^2$ does not admit a labelling compatible with that of the generators of $M_0$, that is $x_1^2$ does not belong to $Cl(M_0)$ (see Figure \ref{labelsGENH}).
\[
\begin{tikzpicture}[x=.75cm, y=.75cm,
    every edge/.style={
        draw,
      postaction={decorate,
                    decoration={markings}
                   }
        }
]

\node at (-2.5,-.25) {$\scalebox{1}{$x_1^2=$}$};
 
\draw[thick] (1,1)--(2.5,-.5);
\draw[thick] (0,0)--(1.5,1.5)--(3,0)--(1.5,-1.5)--(0,0);
\draw[thick] (.5,.5)--(2,-1);
\draw[thick] (1,-2)--(1,-2.5);
\draw[thick] (1,2)--(1,2.5);
\draw[thick] (-1,0)--(1,2)--(1.5,1.5);
\draw[thick] (-1,0)--(1,-2)--(1.5,-1.5);
 
\node at (.75,1.65) {\rotatebox[origin=tr]{-135}{$\scalebox{.75}{$>$}$}};
\node at (1.25,1.75) {\rotatebox[origin=tr]{-45}{$\scalebox{.75}{$>$}$}};
\node at (.75,-1.75) {\rotatebox[origin=tr]{135}{$\scalebox{.75}{$>$}$}};
\node at (1.25,-1.75) {\rotatebox[origin=tr]{45}{$\scalebox{.75}{$>$}$}};
\node at (.25,.15) {\rotatebox[origin=tr]{-135}{$\scalebox{.75}{$>$}$}};
\node at (1.25,1.15) {\rotatebox[origin=tr]{-135}{$\scalebox{.75}{$>$}$}};
\node at (.75,.65) {\rotatebox[origin=tr]{-135}{$\scalebox{.75}{$>$}$}};
 \node at (1.75,1.25) {\rotatebox[origin=tr]{-45}{$\scalebox{.75}{$>$}$}};
\node at (1.25,.75) {\rotatebox[origin=tr]{-45}{$\scalebox{.75}{$>$}$}};
\node at (.75,.25) {\rotatebox[origin=tr]{-45}{$\scalebox{.75}{$>$}$}};

\node at (1.25,-1.25) {\rotatebox[origin=tr]{135}{$\scalebox{.75}{$>$}$}};
\node at (1.75,-.75) {\rotatebox[origin=tr]{135}{$\scalebox{.75}{$>$}$}};
\node at (2.25,-.25) {\rotatebox[origin=tr]{135}{$\scalebox{.75}{$>$}$}};
\node at (2.75,-.25) {\rotatebox[origin=tr]{45}{$\scalebox{.75}{$>$}$}};
\node at (2.25,-.75) {\rotatebox[origin=tr]{45}{$\scalebox{.75}{$>$}$}};
\node at (1.75,-1.25) {\rotatebox[origin=tr]{45}{$\scalebox{.75}{$>$}$}};

  \fill (-1,0)  circle[radius=1.5pt];
  \fill (0,0)  circle[radius=1.5pt];
  \fill (1,0)  circle[radius=1.5pt];  
  \fill (2,0)  circle[radius=1.5pt];
  \fill (3,0)  circle[radius=1.5pt];
 
   \node[red] at (.7,-2.2) {$\scalebox{.75}{$1$}$};
   \node[red] at (.7,2.2) {$\scalebox{.75}{$1$}$};
   \node[red] at (-1.3,0) {$\scalebox{.75}{$2$}$};

 \node[red] at (1.7,-1.7) {$\scalebox{.75}{$3$}$};
 \node[red] at (1.7,1.7) {$\scalebox{.75}{$3$}$};
 \node[red] at (-.3,0) {$\scalebox{.75}{$6$}$};
 \node[red] at (2.2,-1.2) {$\scalebox{.75}{$7$}$};
 \node[red] at (.8,1.2) {$\scalebox{.75}{$6$}$};
  \node[green] at (1.7,0) {$\scalebox{.75}{$6$}$};
  \node[red] at (3.3,0) {$\scalebox{.75}{$7$}$};
  \node[red] at (2.6,-.8) {$\scalebox{.75}{$3$}$};
 \node[red] at (.4,.8) {$\scalebox{.75}{$6$}$};
 \node[green] at (.7,0) {$\scalebox{.75}{$5$}$};

\node at (0,-1.2) {$\;$};
\end{tikzpicture}
\]
\end{proof}
\begin{remark}
The previous lemma shows that $\CF$ is not a maximal subgroup of $K_{(2,2)}$.
\end{remark}

\begin{proposition}\label{lemmaH}
The subgroup $M_0=\langle \CF, x_0^2\rangle$  is generated by $x_0^2$, $x_2^2$, $x_1x_2$.
\end{proposition}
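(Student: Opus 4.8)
The plan is to prove the equality by showing the two displayed generating sets generate each other, working directly in the infinite presentation of $F$ and using only the defining relations $x_nx_k=x_kx_{n+1}$ ($k<n$) together with Ren's description $\CF=\langle w_0,w_1,w_2,w_3\rangle$, so that $M_0=\langle w_0,w_1,w_2,w_3,x_0^2\rangle$. The single computational ingredient I would isolate first is the effect of conjugating by powers of $x_0$: the relations $x_jx_0=x_0x_{j+1}$ give $x_0x_jx_0^{-1}=x_{j-1}$ for every $j\geq 2$, and iterating, $x_0^{-2}x_ix_0^2=x_{i+2}$ for every $i\geq 1$; in particular $x_1=x_0x_2x_0^{-1}$, $x_3=x_0^{-2}x_1x_0^2$ and $x_4=x_0^{-2}x_2x_0^2$. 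These three identities are essentially all that the rest of the argument uses.

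For the inclusion $\langle x_0^2,x_2^2,x_1x_2\rangle\subseteq M_0$: of course $x_0^2\in M_0$; from $x_1^2=(x_0x_2x_0^{-1})^2=x_0x_2^2x_0^{-1}$ one gets $w_1=x_0x_1^2x_0^{-1}=x_0^2x_2^2x_0^{-2}$, hence $x_2^2=x_0^{-2}w_1x_0^2\in M_0$; and from $w_0=x_0^2x_1x_2^{-1}$ one gets $x_1x_2^{-1}=x_0^{-2}w_0\in M_0$, whence $x_1x_2=(x_1x_2^{-1})x_2^2\in M_0$.

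For the reverse inclusion it suffices to write each generator of $M_0$ as a word in $a:=x_0^2$, $b:=x_2^2$, $c:=x_1x_2$. The first two come for free from the previous paragraph: $w_1=aba^{-1}$ and $w_0=acb^{-1}$. Substituting $x_3=x_1^{-1}x_2x_1$ (from $x_2x_1=x_1x_3$) into $w_2=x_1^2x_3x_2^{-1}$ yields $w_2=x_1x_2x_1x_2^{-1}=c(cb^{-1})=c^2b^{-1}$; and substituting $x_3=a^{-1}x_1a$, $x_4=a^{-1}x_2a$ into $w_3=x_2^2x_3x_4^{-1}$ yields $w_3=x_2^2a^{-1}(x_1x_2^{-1})a=ba^{-1}cb^{-1}a$. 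Since also $x_0^2=a$, every generator of $M_0$ lies in $\langle a,b,c\rangle$, and combining this with the previous inclusion gives $M_0=\langle x_0^2,x_2^2,x_1x_2\rangle$.

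I do not anticipate a genuine obstacle: the proof is a short chain of substitutions in the group presentation. The only point calling for care is the conjugation bookkeeping — that $x_0$ is fixed under conjugation by $x_0^{\pm2}$ while each $x_i$ with $i\geq 1$ has its index shifted by $\mp2$ — so I would state and verify that fact once at the outset and then apply it mechanically when rewriting $x_3$ and $x_4$.
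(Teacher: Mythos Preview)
Your proof is correct and follows essentially the same route as the paper's: both directions are checked by direct manipulation of the relations, and your expressions $w_0=acb^{-1}$, $w_1=aba^{-1}$, $w_2=c^2b^{-1}$, $w_3=ba^{-1}cb^{-1}a$ agree with the paper's formulas after substituting $x_3x_4=a^{-1}ca$ and $x_4^2=a^{-1}ba$. The only organizational difference is that the paper passes through the infinite family $\mathcal{S}_{M_0}=\{x_{2k}^2,\,x_{2k+1}x_{2k+2}\mid k\geq 0\}$ as an intermediate generating set, a fact it reuses in later lemmas; your argument skips this detour, which is fine for the proposition as stated.
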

\begin{proof}
First we show that the following elements generate $M_0$
$$
\CS_{M_0}:=\{x_{2k}^2, x_{2k+1}x_{2k+2} \; | \; k=0, 1, \ldots\}\; .
$$
Indeed, these elements are in $M_0$ 
\begin{align*}
&x_0^{-2}w_1x_0^2=x_0^{-1}x_1^2x_0=x_2^2\in M_0\\
&x_0^{-2k}x_2^2x_0^{2k}=x_{2k+2}^2\in M_0\\
&x_0^{-2}w_0x_2^2=x_1x_2\in M_0\\
&x_0^{-2k}x_1x_2x_0^{2k}=x_{1+2k}x_{2+2k}\in M_0\; .
\end{align*}
It is also easy to see that the generators of $M_0$ can be obtained from elements in $\CS_{M_0}$
\begin{align*}
&w_0=x_0^2(x_1x_2)x_2^{-2}\\
&w_1=x_0^2x_2^2x_0^{-2}\\
&w_2=(x_1x_2)(x_1x_2)x_2^{-2}\\
&w_3=x_2^2(x_3x_4)x_4^{-2}
\end{align*} 
\end{proof}

\begin{lemma} \label{M0even}
For all $i\geq 0$, the subgroups $\langle x_{2i}^2,\CF\rangle$ are all equal.
\end{lemma}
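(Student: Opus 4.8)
The statement is that the subgroups $\langle x_{2i}^2, \CF\rangle$ all coincide as $i$ ranges over $\IN$. The plan is to show that each of these equals $M_0=\langle x_0^2,\CF\rangle$, which by Proposition \ref{lemmaH} is $\langle x_{2k}^2, x_{2k+1}x_{2k+2}\;|\;k\geq 0\rangle$. First I would observe the inclusion $\langle x_{2i}^2,\CF\rangle\subseteq M_0$ is not immediate for $i>0$ unless we already know $x_{2i}^2\in M_0$; but this is exactly what the computation in the proof of Proposition \ref{lemmaH} gives, namely $x_0^{-2k}x_2^2x_0^{2k}=x_{2k+2}^2$ together with $x_2^2=x_0^{-2}w_1x_0^2$, so every $x_{2i}^2$ with $i\geq 1$ already lies in $M_0$ (and $x_0^2\in M_0$ trivially). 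Hence $\langle x_{2i}^2,\CF\rangle\subseteq M_0$ for every $i\geq 0$.

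For the reverse inclusion it suffices to show $x_0^2\in\langle x_{2i}^2,\CF\rangle$ for each fixed $i$, since then $M_0=\langle x_0^2,\CF\rangle\subseteq\langle x_{2i}^2,\CF\rangle$. Here I would use the shift homomorphism $\varphi$ of Lemma \ref{lemmashift}, which sends $x_j\mapsto x_{j+1}$ and preserves $\CF$. The key algebraic identity to exploit is a conjugation inside $\CF$: using a generator of $\CF$ that shifts indices by $2$, I want to move $x_{2i}^2$ down to $x_{2i-2}^2$. Concretely, from the relations $x_nx_k=x_kx_{n+1}$ for $k<n$ one checks that conjugating $x_{2i}^2$ by an appropriate element of $F$ built from the $w_j$ produces $x_{2i-2}^2$; the natural candidate is to run the computations of Proposition \ref{lemmaH} in reverse. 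Since $w_1=x_0x_1^2x_0^{-1}\in\CF$ gives $x_1^2 = x_0^{-1}w_1 x_0$, i.e. $x_0 x_1^2 x_0^{-1} = w_1$, and more generally conjugation by powers of $x_0$ relates $x_{2k}^2$ to $x_{2k+2}^2$, the relevant move is: there is $c_i\in\CF$ (a product of $w_j$'s and their conjugates) with $c_i^{-1}x_{2i}^2 c_i = x_{2i-2}^2$, or an analogous relation directly expressing $x_{2i-2}^2$ in terms of $x_{2i}^2$ and elements of $\CF$. Iterating $i$ times brings us down to $x_0^2$.

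The main obstacle I anticipate is producing the explicit conjugating elements inside $\CF$ that realize the downward index shift $x_{2i}^2\rightsquigarrow x_{2i-2}^2$ while staying in $\CF$ (not merely in $F$) — the upward direction $x_{2k}^2\mapsto x_{2k+2}^2$ by conjugation by $x_0^2$ is easy but $x_0^2\notin\CF$, so one cannot simply invert it. The resolution is that one does not need a single $\CF$-conjugator: it is enough that $x_{2i-2}^2$ be expressible as a word in $x_{2i}^2$ and elements of $\CF$. For this I would revisit the identities $w_0=x_0^2(x_1x_2)x_2^{-2}$, $w_1=x_0^2x_2^2x_0^{-2}$, $w_2=(x_1x_2)^2x_2^{-2}$, $w_3=x_2^2(x_3x_4)x_4^{-2}$ from Proposition \ref{lemmaH} and their shifts $\varphi^k(w_j)\in\CF$; applying $\varphi^{2i-2}$ to $w_1=x_0^2x_2^2x_0^{-2}$ gives $\varphi^{2i-2}(w_1)=x_{2i-2}^2 x_{2i}^2 x_{2i-2}^{-2}\in\CF$, whence $x_{2i-2}^2=\varphi^{2i-2}(w_1)\cdot x_{2i}^2{}^{-1}\cdot x_{2i-2}^2\cdot x_{2i}^2$ — not yet a clean expression, so instead I would use that $\varphi^{2i-2}(w_1)=x_{2i-2}^2\,x_{2i}^2\,x_{2i-2}^{-2}$ rearranges to express $x_{2i-2}^2$ in terms of $x_{2i}^2$ and $\CF$ only after also invoking $\varphi^{2i-2}(w_3)$ or $\varphi^{2i-2}(w_2)$ to cancel the stray $x_{2i-2}^{\pm2}$; a short case-chase with two or three of the shifted generators closes this. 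Once $x_0^2\in\langle x_{2i}^2,\CF\rangle$ is established, the two inclusions combine to give $\langle x_{2i}^2,\CF\rangle=M_0$ for all $i\geq 0$, which is the claim.
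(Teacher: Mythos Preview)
Your overall strategy is the same as the paper's --- prove the chain of equalities by showing both $R_{2i+2}\subseteq R_{2i}$ and $R_{2i-2}\subseteq R_{2i}$, where $R_{2i}:=\langle x_{2i}^2,\CF\rangle$ --- and your argument for $R_{2i}\subseteq M_0$ is fine. The gap is in the reverse inclusion: you correctly see that it comes down to producing $x_{2i-2}^2$ inside $R_{2i}$, but your concrete attempt does not work, and the hand-wave at the end is not a proof.

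The identity $\varphi^{2i-2}(w_1)=x_{2i-2}^2\,x_{2i}^2\,x_{2i-2}^{-2}$ is correct, but it only says that $x_{2i-2}^2$ conjugates $x_{2i}^2$ into an element of $\CF$; since both $x_{2i}^2$ and $\varphi^{2i-2}(w_1)$ are already in $R_{2i}$, this is vacuous for extracting $x_{2i-2}^2$. Your ``rearranged'' equation still has $x_{2i-2}^2$ on both sides, and invoking $\varphi^{2i-2}(w_3)$ gives nothing new (note $w_3=\varphi^2(w_0)$), while $\varphi^{2i-2}(w_2)$ only produces $(x_{2i-1}x_{2i})^2$ in $R_{2i}$, not $x_{2i-1}x_{2i}$ itself. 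The missing ingredient is a way to get the \emph{odd--even product} $x_{2i-1}x_{2i}$ (not its square) into $R_{2i}$; once you have that, the identity $\varphi^{2i-2}(w_0)=x_{2i-2}^2\,x_{2i-1}\,x_{2i}^{-1}$ immediately yields
\[
x_{2i-2}^2=\varphi^{2i-2}(w_0)\,x_{2i}^2\,(x_{2i-1}x_{2i})^{-1}\in R_{2i}.
\]
The paper obtains $x_{2i-1}x_{2i}$ by a conjugation inside $\CF$ that you did not find: first $x_{2i+1}x_{2i+2}\in R_{2i}$ (apply $\varphi^{2i}$ to $x_1x_2\in M_0$), and then the key computation
\[
w_1\,(x_{2i+1}x_{2i+2})\,w_1^{-1}=x_{2i-1}x_{2i}\qquad (i\geq 1),
\]
with $w_1=x_0x_1^2x_0^{-1}\in\CF$. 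This single conjugation by a fixed element of $\CF$ is what shifts the indices \emph{down} by two and closes the argument; your shifted relations $\varphi^k(w_j)$ alone, without such a conjugation, do not suffice.
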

\begin{proof}
Denote by $R_{2i}:=\langle x_{2i}^2,\CF\rangle$. Since $\varphi(\CF)\subset \CF$, it holds $\varphi^{2i}(R_0)=\varphi^{2i}(\langle x_{0}^2,\CF\rangle)=\langle x_{2i}^2,\varphi^{2i}(\CF)\rangle\subset R_{2i}$. As $x_1x_2$, $x_2^2\in R_0$, we have $x_{2i+1}x_{2i+2}$, $x_{2i+2}^2\in R_{2i}$. In particular, $R_{2i+2}\leq R_{2i}$. 
We want to prove that the converse inclusion holds. 
First, notice that $w_1x_{2i+1}x_{2i+2}w_1^{-1}=x_{2i-1}x_{2i}\in R_{2i}$ for all $i\geq 1$,
where $w_1=x_0x_1^2x_0^{-1}$.
Therefore, we have $\varphi^{2i-2}(w_0)x_{2i}^2(x_{2i-1}x_{2i})^{-1}=x_{2i-2}^2\in R_{2i}$.
This means that $R_{2i-2}\leq R_{2i}$ and we are done.
\end{proof}

\begin{lemma}\label{lemmaH1} 
For any $g=x_ix_j\in K_{(2,2)}\setminus M_0$, the subgroup $\langle g,M_0\rangle$ is equal to $K_{(2,2)}$.
\end{lemma}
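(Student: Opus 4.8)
The plan is to analyze which positive length-$2$ elements $x_ix_j$ (with $i\le j$) actually lie in $K_{(2,2)}\setminus M_0$, and then to show that adjoining any one of them generates everything. First I would note that by Proposition~\ref{lemmaH}, $M_0$ already contains all $x_{2k}^2$ and all $x_{2k+1}x_{2k+2}$. The positive elements of $K_{(2,2)}$ of normal-form length $2$ are of the form $x_ix_j$ with $i\le j$ and an even number of $x_0^{\pm1}$'s; combined with the parity constraint coming from $\pi$ (the sum of ``charges'' at $0$ and $1$ must be even), one checks that the length-$2$ positive elements in $K_{(2,2)}$ are exactly: $x_{2k}^2$, $x_{2k+1}^2$, $x_{2k}x_{2k+2}$, $x_{2k+1}x_{2k+3}$, $x_{2k}x_{2k+1}$ after reindexing, etc.\ — the precise bookkeeping I would do carefully, but the point is that $g=x_ix_j\in K_{(2,2)}\setminus M_0$ forces $g$ to be of a ``wrong parity'' type, the two basic prototypes being $g=x_1^2$ (an odd square) and $g=x_0x_1$ (or more generally $x_{2k}x_{2k+1}$, a ``straddling'' pair). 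So the proof reduces to finitely many cases up to applying the shift $\varphi$ and conjugation.

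Next, for each prototype $g$ I would produce, inside $\langle g, M_0\rangle$, one new generator that lets me bootstrap to all of $K_{(2,2)}=\langle x_0^2, x_2^2, x_1x_2\rangle$ (equivalently $\langle x_{2k}^2, x_{2k+1}x_{2k+2}\rangle_k$, Proposition~\ref{lemmaH}). The key observation is that $M_0$ is closed under the shift $\varphi$ composed appropriately and under conjugation by the generators $w_i$ of $\CF$ and by $x_0^2$; so once $\langle g, M_0\rangle$ contains, say, $x_1^2$, I can conjugate and shift to get all $x_{2k+1}^2$ as well, and then together with the $x_{2k}^2$ and $x_{2k+1}x_{2k+2}$ already present I would check directly (using the relations $x_nx_k=x_kx_{n+1}$ for $k<n$) that $\langle x_0^2, x_1^2, x_1x_2\rangle$ — hence $\langle x_0, x_1^2, \dots\rangle$ after noting $x_0^2\cdot(x_1x_2)\cdot x_2^{-2}$-type manipulations — recovers $x_0$ itself or at least all the standard generators of $K_{(2,2)}$, forcing $\langle g,M_0\rangle\supseteq K_{(2,2)}$. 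For the ``straddling'' prototype $g=x_0x_1$: since $x_0^2, x_1x_2\in M_0$, the element $(x_0x_1)^{-1}x_0^2 = x_1^{-1}x_0$ lies in $\langle g,M_0\rangle$, and $x_1^{-1}x_0\cdot(x_1x_2) = x_1^{-1}x_0x_1x_2 = x_1^{-1}x_0x_1x_2$; using $x_0x_1 = x_1x_2x_0^{-1}\cdot\text{(stuff)}$ I would extract $x_0$ or $x_1^2$ and again collapse onto $K_{(2,2)}$.

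A cleaner organizing principle, which I would prefer to use if the case analysis gets unwieldy: since $M_0$ is a proper subgroup with $\pi(M_0)=2\IZ\oplus2\IZ=\pi(K_{(2,2)})$ and $M_0\supsetneq\CF$, and since (by Lemma~\ref{M0even}) $M_0$ is stable in a strong sense, the obstruction to an element $g$ lying in $M_0$ can be detected by the core/closure machinery of Definition~\ref{defNR}: $g\notin M_0$ means $g\notin\mathrm{Cl}(M_0)$, i.e.\ $g$ cannot be labelled compatibly with the labelling in Figure~\ref{labelsGENH}. I would use this to pin down, for each length-$2$ positive $g\notin M_0$, a specific ``forbidden identification'' that $g$ forces on the core of $\langle g, M_0\rangle$, and show that this identification already forces the core of $\langle g,M_0\rangle$ to be the (one-vertex-type) core of $K_{(2,2)}$; since $K_{(2,2)}$ is itself closed (it equals its own core's accepted set, being a copy of $F$ of finite index), this yields $\langle g, M_0\rangle = K_{(2,2)}$.

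The main obstacle I anticipate is the case enumeration: correctly listing all positive length-$2$ elements of $K_{(2,2)}$ and separating those in $M_0$ from those not in $M_0$ requires care with the normal form and with the relations $x_nx_k=x_kx_{n+1}$ ($k<n$), because e.g.\ $x_0x_2, x_1x_3$ must be recognized as elements of $M_0$ while $x_1^2, x_0x_1$ must not, and some superficially different expressions coincide after reduction. Once the list is correct, each individual ``generate $K_{(2,2)}$'' step is a short explicit computation with the $x_n$-relations, or a one-line core argument; so I expect the length of the proof to be dominated by this bookkeeping and by reducing the general $g=x_ix_j$ to the two or three prototypes via $\varphi$ and conjugation by elements of $M_0$.
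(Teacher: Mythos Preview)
Your overall plan---reduce to a base case like $g=x_1^2$ by case analysis and explicit manipulations with the relations $x_nx_k=x_kx_{n+1}$---is exactly the paper's strategy, and your instinct that the bookkeeping is the main obstacle is right. But the bookkeeping in your proposal is already incorrect in ways that would derail the proof.

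First, your list of prototypes is wrong. The element $x_0x_1$ is \emph{not} in $K_{(2,2)}$ at all: $\log_2(x_0x_1)'(0)=1\notin 2\IZ$. More generally, $x_0x_j\notin K_{(2,2)}$ for any $j\ge 1$, so the only length-$2$ positive elements of $K_{(2,2)}$ are $x_0^2$ (already in $M_0$) and $x_ix_j$ with $1\le i\le j$. Second, your claim that $x_1x_3\in M_0$ is false; in fact $x_1x_3=x_2x_1$ and $\langle x_1x_3,M_0\rangle=K_{(2,2)}$ (this is one of the cases you must handle). Third, two prototypes are not enough: the paper needs seven cases---odd squares $x_{2i+1}^2$, ``straddling'' pairs $x_{2i}x_{2i+1}$, distinct even pairs $x_{2i}x_{2j}$, mixed pairs $x_{2i}x_{2j+1}$ and $x_{2i+1}x_{2j}$, and distinct odd pairs $x_{2i+1}x_{2j+1}$---each reduced by different explicit manipulations (not just ``shift and conjugate'') to the base case $x_1^2$.

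Finally, your proposed ``cleaner'' core/closure argument has a genuine gap: showing that the core of $\langle g,M_0\rangle$ coincides with the core of $K_{(2,2)}$ only gives $\mathrm{Cl}(\langle g,M_0\rangle)=\mathrm{Cl}(K_{(2,2)})=K_{(2,2)}$, i.e.\ that the \emph{closure} of $\langle g,M_0\rangle$ is $K_{(2,2)}$. You would still need to know that $\langle g,M_0\rangle$ is closed, which is precisely what you don't know a priori. The core machinery is useful in this paper only for proving \emph{non}-membership (as in Lemma~\ref{non-max-H}), not for proving that a subgroup equals $K_{(2,2)}$.
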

\begin{proof}
We note that since $g=x_ix_j\in K_{(2,2)}\setminus M_0$, then both $i$ and $j$ are different from $0$.\\
For the sake of clarity we divide the proof in seven cases depending on the form of $g$.

\noindent
\textbf{Case 1}: $g=x_1^2$.\\
Since $M_0\leq H_1:=\langle x_0^2, x_1^2,\CF\rangle$, 
by the proof of Proposition \ref{lemmaH}, 
we already know that $x_{2k}^2, x_{2k+1}x_{2k+2}\in M_0\leq H_1$.
We also have 
\begin{align*}
&x_0^{-2k}x_1^2x_0^{2k}=x_{1+2k}^2\in H_1\qquad k=0, 1, \ldots\\
&x_1^{-2}w_2x_2^2=x_3x_2\in H_1\\
&x_2x_3=x_2^2(x_3x_2)^{-1} x_3^{2}\in H_1\\
&x_0^{-2k}x_2x_3x_0^{2k}=x_{2+2k}x_{3+2k}\in H_1\qquad k=0, 1, \ldots \\
&(x_2x_3)x_3^{-2}(x_3x_4)=x_2x_4\in H_1\\
&x_0^{-2k}x_2x_4x_0^{2k}=x_{2+2k}x_{4+2k}\in H_1\\
&x_1x_3=x_2x_1=w_0^{-1}x_0^2x_1^2\in H_1\\
&x_0^{-2k}x_1x_3x_0^{2k}=x_{1+2k}x_{3+2k}\in H_1
\end{align*}
Since the elements $\{x_{1+2k}x_{3+2k}, x_{2+2k}x_{4+2k}, x_{2k}^2, x_{2k+1}^2, x_{2k+1}x_{2k+2}, x_{2k+2}x_{2k+3}\}_{k\geq 0}$ generate $K_{(2,2)}$ we are done.


\noindent
\textbf{Case 2}:  $g= x_{2i+1}^2$ for any $i\geq1$.\\
For any $i\geq 1$, it holds $x_0^{2}x_{2i+1}^2x_0^{-2}=x_{2i-1}^2$.
Therefore, the claim follows by 
Case 1 and an inductive argument.


\noindent
\textbf{Case 3}:  $g=x_{2i}x_{2i+1}$ for any $i\geq1$.\\
Assume first that $i=1$. We have 
\begin{align*}
&x_0^{-2}w_0x_2x_3=x_1x_3=x_2x_1\in \langle x_{2}x_{3},M_0\rangle\\
&x_0^{-2}w_0x_2x_1=x_1^2\in \langle x_{2}x_{3},M_0\rangle
\end{align*}
where 
$w_0=x_0^2x_1x_2^{-1}$.
In particular, by Case 1 we have $K_{(2,2)}=\langle  x_{1}^2,M_0\rangle \leq \langle x_{2}x_{3},M_0\rangle\leq K_{(2,2)}$ and we are done. 

If $i\geq 2$, the claim follows by induction from the equality 
$x_0^{2}x_{2i}x_{2i+1}x_0^{-2}=x_{2(i-1)}x_{2(i-1)+1}$.

\noindent
\textbf{Case 4}:  $g= x_{2i}x_{2j}$ for any $j\geq i+1\geq 1$, $i\geq 1$.\\
Suppose first that $j=i+1$. 
As $x_0^{2}(x_{2i}x_{2i+2})x_0^{-2}=x_{2(i-1)}x_{2i}$ for $i\geq 2$, we may assume also that $i=1$.
We know from Proposition \ref{lemmaH} that $x_1x_2\in M_0$ and, thus, $x_1^2=(x_1x_2)^2(x_2x_4)^{-1}\in \langle x_{2}x_{4},M_0\rangle$.
By Case 1 we get $\langle x_{2i}x_{2j},M_0\rangle =K_{(2,2)}$.

If $j\geq i+2$, the claim follows by induction from the equality $x_{2i}^{2}(x_{2i}x_{2j})x_{2i}^{-2}=x_{2i}x_{2(j-1)}\in \langle x_{2i}x_{2j},M_0\rangle$ (we recall that $x_{2i}^2\in M_0$ for all $i\geq 0$ by Proposition \ref{lemmaH}).


\noindent
\textbf{Case 5}:  $g=x_{2i}x_{2j+1}$ for any $j\geq i+1\geq 1$, $i\geq 1$.\\
Since $x_{2j}^{-2}(x_{2i}x_{2j+1})x_{2i}^{-2}=x_{2j}^{-2}(x_{2j}x_{2i})x_{2i}^{-2}=x_{2j}^{-1}x_{2i}^{-1}\in \langle x_{2i}x_{2j+1},M_0\rangle$, we have $x_{2i}x_{2j}\in \langle x_{2i}x_{2j+1},M_0\rangle$ and the claim follows from Case 4.


\noindent
\textbf{Case 6}:  $g=x_{2i+1}x_{2j}$ for any $j> i+1$, $i\geq 0$.\\
Since $x_{2i+2}^{2}(x_{2i+1}x_{2i+2})^{-1}(x_{2i+1}x_{2j})=x_{2(i+1)}x_{2j}$ the claim follows from Case 4.

\noindent
\textbf{Case 7}: $g=x_{2i+1}x_{2j+1}$, for any $j\geq i$, $i\geq 0$. \\
When $j=i$ the claim is precisely the content of Case 2. \\
Suppose that $j\geq i+1$.
We note that $x_{2i+2}^{2}(x_{2i+1}x_{2i+2})^{-1}(x_{2i+1}x_{2j+1})=x_{2i+2}x_{2j+1}$.
Now when $j=i+1$ the claim follows from 
Case 3. When $j\geq i+2$ it follows from
Case 5.

\end{proof}
The next result is not needed for the proof of the maximality of $M_0$ in $K_{(2,2)}$, but it will come in handy in 
 the proof of the maximality of the subgroup $M_1$.
\begin{lemma}\label{lemma-zero-j}
For any $j\geq 1$, it holds $\langle x_{0}x_{j},M_0\rangle = K_{(1,2)}$.
\end{lemma}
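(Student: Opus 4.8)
The plan is to show $\langle x_0 x_j, M_0\rangle = K_{(1,2)}$ for all $j \geq 1$ by a two-sided inclusion argument. For the inclusion $\langle x_0 x_j, M_0\rangle \subseteq K_{(1,2)}$, one checks that $x_0 x_j$ lies in $K_{(1,2)}$ (since $\log_2(x_0x_j)'(0) = 1$ when... wait, actually $x_0$ contributes $1$ to the slope at $0$, so $x_0x_j \notin K_{(1,2)}$ if $j\ge 1$). Let me reconsider: for $j\ge 1$, $x_j$ is trivial near $0$, so $\log_2(x_0x_j)'(0)=1$, meaning $x_0x_j \in K_{(1,2)}$ indeed has $\log_2 f'(0)$ free and we need $\log_2 f'(1) \in 2\IZ$. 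Since $x_0, x_j$ have $\log_2 f'(1) = 0$ for $j\ge 1$ (and $=-1$ only in the degenerate reading), one verifies $x_0 x_j \in K_{(1,2)}$; combined with $M_0 \subseteq K_{(2,2)} \subseteq K_{(1,2)}$, this gives one inclusion.

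For the reverse inclusion $K_{(1,2)} \subseteq \langle x_0 x_j, M_0\rangle$, the plan is to recall from Section \ref{sec4} that $K_{(1,2)}$ is generated by $x_0 x_2$ and $x_1 x_2$. Since $x_1 x_2 \in M_0$ by Proposition \ref{lemmaH}, it remains only to produce $x_0 x_2$ inside $\langle x_0 x_j, M_0\rangle$. The idea is to use the elements of $\CS_{M_0} = \{x_{2k}^2, x_{2k+1}x_{2k+2}\}$ together with the relations $x_n x_k = x_k x_{n+1}$ for $k < n$ to manipulate $x_0 x_j$ into $x_0 x_2$. Concretely, conjugating $x_0 x_j$ by suitable elements of $M_0$ (which is normalized-ish by the relevant $x_{2k}^2$'s) should shift the second index; and one can also multiply by $x_1 x_2, x_2^2 \in M_0$ on the appropriate side to cancel and rebuild. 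I would split into the cases $j$ even and $j$ odd, and within each do a short induction on $j$: the base cases $j=2$ (trivial, $x_0x_2$ itself) and $j=1$ or $j=3$ handled directly, with the inductive step given by an identity of the shape $x_{2k}^{2}\,(x_0 x_j)\,x_{2k}^{-2} = x_0 x_{j\mp 2}$ or $(x_0x_j)(x_{j-1}x_j)^{-1}(\ldots) = x_0 x_{j'}$, using that $x_{2k}^2 \in M_0$.

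The main obstacle I expect is the bookkeeping in the inductive step: making sure that the conjugators used to decrease $j$ genuinely lie in $M_0$ (not merely in $K_{(2,2)}$ or $K_{(1,2)}$), and that the index-shifting identities are valid in $F$ — this requires careful application of the defining relations $x_n x_k = x_k x_{n+1}$, keeping track of which generators have even versus odd subscript. Once the right identity is found (analogous to the case-by-case reductions in the proof of Lemma \ref{lemmaH1}), the argument closes quickly: $x_0 x_2 \in \langle x_0 x_j, M_0\rangle$ together with $x_1 x_2 \in M_0$ yields $K_{(1,2)} = \langle x_0x_2, x_1x_2\rangle \subseteq \langle x_0 x_j, M_0\rangle$, completing the proof.
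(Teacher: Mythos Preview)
Your reduction step is the same as the paper's: conjugating by $x_0^{\pm 2}\in M_0$ gives $x_0^2(x_0x_j)x_0^{-2}=x_0x_{j-2}$ for $j\ge 3$, so everything comes down to $j\in\{1,2\}$. (Note that only $k=0$ gives a clean identity of the shape you wrote; for $k\ge 1$ the conjugate $x_{2k}^2(x_0x_j)x_{2k}^{-2}$ does not simplify to $x_0x_{j\mp 2}$.)

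The genuine gap is the base case $j=1$, which you wave off as ``handled directly''. Your target is $x_0x_2\in\langle x_0x_1,M_0\rangle$; but observe that $(x_0x_1)^{-1}(x_0x_2)=x_1^{-1}x_2=x_1^{-2}(x_1x_2)$, so $x_0x_2\in\langle x_0x_1,M_0\rangle$ is \emph{equivalent} to $x_1^2\in\langle x_0x_1,M_0\rangle$. This is not a one-line identity in $M_0$; it requires using $x_0x_1$ at least twice. The paper does exactly this: it computes $x_0^{-2}(x_0x_1)^2=x_1x_3=x_2x_1$ and then $x_1^2=(x_1x_2)x_2^{-2}(x_2x_1)$. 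Having produced $x_1^2$, the paper does \emph{not} proceed by writing down $x_0x_2$; instead it invokes Lemma~\ref{lemmaH1} to get $K_{(2,2)}\le\langle x_0x_1,M_0\rangle$, and then concludes by the index-two inclusion $K_{(2,2)}<\langle x_0x_1,M_0\rangle\le K_{(1,2)}$. The case $j=2$ is treated analogously, producing $x_2x_4$ and then $x_1^2$ via $w_2$. So the paper leans on Lemma~\ref{lemmaH1} and the index argument where you hoped for a direct manipulation; your plan can be completed, but only by inserting precisely this computation.

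A minor point: your first paragraph is muddled about which endpoint $K_{(1,2)}$ constrains. By definition $K_{(1,2)}=\{f:\log_2 f'(1)\in 2\IZ\}$; since $\pi(x_0)=(1,-1)$ and $\pi(x_j)=(0,-1)$ for $j\ge 1$, one has $\pi(x_0x_j)=(1,-2)$, so $x_0x_j\in K_{(1,2)}$ immediately.
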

\begin{proof}
As $x_0^{-2}x_0x_jx_0^{-2}=x_0x_{j-2}$ for any $j\geq 2$, it is enough to consider the cases $j=1$ and $j=2$.

Assume $j=1$. Then, we have
\begin{align*}
&x_0^{-2}(x_0x_1)^2=x_1x_3=x_2x_1\in \langle x_0x_1, M_0\rangle\\
&x_1^2=(x_1x_2)x_2^{-2}(x_2x_1)\in \langle x_0x_1, M_0\rangle
\end{align*}
so by Lemma \ref{lemmaH1} we see that $K_{(2,2)}=\langle x_0^2,x_1^2, \CF\rangle <  \langle x_0x_1, M_0\rangle\leq K_{(1,2)}$. Now $|K_{(1,2)}:K_{(2,2)}|=2$ and, thus, $\langle x_0x_1, M_0\rangle =K_{(1,2)}$.

Similarly, for $j=2$, first we observe that $x_2x_4=x_3x_2=x_0^{-2}(x_0x_2)^2\in \langle x_0x_2, M_0\rangle$. 
Recall that $w_2=x_1^2x_3x_2^{-1}$.
Then, $x_1^2=w_2(x_2x_4)(x_3x_4)^{-1}\in \langle x_0x_1, M_0\rangle$ and, therefore, by Lemma \ref{lemmaH1} we have $K_{(2,2)}=\langle x_0^2,x_1^2, M_0\rangle <  \langle x_0x_2, M_0\rangle \leq K_{(1,2)}$. Since $\langle x_0x_1, K_{(2,2)}\rangle=K_{(1,2)}$, we are done.
\end{proof}
 
In the next definition and lemma we collect some notions   and  results from \cite[Section 3]{GS2}.
\begin{definition}
Let  $w=x_{i_1}\cdots x_{i_n}$ be a positive normal form of $F$, then
\begin{itemize}
\item a letter $x_i$ is said to \textbf{skip over} $w$ if $x_iw=wx_{i+n}$.
\item $w$ is a \textbf{block}  if contains at least two distinct letters and $x_{i_1+1}$ skips over $B$. 
\item $w$ is a \textbf{minimal block} if $w'=x_{i_2}\cdots x_{i_n}$ is not a block.
\item for any $k\in\IN$, the element $w''=x_{i_1+k}\cdots x_{i_n+k}$ is said to be a \textbf{translation} of $w$. 
\end{itemize}  
\end{definition}
\begin{lemma}\cite[Lemma 3.6, Lemma 3.9, remark in proof of Lemma 3.11]{GS2} \label{lemma-block-prop}
Let  $B=x_{i_1}\cdots x_{i_n}$ be a positive normal form. 
Then
\begin{enumerate}
\item a letter $x_i$ skips over $B$ if and only if for all  $j=1, \ldots , n$ we have $i_j<i+j-1$;
\item if $x_i$ skips over $B$, then $x_k$ skips over $w$ for all $k>i$.
\end{enumerate}
%
Moreover, if  $B=x_{i_1}\cdots x_{i_n}$ is a block, then
\begin{enumerate}
\item[(3)]  every translation $B'$ of $B$  is a block; 
\item[(4)] for every $j\neq i_1$ we have $x_j^{-1}B=B'x_r^{-1}$, where $B'$ is a translation of $B$ and $r\in\IN$; 
\item[(5)] let $w=w_1Bw_2$ be a positive normal form, where $B$ is a block and $w_1$, $w_2$ are some possibly empty words; then either $x_j^{-1}w$ is shorter than $w$ or $x_j^{-1}w=w_1'B'w_2'$ for some $w_1'$, $w_2'$, $B'$ such that $|w_1'|=|w_1|$, $|w_2'|=|w_2|+1$, $B'$ is a translation of $B$; 
\item[(6)] If $B$ is a minimal block, then there exists a $j\in\{2,\ldots , n\}$ such that $i_j=i_1+j-1$.
\end{enumerate}
\end{lemma}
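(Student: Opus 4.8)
The plan is to prove the six assertions in the order stated, with part~(1) carrying most of the weight. For~(1) I would induct on $n=|B|$, using only the defining relation in the form $x_mx_k=x_kx_{m+1}$ for $k<m$. The generator $x_i$ slides past the first letter $x_{i_1}$ exactly when $i_1<i$, and in doing so it becomes $x_{i+1}$; iterating, once it has slid past $x_{i_1},\dots,x_{i_{j-1}}$ it has turned into $x_{i+j-1}$, so it can continue past $x_{i_j}$ precisely when $i_j<i+j-1$. Thus if $i_j<i+j-1$ for every $j$ we obtain $x_iB=Bx_{i+n}$, so $x_i$ skips over $B$; conversely, if one of these inequalities fails, uniqueness of normal forms in $F$ shows the normal form of $x_iB$ is not $Bx_{i+n}$. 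Part~(2) is then immediate: $i_j<i+j-1$ and $k>i$ give $i_j<k+j-1$ for all $j$.

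For the block assertions, note that, by~(1), ``$B=x_{i_1}\cdots x_{i_n}$ is a block'' means $i_j<i_1+j$ for all $j$ together with $B$ having at least two distinct letters. Part~(3) is then clear, since a translation shifts every $i_\ell$ by a fixed amount, preserving both the inequalities $i_\ell<i_1+\ell$ and the count of distinct letters. For~(4) I would split on the sign of $j-i_1$. If $j>i_1$, then $x_j$ skips over $B$ by the block property and~(2), whence $x_j^{-1}B=Bx_{j+n}^{-1}$, of the required form with $B'=B$ and $r=j+n$. If $j<i_1$, then $j<i_1\le i_\ell$ for every $\ell$, so each move $x_j^{-1}x_{i_\ell}=x_{i_\ell+1}x_j^{-1}$ is valid and sliding $x_j^{-1}$ through $B$ gives $x_j^{-1}B=B^{+}x_j^{-1}$, where $B^{+}$ is the translation of $B$ by $1$; so $B'=B^{+}$ and $r=j$.

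Part~(5) is obtained by sliding $x_j^{-1}$ from the left through $w=w_1Bw_2$. Going through $w_1$, at each letter either the two indices disagree, in which case $x_j^{-1}$ passes with its index changing by $0$ or $1$ and $w_1$ replaced by a word of the same length, or they agree, producing a cancellation and hence $|x_j^{-1}w|<|w|$. If no cancellation happens in $w_1$, the surviving $x_{j'}^{-1}$ meets $B$; if $j'=i_1$ it cancels and again the length drops, and otherwise~(4) rewrites $x_{j'}^{-1}B$ as $B'x_r^{-1}$ with $B'$ a translation of $B$, after which $x_r^{-1}$ is absorbed at the front of $w_2$ to produce $w_2'$ with $|w_2'|=|w_2|+1$ (any further cancellation there only shortens $w$). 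For~(6), suppose $B$ is a minimal block yet $i_j\neq i_1+j-1$ for all $j\in\{2,\dots,n\}$; together with $i_j\le i_1+j-1$ this forces $i_j\le i_1+j-2$, and then, using $i_1\le i_2$, the criterion of~(1) gives that $x_{i_2+1}$ skips over $x_{i_2}\cdots x_{i_n}$. If that tail has two distinct letters it is therefore a block, contradicting minimality; and if the tail is a single repeated letter one checks directly from the block inequalities that $i_2=i_1+1$, so $j=2$ already works.

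The step I expect to be most delicate is the index bookkeeping in~(4) and~(5): one must check that every index met while pushing an inverse generator through a block stays strictly on the appropriate side of that generator's (increasing) index, so that the block is merely translated by $0$ or $1$ and is not otherwise distorted, and that the only alternative to this clean behaviour is an honest cancellation that shortens the word. Everything else reduces, fairly mechanically, to the inequality criterion established in part~(1), which is therefore the heart of the argument.
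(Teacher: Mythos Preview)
The paper does not supply its own proof of this lemma: it is quoted from \cite{GS2} (Lemmas~3.6, 3.9, and a remark in the proof of Lemma~3.11 there), so there is nothing in the present paper to compare against. Your direct argument is correct and is the standard one---the skipping criterion in~(1) is obtained by sliding $x_i$ past the letters of $B$ one at a time using $x_ix_{i_j}=x_{i_j}x_{i+1}$, and the remaining parts follow from~(1) together with the two rewriting rules $x_j^{-1}x_m=x_mx_{j+1}^{-1}$ (for $j>m$) and $x_j^{-1}x_m=x_{m+1}x_j^{-1}$ (for $j<m$).

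One small clarification in your treatment of~(6): in the closing sentence you say that when the tail $x_{i_2}\cdots x_{i_n}$ is a single repeated letter, ``$j=2$ already works''. Under your standing assumption that $i_j\neq i_1+j-1$ for every $j\ge 2$, deducing $i_2=i_1+1$ is already a contradiction (it says precisely $i_2=i_1+2-1$), so this case simply does not arise; the whole argument is by contradiction. This is only a matter of phrasing---the logic is sound.
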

Observe that for a normal form $B=x_{i_1}\cdots x_{i_n}$ being a block amounts to having $x_{i_1}\neq x_{i_n}$ and
that $x_{i_1+1}$ skips over $B$, \cite[Remark 3.8]{GS2}.\\
We will need the following   lemma in the proof of Theorem \ref{thmM1}. 
\begin{lemma}
 \label{positivity}
For any $g\in 
 K_{(2,2)}\setminus M_0$, the double coset $M_0gM_0$ contains a positive element $w$ such that for every $w'\in M_0gM_0$, it holds $|w|\leq |w'|$. Moreover, $w$ may be chosen such that it also does not contain any block. 
\end{lemma}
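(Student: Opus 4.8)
The plan is to mimic the argument of Golan and Sapir for the maximality of the oriented subgroup (the relevant combinatorics of normal forms and blocks has already been recorded in Lemma \ref{lemma-block-prop}). Throughout, the crucial feature of $M_0$ is that by Proposition \ref{lemmaH} it is generated by the \emph{positive} elements $\CS_{M_0}=\{x_{2k}^2,\,x_{2k+1}x_{2k+2}\;|\;k\geq 0\}$, and that $\CF$, hence $M_0$, is invariant under the reflection $\sigma$ and satisfies $\varphi(\CF)\subseteq\CF$ (Lemma \ref{lemmashift}), so that $M_0$ contains positive elements, and translates of such, supported on arbitrarily high indices.

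First I would show that $M_0gM_0$ contains a positive element $w_1$ with $|w_1|\leq|g|$, by induction on the number of negative letters in the reduced normal form $g=pq^{-1}$, $p,q\in F_+$. If $g$ is not positive, let $x_j^{-1}$ be the last letter of its reduced form; multiply $g$ on the right by $x_j^2\in M_0$ when $j$ is even and by $x_jx_{j+1}\in M_0$ when $j$ is odd (both lie in $M_0$ by Proposition \ref{lemmaH}). Using the relations $x_a^{-1}x_b=x_{b+1}x_a^{-1}$ for $a<b$ and $x_a^{-1}x_b=x_bx_{a+1}^{-1}$ for $b<a$ to put the product back into normal form, one checks that the outcome lies in $M_0gM_0$ and has a reduced form with strictly fewer negative letters and length at most $|g|$; the parity of $j$ is chosen exactly so that the substitution stays inside $M_0$, which is consistent since elements of $K_{(2,2)}$ have even normal-form length and an even number of occurrences of $x_0^{\pm1}$. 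Iterating produces a positive $w_1\in M_0gM_0$ with $|w_1|\leq|g|$; applying this to an element of $M_0gM_0$ of minimal length (which exists, lengths being non-negative integers) yields a positive $w$ of minimal length in $M_0gM_0$.

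Next I would pick, among the minimal-length positive elements of $M_0gM_0$, one with the fewest blocks, and show this number is $0$. If such a $w$ contained a block $B$, write $w=uBv$ with $u,v$ possibly empty positive words; since translates of $B$ are again blocks and, by parts (4) and (5) of Lemma \ref{lemma-block-prop}, multiplication by a single generator $x_i^{\pm1}$ either strictly shortens $w$ or merely replaces $B$ by a translate while leaving $|u|$ unchanged, one can multiply $w$ on the appropriate side by a suitable translate of an element of $\CS_{M_0}$ (or of its inverse) to obtain an element of $M_0wM_0=M_0gM_0$ that is either strictly shorter, contradicting minimality, or of the same length but with strictly fewer blocks, contradicting the choice of $w$. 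This is exactly the reasoning of \cite[Section 3]{GS2}, the only difference being that for $M_0$ the available positive building blocks are the $x_{2k}^2$ and the $x_{2k+1}x_{2k+2}$ rather than all of the $x_i$; hence $w$ contains no block, as required.

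The main obstacle will be the length bookkeeping in both steps: because $M_0$ does not contain all the generators $x_i$, the cancellations have to be performed through the parity-dependent substitutions above, and one must check, using the commutation relations of $F$, that removing a negative letter (or a block) never forces the positive part of the normal form to grow. The block-elimination step is the more delicate instance of this phenomenon and is where the precise statements in Lemma \ref{lemma-block-prop} are needed.
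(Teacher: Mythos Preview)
Your first step—making a minimal-length representative positive by right-multiplying by $x_j^2$ (for $j$ even) or $x_jx_{j+1}$ (for $j$ odd)—is exactly the paper's argument and is correct.

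The second step has a real gap. The dichotomy you claim, ``either strictly shorter or strictly fewer blocks'', is not what Lemma~\ref{lemma-block-prop}(5) delivers: multiplying $w=z_1Bz_2$ on the left by a single $x_j^{-1}$ either shortens $w$ or produces $z_1'B'z_2'$ where $B'$ is a \emph{translate} of $B$, so the block count does not drop. The quantity that actually decreases under the paper's manoeuvre is $|z_1|$, not the number of blocks: one peels off the first letter $x_j$ of $z_1$ using $x_j^{-2}$ or $(x_jx_{j+1})^{-1}$ (according to parity) and applies (5) to the leftover letter, pushing the block one step to the left. Iterating brings the block to the front, i.e.\ $w=Bz_2$ with $B=x_{i_1}\cdots x_{i_n}$ a minimal block.

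At this point a genuine parity obstruction appears that your sketch does not address. If $i_1$ is odd, then $(x_{i_1}x_{i_1+1})^{-1}\in M_0$ and Lemma~\ref{lemma-block-prop}(6) forces a cancellation, contradicting minimality of $|w|$; this is the Golan--Sapir step. But if $i_1$ is even, $x_{i_1}x_{i_1+1}\notin M_0$, and no single element of $\CS_{M_0}$ obviously shortens $w$ or destroys $B$. The paper resolves this by a further case analysis on $i_2,i_3$, exploiting that $B$ is a \emph{minimal} block and $|w|$ is minimal: one first forces $i_2=i_1+1$; then either $i_3=i_1+1$, in which case the tailor-made element $x_{i_1}^{-2}(x_{i_1-1}x_{i_1}^{-1})\in M_0$ produces a strictly shorter word, or $i_3=i_1+2$, in which case $x_{i_2}\cdots x_{i_n}$ is itself a block, contradicting minimality of $B$. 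This case analysis is the missing ingredient; without it the even-$i_1$ case does not close, and your ``fewest blocks'' framing cannot substitute for it.
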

\begin{proof}
First we show that there is a positive element of minimal length in the double coset $M_0gM_0$.
Take $w\in M_0gM_0$ of minimal length.
If it is positive, we are done. Otherwise, 
find its normal form: $w=x_{0}^{a_0}\cdots x_{n}^{a_n}x_{n}^{-b_n}\cdots x_{0}^{-b_0}$. The last non-zero factor is $x_{i_0}^{-1}$. If $i_0\in 2\IN_0$, take the element $wx_{i_0}^2\in M_0gM_0$, otherwise  take $wx_{i_0}x_{i_0+1}\in M_0gM_0$. In both cases, the new element admits a normal form of the same length, but with less negative factors. The claim follows by iteration.

Now we prove that $w$ does not contain blocks.
Let $w$ an element in the double coset $M_0gM_0$ of minimal length. By the previous discussion we may assume that $w$ is in $F_+$. 
Suppose that the normal form of $w$ contains a block, that is $w=z_1Bz_2$, with $B$ being a minimal block.

We now show that we can replace $w$ with another element in $M_0gM_0$ of the same length and such that $w'=B'z_2'$, with $B'$ being a translation of $B$. 
If $z_1=\emptyset$, there is nothing to do.
If $z_1$ is non-empty, then $z_1=x_jz''_1$. 
First, we observe that if $j=0$, then actually $z_1=x_0^{2k}\tilde{w}$. However, this is in contradiction with the minimality of the length of $w$ because $x_0^{-2}w$ is shorter than $w$ and still in $M_0gM_0$. Therefore, $j$ is necessarily different from $0$.
Now if $j$ is odd, consider $z'':=(x_jx_{j+1})^{-1}w=x_{j+1}^{-1}z''_1Bz_2$ (recall that $x_jx_{j+1}\in M_0$).
By Lemma \ref{lemma-block-prop} (5) and the minimality of $w$ we get $z''=z_1'B'z_2'$, where $|z_1'|=|z_1''|=|z_1|-1$, $|z_2'|=|z_2|+1$, $B'$ is a translation of $B$. By iteration we may assume that $z_1$ is empty. If $j$ is even, consider 
$x_j^{-2}w=x_{j}^{-1}z''_1Bz_2$ (note that $x_j^{2}\in M_0$) and argue as before.

By the previous discussion we may assume that $w=Bz_2$, where $B=x_{i_1}\cdots x_{i_n}$. 
There are two cases depending on whether $i_1$ is odd or even. In the first case consider $t_1=(x_{i_1}x_{i_1+1})^{-1}w=x_{i_1+1}^{-1}x_{i_2}\cdots x_{i_n}$.
By Lemma \ref{lemma-block-prop} (6) 
 there exists a $j$ such that $i_j=i_1+j-1$. This means that $x_{i_1+1}^{-1}$
cancel the first occurrence of $x_{i_j}$ in $t$. This is in contradiction with our hypothesis of $w$ being of minimal length and we are done.
We want to show that the second case cannot occur. Since $w$ is of minimal length, $i_2\geq i_1+1$ (if $i_2=i_1$, then the element $x_{i_1}^{-2}w\in M_0gM_0$ is shorter than $w$).
As $B$ is a block $i_2<i_1+2$ and thus $i_2=i_1+1$.
Similarly $i_3\geq i_2=i_1+1$ and $i_3<i_1+3$. 
Therefore, we have two sub-cases: $i_3=i_2=i_1+1$ and $i_3=i_1+2$.
In the first sub-case 
the element $x_{i_1}^{-2}(x_{i_1-1}x_{i_1}^{-1})w=x_{i_1}^{-2}(x_{i_1-1}x_{i_1}^{-1})Bz_2=x_{i_1}^{-2}x_{i_1-1}x_{i_1+1}^2x_{i_4}\cdots x_{i_n}z_2=x_{i_1}^{-2}x_{i_1}^2 x_{i_1-1}x_{i_4}\cdots x_{i_n}z_2=x_{i_1-1}x_{i_4}\cdots x_{i_n}z_2\in M_0gM_0$ 
 is shorter than $w$. This is absurd.
 The second sub-case is impossible as well since we assumed that $B$ is a minimal block, while $\tilde{B}:=x_{i_2}\cdots x_{i_n}$ is a block. Indeed, set $i'_j:=i_{j+1}$ for $j=1, \ldots , n-1$ and consider $\tilde{B}=x_{i'_1}\cdots x_{i'_{n-1}}$. We only have to check that $i'_j<i'_1+j$ for all $j=1, \ldots , n-1$.
By definition we have $i'_j:=i_{j+1}<i_1+j+1=i_2+j=i'_1+j$. 
\end{proof}
We are finally in a position to prove 
Theorem \ref{thmM1} for $M_0$.  
\begin{proof}[Proof of Theorem \ref{thmM1} for $M_0$]
Let $g\in K_{(2,2)}\setminus M_0$. We need to show that $\langle g, M_0\rangle =K_{(2,2)}$.
Note that $\langle g, M_0\rangle =\langle g', M_0\rangle$ for any $g'\in M_0 gM_0$.
By Lemma \ref{positivity} 
  we may hence 
  assume that $g$ is positive, 
   does not contain any block and is an element of minimal length in $M_0gM_0$.

We  give a proof by induction on the length of the normal form of $g$ (which is even because $g$ is in $K_{(2,2)}$).
If the length is $2$, then the claim is precisely the content of Lemma \ref{lemmaH1}.
 
Suppose that the length is bigger than $2$. 
 We have $g=w'x_j^k$ with $j\in \IN$ and $w'=x_{i_1}\cdots x_{i_m}$ is either empty or its last letter is not $x_j$. If $w'$ is empty (in this case this implies that $k\in 2\IN$, $j\in 2\IN_0+1$), then $x_j^{-k}(x_j x_{j+1}) x_j^k=x_jx_{j+1+k}\in \langle M_0, x_j^k\rangle$ and by Lemma \ref{lemmaH1}
 we are done.

Suppose that $w'$ is non-empty and let $m=|w'|$. 
Now we have two cases: (1) $j$ is odd; (2) $j$ is even and $k=1$. Without loss of generality, we may suppose that $j\geq m=|w'|$ (it suffices to replace $g$ by $x_0^{-2l}gx_0^{2l}$ with $l$ big enough). In this case it holds $x_{j-m}w'=w'x_j$ if $j\geq m$ (\cite[Formula ($\star$) in proof of  Theorem 3.12]{GS2}).
For case (1), there are two sub-cases: $k$ and $m$ are even, $k$ and $m$ are odd.
If $k$ and $m$ are even take the element
\begin{align*}
g^{-1}(x_{j-m}x_{j-m+1})g& = x_j^{-k}w'^{-1}	x_{j-m}x_{j-m+1}w'x_j^k	\\
&= x_j^{-k}w'^{-1}w'x_{j}x_{j+1}x_j^k\\
&= x_j^{-k}x_{j}x_{j+1}x_j^k\\
&= x_{j}x_{j+1+k} \in \langle g, M_0\rangle
\end{align*}
which contains $K_{(2,2)}$ by 
Lemma \ref{lemmaH1}.\\
If $k$ and $m$ are odd take the element 
\begin{align*}
g^{-1}x_{j-m}^2g& = x_j^{-k}w'^{-1}	x_{j-m}^2w'x_j^k	\\
&= x_j^{-k}w'^{-1}w'x_j^2x_j^k\\
&=x_{j}^2 \in \langle g, M_0\rangle
\end{align*}
For some $l\in \IN$, we have $x_0^{2l}x_j^2x_0^{-2l}=x_1^2$. The claim now follows from Lemma \ref{lemmaH1}.

For case (2), we observe that $m$ is odd and consider the element 
\begin{align*}
g^{-1}(x_{j-m}x_{j-m+1})g&=x_j^{-1}w'^{-1}(x_{j-m}x_{j-m+1})w'x_j\\
&=x_j^{-1}w'^{-1}w'x_jx_{j+1}x_j\\
&=x_{j+1}x_j=x_jx_{j+2}\in M_0gM_0
\end{align*}
Now the claim follows from Lemma \ref{lemmaH1}.

\end{proof}

\begin{corollary}\label{cor41}
The subgroup $\theta^{-1}(M_0)$ is a maximal infinite index subgroup of $F$.
\end{corollary}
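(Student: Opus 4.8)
The plan is to transport the conclusion of Theorem~\ref{thmM1} through the isomorphism $\theta$, so that no genuinely new work is required. Recall from Section~\ref{sec4} that $\theta\colon F\to F$ is a monomorphism whose image is exactly $K_{(2,2)}$; hence $\theta$ restricts to a group isomorphism $F\xrightarrow{\ \sim\ }K_{(2,2)}$. Any group isomorphism carries the lattice of subgroups of the source bijectively and order-isomorphically onto that of the target, and preserves indices; in particular it sends maximal subgroups to maximal subgroups. The corollary will then follow by applying this to $M_0$.

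Concretely, I would first invoke Theorem~\ref{thmM1} with $i=0$: for every $g\in K_{(2,2)}\setminus M_0$ we have $\langle M_0,g\rangle=K_{(2,2)}$, and $M_0$ is a proper subgroup of $K_{(2,2)}$ by Lemma~\ref{non-max-H}, so $M_0$ is a maximal subgroup of $K_{(2,2)}$. Now suppose $\theta^{-1}(M_0)\le H\le F$ with $\theta^{-1}(M_0)\neq H$. Applying $\theta$ gives $M_0\le\theta(H)\le\theta(F)=K_{(2,2)}$ with $M_0\neq\theta(H)$, so maximality of $M_0$ in $K_{(2,2)}$ forces $\theta(H)=K_{(2,2)}$, whence $H=\theta^{-1}(K_{(2,2)})=F$. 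Thus $\theta^{-1}(M_0)$ is a maximal subgroup of $F$.

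It remains to check that $\theta^{-1}(M_0)$ has infinite index. Since $\theta$ is an isomorphism onto $K_{(2,2)}$, one has $[F:\theta^{-1}(M_0)]=[K_{(2,2)}:M_0]$, which is infinite by Theorem~\ref{thmM1}. (Alternatively one may avoid the index clause of Theorem~\ref{thmM1} and argue via the abelianisation: from $\CF\le M_0\le K_{(2,2)}$ and $\pi(\CF)=\pi(K_{(2,2)})=2\IZ\oplus 2\IZ$, see Proposition~\ref{propKCF}, we get $\pi(M_0)=2\IZ\oplus 2\IZ$, while $M_0\neq K_{(2,2)}$ by Lemma~\ref{non-max-H}; Lemma~\ref{Lemmamax} then yields $[F:M_0]=\infty$, and since $[F:M_0]=[F:K_{(2,2)}]\,[K_{(2,2)}:M_0]=4\,[K_{(2,2)}:M_0]$ this is equivalent to $[F:\theta^{-1}(M_0)]=\infty$.) There is no real obstacle here: the statement is a formal consequence of Theorem~\ref{thmM1} plus the fact that $\theta$ is an isomorphism onto $K_{(2,2)}$. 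The only point requiring a little care is to keep the two ambient groups straight — $M_0$ is maximal \emph{inside $K_{(2,2)}$}, not a priori inside $F$ — which is precisely why the argument is phrased through $\theta$ rather than by comparing $M_0$ with subgroups of $F$ directly.
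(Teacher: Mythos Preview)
Your argument is correct and is exactly the reasoning the paper intends: the corollary is stated without proof because it follows immediately from Theorem~\ref{thmM1} via the isomorphism $\theta\colon F\to K_{(2,2)}$, and you have simply written out that transport-of-structure step in full.
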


\begin{remark}
In the proof of Lemma \ref{non-max-H} it was shown that $Cl(M_0)$ is a proper subgroup of $K_{(2,2)}$ (since $x_1^2\not\in Cl(M_0)$). 
As $M_0\leq Cl(M_0)<K_{(2,2)}$ (the second inclusion follows from \cite[Lemma 20]{NR}), it follows that the subgroup $M_0$ is closed, that is $Cl(M_0)=M_0$.
\end{remark}

We now consider two other infinite index subgroups of $K_{(2,2)}$ containing $\CF$: $M_1:=\langle x_1^2, \CF\rangle$ and $M_2:=\langle x_0x_1x_2x_0^{-3}, \CF\rangle$. Note that $\sigma(x_1^2)=x_0x_1x_2x_0^{-3}$. Clearly, $M_1$ is isomorphic with $M_2$ thanks to $\sigma\upharpoonright_{K_{(2,2)}}: K_{(2,2)}\to K_{(2,2)}$, and $M_1$ is a maximal subgroup if and only if $M_2$ is a maximal subgroup because $\sigma(M_1)=M_2$.

We begin with a couple of lemmas that allow us to understand better the subgroups $M_1$ and $M_2$. 
 \begin{lemma}\label{lemma-M}
The subgroup $M_1$ contains the subset
$$
\CS_{M_1}:=\{x_{2k+1}^2, x_{2k+2}x_{2k+3}\; | \; k=0,1,\ldots\}\; .
$$
In particular, $\varphi(M_0)=\langle \CS_{M_1}\rangle$ is contained in $M_1$.
\end{lemma}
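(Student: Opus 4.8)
The plan is to obtain the lemma as a formal consequence of the shift homomorphism $\varphi$ together with the explicit generating set for $M_0$ found in Proposition \ref{lemmaH}. Recall that $\varphi$ maps $x_i$ to $x_{i+1}$ for every $i\geq 0$, and that, by Lemma \ref{lemmashift}, $\varphi(\CF)\subseteq\CF$.

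First I would use the elementary fact that a group homomorphism carries the subgroup generated by a set onto the subgroup generated by its image. Applying this to $M_0=\langle x_0^2,\CF\rangle$ gives
\[
\varphi(M_0)=\langle\varphi(x_0^2),\varphi(\CF)\rangle=\langle x_1^2,\varphi(\CF)\rangle\subseteq\langle x_1^2,\CF\rangle=M_1 ,
\]
where the inclusion is exactly the content of $\varphi(\CF)\subseteq\CF$. This already yields $\varphi(M_0)\subseteq M_1$, i.e.\ the ``in particular'' part of the statement, once $\varphi(M_0)$ has been identified with $\langle\CS_{M_1}\rangle$.

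Next I would carry out that identification. By the proof of Proposition \ref{lemmaH}, $M_0=\langle\CS_{M_0}\rangle$ with $\CS_{M_0}=\{x_{2k}^2,\,x_{2k+1}x_{2k+2}\mid k\geq 0\}$. Applying $\varphi$ term by term, $\varphi(x_{2k}^2)=x_{2k+1}^2$ and $\varphi(x_{2k+1}x_{2k+2})=x_{2k+2}x_{2k+3}$, so $\varphi(\CS_{M_0})=\CS_{M_1}$ and hence $\varphi(M_0)=\langle\varphi(\CS_{M_0})\rangle=\langle\CS_{M_1}\rangle$. Combining this with the previous step gives $\CS_{M_1}\subseteq\langle\CS_{M_1}\rangle=\varphi(M_0)\subseteq M_1$, which proves both assertions of the lemma.

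I do not expect any real obstacle here: the statement follows directly from Lemma \ref{lemmashift} and the generating set computed in Proposition \ref{lemmaH}. The only point needing a moment's care is to check that the indices in $\varphi(\CS_{M_0})$ match precisely the definition of $\CS_{M_1}$, which is immediate from $\varphi(x_i)=x_{i+1}$.
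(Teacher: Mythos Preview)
Your argument is correct. It differs from the paper's proof in logical order: the paper shows $\CS_{M_1}\subseteq M_1$ by redoing, inside $M_1$, the four explicit computations from the proof of Proposition~\ref{lemmaH} shifted by one (namely $x_1^{-2}\varphi(w_1)x_1^{2}=x_3^{2}$, $x_1^{-2k}x_3^{2}x_1^{2k}=x_{3+2k}^{2}$, $x_1^{-2}\varphi(w_0)x_3^{2}=x_2x_3$, $x_1^{-2k}x_2x_3x_1^{2k}=x_{2+2k}x_{3+2k}$). You instead obtain $\varphi(M_0)\subseteq M_1$ in one line from the definition $M_0=\langle x_0^{2},\CF\rangle$ together with $\varphi(\CF)\subseteq\CF$, and then import the identification $\varphi(M_0)=\langle\CS_{M_1}\rangle$ from Proposition~\ref{lemmaH}. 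Your route is tidier and avoids repeating calculations; the paper's route has the minor advantage of being self-contained in that it only needs $\varphi(w_0),\varphi(w_1)\in\CF$ rather than the full equality $M_0=\langle\CS_{M_0}\rangle$.
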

\begin{proof}
The claim follows by easy computations
\begin{align*}
&x_1^{-2}\varphi(w_1)x_1^2=x_3^2\in M_1\\
&x_1^{-2k}x_3^2x_1^{2k}=x_{3+2k}^2\in M_1\\
&x_1^{-2}\varphi(w_0)x_3^2=x_2x_3\in M_1\\
&x_1^{-2k}x_2x_3x_1^{2k}=x_{2+2k}x_{3+2k}\in M_1
\end{align*}
where 
$w_0=x_0^2x_1x_2^{-1}$,
$w_1=x_0x_1^2x_0^{-1}$.
\end{proof}

\begin{lemma}
The subgroups $M_1$ and $M_2$ are proper distinct subgroups of $K_{(2,2)}$. 
\end{lemma}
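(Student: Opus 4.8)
The plan is to prove three separate assertions: that $M_1$ is a proper subgroup of $K_{(2,2)}$, that $M_2$ is a proper subgroup of $K_{(2,2)}$, and that $M_1 \neq M_2$. Since $\sigma$ restricts to an automorphism of $K_{(2,2)}$ carrying $M_1$ to $M_2$, properness of $M_2$ will follow immediately from properness of $M_1$, so the real work is in the other two assertions.

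First I would show $M_1 \neq K_{(2,2)}$ by exhibiting an element of $K_{(2,2)}$ not in $M_1$. The natural candidate is $x_0^2$. To see $x_0^2 \notin M_1$ I would use the closure machinery: it suffices to show $x_0^2 \notin \mathrm{Cl}(M_1)$, i.e.\ that the reduced diagram of $x_0^2$ admits no labelling compatible with the core of $M_1$. By Lemma \ref{lemma-M} we have $\varphi(M_0) = \langle \CS_{M_1}\rangle \subseteq M_1$, and in fact $M_1 = \langle x_1^2, \CF\rangle$; combining the generators $\CS_{M_1} = \{x_{2k+1}^2, x_{2k+2}x_{2k+3}\}$ with the generators $w_0, w_1, w_2, w_3$ of $\CF$, one computes the core of $M_1$ and checks that the root-region of $x_0^2$ (a left edge $x_0$ emanating from the top root) cannot be matched. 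Concretely, every generator of $M_1$ acts on binary expansions in a way that preserves the parity/colour structure relevant near $0$: $\CF$ preserves each $S_i$ by Theorem \ref{theo-stab}, and $x_1^2$ fixes a neighbourhood of $0$ pointwise, hence also preserves each $S_i$ near $0$; but $x_0^2(t) = .00\dots$ shifts a neighbourhood of $0$ and does \emph{not} preserve $\omega$ modulo $3$ on $S_0$ (it changes the length parity relative to colour exactly as in the $l>0$ discussion of Lemma \ref{lemma3}). Thus $M_1 \subseteq \mathrm{Stab}(S_0) = \CF$ near $0$ is too strong a claim, but the weaker statement $M_1 \leq \mathrm{Stab}(S_0)$ fails for $x_0^2$ and this is exactly what rules $x_0^2$ out of $M_1$. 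Either the colouring argument or the explicit core-labelling argument (paralleling Lemma \ref{non-max-H} and Figure \ref{labelsGENH}) works; I would present the core-labelling version for uniformity with the $M_0$ case.

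Next, for $M_1 \neq M_2$, I would show $x_1^2 \notin M_2$. Since $M_2 = \sigma(M_1)$ and $\sigma(x_1^2) = x_0x_1x_2x_0^{-3}$, this is equivalent to $x_0x_1x_2x_0^{-3} \notin M_1$; equivalently, applying $\sigma$, to showing $x_1^2 \notin M_1 \iff$ no, rather: $x_1^2 \in M_2$ would force $\sigma(x_1^2) = x_0x_1x_2x_0^{-3} \in M_1$. So I would show $x_0x_1x_2x_0^{-3} \notin M_1$ by the same closure/colouring method: $x_0 x_1 x_2 x_0^{-3}$ has nontrivial derivative $2^{-2}$ at $0$ (it lies in $K_{(2,2)}$ but not in any subgroup fixing a neighbourhood of $0$), and near $0$ it shifts expansions by an even amount while near $1$ it is the reflected square; checking against the core of $M_1$ — whose generators all fix a neighbourhood of $0$ except for the $\CF$-part, which preserves $\omega \bmod 3$ and the length-parity on $S_i$ — one finds the forced labels clash. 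Symmetrically (or by applying the already-established asymmetry), $x_0x_1x_2x_0^{-3} \notin M_1$ gives $M_1 \neq M_2$.

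The main obstacle will be the bookkeeping in the core computation: one must correctly draw the reduced diagrams of the generators $w_0, \dots, w_3$ of $\CF$ together with $x_1^2$ (for $M_1$) and track the vertex identifications under rules (1)–(2) of Definition \ref{defNR} far enough to see that the candidate element forces an inconsistent labelling. This is the analogue of Lemma \ref{non-max-H} and Figure \ref{labelsGENH} and is essentially a finite check, but it is delicate because $\CF$ has four generators and the cores are larger than in the $M_0$ case. An alternative that sidesteps the diagrammatics: use Theorem \ref{theo-stab} plus a direct action-on-dyadics argument showing $M_1 \subseteq \mathrm{Stab}(S_1)$ (every generator of $M_1$ stabilises $S_1$ — for the $\CF$-generators by Theorem \ref{theo-stab}, for $x_1^2$ because it fixes $[0,1/2]$ pointwise hence stabilises the subset of $S_1$ starting with $0$, and maps words starting with $1$ among themselves while preserving length and the alternating sum) whereas $x_0^2 \notin \mathrm{Stab}(S_1)$ and $x_0x_1x_2x_0^{-3} \notin \mathrm{Stab}(S_1)$ since both move a neighbourhood of $0$; I would likely present this cleaner stabiliser argument as the main proof, noting it is in the spirit of Theorem \ref{theo-stab}.
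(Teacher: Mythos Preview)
Your core-labelling approach (showing $x_0^2 \notin \mathrm{Cl}(M_1)$ and $x_1^2 \notin \mathrm{Cl}(M_2)$) is exactly what the paper does, and if carried out it works; the paper performs the explicit vertex-labelling computations in Figures~\ref{labelsGENM} and~\ref{labelsGENM2}. The paper also shows $x_0^2 \notin M_2$, which distinguishes $M_2$ from $M_0$ as well as from $M_1$.

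However, the alternative argument you say you would \emph{likely present as the main proof} has a fatal gap. The claim $M_1 \subseteq \mathrm{Stab}(S_1)$ is false. By Theorem~\ref{theo-stab} we have $\mathrm{Stab}(S_1)=\CF$, so $M_1 \subseteq \mathrm{Stab}(S_1)$ would force $x_1^2 \in \CF$ and hence $M_1=\CF$, making the whole construction vacuous. Concretely, $x_1^2$ does \emph{not} preserve length and alternating sum on words beginning with~$1$: for instance $.1010\in S_1$ (length $4$, $\omega=-2\equiv 1$), but $x_1^2(.1010)=x_1(.1100)=.11100$, which has odd length and lies in no $S_i$. So your assertion that $x_1^2$ ``maps words starting with $1$ among themselves while preserving length and the alternating sum'' is simply wrong.

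The auxiliary reasoning is also unsound: you argue $x_0^2$ and $x_0x_1x_2x_0^{-3}$ fail to lie in $\mathrm{Stab}(S_1)$ because they ``move a neighbourhood of~$0$'', but that criterion is meaningless here --- the generator $w_0=x_0^2x_1x_2^{-1}\in\CF=\mathrm{Stab}(S_1)$ has derivative $4$ at~$0$ and certainly moves a neighbourhood of~$0$. There is no stabiliser shortcut available for $M_1$ precisely because $M_1$ strictly contains $\CF$; you must do the core computation (or find some other invariant of $M_1$ that is not already an invariant of $\CF$).
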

\begin{proof}
First we show that $x_0^2\not\in M_1$.
In order to do this, it suffices to show that $x_0^2$ does not admit a labelling compatible with that of the generators of $M_1$, that is $x_0^2$ does not belong to $Cl(M_1)$ (see Figure \ref{labelsGENM}).
\[
\begin{tikzpicture}[x=.75cm, y=.75cm,
    every edge/.style={
        draw,
      postaction={decorate,
                    decoration={markings}
                   }
        }
]

\node at (-1.5,-.25) {$\scalebox{1}{$x_0^2=$}$};

\draw[thick] (0,0)--(1.5,1.5)--(3,0)--(1.5,-1.5)--(0,0);
\draw[thick] (.5,.5)--(2,-1);
\draw[thick] (1,1)--(2.5,-.5);
\draw[thick] (1.5,1.5)--(1.5,2);
\draw[thick] (1.5,-1.5)--(1.5,-2);

\node at (.25,.15) {\rotatebox[origin=tr]{-135}{$\scalebox{.75}{$>$}$}};
\node at (1.25,1.15) {\rotatebox[origin=tr]{-135}{$\scalebox{.75}{$>$}$}};
\node at (.75,.65) {\rotatebox[origin=tr]{-135}{$\scalebox{.75}{$>$}$}};
 \node at (1.75,1.25) {\rotatebox[origin=tr]{-45}{$\scalebox{.75}{$>$}$}};
\node at (1.25,.75) {\rotatebox[origin=tr]{-45}{$\scalebox{.75}{$>$}$}};
\node at (.75,.25) {\rotatebox[origin=tr]{-45}{$\scalebox{.75}{$>$}$}};

\node at (1.25,-1.25) {\rotatebox[origin=tr]{135}{$\scalebox{.75}{$>$}$}};
\node at (1.75,-.75) {\rotatebox[origin=tr]{135}{$\scalebox{.75}{$>$}$}};
\node at (2.25,-.25) {\rotatebox[origin=tr]{135}{$\scalebox{.75}{$>$}$}};
\node at (2.75,-.25) {\rotatebox[origin=tr]{45}{$\scalebox{.75}{$>$}$}};
\node at (2.25,-.75) {\rotatebox[origin=tr]{45}{$\scalebox{.75}{$>$}$}};
\node at (1.75,-1.25) {\rotatebox[origin=tr]{45}{$\scalebox{.75}{$>$}$}};

  \fill (0,0)  circle[radius=1.5pt];
  \fill (1,0)  circle[radius=1.5pt];  
  \fill (2,0)  circle[radius=1.5pt];
  \fill (3,0)  circle[radius=1.5pt];

 \node[red] at (1.7,-1.6) {$\scalebox{.75}{$1$}$};
 \node[red] at (1.3,1.6) {$\scalebox{.75}{$1$}$};
 \node[red] at (-.3,0) {$\scalebox{.75}{$2$}$};
 \node[red] at (2.2,-1.2) {$\scalebox{.75}{$3$}$};
 \node[red] at (.8,1.2) {$\scalebox{.75}{$2$}$};
  \node[green] at (1.7,0) {$\scalebox{.75}{$6$}$};
  \node[red] at (3.3,0) {$\scalebox{.75}{$3$}$};
  \node[red] at (2.6,-.8) {$\scalebox{.75}{$7$}$};
 \node[red] at (.4,.8) {$\scalebox{.75}{$4$}$};
 \node[green] at (.7,0) {$\scalebox{.75}{$6$}$};

\node at (0,-1.2) {$\;$};
\end{tikzpicture}
\]
Now we show that $x_0^2$ and $x_1^2$ do not belong to $M_2$ (see Figure \ref{labelsGENM2}) and, thus, $M_2$ is different from both $M_1$ and $M_0$. 
\[
\begin{tikzpicture}[x=.75cm, y=.75cm,
    every edge/.style={
        draw,
      postaction={decorate,
                    decoration={markings}
                   }
        }
]

\node at (-1.5,-.25) {$\scalebox{1}{$x_0^2=$}$};

\draw[thick] (0,0)--(1.5,1.5)--(3,0)--(1.5,-1.5)--(0,0);
\draw[thick] (.5,.5)--(2,-1);
\draw[thick] (1,1)--(2.5,-.5);
\draw[thick] (1.5,1.5)--(1.5,2);
\draw[thick] (1.5,-1.5)--(1.5,-2);

\node at (.25,.15) {\rotatebox[origin=tr]{-135}{$\scalebox{.75}{$>$}$}};
\node at (1.25,1.15) {\rotatebox[origin=tr]{-135}{$\scalebox{.75}{$>$}$}};
\node at (.75,.65) {\rotatebox[origin=tr]{-135}{$\scalebox{.75}{$>$}$}};
 \node at (1.75,1.25) {\rotatebox[origin=tr]{-45}{$\scalebox{.75}{$>$}$}};
\node at (1.25,.75) {\rotatebox[origin=tr]{-45}{$\scalebox{.75}{$>$}$}};
\node at (.75,.25) {\rotatebox[origin=tr]{-45}{$\scalebox{.75}{$>$}$}};

\node at (1.25,-1.25) {\rotatebox[origin=tr]{135}{$\scalebox{.75}{$>$}$}};
\node at (1.75,-.75) {\rotatebox[origin=tr]{135}{$\scalebox{.75}{$>$}$}};
\node at (2.25,-.25) {\rotatebox[origin=tr]{135}{$\scalebox{.75}{$>$}$}};
\node at (2.75,-.25) {\rotatebox[origin=tr]{45}{$\scalebox{.75}{$>$}$}};
\node at (2.25,-.75) {\rotatebox[origin=tr]{45}{$\scalebox{.75}{$>$}$}};
\node at (1.75,-1.25) {\rotatebox[origin=tr]{45}{$\scalebox{.75}{$>$}$}};

  \fill (0,0)  circle[radius=1.5pt];
  \fill (1,0)  circle[radius=1.5pt];  
  \fill (2,0)  circle[radius=1.5pt];
  \fill (3,0)  circle[radius=1.5pt];

 \node[red] at (1.7,-1.6) {$\scalebox{.75}{$1$}$};
 \node[red] at (1.3,1.6) {$\scalebox{.75}{$1$}$};
 \node[red] at (-.3,0) {$\scalebox{.75}{$2$}$};
 \node[red] at (2.2,-1.2) {$\scalebox{.75}{$3$}$};
 \node[red] at (.8,1.2) {$\scalebox{.75}{$2$}$};
  \node[green] at (1.7,0) {$\scalebox{.75}{$7$}$};
  \node[red] at (3.3,0) {$\scalebox{.75}{$3$}$};
  \node[red] at (2.6,-.8) {$\scalebox{.75}{$5$}$};
 \node[red] at (.4,.8) {$\scalebox{.75}{$6$}$};
 \node[green] at (.7,0) {$\scalebox{.75}{$8$}$};

\node at (0,-1.2) {$\;$};
\end{tikzpicture}
\qquad 
\begin{tikzpicture}[x=.75cm, y=.75cm,
    every edge/.style={
        draw,
      postaction={decorate,
                    decoration={markings}
                   }
        }
]

\node at (-2.5,-.25) {$\scalebox{1}{$x_1^2=$}$};
 
\draw[thick] (1,1)--(2.5,-.5);
\draw[thick] (0,0)--(1.5,1.5)--(3,0)--(1.5,-1.5)--(0,0);
\draw[thick] (.5,.5)--(2,-1);
\draw[thick] (1,-2)--(1,-2.5);
\draw[thick] (1,2)--(1,2.5);
\draw[thick] (-1,0)--(1,2)--(1.5,1.5);
\draw[thick] (-1,0)--(1,-2)--(1.5,-1.5);
 
\node at (.75,1.65) {\rotatebox[origin=tr]{-135}{$\scalebox{.75}{$>$}$}};
\node at (1.25,1.75) {\rotatebox[origin=tr]{-45}{$\scalebox{.75}{$>$}$}};
\node at (.75,-1.75) {\rotatebox[origin=tr]{135}{$\scalebox{.75}{$>$}$}};
\node at (1.25,-1.75) {\rotatebox[origin=tr]{45}{$\scalebox{.75}{$>$}$}};
\node at (.25,.15) {\rotatebox[origin=tr]{-135}{$\scalebox{.75}{$>$}$}};
\node at (1.25,1.15) {\rotatebox[origin=tr]{-135}{$\scalebox{.75}{$>$}$}};
\node at (.75,.65) {\rotatebox[origin=tr]{-135}{$\scalebox{.75}{$>$}$}};
 \node at (1.75,1.25) {\rotatebox[origin=tr]{-45}{$\scalebox{.75}{$>$}$}};
\node at (1.25,.75) {\rotatebox[origin=tr]{-45}{$\scalebox{.75}{$>$}$}};
\node at (.75,.25) {\rotatebox[origin=tr]{-45}{$\scalebox{.75}{$>$}$}};

\node at (1.25,-1.25) {\rotatebox[origin=tr]{135}{$\scalebox{.75}{$>$}$}};
\node at (1.75,-.75) {\rotatebox[origin=tr]{135}{$\scalebox{.75}{$>$}$}};
\node at (2.25,-.25) {\rotatebox[origin=tr]{135}{$\scalebox{.75}{$>$}$}};
\node at (2.75,-.25) {\rotatebox[origin=tr]{45}{$\scalebox{.75}{$>$}$}};
\node at (2.25,-.75) {\rotatebox[origin=tr]{45}{$\scalebox{.75}{$>$}$}};
\node at (1.75,-1.25) {\rotatebox[origin=tr]{45}{$\scalebox{.75}{$>$}$}};

  \fill (-1,0)  circle[radius=1.5pt];
  \fill (0,0)  circle[radius=1.5pt];
  \fill (1,0)  circle[radius=1.5pt];  
  \fill (2,0)  circle[radius=1.5pt];
  \fill (3,0)  circle[radius=1.5pt];
 
   \node[red] at (.7,-2.2) {$\scalebox{.75}{$1$}$};
   \node[red] at (.7,2.2) {$\scalebox{.75}{$1$}$};
   \node[red] at (-1.3,0) {$\scalebox{.75}{$2$}$};

 \node[red] at (1.7,-1.7) {$\scalebox{.75}{$3$}$};
 \node[red] at (1.7,1.7) {$\scalebox{.75}{$3$}$};
 \node[red] at (-.3,0) {$\scalebox{.75}{$4$}$};
 \node[red] at (2.2,-1.2) {$\scalebox{.75}{$5$}$};
 \node[red] at (.8,1.2) {$\scalebox{.75}{$4$}$};
  \node[green] at (1.7,0) {$\scalebox{.75}{$7$}$};
  \node[red] at (3.3,0) {$\scalebox{.75}{$5$}$};
  \node[red] at (2.6,-.8) {$\scalebox{.75}{$3$}$};
 \node[red] at (.4,.8) {$\scalebox{.75}{$8$}$};
 \node[green] at (.7,0) {$\scalebox{.75}{$8$}$};

\node at (0,-1.2) {$\;$};
\end{tikzpicture}
\]
\end{proof}

 \begin{figure}
\phantom{This text will be invisible} 
\[
\begin{tikzpicture}[x=.75cm, y=.75cm,
    every edge/.style={
        draw,
      postaction={decorate,
                    decoration={markings}
                   }
        }
]

\node at (-1.5,-.25) {$\scalebox{1}{$w_0=$}$};

\draw[thick] (0,0)--(2,2)--(4,0);
\draw[thick] (0,0)--(2,-2)--(4,0);
\draw[thick] (1,0)--(1.5,0.5)--(2,0);
\draw[thick] (1.5,0.5)--(1,1);
\draw[thick] (3,0)--(1.5,1.5);
\draw[thick] (3,0)--(2.5,-.5); 
\draw[thick] (4,0)--(2,2);
\draw[thick] (4,0)--(3.5,-.5);
\draw[thick] (1,0)--(2.5,-1.5);
\draw[thick] (2,0)--(3,-1);

\draw[thick] (2,2)--(2,2.25);
\draw[thick] (2,-2)--(2,-2.25);

\node at (1.75,1.65) {\rotatebox[origin=tr]{-135}{$\scalebox{.75}{$>$}$}};
\node at (1.25,1.15) {\rotatebox[origin=tr]{-135}{$\scalebox{.75}{$>$}$}};
\node at (.75,.65) {\rotatebox[origin=tr]{-135}{$\scalebox{.75}{$>$}$}};
\node at (1.25,.75) {\rotatebox[origin=tr]{-45}{$\scalebox{.75}{$>$}$}};
\node at (2.25,1.75) {\rotatebox[origin=tr]{-45}{$\scalebox{.75}{$>$}$}};
\node at (1.75,1.25) {\rotatebox[origin=tr]{-45}{$\scalebox{.75}{$>$}$}};
\node at (1.75,.25) {\rotatebox[origin=tr]{-45}{$\scalebox{.75}{$>$}$}};
\node at (1.25,.15) {\rotatebox[origin=tr]{-135}{$\scalebox{.75}{$>$}$}};

\node at (1.75,-1.75) {\rotatebox[origin=tr]{135}{$\scalebox{.75}{$>$}$}};
\node at (2.25,-1.25) {\rotatebox[origin=tr]{135}{$\scalebox{.75}{$>$}$}};
\node at (2.25,-.25) {\rotatebox[origin=tr]{135}{$\scalebox{.75}{$>$}$}};
\node at (2.75,-.25) {\rotatebox[origin=tr]{45}{$\scalebox{.75}{$>$}$}};
\node at (3.25,-.75) {\rotatebox[origin=tr]{45}{$\scalebox{.75}{$>$}$}};
\node at (2.75,-.75) {\rotatebox[origin=tr]{135}{$\scalebox{.75}{$>$}$}};
\node at (2.75,-1.25) {\rotatebox[origin=tr]{45}{$\scalebox{.75}{$>$}$}};
\node at (2.25,-1.75) {\rotatebox[origin=tr]{45}{$\scalebox{.75}{$>$}$}};

  \fill (0,0)  circle[radius=1.5pt];
  \fill (1,0)  circle[radius=1.5pt];  
  \fill (2,0)  circle[radius=1.5pt];
  \fill (3,0)  circle[radius=1.5pt];
  \fill (4,0)  circle[radius=1.5pt];

 \node[red] at (1.8,2.1) {$\scalebox{.75}{$1$}$};
 \node[red] at (1.8,-2.1) {$\scalebox{.75}{$1$}$};
 \node[red] at (1.3,1.6) {$\scalebox{.75}{$2$}$};
 \node[red] at (-.3,0) {$\scalebox{.75}{$2$}$};
 \node[red] at (4.3,0) {$\scalebox{.75}{$3$}$};
 \node[red] at (2.7,-1.7) {$\scalebox{.75}{$3$}$};
 \node[red] at (.8,1.2) {$\scalebox{.75}{$4$}$};
  \node[red] at (1.7,0) {$\scalebox{.75}{$8$}$};
  \node[red] at (3.3,0) {$\scalebox{.75}{$6$}$};
  \node[red] at (2.5,-.8) {$\scalebox{.75}{$8$}$};
 \node[red] at (3.2,-1.2) {$\scalebox{.75}{$7$}$};
 \node[red] at (1.5,.8) {$\scalebox{.75}{$6$}$};
 \node[red] at (.7,0) {$\scalebox{.75}{$5$}$};

\node at (0,-1.2) {$\;$};
\end{tikzpicture}
\qquad 
\begin{tikzpicture}[x=.75cm, y=.75cm,
    every edge/.style={
        draw,
      postaction={decorate,
                    decoration={markings}
                   }
        }
]

\node at (-1.5,-.25) {$\scalebox{1}{$w_1=$}$};

\draw[thick] (0,0)--(2,2)--(4,0);
\draw[thick] (0,0)--(2,-2)--(4,0);

\draw[thick] (1,0)--(1.5,0.5)--(2,0);
\draw[thick] (2,0)--(3,-1);
\draw[thick] (1,0)--(.5,-.5);
\draw[thick] (1.5,.5)--(2,1)--(3,0);
\draw[thick] (3,0)--(3.5,-.5);
\draw[thick] (2,1)--(1.5,1.5);

\draw[thick] (2,2)--(2,2.25);
\draw[thick] (2,-2)--(2,-2.25);

\node at (0,-1.2) {$\;$};

\node at (1.75,1.65) {\rotatebox[origin=tr]{-135}{$\scalebox{.75}{$>$}$}};
\node at (1.75,.65) {\rotatebox[origin=tr]{-135}{$\scalebox{.75}{$>$}$}};
\node at (1.25,1.15) {\rotatebox[origin=tr]{-135}{$\scalebox{.75}{$>$}$}};
\node at (1.75,-1.75) {\rotatebox[origin=tr]{135}{$\scalebox{.75}{$>$}$}};
\node at (2.75,-.75) {\rotatebox[origin=tr]{135}{$\scalebox{.75}{$>$}$}};
\node at (3.25,-.25) {\rotatebox[origin=tr]{135}{$\scalebox{.75}{$>$}$}};
\node at (.25,-.25) {\rotatebox[origin=tr]{135}{$\scalebox{.75}{$>$}$}};
\node at (3.25,-.75) {\rotatebox[origin=tr]{45}{$\scalebox{.75}{$>$}$}};
\node at (3.75,-.25) {\rotatebox[origin=tr]{45}{$\scalebox{.75}{$>$}$}};
\node at (2.75,-.75) {\rotatebox[origin=tr]{135}{$\scalebox{.75}{$>$}$}};
\node at (.75,-.25) {\rotatebox[origin=tr]{45}{$\scalebox{.75}{$>$}$}};
\node at (2.25,-1.75) {\rotatebox[origin=tr]{45}{$\scalebox{.75}{$>$}$}};
\node at (2.25,1.75) {\rotatebox[origin=tr]{-45}{$\scalebox{.75}{$>$}$}};
\node at (2.25,.75) {\rotatebox[origin=tr]{-45}{$\scalebox{.75}{$>$}$}};
\node at (1.75,.25) {\rotatebox[origin=tr]{-45}{$\scalebox{.75}{$>$}$}};
\node at (1.25,.15) {\rotatebox[origin=tr]{-135}{$\scalebox{.75}{$>$}$}};
\node at (1.75,1.25) {\rotatebox[origin=tr]{-45}{$\scalebox{.75}{$>$}$}};

  \fill (0,0)  circle[radius=1.5pt];
  \fill (1,0)  circle[radius=1.5pt];  
  \fill (2,0)  circle[radius=1.5pt];
  \fill (3,0)  circle[radius=1.5pt];
  \fill (4,0)  circle[radius=1.5pt];

 \node[red] at (1.8,2.1) {$\scalebox{.75}{$1$}$};
 \node[red] at (1.8,-2.1) {$\scalebox{.75}{$1$}$};
 \node[red] at (1.3,1.6) {$\scalebox{.75}{$2$}$};
 \node[red] at (-.3,0) {$\scalebox{.75}{$4$}$};
 \node[red] at (4.3,0) {$\scalebox{.75}{$3$}$};
 \node[red] at (.3,-.7) {$\scalebox{.75}{$2$}$};
 \node[red] at (2.2,1.2) {$\scalebox{.75}{$6$}$};
  \node[red] at (1.7,0) {$\scalebox{.75}{$5$}$};
  \node[red] at (3.3,0) {$\scalebox{.75}{$8$}$};
  \node[red] at (3.7,-.75) {$\scalebox{.75}{$7$}$};
  \node[red] at (3.2,-1.2) {$\scalebox{.75}{$3$}$};
 \node[red] at (1.5,.8) {$\scalebox{.75}{$5$}$};
 \node[red] at (.7,0) {$\scalebox{.75}{$6$}$};
  
\end{tikzpicture}
\]

\[
\begin{tikzpicture}[x=.75cm, y=.75cm,
    every edge/.style={
        draw,
      postaction={decorate,
                    decoration={markings}
                   }
        }
]

\node at (-2.5,-.25) {$\scalebox{1}{$w_2=$}$};
 
\draw[thick] (-1,0)--(1.5,2.5)--(2,2);
\draw[thick] (-1,0)--(1.5,-2.5)--(2,-2);

\draw[thick] (0,0)--(2,2)--(4,0);
\draw[thick] (0,0)--(2,-2)--(4,0);

\draw[thick] (2,0)--(2.5,.5)--(3,0);

\draw[thick] (1,0)--(.5,.5);
\draw[thick] (2.5,.5)--(1.5,1.5);

\draw[thick] (3,0)--(3.5,-.5);
\draw[thick] (2,0)--(1.5,-.5)--(1,0);
\draw[thick] (1.5,-.5)--(2.5,-1.5);

\draw[thick] (1.5,2.5)--(1.5,2.75);
\draw[thick] (1.5,-2.5)--(1.5,-2.75);

\node at (1.75,1.65) {\rotatebox[origin=tr]{-135}{$\scalebox{.75}{$>$}$}};
\node at (2.25,.15) {\rotatebox[origin=tr]{-135}{$\scalebox{.75}{$>$}$}};
\node at (2.25,1.75) {\rotatebox[origin=tr]{-45}{$\scalebox{.75}{$>$}$}};
\node at (1.25,2.15) {\rotatebox[origin=tr]{-135}{$\scalebox{.75}{$>$}$}};
\node at (1.75,2.25) {\rotatebox[origin=tr]{-45}{$\scalebox{.75}{$>$}$}};
\node at (.75,.25) {\rotatebox[origin=tr]{-45}{$\scalebox{.75}{$>$}$}};
\node at (2.75,.25) {\rotatebox[origin=tr]{-45}{$\scalebox{.75}{$>$}$}};
\node at (.25,.15) {\rotatebox[origin=tr]{-135}{$\scalebox{.75}{$>$}$}};
\node at (1.75,1.25) {\rotatebox[origin=tr]{-45}{$\scalebox{.75}{$>$}$}};
\node at (1.25,1.15) {\rotatebox[origin=tr]{-135}{$\scalebox{.75}{$>$}$}};

\node at (1.75,-1.75) {\rotatebox[origin=tr]{135}{$\scalebox{.75}{$>$}$}};
\node at (2.25,-1.25) {\rotatebox[origin=tr]{135}{$\scalebox{.75}{$>$}$}};
\node at (3.25,-.25) {\rotatebox[origin=tr]{135}{$\scalebox{.75}{$>$}$}};
\node at (2.75,-1.25) {\rotatebox[origin=tr]{45}{$\scalebox{.75}{$>$}$}};
\node at (3.75,-.25) {\rotatebox[origin=tr]{45}{$\scalebox{.75}{$>$}$}};
\node at (1.25,-.25) {\rotatebox[origin=tr]{135}{$\scalebox{.75}{$>$}$}};
\node at (1.25,-2.25) {\rotatebox[origin=tr]{135}{$\scalebox{.75}{$>$}$}};
\node at (1.75,-2.25) {\rotatebox[origin=tr]{45}{$\scalebox{.75}{$>$}$}};
\node at (1.75,-.25) {\rotatebox[origin=tr]{45}{$\scalebox{.75}{$>$}$}};
\node at (2.25,-1.75) {\rotatebox[origin=tr]{45}{$\scalebox{.75}{$>$}$}};

  \fill (0,0)  circle[radius=1.5pt];
  \fill (1,0)  circle[radius=1.5pt];  
  \fill (2,0)  circle[radius=1.5pt];
  \fill (3,0)  circle[radius=1.5pt];
  \fill (4,0)  circle[radius=1.5pt];
  \fill (-1,0)  circle[radius=1.5pt];

 \node[red] at (1.2,2.6) {$\scalebox{.75}{$1$}$};
 \node[red] at (1.2,-2.6) {$\scalebox{.75}{$1$}$};
  \node[red] at (2.2,2.2) {$\scalebox{.75}{$3$}$};
 \node[red] at (1.3,1.7) {$\scalebox{.75}{$5$}$};
 \node[red] at (.4,.8) {$\scalebox{.75}{$6$}$};
 \node[red] at (-.3,0) {$\scalebox{.75}{$5$}$};
   \node[red] at (1.7,0) {$\scalebox{.75}{$6$}$};
  \node[red] at (3.3,0) {$\scalebox{.75}{$5$}$};
 \node[red] at (.7,0) {$\scalebox{.75}{$8$}$};
 \node[red] at (-1.4,0) {$\scalebox{.75}{$2$}$};
 \node[red] at (4.3,0) {$\scalebox{.75}{$7$}$};
 \node[red] at (2.6,.8) {$\scalebox{.75}{$5$}$};
  \node[red] at (2.7,-1.7) {$\scalebox{.75}{$7$}$};
  \node[red] at (2.2,-2.2) {$\scalebox{.75}{$3$}$};
  \node[red] at (3.7,-.75) {$\scalebox{.75}{$3$}$};
  \node[red] at (1.3,-.75) {$\scalebox{.75}{$8$}$};

\node at (0,-1.2) {$\;$};
\end{tikzpicture}
\qquad 
\begin{tikzpicture}[x=.75cm, y=.75cm,
    every edge/.style={
        draw,
      postaction={decorate,
                    decoration={markings}
                   }
        }
]

\node at (-3.5,-.25) {$\scalebox{1}{$w_3=$}$};
 
\draw[thick] (-1,0)--(1.5,2.5)--(2,2);
\draw[thick] (-1,0)--(1.5,-2.5)--(2,-2);

\draw[thick] (-2,0)--(1,3)--(1.5,2.5);
\draw[thick] (-2,0)--(1,-3)--(1.5,-2.5);

\draw[thick] (0,0)--(2,2)--(4,0);
\draw[thick] (0,0)--(2,-2)--(4,0);
\draw[thick] (1,0)--(1.5,0.5)--(2,0);
\draw[thick] (1.5,0.5)--(1,1);
\draw[thick] (3,0)--(1.5,1.5);
\draw[thick] (3,0)--(2.5,-.5); 
\draw[thick] (4,0)--(2,2);
\draw[thick] (4,0)--(3.5,-.5);
\draw[thick] (1,0)--(2.5,-1.5);
\draw[thick] (2,0)--(3,-1);

\draw[thick] (1,3)--(1,3.25);
\draw[thick] (1,-3)--(1,-3.25);

\node at (1.75,1.65) {\rotatebox[origin=tr]{-135}{$\scalebox{.75}{$>$}$}};
\node at (2.25,1.75) {\rotatebox[origin=tr]{-45}{$\scalebox{.75}{$>$}$}};
\node at (1.25,2.15) {\rotatebox[origin=tr]{-135}{$\scalebox{.75}{$>$}$}};
\node at (1.75,2.25) {\rotatebox[origin=tr]{-45}{$\scalebox{.75}{$>$}$}};
\node at (.75,2.65) {\rotatebox[origin=tr]{-135}{$\scalebox{.75}{$>$}$}};
\node at (1.25,2.75) {\rotatebox[origin=tr]{-45}{$\scalebox{.75}{$>$}$}};
\node at (.75,.65) {\rotatebox[origin=tr]{-135}{$\scalebox{.75}{$>$}$}};
\node at (1.25,.15) {\rotatebox[origin=tr]{-135}{$\scalebox{.75}{$>$}$}};
\node at (1.75,.25) {\rotatebox[origin=tr]{-45}{$\scalebox{.75}{$>$}$}};
\node at (1.25,.75) {\rotatebox[origin=tr]{-45}{$\scalebox{.75}{$>$}$}};
\node at (1.75,1.25) {\rotatebox[origin=tr]{-45}{$\scalebox{.75}{$>$}$}};
\node at (1.25,1.15) {\rotatebox[origin=tr]{-135}{$\scalebox{.75}{$>$}$}};

\node at (2.25,-.25) {\rotatebox[origin=tr]{135}{$\scalebox{.75}{$>$}$}};
\node at (1.75,-1.75) {\rotatebox[origin=tr]{135}{$\scalebox{.75}{$>$}$}};
\node at (2.25,-1.25) {\rotatebox[origin=tr]{135}{$\scalebox{.75}{$>$}$}};
\node at (2.75,-.75) {\rotatebox[origin=tr]{135}{$\scalebox{.75}{$>$}$}};
\node at (.75,-2.75) {\rotatebox[origin=tr]{135}{$\scalebox{.75}{$>$}$}};
\node at (2.75,-1.25) {\rotatebox[origin=tr]{45}{$\scalebox{.75}{$>$}$}};
\node at (3.25,-.75) {\rotatebox[origin=tr]{45}{$\scalebox{.75}{$>$}$}};
\node at (1.25,-2.25) {\rotatebox[origin=tr]{135}{$\scalebox{.75}{$>$}$}};
\node at (1.25,-2.75) {\rotatebox[origin=tr]{45}{$\scalebox{.75}{$>$}$}};
\node at (1.75,-2.25) {\rotatebox[origin=tr]{45}{$\scalebox{.75}{$>$}$}};
\node at (2.75,-.25) {\rotatebox[origin=tr]{45}{$\scalebox{.75}{$>$}$}};
\node at (2.25,-1.75) {\rotatebox[origin=tr]{45}{$\scalebox{.75}{$>$}$}};

  \fill (0,0)  circle[radius=1.5pt];
  \fill (1,0)  circle[radius=1.5pt];  
  \fill (2,0)  circle[radius=1.5pt];
  \fill (3,0)  circle[radius=1.5pt];
  \fill (4,0)  circle[radius=1.5pt];
  \fill (-1,0)  circle[radius=1.5pt];
  \fill (-2,0)  circle[radius=1.5pt];

 \node[red] at (2.1,2.2) {$\scalebox{.75}{$7$}$};
 \node[red] at (2.1,-2.3) {$\scalebox{.75}{$7$}$};
 \node[red] at (1.3,1.6) {$\scalebox{.75}{$8$}$};
 \node[red] at (-.3,0) {$\scalebox{.75}{$8$}$};
 \node[red] at (4.3,0) {$\scalebox{.75}{$3$}$};
 \node[red] at (2.7,-1.7) {$\scalebox{.75}{$3$}$};
 \node[red] at (.8,1.2) {$\scalebox{.75}{$8$}$};
  \node[red] at (1.7,0) {$\scalebox{.75}{$8$}$};
  \node[red] at (3.3,0) {$\scalebox{.75}{$6$}$};
  \node[red] at (2.5,-.8) {$\scalebox{.75}{$8$}$};
 \node[red] at (3.2,-1.2) {$\scalebox{.75}{$7$}$};
 \node[red] at (1.5,.8) {$\scalebox{.75}{$6$}$};
 \node[red] at (.7,0) {$\scalebox{.75}{$5$}$};
 \node[red] at (-1.3,0) {$\scalebox{.75}{$5$}$};
 \node[red] at (-2.3,0) {$\scalebox{.75}{$2$}$};
 \node[red] at (.7,-3.1) {$\scalebox{.75}{$1$}$};
 \node[red] at (.7,3.1) {$\scalebox{.75}{$1$}$};
 \node[red] at (1.7,2.7) {$\scalebox{.75}{$3$}$};
 \node[red] at (1.7,-2.7) {$\scalebox{.75}{$3$}$};

\node at (0,-1.2) {$\;$};

\end{tikzpicture}
\]
\[
\begin{tikzpicture}[x=.75cm, y=.75cm,
    every edge/.style={
        draw,
      postaction={decorate,
                    decoration={markings}
                   }
        }
]

\node at (-2.5,-.25) {$\scalebox{1}{$x_1^2=$}$};
 
\draw[thick] (1,1)--(2.5,-.5);
\draw[thick] (0,0)--(1.5,1.5)--(3,0)--(1.5,-1.5)--(0,0);
\draw[thick] (.5,.5)--(2,-1);
\draw[thick] (1,-2)--(1,-2.5);
\draw[thick] (1,2)--(1,2.5);
\draw[thick] (-1,0)--(1,2)--(1.5,1.5);
\draw[thick] (-1,0)--(1,-2)--(1.5,-1.5);
 
\node at (.75,1.65) {\rotatebox[origin=tr]{-135}{$\scalebox{.75}{$>$}$}};
\node at (1.25,1.75) {\rotatebox[origin=tr]{-45}{$\scalebox{.75}{$>$}$}};
\node at (.75,-1.75) {\rotatebox[origin=tr]{135}{$\scalebox{.75}{$>$}$}};
\node at (1.25,-1.75) {\rotatebox[origin=tr]{45}{$\scalebox{.75}{$>$}$}};
\node at (.25,.15) {\rotatebox[origin=tr]{-135}{$\scalebox{.75}{$>$}$}};
\node at (1.25,1.15) {\rotatebox[origin=tr]{-135}{$\scalebox{.75}{$>$}$}};
\node at (.75,.65) {\rotatebox[origin=tr]{-135}{$\scalebox{.75}{$>$}$}};
 \node at (1.75,1.25) {\rotatebox[origin=tr]{-45}{$\scalebox{.75}{$>$}$}};
\node at (1.25,.75) {\rotatebox[origin=tr]{-45}{$\scalebox{.75}{$>$}$}};
\node at (.75,.25) {\rotatebox[origin=tr]{-45}{$\scalebox{.75}{$>$}$}};

\node at (1.25,-1.25) {\rotatebox[origin=tr]{135}{$\scalebox{.75}{$>$}$}};
\node at (1.75,-.75) {\rotatebox[origin=tr]{135}{$\scalebox{.75}{$>$}$}};
\node at (2.25,-.25) {\rotatebox[origin=tr]{135}{$\scalebox{.75}{$>$}$}};
\node at (2.75,-.25) {\rotatebox[origin=tr]{45}{$\scalebox{.75}{$>$}$}};
\node at (2.25,-.75) {\rotatebox[origin=tr]{45}{$\scalebox{.75}{$>$}$}};
\node at (1.75,-1.25) {\rotatebox[origin=tr]{45}{$\scalebox{.75}{$>$}$}};

  \fill (-1,0)  circle[radius=1.5pt];
  \fill (0,0)  circle[radius=1.5pt];
  \fill (1,0)  circle[radius=1.5pt];  
  \fill (2,0)  circle[radius=1.5pt];
  \fill (3,0)  circle[radius=1.5pt];
 
   \node[red] at (.7,-2.2) {$\scalebox{.75}{$1$}$};
   \node[red] at (.7,2.2) {$\scalebox{.75}{$1$}$};
   \node[red] at (-1.3,0) {$\scalebox{.75}{$2$}$};

 \node[red] at (1.7,-1.7) {$\scalebox{.75}{$3$}$};
 \node[red] at (1.7,1.7) {$\scalebox{.75}{$3$}$};
 \node[red] at (-.3,0) {$\scalebox{.75}{$5$}$};
 \node[red] at (2.2,-1.2) {$\scalebox{.75}{$7$}$};
 \node[red] at (.8,1.2) {$\scalebox{.75}{$5$}$};
  \node[red] at (1.7,0) {$\scalebox{.75}{$5$}$};
  \node[red] at (3.3,0) {$\scalebox{.75}{$7$}$};
  \node[red] at (2.6,-.8) {$\scalebox{.75}{$3$}$};
 \node[red] at (.4,.8) {$\scalebox{.75}{$6$}$};
 \node[red] at (.7,0) {$\scalebox{.75}{$8$}$};

\node at (0,-1.2) {$\;$};
\end{tikzpicture}
\]
 \caption{The labellings of the generators of $M_1$.}\label{labelsGENM}
\end{figure}

 \begin{figure}
\phantom{This text will be invisible} 
\[
\begin{tikzpicture}[x=.75cm, y=.75cm,
    every edge/.style={
        draw,
      postaction={decorate,
                    decoration={markings}
                   }
        }
]

\node at (-1.5,-.25) {$\scalebox{1}{$w_0=$}$};

\draw[thick] (0,0)--(2,2)--(4,0);
\draw[thick] (0,0)--(2,-2)--(4,0);
\draw[thick] (1,0)--(1.5,0.5)--(2,0);
\draw[thick] (1.5,0.5)--(1,1);
\draw[thick] (3,0)--(1.5,1.5);
\draw[thick] (3,0)--(2.5,-.5); 
\draw[thick] (4,0)--(2,2);
\draw[thick] (4,0)--(3.5,-.5);
\draw[thick] (1,0)--(2.5,-1.5);
\draw[thick] (2,0)--(3,-1);

\draw[thick] (2,2)--(2,2.25);
\draw[thick] (2,-2)--(2,-2.25);

\node at (1.75,1.65) {\rotatebox[origin=tr]{-135}{$\scalebox{.75}{$>$}$}};
\node at (1.25,1.15) {\rotatebox[origin=tr]{-135}{$\scalebox{.75}{$>$}$}};
\node at (.75,.65) {\rotatebox[origin=tr]{-135}{$\scalebox{.75}{$>$}$}};
\node at (1.25,.75) {\rotatebox[origin=tr]{-45}{$\scalebox{.75}{$>$}$}};
\node at (2.25,1.75) {\rotatebox[origin=tr]{-45}{$\scalebox{.75}{$>$}$}};
\node at (1.75,1.25) {\rotatebox[origin=tr]{-45}{$\scalebox{.75}{$>$}$}};
\node at (1.75,.25) {\rotatebox[origin=tr]{-45}{$\scalebox{.75}{$>$}$}};
\node at (1.25,.15) {\rotatebox[origin=tr]{-135}{$\scalebox{.75}{$>$}$}};

\node at (1.75,-1.75) {\rotatebox[origin=tr]{135}{$\scalebox{.75}{$>$}$}};
\node at (2.25,-1.25) {\rotatebox[origin=tr]{135}{$\scalebox{.75}{$>$}$}};
\node at (2.25,-.25) {\rotatebox[origin=tr]{135}{$\scalebox{.75}{$>$}$}};
\node at (2.75,-.25) {\rotatebox[origin=tr]{45}{$\scalebox{.75}{$>$}$}};
\node at (3.25,-.75) {\rotatebox[origin=tr]{45}{$\scalebox{.75}{$>$}$}};
\node at (2.75,-.75) {\rotatebox[origin=tr]{135}{$\scalebox{.75}{$>$}$}};
\node at (2.75,-1.25) {\rotatebox[origin=tr]{45}{$\scalebox{.75}{$>$}$}};
\node at (2.25,-1.75) {\rotatebox[origin=tr]{45}{$\scalebox{.75}{$>$}$}};

  \fill (0,0)  circle[radius=1.5pt];
  \fill (1,0)  circle[radius=1.5pt];  
  \fill (2,0)  circle[radius=1.5pt];
  \fill (3,0)  circle[radius=1.5pt];
  \fill (4,0)  circle[radius=1.5pt];

 \node[red] at (1.8,2.1) {$\scalebox{.75}{$1$}$};
 \node[red] at (1.8,-2.1) {$\scalebox{.75}{$1$}$};
 \node[red] at (1.3,1.6) {$\scalebox{.75}{$2$}$};
 \node[red] at (-.3,0) {$\scalebox{.75}{$2$}$};
 \node[red] at (4.3,0) {$\scalebox{.75}{$3$}$};
 \node[red] at (2.7,-1.7) {$\scalebox{.75}{$3$}$};
 \node[red] at (.8,1.2) {$\scalebox{.75}{$6$}$};
  \node[red] at (1.7,0) {$\scalebox{.75}{$8$}$};
  \node[red] at (3.3,0) {$\scalebox{.75}{$7$}$};
  \node[red] at (2.5,-.8) {$\scalebox{.75}{$4$}$};
 \node[red] at (3.2,-1.2) {$\scalebox{.75}{$5$}$};
 \node[red] at (1.5,.8) {$\scalebox{.75}{$8$}$};
 \node[red] at (.7,0) {$\scalebox{.75}{$4$}$};

\node at (0,-1.2) {$\;$};
\end{tikzpicture}
\qquad 
\begin{tikzpicture}[x=.75cm, y=.75cm,
    every edge/.style={
        draw,
      postaction={decorate,
                    decoration={markings}
                   }
        }
]

\node at (-1.5,-.25) {$\scalebox{1}{$w_1=$}$};

\draw[thick] (0,0)--(2,2)--(4,0);
\draw[thick] (0,0)--(2,-2)--(4,0);

\draw[thick] (1,0)--(1.5,0.5)--(2,0);
\draw[thick] (2,0)--(3,-1);
\draw[thick] (1,0)--(.5,-.5);
\draw[thick] (1.5,.5)--(2,1)--(3,0);
\draw[thick] (3,0)--(3.5,-.5);
\draw[thick] (2,1)--(1.5,1.5);

\draw[thick] (2,2)--(2,2.25);
\draw[thick] (2,-2)--(2,-2.25);

\node at (0,-1.2) {$\;$};

\node at (1.75,1.65) {\rotatebox[origin=tr]{-135}{$\scalebox{.75}{$>$}$}};
\node at (1.75,.65) {\rotatebox[origin=tr]{-135}{$\scalebox{.75}{$>$}$}};
\node at (1.25,1.15) {\rotatebox[origin=tr]{-135}{$\scalebox{.75}{$>$}$}};
\node at (1.75,-1.75) {\rotatebox[origin=tr]{135}{$\scalebox{.75}{$>$}$}};
\node at (2.75,-.75) {\rotatebox[origin=tr]{135}{$\scalebox{.75}{$>$}$}};
\node at (3.25,-.25) {\rotatebox[origin=tr]{135}{$\scalebox{.75}{$>$}$}};
\node at (.25,-.25) {\rotatebox[origin=tr]{135}{$\scalebox{.75}{$>$}$}};
\node at (3.25,-.75) {\rotatebox[origin=tr]{45}{$\scalebox{.75}{$>$}$}};
\node at (3.75,-.25) {\rotatebox[origin=tr]{45}{$\scalebox{.75}{$>$}$}};
\node at (2.75,-.75) {\rotatebox[origin=tr]{135}{$\scalebox{.75}{$>$}$}};
\node at (.75,-.25) {\rotatebox[origin=tr]{45}{$\scalebox{.75}{$>$}$}};
\node at (2.25,-1.75) {\rotatebox[origin=tr]{45}{$\scalebox{.75}{$>$}$}};
\node at (2.25,1.75) {\rotatebox[origin=tr]{-45}{$\scalebox{.75}{$>$}$}};
\node at (2.25,.75) {\rotatebox[origin=tr]{-45}{$\scalebox{.75}{$>$}$}};
\node at (1.75,.25) {\rotatebox[origin=tr]{-45}{$\scalebox{.75}{$>$}$}};
\node at (1.25,.15) {\rotatebox[origin=tr]{-135}{$\scalebox{.75}{$>$}$}};
\node at (1.75,1.25) {\rotatebox[origin=tr]{-45}{$\scalebox{.75}{$>$}$}};

  \fill (0,0)  circle[radius=1.5pt];
  \fill (1,0)  circle[radius=1.5pt];  
  \fill (2,0)  circle[radius=1.5pt];
  \fill (3,0)  circle[radius=1.5pt];
  \fill (4,0)  circle[radius=1.5pt];

 \node[red] at (1.8,2.1) {$\scalebox{.75}{$1$}$};
 \node[red] at (1.8,-2.1) {$\scalebox{.75}{$1$}$};
 \node[red] at (1.3,1.6) {$\scalebox{.75}{$2$}$};
 \node[red] at (-.3,0) {$\scalebox{.75}{$6$}$};
 \node[red] at (4.3,0) {$\scalebox{.75}{$3$}$};
 \node[red] at (.3,-.7) {$\scalebox{.75}{$2$}$};
 \node[red] at (2.2,1.2) {$\scalebox{.75}{$7$}$};
  \node[red] at (1.7,0) {$\scalebox{.75}{$4$}$};
  \node[red] at (3.3,0) {$\scalebox{.75}{$4$}$};
  \node[red] at (3.7,-.75) {$\scalebox{.75}{$5$}$};
  \node[red] at (3.2,-1.2) {$\scalebox{.75}{$3$}$};
 \node[red] at (1.5,.8) {$\scalebox{.75}{$7$}$};
 \node[red] at (.7,0) {$\scalebox{.75}{$7$}$};
  
\end{tikzpicture}
\]

\[
\begin{tikzpicture}[x=.75cm, y=.75cm,
    every edge/.style={
        draw,
      postaction={decorate,
                    decoration={markings}
                   }
        }
]

\node at (-2.5,-.25) {$\scalebox{1}{$w_2=$}$};
 
\draw[thick] (-1,0)--(1.5,2.5)--(2,2);
\draw[thick] (-1,0)--(1.5,-2.5)--(2,-2);

\draw[thick] (0,0)--(2,2)--(4,0);
\draw[thick] (0,0)--(2,-2)--(4,0);

\draw[thick] (2,0)--(2.5,.5)--(3,0);

\draw[thick] (1,0)--(.5,.5);
\draw[thick] (2.5,.5)--(1.5,1.5);

\draw[thick] (3,0)--(3.5,-.5);
\draw[thick] (2,0)--(1.5,-.5)--(1,0);
\draw[thick] (1.5,-.5)--(2.5,-1.5);

\draw[thick] (1.5,2.5)--(1.5,2.75);
\draw[thick] (1.5,-2.5)--(1.5,-2.75);

\node at (1.75,1.65) {\rotatebox[origin=tr]{-135}{$\scalebox{.75}{$>$}$}};
\node at (2.25,.15) {\rotatebox[origin=tr]{-135}{$\scalebox{.75}{$>$}$}};
\node at (2.25,1.75) {\rotatebox[origin=tr]{-45}{$\scalebox{.75}{$>$}$}};
\node at (1.25,2.15) {\rotatebox[origin=tr]{-135}{$\scalebox{.75}{$>$}$}};
\node at (1.75,2.25) {\rotatebox[origin=tr]{-45}{$\scalebox{.75}{$>$}$}};
\node at (.75,.25) {\rotatebox[origin=tr]{-45}{$\scalebox{.75}{$>$}$}};
\node at (2.75,.25) {\rotatebox[origin=tr]{-45}{$\scalebox{.75}{$>$}$}};
\node at (.25,.15) {\rotatebox[origin=tr]{-135}{$\scalebox{.75}{$>$}$}};
\node at (1.75,1.25) {\rotatebox[origin=tr]{-45}{$\scalebox{.75}{$>$}$}};
\node at (1.25,1.15) {\rotatebox[origin=tr]{-135}{$\scalebox{.75}{$>$}$}};

\node at (1.75,-1.75) {\rotatebox[origin=tr]{135}{$\scalebox{.75}{$>$}$}};
\node at (2.25,-1.25) {\rotatebox[origin=tr]{135}{$\scalebox{.75}{$>$}$}};
\node at (3.25,-.25) {\rotatebox[origin=tr]{135}{$\scalebox{.75}{$>$}$}};
\node at (2.75,-1.25) {\rotatebox[origin=tr]{45}{$\scalebox{.75}{$>$}$}};
\node at (3.75,-.25) {\rotatebox[origin=tr]{45}{$\scalebox{.75}{$>$}$}};
\node at (1.25,-.25) {\rotatebox[origin=tr]{135}{$\scalebox{.75}{$>$}$}};
\node at (1.25,-2.25) {\rotatebox[origin=tr]{135}{$\scalebox{.75}{$>$}$}};
\node at (1.75,-2.25) {\rotatebox[origin=tr]{45}{$\scalebox{.75}{$>$}$}};
\node at (1.75,-.25) {\rotatebox[origin=tr]{45}{$\scalebox{.75}{$>$}$}};
\node at (2.25,-1.75) {\rotatebox[origin=tr]{45}{$\scalebox{.75}{$>$}$}};

  \fill (0,0)  circle[radius=1.5pt];
  \fill (1,0)  circle[radius=1.5pt];  
  \fill (2,0)  circle[radius=1.5pt];
  \fill (3,0)  circle[radius=1.5pt];
  \fill (4,0)  circle[radius=1.5pt];
  \fill (-1,0)  circle[radius=1.5pt];

 \node[red] at (1.2,2.6) {$\scalebox{.75}{$1$}$};
 \node[red] at (1.2,-2.6) {$\scalebox{.75}{$1$}$};
  \node[red] at (2.2,2.2) {$\scalebox{.75}{$3$}$};
 \node[red] at (1.3,1.7) {$\scalebox{.75}{$4$}$};
 \node[red] at (.4,.8) {$\scalebox{.75}{$8$}$};
 \node[red] at (-.3,0) {$\scalebox{.75}{$4$}$};
   \node[red] at (1.7,0) {$\scalebox{.75}{$7$}$};
  \node[red] at (3.3,0) {$\scalebox{.75}{$4$}$};
 \node[red] at (.7,0) {$\scalebox{.75}{$8$}$};
 \node[red] at (-1.4,0) {$\scalebox{.75}{$2$}$};
 \node[red] at (4.3,0) {$\scalebox{.75}{$5$}$};
 \node[red] at (2.6,.8) {$\scalebox{.75}{$7$}$};
  \node[red] at (2.7,-1.7) {$\scalebox{.75}{$5$}$};
  \node[red] at (2.2,-2.2) {$\scalebox{.75}{$3$}$};
  \node[red] at (3.7,-.75) {$\scalebox{.75}{$3$}$};
  \node[red] at (1.3,-.75) {$\scalebox{.75}{$4$}$};

\node at (0,-1.2) {$\;$};
\end{tikzpicture}
\qquad 
\begin{tikzpicture}[x=.75cm, y=.75cm,
    every edge/.style={
        draw,
      postaction={decorate,
                    decoration={markings}
                   }
        }
]

\node at (-3.5,-.25) {$\scalebox{1}{$w_3=$}$};
 
\draw[thick] (-1,0)--(1.5,2.5)--(2,2);
\draw[thick] (-1,0)--(1.5,-2.5)--(2,-2);

\draw[thick] (-2,0)--(1,3)--(1.5,2.5);
\draw[thick] (-2,0)--(1,-3)--(1.5,-2.5);

\draw[thick] (0,0)--(2,2)--(4,0);
\draw[thick] (0,0)--(2,-2)--(4,0);
\draw[thick] (1,0)--(1.5,0.5)--(2,0);
\draw[thick] (1.5,0.5)--(1,1);
\draw[thick] (3,0)--(1.5,1.5);
\draw[thick] (3,0)--(2.5,-.5); 
\draw[thick] (4,0)--(2,2);
\draw[thick] (4,0)--(3.5,-.5);
\draw[thick] (1,0)--(2.5,-1.5);
\draw[thick] (2,0)--(3,-1);

\draw[thick] (1,3)--(1,3.25);
\draw[thick] (1,-3)--(1,-3.25);

\node at (1.75,1.65) {\rotatebox[origin=tr]{-135}{$\scalebox{.75}{$>$}$}};
\node at (2.25,1.75) {\rotatebox[origin=tr]{-45}{$\scalebox{.75}{$>$}$}};
\node at (1.25,2.15) {\rotatebox[origin=tr]{-135}{$\scalebox{.75}{$>$}$}};
\node at (1.75,2.25) {\rotatebox[origin=tr]{-45}{$\scalebox{.75}{$>$}$}};
\node at (.75,2.65) {\rotatebox[origin=tr]{-135}{$\scalebox{.75}{$>$}$}};
\node at (1.25,2.75) {\rotatebox[origin=tr]{-45}{$\scalebox{.75}{$>$}$}};
\node at (.75,.65) {\rotatebox[origin=tr]{-135}{$\scalebox{.75}{$>$}$}};
\node at (1.25,.15) {\rotatebox[origin=tr]{-135}{$\scalebox{.75}{$>$}$}};
\node at (1.75,.25) {\rotatebox[origin=tr]{-45}{$\scalebox{.75}{$>$}$}};
\node at (1.25,.75) {\rotatebox[origin=tr]{-45}{$\scalebox{.75}{$>$}$}};
\node at (1.75,1.25) {\rotatebox[origin=tr]{-45}{$\scalebox{.75}{$>$}$}};
\node at (1.25,1.15) {\rotatebox[origin=tr]{-135}{$\scalebox{.75}{$>$}$}};

\node at (2.25,-.25) {\rotatebox[origin=tr]{135}{$\scalebox{.75}{$>$}$}};
\node at (1.75,-1.75) {\rotatebox[origin=tr]{135}{$\scalebox{.75}{$>$}$}};
\node at (2.25,-1.25) {\rotatebox[origin=tr]{135}{$\scalebox{.75}{$>$}$}};
\node at (2.75,-.75) {\rotatebox[origin=tr]{135}{$\scalebox{.75}{$>$}$}};
\node at (.75,-2.75) {\rotatebox[origin=tr]{135}{$\scalebox{.75}{$>$}$}};
\node at (2.75,-1.25) {\rotatebox[origin=tr]{45}{$\scalebox{.75}{$>$}$}};
\node at (3.25,-.75) {\rotatebox[origin=tr]{45}{$\scalebox{.75}{$>$}$}};
\node at (1.25,-2.25) {\rotatebox[origin=tr]{135}{$\scalebox{.75}{$>$}$}};
\node at (1.25,-2.75) {\rotatebox[origin=tr]{45}{$\scalebox{.75}{$>$}$}};
\node at (1.75,-2.25) {\rotatebox[origin=tr]{45}{$\scalebox{.75}{$>$}$}};
\node at (2.75,-.25) {\rotatebox[origin=tr]{45}{$\scalebox{.75}{$>$}$}};
\node at (2.25,-1.75) {\rotatebox[origin=tr]{45}{$\scalebox{.75}{$>$}$}};

  \fill (0,0)  circle[radius=1.5pt];
  \fill (1,0)  circle[radius=1.5pt];  
  \fill (2,0)  circle[radius=1.5pt];
  \fill (3,0)  circle[radius=1.5pt];
  \fill (4,0)  circle[radius=1.5pt];
  \fill (-1,0)  circle[radius=1.5pt];
  \fill (-2,0)  circle[radius=1.5pt];

 \node[red] at (2.1,2.2) {$\scalebox{.75}{$5$}$};
 \node[red] at (2.1,-2.3) {$\scalebox{.75}{$5$}$};
 \node[red] at (1.3,1.6) {$\scalebox{.75}{$4$}$};
 \node[red] at (-.3,0) {$\scalebox{.75}{$4$}$};
 \node[red] at (4.3,0) {$\scalebox{.75}{$3$}$};
 \node[red] at (2.7,-1.7) {$\scalebox{.75}{$3$}$};
 \node[red] at (.8,1.2) {$\scalebox{.75}{$8$}$};
  \node[red] at (1.7,0) {$\scalebox{.75}{$8$}$};
  \node[red] at (3.3,0) {$\scalebox{.75}{$7$}$};
  \node[red] at (2.5,-.8) {$\scalebox{.75}{$4$}$};
 \node[red] at (3.2,-1.2) {$\scalebox{.75}{$5$}$};
 \node[red] at (1.5,.8) {$\scalebox{.75}{$8$}$};
 \node[red] at (.7,0) {$\scalebox{.75}{$4$}$};
 \node[red] at (-1.3,0) {$\scalebox{.75}{$4$}$};
 \node[red] at (-2.3,0) {$\scalebox{.75}{$2$}$};
 \node[red] at (.7,-3.1) {$\scalebox{.75}{$1$}$};
 \node[red] at (.7,3.1) {$\scalebox{.75}{$1$}$};
 \node[red] at (1.7,2.7) {$\scalebox{.75}{$3$}$};
 \node[red] at (1.7,-2.7) {$\scalebox{.75}{$3$}$};

\node at (0,-1.2) {$\;$};

\end{tikzpicture}
\]
\[
\begin{tikzpicture}[x=.75cm, y=.75cm,
    every edge/.style={
        draw,
      postaction={decorate,
                    decoration={markings}
                   }
        }
]

\node at (-2.5,-.25) {$\scalebox{1}{$\sigma(x_1^2)=$}$};
 

\draw[thick] (-1,0)--(1,2)--(3,0)--(1,-2)--(-1,0);
\draw[thick] (0.5,-1.5)--(2,0)--(0.5,1.5);
\draw[thick] (-.5,-.5)--(1,1);
\draw[thick] (0,-1)--(1.5,.5);

\draw[thick] (1,-2)--(1,-2.5);
\draw[thick] (1,2)--(1,2.5);

\node at (.75,1.65) {\rotatebox[origin=tr]{-135}{$\scalebox{.75}{$>$}$}};
\node at (1.25,1.75) {\rotatebox[origin=tr]{-45}{$\scalebox{.75}{$>$}$}};
\node at (.75,-1.75) {\rotatebox[origin=tr]{135}{$\scalebox{.75}{$>$}$}};
\node at (1.25,-1.75) {\rotatebox[origin=tr]{45}{$\scalebox{.75}{$>$}$}};
\node at (.25,1.15) {\rotatebox[origin=tr]{-135}{$\scalebox{.75}{$>$}$}};
\node at (1.25,.15) {\rotatebox[origin=tr]{-135}{$\scalebox{.75}{$>$}$}};
\node at (.75,.65) {\rotatebox[origin=tr]{-135}{$\scalebox{.75}{$>$}$}};
 \node at (.75,1.25) {\rotatebox[origin=tr]{-45}{$\scalebox{.75}{$>$}$}};
\node at (1.25,.75) {\rotatebox[origin=tr]{-45}{$\scalebox{.75}{$>$}$}};
\node at (1.75,.25) {\rotatebox[origin=tr]{-45}{$\scalebox{.75}{$>$}$}};

\node at (.25,-1.25) {\rotatebox[origin=tr]{135}{$\scalebox{.75}{$>$}$}};
\node at (-.25,-.75) {\rotatebox[origin=tr]{135}{$\scalebox{.75}{$>$}$}};
\node at (-.75,-.25) {\rotatebox[origin=tr]{135}{$\scalebox{.75}{$>$}$}};
\node at (-.25,-.25) {\rotatebox[origin=tr]{45}{$\scalebox{.75}{$>$}$}};
\node at (.25,-.75) {\rotatebox[origin=tr]{45}{$\scalebox{.75}{$>$}$}};
\node at (.75,-1.25) {\rotatebox[origin=tr]{45}{$\scalebox{.75}{$>$}$}};

  \fill (-1,0)  circle[radius=1.5pt];
  \fill (0,0)  circle[radius=1.5pt];
  \fill (1,0)  circle[radius=1.5pt];  
  \fill (2,0)  circle[radius=1.5pt];
  \fill (3,0)  circle[radius=1.5pt];
 
   \node[red] at (.7,-2.2) {$\scalebox{.75}{$1$}$};
   \node[red] at (.7,2.2) {$\scalebox{.75}{$1$}$};
   \node[red] at (-1.3,0) {$\scalebox{.75}{$6$}$};

 \node[red] at (.3,-1.7) {$\scalebox{.75}{$2$}$};
 \node[red] at (.25,1.7) {$\scalebox{.75}{$2$}$};
 \node[red] at (-.3,0) {$\scalebox{.75}{$7$}$};
 \node[red] at (-.3,-1.2) {$\scalebox{.75}{$6$}$};
 \node[red] at (1.2,1.2) {$\scalebox{.75}{$7$}$};
  \node[red] at (1.7,0) {$\scalebox{.75}{$7$}$};
  \node[red] at (3.3,0) {$\scalebox{.75}{$3$}$};
  \node[red] at (-.7,-.8) {$\scalebox{.75}{$2$}$};
 \node[red] at (1.7,.8) {$\scalebox{.75}{$4$}$};
 \node[red] at (.7,0) {$\scalebox{.75}{$8$}$};

\node at (0,-1.2) {$\;$};
\end{tikzpicture}
\]
 \caption{The labellings of the generators of $M_2$.}\label{labelsGENM2}
\end{figure}

As in the case of $M_0$, we begin proving that adding a positive element of length $2$ to $M_1$ either doesn't change it (Lemma \ref{lemma-squared-bis-1}) or gives the whole $K_{(2,2)}$ (Lemma \ref{lemma-length-two}).

\begin{lemma}\label{lemma-squared-bis-1}
The groups $\langle x_{2k+1}^2,\CF\rangle$ are all equal to $M_1$.
\end{lemma}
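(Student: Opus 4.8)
The plan is to obtain this from Lemma~\ref{M0even} by transporting that result along the shift homomorphism $\varphi$, which satisfies $\varphi(x_i)=x_{i+1}$ for all $i\geq 0$ and, by Lemma~\ref{lemmashift}, maps $\CF$ into itself.

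Fix $k\geq 0$ and write $R_{2k+1}:=\langle x_{2k+1}^2,\CF\rangle$. First I would apply $\varphi$ to the identity $\langle x_{2k}^2,\CF\rangle=M_0$ furnished by Lemma~\ref{M0even}. Since the image under a homomorphism of a subgroup generated by a set is the subgroup generated by the images, and since $\varphi(\CF)\subseteq\CF$, this gives
\[
\varphi(M_0)=\varphi\big(\langle x_{2k}^2,\CF\rangle\big)=\langle x_{2k+1}^2,\varphi(\CF)\rangle\subseteq\langle x_{2k+1}^2,\CF\rangle=R_{2k+1}.
\]
The key point is that the left-hand side $\varphi(M_0)$ does not depend on $k$.

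To finish, note that $x_{2k+1}^2=\varphi(x_{2k}^2)\in\varphi(M_0)$; combining this with the displayed inclusion $\varphi(M_0)\subseteq R_{2k+1}$ and with $\CF\subseteq R_{2k+1}$, one obtains $R_{2k+1}=\langle\varphi(M_0),\CF\rangle$, an expression that is independent of $k$. In particular $R_{2k+1}=R_1=\langle x_1^2,\CF\rangle=M_1$ for every $k\geq 0$, as required. (This is of course consistent with Lemma~\ref{lemma-M}, which already records $\varphi(M_0)\subseteq M_1$.)

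The argument is short and presents no genuine obstacle, since the substantive content is already packaged in Lemma~\ref{M0even}. The only points that need a little attention are that $\varphi$ is injective but not surjective, so one must reason at the level of generating sets when pushing the equality of Lemma~\ref{M0even} forward rather than applying an inverse, and that the inclusion $\varphi(\CF)\subseteq\CF$ must be used in the direction that keeps $\varphi(M_0)$ inside $R_{2k+1}$.
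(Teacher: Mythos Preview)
Your proof is correct and takes a genuinely different route from the paper's own argument.

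The paper proves this lemma by essentially repeating, with shifted indices, the same chain-of-inclusions computation used for Lemma~\ref{M0even}: it shows $R_{2i+3}\leq R_{2i+1}$ by observing that $x_{2i+2}x_{2i+3},x_{2i+3}^2\in R_{2i+1}$ (these come from $\varphi^{2i}(M_1)$), and then shows the reverse $R_{2i-1}\leq R_{2i+1}$ by a pair of explicit conjugation identities involving $w_1$ and $\varphi^{2i-1}(w_0)$. In other words, the paper redoes the work of Lemma~\ref{M0even} in parallel.

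Your argument instead \emph{transports} Lemma~\ref{M0even} through $\varphi$: from $\langle x_{2k}^2,\CF\rangle=M_0$ you read off $\varphi(M_0)=\langle x_{2k+1}^2,\varphi(\CF)\rangle\subseteq R_{2k+1}$, and since $x_{2k+1}^2\in\varphi(M_0)$ you conclude $R_{2k+1}=\langle\varphi(M_0),\CF\rangle$, which is visibly independent of $k$. This is shorter and more conceptual; it makes clear that the lemma for $M_1$ is a formal consequence of the lemma for $M_0$ together with $\varphi(\CF)\subseteq\CF$, rather than a separate computation. The paper's approach has the minor advantage of being self-contained (it does not cite Lemma~\ref{M0even}), but at the cost of duplicating essentially the same calculation.
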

\begin{proof}
Denote by $R_{2i+1}$ the group $\langle x_{2i+1}^2,\CF\rangle$. Since $\varphi(\CF)\subset \CF$, it holds $\varphi^{2i}(R_1)=\varphi^{2i}(M_1)=\varphi^{2i}(\langle x_{1}^2,\CF\rangle)=\langle x_{2i+1}^2,\varphi^{2i}(\CF)\rangle\subset R_{2i+1}$. As $x_2x_3$, $x_3^2\in R_1=M_1$, we have $x_{2i+2}x_{2i+3}$, $x_{2i+3}^2\in R_{2i+1}$. In particular, $R_{2i+3}\leq R_{2i+1}$. 
We want to prove that the converse inclusion holds. 
First, notice that $w_1x_{2i+2}x_{2i+3}w_1^{-1}=x_{2i}x_{2i+1}\in R_{2i+1}$ for all $i\geq 1$,
where $w_1=x_0x_1^2x_0^{-1}$.
Therefore, we have $\varphi^{2i-1}(w_0)x_{2i+1}^2(x_{2i}x_{2i+1})^{-1}=x_{2i-1}^2\in R_{2i+1}$
where 
$w_0=x_0^2x_1x_2^{-1}$.
This means that $R_{2i-1}\leq R_{2i+1}$ and we are done.
\end{proof}

\begin{lemma}\label{lemma-length-two} 
For any $g=x_ix_j\in K_{(2,2)}\setminus M_1$, the subgroup $\langle g,M_1\rangle$ is equal to $K_{(2,2)}$.
\end{lemma}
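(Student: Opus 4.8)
The plan is to mimic the strategy used for $M_0$ in Lemma \ref{lemmaH1}, reducing the statement for $M_1$ to that for $M_0$ wherever possible by exploiting the shift homomorphism $\varphi$, together with the extra elements $x_0x_j$ available from Lemma \ref{lemma-zero-j}. First I would observe that since $g=x_ix_j\in K_{(2,2)}\setminus M_1$, both $i$ and $j$ are positive (if either were $0$, an argument analogous to the one at the start of Lemma \ref{lemmaH1} would put $g$ in $M_1$, because $M_1$ contains $\varphi(M_0)=\langle\CS_{M_1}\rangle$ by Lemma \ref{lemma-M}; more precisely, one checks the $x_0$-involving length-two elements of $K_{(2,2)}$, namely $x_0x_1, x_0x_2$, etc., generate together with $M_1$ a group containing $K_{(2,2)}$, using that $\varphi(M_0)\subset M_1$ is a ``shifted copy'' of $M_0$ on $[1/2,1]$ and that conjugating $\varphi(M_0)$ by $x_0^{\pm1}$ produces $x_1$-translates). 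Then I would split into cases on the parities of $i$ and $j$ and their relative sizes, exactly paralleling Cases 1--7 of Lemma \ref{lemmaH1} but with all indices shifted up by one.

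The key structural point is the following symmetry: $\varphi$ carries $M_0$ into $M_1$ (Lemma \ref{lemma-M}), and conjugation by $w_1=x_0x_1^2x_0^{-1}$ or by the appropriate generators of $\CF$ allows one to ``unshift'' within $M_1$, just as in Lemma \ref{lemma-squared-bis-1}. Concretely, for $g=x_{2i}^2$ with $i\geq 1$ I would first reduce to $g=x_2^2$ by conjugating with a power of $x_0^2$ (using $x_0^2\in\langle g, M_1\rangle$ once $g=x_{2i}^2$ and $i\geq1$... wait, $x_0^2\notin M_1$, so instead I conjugate $g$ itself: $x_{2i-2}^2=x_0^{2}x_{2i}^2x_0^{-2}$ is not directly available; rather one shows $\langle x_{2i}^2, M_1\rangle\supseteq\langle x_{2i-2}^2,M_1\rangle$ by the same telescoping trick as Lemma \ref{M0even}), eventually reaching $x_2^2$, and then from $\langle x_2^2, M_1\rangle$ one recovers $x_0x_1$ or $x_1^2$-free generators enough to invoke Lemma \ref{lemma-zero-j} or directly $\langle x_0x_1, M_0\rangle=K_{(1,2)}\supsetneq K_{(2,2)}$. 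The even-even, even-odd, odd-odd subcases for $g=x_ix_j$ with $i,j\geq 1$ distinct are then handled verbatim as in Lemma \ref{lemmaH1}: one conjugates or multiplies $g$ by members of $\CS_{M_1}$ (which are $x_{2k+1}^2$ and $x_{2k+2}x_{2k+3}$) to lower indices and change shapes, reducing to the base elements $x_1^2$ (which is $g$ itself — excluded!) — so here one must be careful: the ``base case'' for $M_1$ is not $x_1^2$ but rather $x_0^2$, $x_2^2$, and $x_1x_2$, i.e. a generator of $M_0$ escaping $M_1$.

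The main obstacle I anticipate is precisely this asymmetry in the base case: for $M_0$ the minimal obstruction was $x_1^2$, handled by Case 1 of Lemma \ref{lemmaH1}; for $M_1$ the roles of even and odd indices are swapped, so the ``primitive'' length-two elements outside $M_1$ are $x_0^2$ and $x_2^2$ (and the mixed element $x_1x_2$). I would therefore first prove a counterpart of Lemma \ref{lemmaH1}'s Case 1, showing that $\langle x_0^2, M_1\rangle$ contains $K_{(2,2)}$: indeed $\langle x_0^2, M_1\rangle\supseteq\langle x_0^2, \varphi(M_0)\rangle$, and conjugating the shifted generators $x_{2k+1}x_{2k+2}, x_{2k+1}^2$ back down by $x_0^{-2}$ produces $x_{2k-1}x_{2k}, x_{2k-1}^2$ and eventually the full generating set $\CS_{M_0}\cup\{x_1^2\}$ of $K_{(2,2)}$, giving $\langle x_0^2,M_1\rangle=K_{(2,2)}$ by Lemma \ref{lemmaH1}. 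The analogous statements for $x_2^2$ (reduce to $x_0^2$ via $x_0^2 x_2^2 x_0^{-2}$ — again not available since $x_0^2\notin M_1$, so instead use $\langle x_2^2, M_1\rangle \supseteq \langle x_0^2, M_1\rangle$ by the telescoping of Lemma \ref{M0even} applied inside) and for $x_1x_2$ (from which, as in Case 3, one extracts $x_0^2$ or $x_1^2$, e.g. $x_1x_2\cdot(x_2x_3)^{-1}\cdots$) finish the reduction. Once these three ``primitive'' cases are in hand, the remaining cases of general $g=x_ix_j$ follow by the same index-lowering inductions as in Lemma \ref{lemmaH1}, and I do not expect any genuinely new difficulty there — only bookkeeping of parities.
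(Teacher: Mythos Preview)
Your strategy is on the right track---reduce to the $M_0$ result via the shift $\varphi$---but the execution is far more tangled than necessary, and contains an outright contradiction: you first assert that both $i,j$ must be positive (they need not be; $x_0^2\in K_{(2,2)}\setminus M_1$), then later list $x_0^2$ among your ``primitive'' cases. More importantly, you never isolate the single clean observation that does almost all the work.

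The paper's proof runs as follows. For Case~1, $g=x_{2i}^2$ (any $i\ge 0$), one simply notes that $\langle x_{2i}^2,\CF\rangle = M_0$ by Lemma~\ref{M0even}, so $\langle g, M_1\rangle\supseteq\langle M_0,x_1^2\rangle=K_{(2,2)}$ by Lemma~\ref{lemmaH1}. Your meandering reduction (``telescoping to $x_2^2$, then recovering $x_0x_1$\ldots'') is unnecessary. Then comes the key observation that replaces your entire parallel case analysis: since $\varphi(M_0)\subseteq M_1$ (Lemma~\ref{lemma-M}), whenever $\langle g,M_0\rangle\supseteq K_{(2,2)}$ one has
\[
\varphi(K_{(2,2)})\subseteq \varphi(\langle g,M_0\rangle)=\langle\varphi(g),\varphi(M_0)\rangle\subseteq\langle\varphi(g),M_1\rangle,
\]
and since $x_2^2\in\varphi(K_{(2,2)})$, Case~1 gives $\langle\varphi(g),M_1\rangle=K_{(2,2)}$. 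Every remaining $g'=x_ix_j$ with $i,j\ge 1$ is $\varphi(x_{i-1}x_{j-1})$, and Lemma~\ref{lemmaH1} (when $i-1,j-1\ge 1$) or Lemma~\ref{lemma-zero-j} (when $i-1=0$) already shows $\langle x_{i-1}x_{j-1},M_0\rangle\supseteq K_{(2,2)}$. That is the entire proof.

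So rather than re-running Cases 1--7 of Lemma~\ref{lemmaH1} with swapped parities, formulate and prove the observation above once; everything else is a one-line citation.
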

\begin{proof}
We divide the proof into a series of cases.

\noindent
\textbf{Case 1}: $g=x_{2i}^2$ for $i\geq 0$.\\
In this case the claim follows at once from Lemmas \ref{M0even} and \ref{lemmaH1}.

Now we observe that
if $K_{(2,2)}\subseteq\langle g, M_0\rangle$, then $K_{(2,2)}\subseteq\langle \varphi(g), M_1\rangle$. Indeed, we have
 $\varphi(K_{(2,2)})\subseteq\varphi(\langle g, M_0\rangle)=\langle \varphi(g), \varphi(M_0)\rangle\leq \langle \varphi(g), M_1\rangle$
by Lemma \ref{lemma-M}.
In particular, $x_2^2\in \langle \varphi(g), M_1\rangle$ and by Lemmas \ref{M0even} and \ref{lemmaH1} we are done.

\noindent
\textbf{Case 2}: $g=x_{2i+1}x_{2i+2}$ for any $i\geq 0$.\\
The claim  follows from Lemmas 
\ref{lemmaH1} and  \ref{lemma-zero-j}.

\noindent
\textbf{Case 3}:  $g=x_{2i+1}x_{2j+2}$	for any $j\geq i+1\geq 0$, $i\geq 0$.\\
The claim follows from Lemmas 
\ref{lemmaH1} 
and  \ref{lemma-zero-j}.

\noindent
\textbf{Case 4}: $g=x_{2i+1}x_{2j+1}$ for any $j\geq i+1$, $i\geq 0$.\\
The claim follows from Lemmas 
\ref{lemmaH1}  and  \ref{lemma-zero-j}.

\noindent
\textbf{Case 5}: $g=x_{2i+2}x_{2j+1}$		for any $j> i+1$, $i\geq 0$.\\
The claim follows from Lemma \ref{lemmaH1}. 

\noindent
\textbf{Case 6}: $g=x_{2i+2}x_{2j+2}$		for any $j\geq i\geq 0$.\\
The claim follows from Lemma 
\ref{lemmaH1}. 

\end{proof}

The following lemma is instrumental in the proof of Theorem \ref{thmM1} for $M_1$ and $M_2$.
\begin{lemma}
\label{positivity2}
For any $g\in 
K_{(2,2)}\setminus M_1$, the double coset $M_1gM_1$ contains a positive element $w$ such that for every $w'\in M_1gM_1$, it holds $|w|\leq |w'|$.
Moreover,  $w$ may be chosen such that it does not contain any block
and that it lies in $\varphi(F_+)$. 
\end{lemma}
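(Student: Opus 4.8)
The plan is to transcribe the proof of Lemma~\ref{positivity} through the shift endomorphism $\varphi$, exploiting the inclusion $\varphi(M_0)\subseteq M_1$ from Lemma~\ref{lemma-M}. The key observation is that the generating set $\CS_{M_1}=\{x_{2k+1}^2,\ x_{2k+2}x_{2k+3}\mid k\geq 0\}$ is, index by index, the $\varphi$-image of $\CS_{M_0}=\{x_{2k}^2,\ x_{2k+1}x_{2k+2}\mid k\geq 0\}$, with the generator $x_1^2$ of $M_1$ taking over the role that $x_0^2$ played for $M_0$. Since the shift $x_i\mapsto x_{i+1}$ preserves word length and, by Lemma~\ref{lemma-block-prop}(1), the property of a positive normal form being a block, every length-preserving reduction move and every block-removal step in the proof of Lemma~\ref{positivity} has a verbatim analogue for $M_1$, provided one works with words in which $x_0$ does not occur, i.e.\ inside $\varphi(F_+)$.

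The first step, and the heart of the matter, is to produce a representative of $M_1gM_1$ lying in $\varphi(F)$. Set $2m=\log_2 g'(0)$ and pick a dyadic $s\in(0,1)$ small enough that $g$ is linear on $[0,s]$ and that $s$ lies in the $\CF$-orbit of $1/2$ (this orbit accumulates at $0$, e.g.\ because $w_0^{-n}(1/2)\to 0$). Choose $m_2\in\CF$ with $m_2(1/2)=s$, so $m_2([0,1/2])=[0,s]$, and choose $c\in\CF$ which is multiplication by $2^{-2m}$ on a sufficiently large neighbourhood of $0$ (possible since $\pi(\CF)=2\IZ\oplus 2\IZ$); then $m_1:=c\cdot m_2^{-1}\in\CF$, and a direct check shows $m_1gm_2$ fixes $[0,1/2]$ pointwise, i.e.\ $m_1gm_2=\varphi(h)$ for some $h\in F$. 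Since $g\notin M_1\supseteq\varphi(M_0)$ we have $h\notin M_0$ (otherwise $\varphi(h)\in M_1$ and $g\in M_1\varphi(h)M_1=M_1$), so Lemma~\ref{positivity} applies to $h$ and yields a positive, block-free $u\in M_0hM_0$ of minimal length in $M_0hM_0$; then $\varphi(u)\in\varphi(M_0)\varphi(h)\varphi(M_0)\subseteq M_1gM_1$ is positive, block-free, lies in $\varphi(F_+)$, and has $|\varphi(u)|=|u|$.

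It remains to arrange that the representative is of minimal length in all of $M_1gM_1$, not just in the smaller double coset. For this I would start from a globally minimal-length $w\in M_1gM_1$ and rerun the positivisation and block-removal arguments of Lemma~\ref{positivity} inside $M_1gM_1$ using the $\varphi$-translated reduction moves; the only genuinely new point is to rule out occurrences of $x_0$ in the resulting positive, block-free, minimal-length $w$. If its normal form began $x_0^{a_0}\cdots$, then $\log_2 w'(0)=\pm a_0$ would force $a_0$ even because $w\in K_{(2,2)}$; and for even $a_0\geq 2$ one gets a contradiction either with minimality — by left-multiplying by a suitable element of $\CF\subseteq M_1$, exactly as in the first step, to strip off the leading $x_0^{a_0}$ — or with block-freeness, since whenever the procedure leaves a word $x_0^{a_0}x_{i_1}^{c_1}\cdots$ with $i_1\leq a_0$, Lemma~\ref{lemma-block-prop}(1) shows that $x_1$ skips over it, so it is a block. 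I expect this handling of $x_0$ to be the main obstacle, and it is precisely where the argument departs from that of Lemma~\ref{positivity}: for $M_0$ the containment $x_0^2\in M_0$ let one cancel trailing powers of $x_0^{-1}$ purely inside the normal form, whereas here $x_0^2\notin M_1$, so one is forced to argue instead with the action of $F$ on $[0,1]$ as in the first step; after that, the rest is routine bookkeeping through $\varphi$.
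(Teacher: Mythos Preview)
Your reduction to $\varphi(F)$ in the second paragraph is correct, and pushing Lemma~\ref{positivity} through $\varphi$ does give a positive, block-free element of $M_1gM_1\cap\varphi(F_+)$. The problem is exactly where you say it is: none of this controls length, and your third paragraph does not close the gap. Your orbit argument produces $m_1,m_2\in\CF$ with $m_1gm_2\in\varphi(F)$, but $|m_1gm_2|$ may be much larger than $|g|$; so starting from a minimal-length $w\in M_1gM_1$ and ``stripping off $x_0^{a_0}$ by a suitable element of $\CF$'' can lengthen it, and no contradiction with minimality follows. Your alternative branch is also flawed: the implication ``$i_1\le a_0\Rightarrow x_1$ skips over $x_0^{a_0}x_{i_1}^{c_1}\cdots$'' is false, since by Lemma~\ref{lemma-block-prop}(1) one needs $j_k<k$ for \emph{every} letter, not just the first one after the $x_0$'s (e.g.\ $x_0^2x_1x_{100}$ is not a block).

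The paper's device, which you are missing, is a single concrete element of $M_1$ that removes $x_0^{\pm 2}$ \emph{without increasing length}. Namely $h:=w_0\cdot(x_2x_3)=x_0^2x_1x_3\in M_1$ (here $w_0\in\CF\subseteq M_1$ and $x_2x_3\in M_1$ by Lemma~\ref{lemma-M}). If $a_0,b_0$ are both even, the replacement $g\mapsto h^{-a_0/2}gh^{b_0/2}$ lies in $\varphi(F)$ and has the same length as $g$; if both are odd one uses one extra power of $h$ on each side and the identity $x_0^{-1}\varphi(x)x_0=\varphi^2(x)$ to land in $\varphi(F)$ with length at most $|g|$. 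Applying this to a globally minimal $w\in M_1gM_1$ gives a minimal-length representative already in $\varphi(F)$; after that your $\varphi$-transcription of the positivisation and block-removal from Lemma~\ref{positivity} goes through verbatim, with the parities of the reduction moves swapped ($x_j^2$ for odd $j$, $x_jx_{j+1}$ for even $j$).
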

\begin{proof}
Consider the normal form of $g=x_0^{a_0}x_1^{a_1}\cdots x_1^{-b_1}x_0^{-b_0}$. 
In the first step of this proof we want to obtain an element in $\varphi(F)\cap M_1gM_1$. 
There are two cases to deal with: $a_0$ and $b_0$ are both even or odd.
In the first case take the element $h^{-a_0/2}gh^{b_0/2}$, where $h:=w_0x_2x_3=x_0^2x_1x_3\in M_1$, where $w_0=x_0^2x_1x_2^{-1}$. This element has the same length as $g$ and is in $\varphi(F)$.
In the second case, we observe that $g=x_0^{a_0}\varphi(\tilde{g})x_0^{-b_0}$, where $\tilde{g}:=x_0^{a_1}x_1^{a_2}\cdots x_1^{-b_1}x_0^{-b_1}$. Now take the element
\begin{align*}
h^{-[a_0/2]-1}gh^{[b_0/2]+1}&=x_3^{-1}x_1^{-1}(x_0^{-1}\varphi(\tilde{g})x_0)x_1x_3\\
&=x_3^{-1}x_1^{-1}\varphi^2(\tilde{g})x_1x_3\\
&=x_3^{-1}(\varphi(x_0^{-1}\varphi(\tilde{g})x_0))x_3\\
&=x_3^{-1}\varphi^3(\tilde{g})x_3
\end{align*}
where we used that $x_0^{-1}\varphi(x)x_0=\varphi^2(x)$ for all $x\in F$, \cite[p. 29]{B}.
The generators $x_3^{\pm 1}$ may or may not appear in the normal form of $\varphi^3(\tilde{g})$. 
If they appear, the element $x_3^{-1}\varphi^3(\tilde{g})x_3$ is shorter than $g$ and in $\varphi(F)$.
Otherwise, we have $h^{-[a_0/2]-1}gh^{[b_0/2]+1}=\varphi^4(\tilde{g})\in \varphi(F)$.
 
Now take $w\in M_1gM_1$ of minimal length. By the previous discussion we may assume that $w\in \varphi(F)$.
If it is positive, we are done. Otherwise, we consider its normal form 
$$
w=x_{0}^{a_0}\cdots x_{n}^{a_n}x_{n}^{-b_n}\cdots x_{0}^{-b_0}\; .
$$ 
Let $x_{i_0}^{-1}$ be the last non-zero factor. If $i_0\in 2\IN_0+1$, the element $wx_{i_0}^2\in M_1gM_1$ admits a normal form of the same length, but with less negative factors.
Similarly, when $i_0\in 2\IN$, one considers the element $wx_{i_0}x_{i_0+1}\in M_1gM_1$. The claim follows by iteration.

Now we want to show that such an element $w$ 
 in $\varphi(F_+)$ does not contain a block. 
Suppose instead that the normal form of $w$ contains a block, that is $w=z_1Bz_2$, with $B$ being a minimal block.

We now show that we can replace $w$ with another element in $M_1gM_1\cap \varphi(F)$ of the same length and such that $w'=B'z_2'$, with $B'$ being a translation of $B$. 
If $z_1=\emptyset$, there is nothing to do.
If $z_1$ is non-empty, then $z_1=x_jz''_1$.  
If $j$ is even, consider $z'':=(x_jx_{j+1})^{-1}w=x_{j+1}^{-1}z''_1Bz_2$ (recall that $x_jx_{j+1}\in M_1$).
By Lemma \ref{lemma-block-prop} (5) and the minimality of $w$ we get $z''=z_1'B'z_2'$, where $|z_1'|=|z_1''|=|z_1|-1$, $|z_2'|=|z_2|+1$, $B'$ is a translation of $B$. If $j$ is odd, consider 
$x_j^{-2}w=x_{j}^{-1}z''_1Bz_2$, $x_j^{2}\in M_1$ and argue as before. By iteration we may assume that $z_1$ is empty. 

By the previous discussion we may assume that $w=Bz_2$, where $B=x_{i_1}\cdots x_{i_n}$. 
There are two cases depending on whether $i_1$ is even or odd. 

In the first case consider $t_1=(x_{i_1}x_{i_1+1})^{-1}w=x_{i_1+1}^{-1}x_{i_2}\cdots x_{i_n}$.
By Lemma \ref{lemma-block-prop} (6) 
 there exists a $j$ such that $i_j=i_1+j-1$. This means that $x_{i_1+1}^{-1}$
cancel the first occurrence of $x_{i_j}$ in $t$. This is in contradiction with our hypothesis of $w$ being of minimal length and we are done.

We want to show that the second case ($i_1$ odd) cannot occur. 
Since $w$ is of minimal length, $i_2\geq i_1+1$ (if $i_2=i_1$, then the element $x_{i_1}^{-2}w\in M_1gM_1\cap \varphi(F)$ is shorter than $w$).
We assumed that $B$ is a minimal block, however $\tilde{B}:=x_{i_2}\cdots x_{i_n}$ is a block. Indeed, set $i'_j:=i_{j+1}$ for $j=1, \ldots , n-1$ and consider $\tilde{B}=x_{i'_1}\cdots x_{i'_{n-1}}$. We   have to check that $i'_j<i'_1+j$ for all $j=1, \ldots , n-1$.
By definition we have $i'_j:=i_{j+1}<i_1+j+1=i_2+j=i'_1+j$.
If we show that $\tilde B$ contains at least two different letters, then we reached a contradiction.
As $B$ is a block 
 $i_3\geq i_2=i_1+1$ and $i_3<i_1+3$. 
Therefore, we have two sub-cases: $i_3=i_1+2$ and $i_3=i_2=i_1+1$. In the first we 
found that there are at least two different letters and, thus, $\tilde B$ is a block.
In the second sub-case 
the element 
\begin{align*}
\varphi^{i_1}(w_1)^{-1}w&=x_{i_1}x_{i_1+1}^{-2}x_{i_1}^{-1}Bz_2=x_{i_1}x_{i_1+1}^{-2}x_{i_1}^{-1}x_{i_1}x_{i_1+1}x_{i_1+1}x_{i_4}\cdots x_{i_n}z_2\\
&=x_{i_1}x_{i_4}\cdots x_{i_n}w_2\in M_1gM_1
\end{align*} 
 is shorter than $w$ (recall that $w_1=x_0x_1^{2}x_0^{-1}$) and we are done. 
\end{proof}
We are now ready to prove that $M_1$ and $M_2$ are maximal subgroups of the rectangular subgroup $K_{(2,2)}$.
\begin{proof}[Proof of Theorem \ref{thmM1} for $M_1$ and $M_2$]
Let $g\in K_{(2,2)}\setminus M_1$. We need to show that $\langle g, M_1\rangle =K_{(2,2)}$.
Note that $\langle g, M_1\rangle =\langle g', M_1\rangle$ for any $g'\in M_1 gM_1$.
Therefore, we may replace $g$ with any element in $M_1gM_1$. By Lemma \ref{positivity2} we may suppose that $g$ 
 does not contain any block and is an element of minimal length in $M_1gM_1\cap\varphi(F_+)$.

We  give a proof by induction on the length of the normal form of $g$ (which is even because $g$ is in $K_{(2,2)}$).
If the length is $2$, then the claim is exactly the content of
 Lemma \ref{lemma-length-two}. 

Suppose that the length is bigger than $2$. 
 We have $g=w'x_j^k$ with $j\in \IN$ and $w'=x_{i_1}\cdots x_{i_m}$ is either empty or its last letter is not $x_j$. If $w'$ is empty (in this case this implies that $k\in 2\IN$, $j\in 2\IN_0$), then $x_j^{-k}(x_j x_{j+1}) x_j^k=x_jx_{j+1+k}\in \langle M_1, x_j^k\rangle$ and by Lemma \ref{lemma-length-two} we are done.

Suppose that $w'$ is non-empty and let $m=|w'|$. 
Now we have two cases: (1) $j$ is even; (2) $j$ is odd and $k=1$. Without loss of generality, we may suppose that $j\geq m=|w'|$ (it suffices to replace $w$ by $x_1^{-2l}wx_1^{2l}$ with $l$ big enough). In this case it holds $x_{j-m}w'=w'x_j$ if $j\geq m$ (\cite[Formula ($\star$) in proof of  Theorem 3.12]{GS2}).
For case (1), there are two sub-cases: $k$ and $m$ are odd, $k$ and $m$ are even.
If $k$ and $m$ are odd ($j$ is even) take the element 
\begin{align*}
g^{-1}x_{j-m}^2g& = x_j^{-k}w'^{-1}	x_{j-m}^2w'x_j^k	\\
&= x_j^{-k}w'^{-1}w'x_j^2x_j^k\\
&=x_{j}^2 \in \langle g, M_1\rangle
\end{align*}
which contains $K_{(2,2)}$ by 
Lemma \ref{lemma-length-two}.\\
If $k$ and $m$ are even  ($j$ is even) take the element 
\begin{align*}
g^{-1}(x_{j-m}x_{j-m+1})g& = x_j^{-k}w'^{-1}	x_{j-m}x_{j-m+1}w'x_j^k	\\
&= x_j^{-k}w'^{-1}w'x_{j}x_{j+1}x_j^k\\
&= x_j^{-k}x_{j}x_{j+1}x_j^k\\
&= x_{j}x_{j+1+k} \in \langle g, M_1\rangle
\end{align*}
which contains $K_{(2,2)}$ by 
Lemma \ref{lemma-length-two}.\\

For case (2), that is  $j$ is odd and $k=1$, we observe that $m$ is odd and consider the element  
\begin{align*}
g^{-1}(x_{j-m}x_{j-m+1})g&=x_j^{-1}w'^{-1}(x_{j-m}x_{j-m+1})w'x_j\\
&=x_j^{-1}w'^{-1}w'x_jx_{j+1}x_j\\
&=x_{j+1}x_j=x_jx_{j+2}\in M_1gM_1
\end{align*}
Now the claim follows from Lemma \ref{lemma-length-two}.

\end{proof}
\begin{corollary}\label{cor42}
The subgroups $\theta^{-1}(M_1)$ and  $\theta^{-1}(M_2)$  are maximal infinite index subgroups of $F$. 
\end{corollary}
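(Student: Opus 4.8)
The plan is to deduce the Corollary formally from Theorem~\ref{thmM1} by transporting everything along the isomorphism $\theta$ constructed in Section~\ref{sec4}. Recall that $\theta: F\to F$ is a monomorphism whose image is precisely $K_{(2,2)}$; consequently it restricts to an isomorphism from $F$ onto $K_{(2,2)}$, so that $\theta^{-1}$ is an isomorphism from $K_{(2,2)}$ onto $F$, with $\theta^{-1}(K_{(2,2)})=F$. Any group isomorphism induces an inclusion-preserving bijection between the subgroup lattices of its source and target, carrying maximal subgroups to maximal subgroups and preserving indices of subgroups; the Corollary will follow by applying this observation to $\theta^{-1}$.

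First I would transport maximality. Theorem~\ref{thmM1} (for $i=1,2$) asserts that $M_i$ is a maximal subgroup of $K_{(2,2)}$: for every $g\in K_{(2,2)}\setminus M_i$ one has $\langle M_i,g\rangle=K_{(2,2)}$, and $M_i$ is proper in $K_{(2,2)}$. Applying $\theta^{-1}$: for $g\in F\setminus\theta^{-1}(M_i)$ we have $\theta(g)\in K_{(2,2)}\setminus M_i$, hence $\langle\theta^{-1}(M_i),g\rangle=\theta^{-1}(\langle M_i,\theta(g)\rangle)=\theta^{-1}(K_{(2,2)})=F$; together with $\theta^{-1}(M_i)\subsetneq F$ this says that $\theta^{-1}(M_1)$ and $\theta^{-1}(M_2)$ are maximal subgroups of $F$.

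Next I would verify infinite index. Just as in the case of $M_0$, the subgroups $M_1$ and $M_2$ are proper subgroups of $K_{(2,2)}$ (the content of the lemma above showing $x_0^2\notin M_1$ and $x_0^2,x_1^2\notin M_2$) which nevertheless project onto $\pi(\CF)=2\IZ\oplus 2\IZ$ (Proposition~\ref{propKCF}): since $\CF\le M_i\le K_{(2,2)}$ we get $2\IZ\oplus2\IZ=\pi(\CF)\subseteq\pi(M_i)\subseteq\pi(K_{(2,2)})=2\IZ\oplus2\IZ$. Hence Lemma~\ref{Lemmamax} gives $|F:M_i|=\infty$, and a fortiori $|K_{(2,2)}:M_i|=\infty$. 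Since $\theta^{-1}$ preserves indices and carries $K_{(2,2)}$ to $F$, we obtain $|F:\theta^{-1}(M_i)|=|K_{(2,2)}:M_i|=\infty$ for $i=1,2$.

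There is essentially no obstacle in this argument: all the real work is already contained in Theorem~\ref{thmM1} and in the index computation for $M_0$, and the Corollary is merely the translation of those facts through the explicit isomorphism $\theta$. I would also remark, in keeping with the heading of this section, that $\CF\le M_i$ yields $\theta^{-1}(\CF)\le\theta^{-1}(M_i)$ with $\theta^{-1}(\CF)\cong\CF\cong F_4$, so each of the maximal subgroups $\theta^{-1}(M_1)$ and $\theta^{-1}(M_2)$ contains an isomorphic copy of the $3$-colorable subgroup; distinguishing them from the previously known maximal subgroups of $F$ is the task of Section~\ref{sec5}.
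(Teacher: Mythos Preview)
Your proposal is correct and follows exactly the approach the paper intends: the corollary is stated without proof in the paper, being an immediate consequence of Theorem~\ref{thmM1} (which already asserts that $M_1$ and $M_2$ are maximal infinite index subgroups of $K_{(2,2)}$) via the isomorphism $\theta:F\to K_{(2,2)}$. Your write-up simply makes explicit the routine transport along $\theta^{-1}$ and the index bookkeeping that the paper leaves to the reader.
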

\begin{remark} 
Like $M_0$, the subgroups $M_1$ and $M_2$ are closed, that is $Cl(M_1)=M_1$ and $Cl(M_2)=M_2$. 
\end{remark}

\begin{theorem}\label{teo43}
The index of $\CF$ in $M_0$, $M_1$ and $M_2$ is infinite.
\end{theorem}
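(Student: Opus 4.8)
The plan is to prove infinitude of each index directly, by exhibiting inside $M_i$ an infinite family of elements lying in pairwise distinct cosets of $\CF$. All three cases will be reduced to a single explicit computation by exploiting the two symmetry maps of the paper: the shift homomorphism $\varphi$ and the reflection automorphism $\sigma$.

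I would treat $M_0=\langle\CF,x_0^2\rangle$ first, using the powers $x_0^{2k}$, $k\in\IZ$, all of which obviously lie in $M_0$. To see $x_0^{2k}\CF\neq x_0^{2l}\CF$ for $k\neq l$ it is enough (inverting the exponent if needed) to check that $x_0^{2m}\notin\CF$ for every integer $m\geq 1$. Here I would invoke $\CF={\rm Stab}(S_0)$ from Theorem~\ref{theo-stab}. The dyadic $3/4=.11$ lies in $S_0$, since $\omega(11)\equiv_3 0$ (Proposition~\ref{prop-form-weight}) and $|11|=2\in 2\IN$; on the other hand, iterating the homeomorphism $x_0$ on numbers near $0$ gives $x_0^{2m}(3/4)=2^{-2m}$, whose canonical binary expansion $.0\cdots 01$ (with $2m-1$ zeros) has even length $2m$ but weight $\omega(0^{2m-1}1)=(-1)^{2m}=1\not\equiv_3 0$, so $2^{-2m}\notin S_0$. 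Thus $x_0^{2m}$ does not stabilise $S_0$, hence $x_0^{2m}\notin\CF$, and $[M_0:\CF]=\infty$.

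For $M_1=\langle\CF,x_1^2\rangle$ I would use that $\varphi(x_0)=x_1$, so that $x_1^{2k}=\varphi(x_0^{2k})\in M_1$; by Lemma~\ref{lemmashift}, $x_1^{2m}\in\CF$ if and only if $x_0^{2m}\in\CF$, which fails for $m\geq 1$ by the previous paragraph, so the cosets $x_1^{2k}\CF$ are pairwise distinct and $[M_1:\CF]=\infty$. Similarly, for $M_2=\langle\CF,\sigma(x_1)^2\rangle$ one has $\sigma(x_1)^{2k}=\sigma(x_1^{2k})\in M_2$, and since $g\in\CF\iff\sigma(g)\in\CF$ (the reflection lemma of Section~\ref{sec1}), $\sigma(x_1^{2m})\in\CF\iff x_1^{2m}\in\CF$, again impossible for $m\geq 1$; hence $[M_2:\CF]=\infty$.

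The only step that requires genuine care is the explicit identity $x_0^{2m}(3/4)=2^{-2m}$ — that is, tracking the iterated action of $x_0$ as the point is pushed toward $0$ — together with the bookkeeping of $\omega$ and of binary lengths needed to conclude rigorously that $2^{-2m}\notin S_0$ (in particular one should use the canonical finite expansion so that $|t|$ is unambiguous). Everything else is formal: the closure of $\CF$ under $\varphi$ and $\sigma$, and the trivial remark that a single infinite list of distinct cosets forces infinite index. It is also worth noting that the statement does not follow from earlier results: both $[F:\CF]$ and $[F:M_i]$ are already known to be infinite, and this gives no control whatsoever on $[M_i:\CF]$.
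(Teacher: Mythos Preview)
Your proof is correct, and it takes a genuinely different route from the paper's. Both arguments boil down to the single fact that $x_0^{2m}\notin\CF$ for every $m\geq 1$ (and then to the transfer of this fact to $x_1^{2m}$ and $\sigma(x_1)^{2m}$ via $\varphi$ and $\sigma$), but they establish it differently. The paper invokes Lemma~\ref{lemma601}, which says $\langle x_j^{2k},\CF\rangle$ is always all of $M_0$ or $M_1$; since $\CF\neq M_0,M_1$, this forces $x_j^{2k}\notin\CF$. That lemma in turn rests on the earlier algebraic machinery (Lemmas~\ref{M0even} and~\ref{lemma-squared-bis-1}) developed for the maximality proofs. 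You instead go back to the stabiliser description $\CF=\operatorname{Stab}(S_0)$ from Theorem~\ref{theo-stab} and compute directly that $x_0^{2m}$ sends $3/4\in S_0$ to $2^{-2m}\notin S_0$; this uses only Section~\ref{sec2} and the two small symmetry lemmas of Section~\ref{sec1}, making your argument self-contained and independent of the Section~\ref{sec4b} results. One cosmetic remark: the phrase ``iterating $x_0$ on numbers near $0$'' is slightly misleading, since $3/4$ is not near $0$; the first two applications of $x_0$ take $3/4\mapsto 1/2\mapsto 1/4$, and only then does each further $x_0$ halve the value --- but of course the outcome $x_0^{2m}(3/4)=2^{-2m}$ is exactly as you claim.
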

In order to prove this theorem we need the following lemma.
\begin{lemma}\label{lemma601}
The groups $\langle x_j^{2k},\CF\rangle$ are all equal to $M_0$ or $M_1$.
\end{lemma}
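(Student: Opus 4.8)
The statement to be proved is Lemma~\ref{lemma601}: the subgroup $\langle x_j^{2k},\CF\rangle$ equals $M_0$ when $j$ is even and $M_1$ when $j$ is odd, for every $j\geq 0$ and every $k\geq 1$. The plan is to reduce to the already-established results about single squares $x_j^2$ (Lemma~\ref{M0even} and Lemma~\ref{lemma-squared-bis-1}) by showing that modulo $\CF$ the subgroup generated by $x_j^{2k}$ actually contains $x_j^2$ (or at least some $x_i^2$ with $i\equiv j \pmod 2$).

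First I would dispose of the trivial containment: since $x_j^{2k}$ is a power of $x_j^2$, we always have $\langle x_j^{2k},\CF\rangle \leq \langle x_j^2,\CF\rangle$, and the right-hand side is $M_0$ if $j\in 2\IN_0$ (by Lemma~\ref{M0even}, since $\langle x_j^2,\CF\rangle=\langle x_{2i}^2,\CF\rangle=M_0$ when $j=2i$) and $M_1$ if $j\in 2\IN_0+1$ (by Lemma~\ref{lemma-squared-bis-1}). So the real content is the reverse inclusion, i.e.\ producing $x_j^2$ (or a same-parity square) inside $\langle x_j^{2k},\CF\rangle$. The key observation is that $\CF$ contains elements that, together with $x_j^{2k}$, generate more than expected. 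Concretely, recall from Proposition~\ref{lemmaH} (and its analogue Lemma~\ref{lemma-M}) that $x_{2i}^2$, $x_{2i+1}x_{2i+2}\in M_0$ and $x_{2i+1}^2$, $x_{2i+2}x_{2i+3}\in M_1$; the idea is to conjugate $x_j^{2k}$ by a suitably chosen element of $\CF$ to peel off copies of $x_j^2$.

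The main step will be an argument of the following shape. Take $j$ even, say $j=2i$; the odd case is handled symmetrically (or by applying $\sigma$, using $\sigma(\CF)=\CF$). One knows $x_j^2=x_{2i}^2\in M_0$, but we want it back from $\langle x_j^{2k},\CF\rangle$. Use a generator $w$ of $\CF$ — concretely a translate of $w_0=x_0^2x_1x_2^{-1}$ or $w_1=x_0x_1^2x_0^{-1}$, pushed into position by the shift $\varphi$ — whose action on the relevant $x_\ell$'s is understood (these are exactly the ``skip'' relations $x_\ell w = w x_{\ell+n}$ of Lemma~\ref{lemma-block-prop}). The element $w x_j^{2k} w^{-1}$ is then a product of shifted generators, and by a cancellation computation entirely analogous to the ones in the proof of Proposition~\ref{lemmaH} and Lemma~\ref{M0even} (e.g.\ $\varphi^{2i-2}(w_0)x_{2i}^2(x_{2i-1}x_{2i})^{-1}=x_{2i-2}^2$), one extracts either $x_j^2$ itself or $x_{j-2}^2$, after which induction on $k$ (peeling one factor at a time: $x_j^{2k}=x_j^2\cdot x_j^{2(k-1)}$, once $x_j^2$ is available) and on $j$ (using $x_0^2 x_j^{2k} x_0^{-2}$-type conjugations to shift the index) finishes the job.

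I expect the main obstacle to be bookkeeping: choosing, for each parity of $j$ and each residue of $k$, the right word in the $\CF$-generators so that the conjugate $w x_j^{2k} w^{-1}$ reduces cleanly to a same-parity square times elements already known to lie in $M_0$ (resp.\ $M_1$), and verifying the normal-form cancellations. There is also a small subtlety when $j=0$ or $j=1$ (the base of the index induction), where one cannot shift down; there one invokes Lemma~\ref{M0even}/Lemma~\ref{lemma-squared-bis-1} directly once a single square has been produced. None of this requires new ideas beyond the ``skip over a block'' calculus of Lemma~\ref{lemma-block-prop} and the explicit relations already used for $M_0$ and $M_1$; the proof is essentially a uniform packaging of those computations.
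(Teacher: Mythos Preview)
Your strategy is correct and matches the paper's: the easy inclusion is trivial, and for the reverse inclusion one conjugates an element of $\CF$ by $x_j^{2k}$ to produce a same-parity square, after which Lemmas~\ref{M0even} and~\ref{lemma-squared-bis-1} finish the job. However, you are overcomplicating the execution. No induction on $k$ or on $j$ is needed, and there is no ``peeling one factor at a time'': the paper observes in a single line that
\[
x_j^{-2k}\,\varphi^j(w_1)\,x_j^{2k}
\;=\; x_j^{-(2k-1)}\,x_{j+1}^2\,x_j^{2k-1}
\;=\; x_{j+2k}^2,
\]
using $w_1=x_0x_1^2x_0^{-1}$ and the relation $x_j^{-1}x_{j+1}x_j=x_{j+2}$ iterated $2k-1$ times. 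Since $j+2k\equiv j\pmod 2$, this immediately gives $M_0=\langle x_{j+2k}^2,\CF\rangle\leq\langle x_j^{2k},\CF\rangle$ when $j$ is even (and $M_1$ when $j$ is odd), so equality holds. Your plan would work, but the one-step computation above is all that is required.
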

\begin{proof}
 Clearly, $\langle x_j^{2k},\CF\rangle$ is contained in $M_0$ (if $j$ is even) or $M_1$ (if $j$ is odd). 
Since $x_j^{-2k}\varphi^j(w_1)x_j^{2k}=x_{j+2k}^2$, we have that $\langle x_{2k}^2,\CF\rangle\leq \langle x_j^{2k},\CF\rangle$. By Lemmas \ref{M0even} and \ref{lemma-squared-bis-1} we are done.
\end{proof}

\begin{proof}[Proof of Theorem \ref{teo43}]
First of all, we observe that it is enough to calculate $|\CF:M_0|$ and $|\CF: M_1|$.
Suppose that $|\CF:M_0|=n<\infty$, that is $M_0=\cup_{k=1}^n g_k\CF$ for some $g_1=1,\ldots , g_n\in M_0$.
We claim that this implies that for every $g\in M_0$ there exist infinitely many $m\in \IN$ such that $g^m\in\CF$. Indeed, there are at least two distinct indices $i$, $j\in\IN$ such that $g^i, g^j\in g_k\CF$ for some $k\in \{1, \ldots , n\}$. This means that $g^{i-j}\in \CF$ and $g^{(i-j)m}\in\CF$ for all $m\in\IN$.
Take $g=x_0^2\in M_0\setminus \CF$. It follows from Lemma \ref{lemma601} that the element $g^k$ does not belong to $\CF$ for all $k\in\IN$.  Therefore, the index of $\CF$ in $M_0$ is infinite.

For $M_1$ use the same argument with $g=x_1^2$ and Lemma \ref{lemma601}.
\end{proof}

\section{On maximal infinite index subgroups of $F$}\label{sec5}
We recall that the \textbf{oriented subgroup} $\vec{F}$ is the subgroup of $F$ generated by $x_0x_1$, $x_1x_2$, $x_2x_3$. 
It can be easily seen that $\vec{F}$ is a subgroup of $K_{(1,2)}$. 
As mentioned in Section \ref{sec4}, the subgroup $K_{(1,2)}$ is isomorphic to $F$ and   an isomorphism is provided by the map
$\beta: \;  F\to K_{(1,2)}$, which is defined as
$\beta(x_i):=x_ix_2$ for $i=0, 1$.
The subgroup $\beta^{-1}(\vec{F})$ is the first example of a maximal subgroup of infinite index in $F$ without fixed points in the open unit interval $(0,1)$. 
For further information on $\vec{F}$, we refer to \cite{Jo14,GS, GS2, Ren, A, RS, AB1}.

In this section we compare the subgroups $\theta^{-1}(M_0)$, $\theta^{-1}(M_1)$, $\theta^{-1}(M_2)$ 
to   maximal infinite index subgroups of $F$ that have been identified before: the oriented subgroup $\vec{F}$; the parabolic subgroups $\stab(t)$ for $t\in (0,1)$;
 Golan's examples \cite[Examples 10.12 and 10.13, Section 10.3.B]{G} $K_1:=\langle H, x_1^2x_2^{-1} \rangle$, $K_2:=\langle H, x_1^2x_2x_1^{-3},x_1^3x_2x_1^{-4} \rangle$, where $H:=\langle x_0, x_1x_2x_1^{-1}\rangle$, and \\ 
$
K_3:=\langle x_0,x_1x_2x_1^{-3},x_1x_2x_3x_2^{-3}x_1^{-1}\rangle\; .
$ 
Note that $K_3$ is the first known example of a maximal  infinite index subgroup of $F$ which acts transitively on the set of dyadic rationals. 

We mention that $K_1$ and $K_2$ are concrete realisations of Golan and Sapir's implicit example of maximal infinite index subgroup containing $H$ described in \cite{GS2}. 
In fact, the subgroups $M_0$, $M_1$ and $M_2$ are distinct from all the   maximal infinite index subgroups containing $H$.
\begin{theorem}\label{distinct}
The subgroup $\theta^{-1}(M_0)$, $\theta^{-1}(M_1)$, $\theta^{-1}(M_2)$ are distinct from the parabolic subgroups, from $\beta^{-1}(\vec{F})$,  from
$K_1$, $K_2$, and $K_3$.
\end{theorem}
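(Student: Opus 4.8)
The plan is to distinguish the three subgroups $\theta^{-1}(M_i)$ from each of the previously known maximal infinite index subgroups by exhibiting, for each comparison, a structural invariant that separates them. The cleanest organizing principle is to work inside $K_{(2,2)}$ (equivalently, after applying $\theta$) and note that all of $M_0,M_1,M_2$ contain $\CF$, hence their $\theta$-preimages all contain $\theta^{-1}(\CF)$; by contrast the candidate subgroups on the other side have quite different "floors". I would split the argument into three blocks, one per family.

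\textbf{Parabolic subgroups.} First I would recall that every parabolic subgroup $\stab(t)$, $t\in(0,1)$, fixes a point of the open interval, so it has a global fixed point, whereas $\CF$ (being isomorphic to $F_4$ and acting on $[0,1]$ via the embedding $\alpha_T$ of Figure \ref{fig-ren-map-2}) has no fixed point in $(0,1)$: its generators $w_0,w_1,w_2,w_3$ have no common fixed point. Since $\CF\leq M_i$ (up to $\theta$), none of $\theta^{-1}(M_i)$ can fix a point of $(0,1)$, so they are not parabolic. This is the easy block.

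\textbf{The oriented subgroup $\beta^{-1}(\vec F)$.} Here I would use the rectangular-subgroup bookkeeping. We have $\vec F\leq K_{(1,2)}$ and $\beta^{-1}(\vec F)\leq F$; on the abelianization, $\pi(\beta^{-1}(\vec F))$ is a proper subgroup of $\IZ\oplus\IZ$ of a specific shape coming from $\vec F\leq K_{(1,2)}$, while by Proposition \ref{propKCF} we have $\pi(\CF)=2\IZ\oplus 2\IZ$, hence $\pi(\theta^{-1}(M_i))=\theta$-pullback of $2\IZ\oplus 2\IZ$. Comparing the images under $\pi$ (equivalently: comparing which $K_{(a,b)}$ the subgroup sits inside as the smallest rectangular overgroup) separates the two families. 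If the $\pi$-images happen to coincide for some indexing, I would fall back on the isomorphism type: $\vec F\cong F_3$ and $\CF\cong F_4$, and $M_i=\langle\CF, s\rangle$ with $s$ an extra element; one then checks $M_i\not\cong F_3$ (e.g. via the abelianization of $M_i$, which I would compute from the generating sets $\CS_{M_i}$ given in Proposition \ref{lemmaH} and Lemma \ref{lemma-M}), ruling out any isomorphism with $\vec F$.

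\textbf{Golan's examples $K_1,K_2,K_3$.} This is the block I expect to be the main obstacle, since $K_1,K_2$ are concrete maximal subgroups containing $H=\langle x_0,x_1x_2x_1^{-1}\rangle$, and one must show $M_i$ (after $\theta$) is \emph{not} among the maximal subgroups containing $H$. The key structural remark is that $H$ projects onto all of $F/[F,F]=\IZ\oplus\IZ$ (since $x_0$ and $x_1x_2x_1^{-1}$ already realize both generators up to conjugacy), whereas $\pi(\CF)=2\IZ\oplus 2\IZ$ and $\pi(M_i)=2\IZ\oplus 2\IZ$ by the discussion preceding Lemma \ref{Lemmamax} and Proposition \ref{propKCF}; therefore $\theta^{-1}(M_i)$ has $\pi$-image a proper (index $4$ here, but in any case proper after pulling back along $\theta$) subgroup of $\IZ\oplus\IZ$, while any subgroup containing $H$ has $\pi$-image all of $\IZ\oplus\IZ$. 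Hence $H\not\leq\theta^{-1}(M_i)$, so $\theta^{-1}(M_i)$ cannot coincide with $K_1$ or $K_2$. For $K_3$ I would use Golan's theorem that $K_3$ acts transitively on the dyadic rationals $(0,1)\cap\IZ[1/2]$, while by Theorem \ref{theo-stab} the group $\CF=\stab(S_i)$ fixes the nonempty proper subset $S_i\subseteq(0,1)\cap\IZ[1/2]$ setwise; adding the single generator $x_0^2$, $x_1^2$, or $\sigma(x_1)^2$ to $\CF$ cannot restore transitivity because each of these also stabilizes the parity data underlying the $S_i$ — more precisely, I would check that $x_0^2, x_1^2, \sigma(x_1)^2$ each preserve the union $S_0\cup S_1\cup S_2=\{t\in(0,1)\cap\IZ[1/2] : |t|\in 2\IN\}$, so $M_i$ preserves this set and hence is not transitive on dyadics; pulling back by $\theta$ preserves the relevant orbit structure, so $\theta^{-1}(M_i)\neq K_3$. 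The delicate point throughout will be handling the $\theta$-transport carefully: $\theta$ is not the identity, so "fixes a set of dyadics" and "abelianization image" must be transported along $\theta$, and I would state a short lemma at the outset that $\theta$ maps rectangular subgroups to rectangular subgroups and records $\pi\circ\theta$ explicitly (computable from the formulas $\theta(x_0)=x_0x_1x_0^{-3}x_1^{-1}$, $\theta(x_1)=x_0x_1^2x_0^{-3}$ derived in Section \ref{sec4}), after which all the invariants above transport cleanly.
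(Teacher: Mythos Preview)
Your proposal contains a fundamental error that undermines the arguments for $\beta^{-1}(\vec F)$, $K_1$, $K_2$, and $K_3$. You claim that $\pi(\theta^{-1}(M_i))$ is a proper subgroup of $\IZ\oplus\IZ$, but this is false. Since $\theta:F\to K_{(2,2)}$ is an isomorphism and $\pi(M_i)=\pi(K_{(2,2)})=2\IZ\oplus 2\IZ$, the preimage $\theta^{-1}(M_i)$ surjects onto the abelianization of $F$. Concretely, from the formulas in Section~\ref{sec4} one computes $\pi(\theta(x_0))=-2\pi(x_0)$ and $\pi(\theta(x_1))=-2\pi(x_0)+2\pi(x_1)$, so the induced map $\IZ^2\to\IZ^2$ has image exactly $2\IZ\oplus 2\IZ$; the condition $\pi(\theta(g))\in 2\IZ\oplus 2\IZ$ therefore imposes no constraint on $g$, and $\pi(\theta^{-1}(M_i))=\IZ\oplus\IZ$. (This had to be so: a maximal subgroup of infinite index in $F$ must surject onto $F/[F,F]$, else it would sit inside a proper finite-index subgroup, contradicting maximality.) Your separation of $\theta^{-1}(M_i)$ from $K_1,K_2$ via $\pi$-images therefore collapses, and the fallback via isomorphism type is neither carried out nor obviously feasible.

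The dynamical arguments share a related gap: you establish properties of $\CF$ and $M_i$ as subgroups of $F$ acting on $[0,1]$, but the subgroups to be compared are $\theta^{-1}(M_i)$, and $\theta$ is not conjugation by a homeomorphism of $[0,1]$. Whether $\theta^{-1}(\CF)$ has a common fixed point in $(0,1)$, or whether $\theta^{-1}(M_i)$ preserves a proper subset of dyadics, are separate questions that you do not address; your proposed transport lemma (about rectangular subgroups and $\pi\circ\theta$) says nothing about fixed-point sets or orbit structure.

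The paper's proof is entirely different and purely computational: for each comparison it exhibits a specific element $h$ of the candidate subgroup (for instance $x_2\in\stab(t)$ for $t\in(0,3/4]$, $\sigma(x_1)\in\stab(t)$ for $t\in[1/2,1)$, $x_0x_1x_2^{-1}\in\beta^{-1}(\vec F)$, and $x_0\in H\leq K_1,K_2$ as well as $x_0\in K_3$), computes $\theta(h)$ explicitly in normal form, and then shows $\theta(h)\notin M_i$ by checking that $\theta(h)$ is not accepted by the core of $M_i$ (using the vertex labellings of Figures~\ref{labelsGENH}, \ref{labelsGENM}, \ref{labelsGENM2}). Since both $\theta^{-1}(M_i)$ and the candidate subgroup are maximal, a single element in one but not the other suffices.
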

\begin{proof}
First of all we show that $\theta^{-1}(M_0)$ does not stabilise any number in $[1/2,1)$. The element $\sigma(x_1)=x_0x_1x_0^{-2}\in \stab(t)$ for $t\in [1/2,1)$. 
The following computations show that $\sigma(x_1)\not\in \theta^{-1}(M_0)$ and, thus, $\theta^{-1}(M_0)\neq \stab(t)$ for all $t\in [1/2,1)$
\begin{align*}
\theta(\sigma(x_1))&=(x_0x_1x_4^{-1}x_0^{-3})(x_0x_1^2x_0^{-3})(x_0^3x_4x_1^{-1}x_0^{-1})(x_0^3x_4x_1^{-1}x_0^{-1})\\
&=x_0x_1x_4^{-1}x_0^{-2}(x_1^2x_4)x_1^{-1}x_0^2x_4x_1^{-1}x_0^{-1}\\
&=x_0x_1x_4^{-1}x_0^{-2}(x_1x_3x_0^2)x_4x_1^{-1}x_0^{-1}\\
&=x_0x_1x_4^{-1}(x_3x_5)x_4x_1^{-1}x_0^{-1}\\
&=x_0(x_1x_3x_4)x_1^{-1}x_0^{-1}\\
&=x_0x_2x_3x_0^{-1}=x_1x_2\not\in M_0
\end{align*}

The element $x_2=x_0^{-1}x_1x_0\in \stab(t)$ for all $t\in (0,3/4]$. Since
\begin{align*}
\theta(x_2)&=(x_0^3x_4x_1^{-1}x_0^{-1})(x_0x_1^2x_0^{-3})(x_0x_1x_4^{-1}x_0^{-3})\\
&=x_0^3x_4x_1(x_0^{-2}x_1)x_4^{-1}x_0^{-3}\\
&=x_0^3x_4x_1x_3x_0^{-2}x_4^{-1}x_0^{-3}\\
&=x_0^3x_4x_1x_3x_6^{-1}x_0^{-5}\\
&=x_0^3x_1x_5x_3x_6^{-1}x_0^{-5}\\
&=x_0^3x_1x_3x_6x_6^{-1}x_0^{-5}\\
&=x_0^3x_1x_3x_0^{-5}
\end{align*}
we see that $\theta(x_2)\in M_0$ if and only if $x_0^{-4}\theta(x_2)x_0^6=x_2x_4\in M_0$. By Lemma 
\ref{lemma-length-two} we know that $x_2x_4\not\in M_0$ and, thus, $\theta^{-1}(M_0)$ does not stabilise any number in $(0,3/4]$. In particular, $\theta^{-1}(M_0)$ is not a parabolic subgroup.

We now show that $\theta^{-1}(M_0)$ does not coincide with the maximal subgroup $\beta^{-1}(\vec{F})$ exhibited in \cite{GS2}. 
Recall that $x_0x_1$ is one of the generators of $\vec{F}$, \cite{GS}. 
It was shown in the proof of \cite[Theorem 3.15]{GS2} that $\beta^{-1}(x_0x_1)=x_0x_1x_2^{-1}=x_0x_1x_0^{-1}x_1x_0$.
We have that 
 \begin{align*}
\theta(x_0x_1x_2^{-1})&=(x_0x_1x_4^{-1}x_0^{-3})(x_0x_1^2x_0^{-3})(x_0^3x_4x_1^{-1}x_0^{-1})(x_0x_1^2x_0^{-3})(x_0x_1x_4^{-1}x_0^{-3})\\
&=x_0x_1x_4^{-1}(x_0^{-2}x_1^2)x_4x_1x_0^{-2}x_1x_4^{-1}x_0^{-3}\\
&=x_0x_1x_4^{-1}x_3^2(x_0^{-2}x_4)x_1x_0^{-2}x_1x_4^{-1}x_0^{-3}\\
&=x_0x_1x_4^{-1}x_3^2x_6(x_0^{-2}x_1)x_0^{-2}x_1x_4^{-1}x_0^{-3}\\
&=x_0x_1x_4^{-1}x_3^2x_6x_3(x_0^{-4}x_1)x_4^{-1}x_0^{-3}\\
&=x_0x_1x_4^{-1}x_3^2x_6x_3x_5(x_0^{-4}x_4^{-1})x_0^{-3}\\
&=x_0x_1x_4^{-1}x_3^2x_6x_3x_5x_8^{-1}x_0^{-7}\\
&=x_0x_1x_4^{-1}x_3^3x_7x_5x_8^{-1}x_0^{-7}\\
&=x_0x_1x_4^{-1}x_3^3x_5x_8x_8^{-1}x_0^{-7}\\
&=x_0x_1x_4^{-1}x_3^3x_5x_0^{-7}\\
&=x_0x_1x_3^3x_5x_8^{-1}x_0^{-7}
\end{align*}
where we used that $x_n^{-1}x_k=x_kx_{n+1}^{-1}$ and $x_k^{-1}x_n=x_{n+1}x_k^{-1}$ for all $k<n$.
The following figure shows that $\theta(x_0x_1x_2^{-1})$ does not belong to $M_0$ and, therefore, $\theta^{-1}(M_0)\neq \beta^{-1}(\vec{F})$
\[
\begin{tikzpicture}[x=.75cm, y=.75cm,
    every edge/.style={
        draw,
      postaction={decorate,
                    decoration={markings}
                   }
        }
]

\node at (-5,-.25) {$\scalebox{1}{$\theta(x_0x_1x_2^{-1})=x_0x_1x_3^3x_5x_8^{-1}x_0^{-7}=$}$};
 
\draw[thick] (0,0)--(5,5)--(10,0)--(5,-5)--(0,0); 
\draw[thick] (5,5)--(5,5.5);
\draw[thick] (5,-5)--(5,-5.5);
\draw[thick] (1,1)--(2,0)--(1,-1);
\draw[thick] (.5,-.5)--(1.5,.5);
\draw[thick] (2,-2)--(4,0)--(3.5,.5);
\draw[thick] (2.5,-2.5)--(5.5,.5);
\draw[thick] (3,-3)--(6,0)--(4.5,1.5);
\draw[thick] (6.5,3.5)--(1.5,-1.5);
\draw[thick] (5,2)--(7,0)--(3.5,-3.5);
\draw[thick] (8.5,-.5)--(9.5,.5);
\draw[thick] (9,-1)--(8,0)--(9,1);


\node at (.75,.65) {\rotatebox[origin=tr]{-135}{$\scalebox{.75}{$>$}$}};
\node at (1.25,.75) {\rotatebox[origin=tr]{-45}{$\scalebox{.75}{$>$}$}}; 

\node at (1.25,.15) {\rotatebox[origin=tr]{-135}{$\scalebox{.75}{$>$}$}};
\node at (1.75,.25) {\rotatebox[origin=tr]{-45}{$\scalebox{.75}{$>$}$}}; 

\node at (3.25,.15) {\rotatebox[origin=tr]{-135}{$\scalebox{.75}{$>$}$}};
\node at (3.75,.25) {\rotatebox[origin=tr]{-45}{$\scalebox{.75}{$>$}$}}; 

\node at (5.25,.15) {\rotatebox[origin=tr]{-135}{$\scalebox{.75}{$>$}$}};
\node at (5.75,.25) {\rotatebox[origin=tr]{-45}{$\scalebox{.75}{$>$}$}}; 

\node at (9.25,.15) {\rotatebox[origin=tr]{-135}{$\scalebox{.75}{$>$}$}};
\node at (9.75,.25) {\rotatebox[origin=tr]{-45}{$\scalebox{.75}{$>$}$}}; 
 
\node at (8.75,.65) {\rotatebox[origin=tr]{-135}{$\scalebox{.75}{$>$}$}};
\node at (9.25,.75) {\rotatebox[origin=tr]{-45}{$\scalebox{.75}{$>$}$}}; 

\node at (4.75,1.65) {\rotatebox[origin=tr]{-135}{$\scalebox{.75}{$>$}$}};
\node at (5.25,1.75) {\rotatebox[origin=tr]{-45}{$\scalebox{.75}{$>$}$}}; 

\node at (4.25,1.15) {\rotatebox[origin=tr]{-135}{$\scalebox{.75}{$>$}$}};
\node at (4.75,1.25) {\rotatebox[origin=tr]{-45}{$\scalebox{.75}{$>$}$}}; 

\node at (6.25,3.15) {\rotatebox[origin=tr]{-135}{$\scalebox{.75}{$>$}$}};
\node at (6.75,3.25) {\rotatebox[origin=tr]{-45}{$\scalebox{.75}{$>$}$}}; 

\node at (4.75,4.65) {\rotatebox[origin=tr]{-135}{$\scalebox{.75}{$>$}$}};
\node at (5.25,4.75) {\rotatebox[origin=tr]{-45}{$\scalebox{.75}{$>$}$}}; 


\node at (.25,-.25) {\rotatebox[origin=tr]{135}{$\scalebox{.75}{$>$}$}};
\node at (.75,-.25) {\rotatebox[origin=tr]{45}{$\scalebox{.75}{$>$}$}}; 

\node at (.75,-.75) {\rotatebox[origin=tr]{135}{$\scalebox{.75}{$>$}$}};
\node at (1.25,-.75) {\rotatebox[origin=tr]{45}{$\scalebox{.75}{$>$}$}}; 

\node at (1.25,-1.25) {\rotatebox[origin=tr]{135}{$\scalebox{.75}{$>$}$}};
\node at (1.75,-1.25) {\rotatebox[origin=tr]{45}{$\scalebox{.75}{$>$}$}}; 

\node at (1.75,-1.75) {\rotatebox[origin=tr]{135}{$\scalebox{.75}{$>$}$}};
\node at (2.25,-1.75) {\rotatebox[origin=tr]{45}{$\scalebox{.75}{$>$}$}}; 

\node at (2.25,-2.25) {\rotatebox[origin=tr]{135}{$\scalebox{.75}{$>$}$}};
\node at (2.75,-2.25) {\rotatebox[origin=tr]{45}{$\scalebox{.75}{$>$}$}}; 

\node at (2.75,-2.75) {\rotatebox[origin=tr]{135}{$\scalebox{.75}{$>$}$}};
\node at (3.25,-2.75) {\rotatebox[origin=tr]{45}{$\scalebox{.75}{$>$}$}}; 

\node at (3.25,-3.25) {\rotatebox[origin=tr]{135}{$\scalebox{.75}{$>$}$}};
\node at (3.75,-3.25) {\rotatebox[origin=tr]{45}{$\scalebox{.75}{$>$}$}}; 

\node at (4.75,-4.75) {\rotatebox[origin=tr]{135}{$\scalebox{.75}{$>$}$}};
\node at (5.25,-4.75) {\rotatebox[origin=tr]{45}{$\scalebox{.75}{$>$}$}}; 

\node at (8.25,-.25) {\rotatebox[origin=tr]{135}{$\scalebox{.75}{$>$}$}};
\node at (8.75,-.25) {\rotatebox[origin=tr]{45}{$\scalebox{.75}{$>$}$}}; 

\node at (8.75,-.75) {\rotatebox[origin=tr]{135}{$\scalebox{.75}{$>$}$}};
\node at (9.25,-.75) {\rotatebox[origin=tr]{45}{$\scalebox{.75}{$>$}$}};

  \fill (0,0)  circle[radius=1.5pt];
  \fill (1,0)  circle[radius=1.5pt];  
  \fill (2,0)  circle[radius=1.5pt];
  \fill (3,0)  circle[radius=1.5pt];
  \fill (4,0)  circle[radius=1.5pt]; 
  \fill (5,0)  circle[radius=1.5pt];
  \fill (6,0)  circle[radius=1.5pt];  
  \fill (7,0)  circle[radius=1.5pt];
  \fill (8,0)  circle[radius=1.5pt];
  \fill (9,0)  circle[radius=1.5pt]; 
  \fill (10,0)  circle[radius=1.5pt];

 \node[red] at (4.8,5.1) {$\scalebox{.75}{$1$}$};
 \node[red] at (.8,1.1) {$\scalebox{.75}{$2$}$};
 \node[red] at (-.3,0) {$\scalebox{.75}{$4$}$};
 \node[red] at (6.8,3.7) {$\scalebox{.75}{$3$}$};
 \node[red] at (9.8,.7) {$\scalebox{.75}{$3$}$};
 \node[red] at (9.3,1.2) {$\scalebox{.75}{$7$}$};
 \node[red] at (10.3,0) {$\scalebox{.75}{$7$}$};
 \node[green] at (8.7,0) {$\scalebox{.75}{$6$}$};
 \node[green] at (7.7,0) {$\scalebox{.75}{$5$}$};
 \node[green] at (6.7,0) {$\scalebox{.75}{$8$}$};
 \node[red] at (5.6,0) {$\scalebox{.75}{$6$}$};
 \node[red] at (4.7,0) {$\scalebox{.75}{$5$}$};
 \node[green] at (3.7,0) {$\scalebox{.75}{$8$}$};
 \node[green] at (2.7,0) {$\scalebox{.75}{$6$}$};
 \node[green] at (1.7,0) {$\scalebox{.75}{$5$}$};
 \node[green] at (.7,0) {$\scalebox{.75}{$8$}$};
 \node[red] at (1.6,.75) {$\scalebox{.75}{$5$}$};
 \node[red] at (3.25,.75) {$\scalebox{.75}{$6$}$};
 \node[red] at (5.75,.75) {$\scalebox{.75}{$8$}$};
 \node[red] at (4.25,1.75) {$\scalebox{.75}{$6$}$};
 \node[red] at (4.75,2.25) {$\scalebox{.75}{$6$}$};

 \node[red] at (4.8,-5.1) {$\scalebox{.75}{$1$}$};
 \node[red] at (.1,-.5) {$\scalebox{.75}{$2$}$};
 \node[red] at (.6,-1) {$\scalebox{.75}{$4$}$};
 \node[red] at (1.1,-1.5) {$\scalebox{.75}{$2$}$};
 \node[red] at (1.6,-2) {$\scalebox{.75}{$4$}$};
 \node[red] at (2.1,-2.5) {$\scalebox{.75}{$2$}$};
 \node[red] at (2.6,-3) {$\scalebox{.75}{$4$}$};
 \node[red] at (3.1,-3.5) {$\scalebox{.75}{$2$}$};
 \node[red] at (9.3,-1.2) {$\scalebox{.75}{$3$}$};
 \node[red] at (8.3,-.6) {$\scalebox{.75}{$6$}$};

\node at (0,-1.2) {$\;$};

\end{tikzpicture}
\]
Now we take care of $\theta^{-1}(M_1)$. First, we show that it does not stabilise any number in $(0,3/4]$. 
The following computations show that $\theta(x_2)=x_0^3x_1x_3x_0^{-5}\not\in M_1$ and, thus, $\theta^{-1}(M_1)\neq \stab(t)$ for all $t\in (0,3/4]$

 \[
 \begin{tikzpicture}[x=.75cm, y=.75cm,
    every edge/.style={
        draw,
      postaction={decorate,
                    decoration={markings}
                   }
        }
]

\node at (-3.5,-.25) {$\scalebox{1}{$\theta(x_2)=$}$};
\node at (5.5,-.25) {$\scalebox{1}{$\not\in M_1$}$};
 
\draw[thick] (1,3)--(1,3.5); 
\draw[thick] (1,-3)--(1,-3.5); 
\draw[thick] (-2,0)--(1,3)--(4,0)--(1,-3)--(-2,0);

\draw[thick] (-1,1)--(0,0)--(-1,-1); 
\draw[thick] (-.5,.5)--(-1.5,-.5); 
\draw[thick] (0,2)--(2,0)--(0,-2); 
\draw[thick] (1.5,.5)--(-.5,-1.5); 
\draw[thick] (.5,2.5)--(3,0)--(.5,-2.5);

\node at (.25,2.15) {\rotatebox[origin=tr]{-135}{$\scalebox{.75}{$>$}$}};%
\node at (.75,2.25) {\rotatebox[origin=tr]{-45}{$\scalebox{.75}{$>$}$}};%
\node at (-.75,.15) {\rotatebox[origin=tr]{-135}{$\scalebox{.75}{$>$}$}};%
\node at (-.25,.25) {\rotatebox[origin=tr]{-45}{$\scalebox{.75}{$>$}$}};%
\node at (.75,2.65) {\rotatebox[origin=tr]{-135}{$\scalebox{.75}{$>$}$}};%
\node at (1.25,2.75) {\rotatebox[origin=tr]{-45}{$\scalebox{.75}{$>$}$}};
\node at (1.25,.15) {\rotatebox[origin=tr]{-135}{$\scalebox{.75}{$>$}$}};
\node at (-.25,1.65) {\rotatebox[origin=tr]{-135}{$\scalebox{.75}{$>$}$}};%
\node at (1.75,.25) {\rotatebox[origin=tr]{-45}{$\scalebox{.75}{$>$}$}};%
\node at (.25,1.75) {\rotatebox[origin=tr]{-45}{$\scalebox{.75}{$>$}$}};
\node at (-.75,.75) {\rotatebox[origin=tr]{-45}{$\scalebox{.75}{$>$}$}};%
\node at (-1.25,.65) {\rotatebox[origin=tr]{-135}{$\scalebox{.75}{$>$}$}};%

\node at (-.25,-1.75) {\rotatebox[origin=tr]{135}{$\scalebox{.75}{$>$}$}};
\node at (-.75,-1.25) {\rotatebox[origin=tr]{135}{$\scalebox{.75}{$>$}$}};
\node at (-1.25,-.75) {\rotatebox[origin=tr]{135}{$\scalebox{.75}{$>$}$}};
\node at (-1.75,-.25) {\rotatebox[origin=tr]{135}{$\scalebox{.75}{$>$}$}};
\node at (.75,-2.75) {\rotatebox[origin=tr]{135}{$\scalebox{.75}{$>$}$}};
\node at (-.25,-1.25) {\rotatebox[origin=tr]{45}{$\scalebox{.75}{$>$}$}};
\node at (-.75,-.75) {\rotatebox[origin=tr]{45}{$\scalebox{.75}{$>$}$}};
\node at (.25,-2.25) {\rotatebox[origin=tr]{135}{$\scalebox{.75}{$>$}$}};
\node at (1.25,-2.75) {\rotatebox[origin=tr]{45}{$\scalebox{.75}{$>$}$}};
\node at (.75,-2.25) {\rotatebox[origin=tr]{45}{$\scalebox{.75}{$>$}$}};
\node at (-1.25,-.25) {\rotatebox[origin=tr]{45}{$\scalebox{.75}{$>$}$}};
\node at (.25,-1.75) {\rotatebox[origin=tr]{45}{$\scalebox{.75}{$>$}$}};

  \fill (0,0)  circle[radius=1.5pt];
  \fill (1,0)  circle[radius=1.5pt];  
  \fill (2,0)  circle[radius=1.5pt];
  \fill (3,0)  circle[radius=1.5pt];
  \fill (4,0)  circle[radius=1.5pt];
  \fill (-1,0)  circle[radius=1.5pt];
  \fill (-2,0)  circle[radius=1.5pt];

 \node[red] at (-.3,.7) {$\scalebox{.75}{$6$}$};
 \node[red] at (-.2,-2.3) {$\scalebox{.75}{$4$}$};
 \node[red] at (-.3,2.3) {$\scalebox{.75}{$4$}$};
 \node[green] at (-.3,0) {$\scalebox{.75}{$8$}$};
 \node[red] at (4.3,0) {$\scalebox{.75}{$7$}$};
 \node[red] at (-.8,-1.7) {$\scalebox{.75}{$2$}$};
 \node[red] at (.2,2.7) {$\scalebox{.75}{$2$}$};
  \node[green] at (1.7,0) {$\scalebox{.75}{$8$}$};
  \node[green] at (.6,0) {$\scalebox{.75}{$5$}$};
  \node[red] at (3.3,0) {$\scalebox{.75}{$6$}$};
  \node[red] at (-1.8,-.8) {$\scalebox{.75}{$2$}$};
 \node[red] at (-1.3,-1.2) {$\scalebox{.75}{$4$}$};
 \node[red] at (1.75,.8) {$\scalebox{.75}{$6$}$};
 \node[green] at (1.7,0) {$\scalebox{.75}{$8$}$};
 \node[green] at (-1.3,0) {$\scalebox{.75}{$5$}$};
 \node[red] at (-2.3,0) {$\scalebox{.75}{$4$}$};
 \node[red] at (.7,-3.1) {$\scalebox{.75}{$1$}$};
 \node[red] at (.7,3.1) {$\scalebox{.75}{$1$}$};
 \node[red] at (-1.3,1.2) {$\scalebox{.75}{$2$}$};
 \node[red] at (.3,-2.7) {$\scalebox{.75}{$2$}$};

\node at (0,-1.2) {$\;$};

\end{tikzpicture}
 \]
 Similarly, $\theta^{-1}(M_1)$ is different from $\stab(t)$ for all $t\in [1/4,1)$ because
 \begin{align*}
 \theta(\sigma(x_2))&= \theta(\sigma(x_0^{-1}))  \theta(\sigma(x_1))  \theta(\sigma(x_0))\\
 &=\theta(x_0) x_1x_2 \theta(x_0)^{-1}\\
 &=(x_0x_1x_4^{-1}x_0^{-3})(x_1x_2)(x_0^{3}x_4x_1^{-1}x_0^{-1})\\
 &=x_0x_1x_4^{-1}x_4x_5x_4x_1^{-1}x_0^{-1}\\
 &=x_0x_1x_5x_4x_1^{-1}x_0^{-1}\\
 &=x_0x_4x_3x_1x_1^{-1}x_0^{-1}\\
 &=x_0x_4x_3x_0^{-1}=x_3x_2=x_2x_4\not\in M_1\\
 \end{align*}
 The following figure shows that $\theta(x_0x_1x_2^{-1})$ does not belong to $M_1$
\[
\begin{tikzpicture}[x=.75cm, y=.75cm,
    every edge/.style={
        draw,
      postaction={decorate,
                    decoration={markings}
                   }
        }
]

\node at (-5,-.25) {$\scalebox{1}{$\theta(x_0x_1x_2^{-1})=x_0x_1x_3^3x_5x_8^{-1}x_0^{-7}=$}$};
 
\draw[thick] (0,0)--(5,5)--(10,0)--(5,-5)--(0,0); 
\draw[thick] (5,5)--(5,5.5);
\draw[thick] (5,-5)--(5,-5.5);
\draw[thick] (1,1)--(2,0)--(1,-1);
\draw[thick] (.5,-.5)--(1.5,.5);
\draw[thick] (2,-2)--(4,0)--(3.5,.5);
\draw[thick] (2.5,-2.5)--(5.5,.5);
\draw[thick] (3,-3)--(6,0)--(4.5,1.5);
\draw[thick] (6.5,3.5)--(1.5,-1.5);
\draw[thick] (5,2)--(7,0)--(3.5,-3.5);
\draw[thick] (8.5,-.5)--(9.5,.5);
\draw[thick] (9,-1)--(8,0)--(9,1);


\node at (.75,.65) {\rotatebox[origin=tr]{-135}{$\scalebox{.75}{$>$}$}};
\node at (1.25,.75) {\rotatebox[origin=tr]{-45}{$\scalebox{.75}{$>$}$}}; 

\node at (1.25,.15) {\rotatebox[origin=tr]{-135}{$\scalebox{.75}{$>$}$}};
\node at (1.75,.25) {\rotatebox[origin=tr]{-45}{$\scalebox{.75}{$>$}$}}; 

\node at (3.25,.15) {\rotatebox[origin=tr]{-135}{$\scalebox{.75}{$>$}$}};
\node at (3.75,.25) {\rotatebox[origin=tr]{-45}{$\scalebox{.75}{$>$}$}}; 

\node at (5.25,.15) {\rotatebox[origin=tr]{-135}{$\scalebox{.75}{$>$}$}};
\node at (5.75,.25) {\rotatebox[origin=tr]{-45}{$\scalebox{.75}{$>$}$}}; 

\node at (9.25,.15) {\rotatebox[origin=tr]{-135}{$\scalebox{.75}{$>$}$}};
\node at (9.75,.25) {\rotatebox[origin=tr]{-45}{$\scalebox{.75}{$>$}$}}; 
 
\node at (8.75,.65) {\rotatebox[origin=tr]{-135}{$\scalebox{.75}{$>$}$}};
\node at (9.25,.75) {\rotatebox[origin=tr]{-45}{$\scalebox{.75}{$>$}$}}; 

\node at (4.75,1.65) {\rotatebox[origin=tr]{-135}{$\scalebox{.75}{$>$}$}};
\node at (5.25,1.75) {\rotatebox[origin=tr]{-45}{$\scalebox{.75}{$>$}$}}; 

\node at (4.25,1.15) {\rotatebox[origin=tr]{-135}{$\scalebox{.75}{$>$}$}};
\node at (4.75,1.25) {\rotatebox[origin=tr]{-45}{$\scalebox{.75}{$>$}$}}; 

\node at (6.25,3.15) {\rotatebox[origin=tr]{-135}{$\scalebox{.75}{$>$}$}};
\node at (6.75,3.25) {\rotatebox[origin=tr]{-45}{$\scalebox{.75}{$>$}$}}; 

\node at (4.75,4.65) {\rotatebox[origin=tr]{-135}{$\scalebox{.75}{$>$}$}};
\node at (5.25,4.75) {\rotatebox[origin=tr]{-45}{$\scalebox{.75}{$>$}$}}; 


\node at (.25,-.25) {\rotatebox[origin=tr]{135}{$\scalebox{.75}{$>$}$}};
\node at (.75,-.25) {\rotatebox[origin=tr]{45}{$\scalebox{.75}{$>$}$}}; 

\node at (.75,-.75) {\rotatebox[origin=tr]{135}{$\scalebox{.75}{$>$}$}};
\node at (1.25,-.75) {\rotatebox[origin=tr]{45}{$\scalebox{.75}{$>$}$}}; 

\node at (1.25,-1.25) {\rotatebox[origin=tr]{135}{$\scalebox{.75}{$>$}$}};
\node at (1.75,-1.25) {\rotatebox[origin=tr]{45}{$\scalebox{.75}{$>$}$}}; 

\node at (1.75,-1.75) {\rotatebox[origin=tr]{135}{$\scalebox{.75}{$>$}$}};
\node at (2.25,-1.75) {\rotatebox[origin=tr]{45}{$\scalebox{.75}{$>$}$}}; 

\node at (2.25,-2.25) {\rotatebox[origin=tr]{135}{$\scalebox{.75}{$>$}$}};
\node at (2.75,-2.25) {\rotatebox[origin=tr]{45}{$\scalebox{.75}{$>$}$}}; 

\node at (2.75,-2.75) {\rotatebox[origin=tr]{135}{$\scalebox{.75}{$>$}$}};
\node at (3.25,-2.75) {\rotatebox[origin=tr]{45}{$\scalebox{.75}{$>$}$}}; 

\node at (3.25,-3.25) {\rotatebox[origin=tr]{135}{$\scalebox{.75}{$>$}$}};
\node at (3.75,-3.25) {\rotatebox[origin=tr]{45}{$\scalebox{.75}{$>$}$}}; 

\node at (4.75,-4.75) {\rotatebox[origin=tr]{135}{$\scalebox{.75}{$>$}$}};
\node at (5.25,-4.75) {\rotatebox[origin=tr]{45}{$\scalebox{.75}{$>$}$}}; 

\node at (8.25,-.25) {\rotatebox[origin=tr]{135}{$\scalebox{.75}{$>$}$}};
\node at (8.75,-.25) {\rotatebox[origin=tr]{45}{$\scalebox{.75}{$>$}$}}; 

\node at (8.75,-.75) {\rotatebox[origin=tr]{135}{$\scalebox{.75}{$>$}$}};
\node at (9.25,-.75) {\rotatebox[origin=tr]{45}{$\scalebox{.75}{$>$}$}};

  \fill (0,0)  circle[radius=1.5pt];
  \fill (1,0)  circle[radius=1.5pt];  
  \fill (2,0)  circle[radius=1.5pt];
  \fill (3,0)  circle[radius=1.5pt];
  \fill (4,0)  circle[radius=1.5pt]; 
  \fill (5,0)  circle[radius=1.5pt];
  \fill (6,0)  circle[radius=1.5pt];  
  \fill (7,0)  circle[radius=1.5pt];
  \fill (8,0)  circle[radius=1.5pt];
  \fill (9,0)  circle[radius=1.5pt]; 
  \fill (10,0)  circle[radius=1.5pt];

 \node[red] at (4.8,5.1) {$\scalebox{.75}{$1$}$};
 \node[red] at (.8,1.1) {$\scalebox{.75}{$2$}$};
 \node[red] at (-.3,0) {$\scalebox{.75}{$4$}$};
 \node[red] at (6.8,3.7) {$\scalebox{.75}{$3$}$};
 \node[red] at (9.8,.7) {$\scalebox{.75}{$3$}$};
 \node[red] at (9.3,1.2) {$\scalebox{.75}{$7$}$};
 \node[red] at (10.3,0) {$\scalebox{.75}{$7$}$};
 \node[red] at (8.7,0) {$\scalebox{.75}{$5$}$};
 \node[green] at (7.7,0) {$\scalebox{.75}{$8$}$};
 \node[green] at (6.7,0) {$\scalebox{.75}{$5$}$};
 \node[red] at (5.6,0) {$\scalebox{.75}{$6$}$};
 \node[green] at (4.7,0) {$\scalebox{.75}{$8$}$};
 \node[green] at (3.7,0) {$\scalebox{.75}{$5$}$};
 \node[red] at (2.7,0) {$\scalebox{.75}{$6$}$};
 \node[green] at (1.7,0) {$\scalebox{.75}{$8$}$};
 \node[green] at (.7,0) {$\scalebox{.75}{$5$}$};
 \node[red] at (1.6,.75) {$\scalebox{.75}{$6$}$};
 \node[red] at (3.25,.75) {$\scalebox{.75}{$5$}$};
 \node[red] at (5.75,.75) {$\scalebox{.75}{$8$}$};
 \node[red] at (4.25,1.75) {$\scalebox{.75}{$6$}$};
 \node[red] at (4.75,2.25) {$\scalebox{.75}{$5$}$};

 \node[red] at (4.8,-5.1) {$\scalebox{.75}{$1$}$};
 \node[red] at (.1,-.5) {$\scalebox{.75}{$2$}$};
 \node[red] at (.6,-1) {$\scalebox{.75}{$4$}$};
 \node[red] at (1.1,-1.5) {$\scalebox{.75}{$2$}$};
 \node[red] at (1.6,-2) {$\scalebox{.75}{$4$}$};
 \node[red] at (2.1,-2.5) {$\scalebox{.75}{$2$}$};
 \node[red] at (2.6,-3) {$\scalebox{.75}{$4$}$};
 \node[red] at (3.1,-3.5) {$\scalebox{.75}{$2$}$};
 \node[red] at (9.3,-1.2) {$\scalebox{.75}{$3$}$};
 \node[red] at (8.3,-.6) {$\scalebox{.75}{$5$}$};

\node at (0,-1.2) {$\;$};

\end{tikzpicture}
\]
This means that $\theta^{-1}(M_1)\neq \beta^{-1}(\vec{F})$.

Now, we deal with $\theta^{-1}(M_2)$.
The element $x_2\in \stab(t)$ for $t\in (0,3/4]$. 
The following computations show that 
$\theta^{-1}(M_2)\neq \stab(t)$ for all $t\in (0,3/4]$
 \[
 \begin{tikzpicture}[x=.75cm, y=.75cm,
    every edge/.style={
        draw,
      postaction={decorate,
                    decoration={markings}
                   }
        }
]

\node at (-3.5,-.25) {$\scalebox{1}{$\theta(x_2)=$}$};
\node at (5.5,-.25) {$\scalebox{1}{$\not\in M_2$}$};
 
\draw[thick] (1,3)--(1,3.5); 
\draw[thick] (1,-3)--(1,-3.5); 
\draw[thick] (-2,0)--(1,3)--(4,0)--(1,-3)--(-2,0);

\draw[thick] (-1,1)--(0,0)--(-1,-1); 
\draw[thick] (-.5,.5)--(-1.5,-.5); 
\draw[thick] (0,2)--(2,0)--(0,-2); 
\draw[thick] (1.5,.5)--(-.5,-1.5); 
\draw[thick] (.5,2.5)--(3,0)--(.5,-2.5);

\node at (.25,2.15) {\rotatebox[origin=tr]{-135}{$\scalebox{.75}{$>$}$}};%
\node at (.75,2.25) {\rotatebox[origin=tr]{-45}{$\scalebox{.75}{$>$}$}};%
\node at (-.75,.15) {\rotatebox[origin=tr]{-135}{$\scalebox{.75}{$>$}$}};%
\node at (-.25,.25) {\rotatebox[origin=tr]{-45}{$\scalebox{.75}{$>$}$}};%
\node at (.75,2.65) {\rotatebox[origin=tr]{-135}{$\scalebox{.75}{$>$}$}};%
\node at (1.25,2.75) {\rotatebox[origin=tr]{-45}{$\scalebox{.75}{$>$}$}};
\node at (1.25,.15) {\rotatebox[origin=tr]{-135}{$\scalebox{.75}{$>$}$}};
\node at (-.25,1.65) {\rotatebox[origin=tr]{-135}{$\scalebox{.75}{$>$}$}};%
\node at (1.75,.25) {\rotatebox[origin=tr]{-45}{$\scalebox{.75}{$>$}$}};%
\node at (.25,1.75) {\rotatebox[origin=tr]{-45}{$\scalebox{.75}{$>$}$}};
\node at (-.75,.75) {\rotatebox[origin=tr]{-45}{$\scalebox{.75}{$>$}$}};%
\node at (-1.25,.65) {\rotatebox[origin=tr]{-135}{$\scalebox{.75}{$>$}$}};%

\node at (-.25,-1.75) {\rotatebox[origin=tr]{135}{$\scalebox{.75}{$>$}$}};
\node at (-.75,-1.25) {\rotatebox[origin=tr]{135}{$\scalebox{.75}{$>$}$}};
\node at (-1.25,-.75) {\rotatebox[origin=tr]{135}{$\scalebox{.75}{$>$}$}};
\node at (-1.75,-.25) {\rotatebox[origin=tr]{135}{$\scalebox{.75}{$>$}$}};
\node at (.75,-2.75) {\rotatebox[origin=tr]{135}{$\scalebox{.75}{$>$}$}};
\node at (-.25,-1.25) {\rotatebox[origin=tr]{45}{$\scalebox{.75}{$>$}$}};
\node at (-.75,-.75) {\rotatebox[origin=tr]{45}{$\scalebox{.75}{$>$}$}};
\node at (.25,-2.25) {\rotatebox[origin=tr]{135}{$\scalebox{.75}{$>$}$}};
\node at (1.25,-2.75) {\rotatebox[origin=tr]{45}{$\scalebox{.75}{$>$}$}};
\node at (.75,-2.25) {\rotatebox[origin=tr]{45}{$\scalebox{.75}{$>$}$}};
\node at (-1.25,-.25) {\rotatebox[origin=tr]{45}{$\scalebox{.75}{$>$}$}};
\node at (.25,-1.75) {\rotatebox[origin=tr]{45}{$\scalebox{.75}{$>$}$}};

  \fill (0,0)  circle[radius=1.5pt];
  \fill (1,0)  circle[radius=1.5pt];  
  \fill (2,0)  circle[radius=1.5pt];
  \fill (3,0)  circle[radius=1.5pt];
  \fill (4,0)  circle[radius=1.5pt];
  \fill (-1,0)  circle[radius=1.5pt];
  \fill (-2,0)  circle[radius=1.5pt];

 \node[red] at (-.3,.7) {$\scalebox{.75}{$7$}$};
 \node[red] at (-.2,-2.3) {$\scalebox{.75}{$6$}$};
 \node[red] at (-.3,2.3) {$\scalebox{.75}{$6$}$};
 \node[green] at (-.3,0) {$\scalebox{.75}{$4$}$};
 \node[red] at (4.3,0) {$\scalebox{.75}{$3$}$};
 \node[red] at (-.8,-1.7) {$\scalebox{.75}{$2$}$};
 \node[red] at (.2,2.7) {$\scalebox{.75}{$2$}$};
  \node[red] at (1.7,0) {$\scalebox{.75}{$8$}$};
  \node[green] at (.6,0) {$\scalebox{.75}{$4$}$};
  \node[red] at (3.3,0) {$\scalebox{.75}{$7$}$};
  \node[red] at (-1.8,-.8) {$\scalebox{.75}{$2$}$};
 \node[red] at (-1.3,-1.2) {$\scalebox{.75}{$6$}$};
 \node[red] at (1.75,.8) {$\scalebox{.75}{$8$}$};
  \node[red] at (-1.3,0) {$\scalebox{.75}{$7$}$};
 \node[red] at (-2.3,0) {$\scalebox{.75}{$6$}$};
 \node[red] at (.7,-3.1) {$\scalebox{.75}{$1$}$};
 \node[red] at (.7,3.1) {$\scalebox{.75}{$1$}$};
 \node[red] at (-1.3,1.2) {$\scalebox{.75}{$2$}$};
 \node[red] at (.3,-2.7) {$\scalebox{.75}{$2$}$};

\node at (0,-1.2) {$\;$};

\end{tikzpicture}
 \]
Since
 \[
 \begin{tikzpicture}[x=.75cm, y=.75cm,
    every edge/.style={
        draw,
      postaction={decorate,
                    decoration={markings}
                   }
        }
]

\node at (-4.75,-.25) {$\scalebox{1}{$\theta(\sigma(x_2))=x_2x_4=$}$};
\node at (5.5,-.25) {$\scalebox{1}{$\not\in M_2$}$};
 
\draw[thick] (1,3)--(1,3.5); 
\draw[thick] (1,-3)--(1,-3.5); 
\draw[thick] (-2,0)--(1,3)--(4,0)--(1,-3)--(-2,0);

\draw[thick] (1.5,2.5)--(-1,0)--(1.5,-2.5); 
\draw[thick] (2,2)--(.5,.5)--(2.5,-1.5); 
\draw[thick] (.5,.5)--(0,0)--(2,-2); 
\draw[thick] (3,-1)--(2,0)--(3,1); 
\draw[thick] (2.5,.5)--(3.5,-.5);

\node at (2.75,.65) {\rotatebox[origin=tr]{-135}{$\scalebox{.75}{$>$}$}};%
\node at (3.25,.75) {\rotatebox[origin=tr]{-45}{$\scalebox{.75}{$>$}$}};%
\node at (1.25,2.15) {\rotatebox[origin=tr]{-135}{$\scalebox{.75}{$>$}$}};%
\node at (1.75,2.25) {\rotatebox[origin=tr]{-45}{$\scalebox{.75}{$>$}$}};%
\node at (.75,2.65) {\rotatebox[origin=tr]{-135}{$\scalebox{.75}{$>$}$}};%
\node at (1.25,2.75) {\rotatebox[origin=tr]{-45}{$\scalebox{.75}{$>$}$}};
\node at (2.25,.15) {\rotatebox[origin=tr]{-135}{$\scalebox{.75}{$>$}$}};
\node at (.25,.15) {\rotatebox[origin=tr]{-135}{$\scalebox{.75}{$>$}$}};%
\node at (.75,.25) {\rotatebox[origin=tr]{-45}{$\scalebox{.75}{$>$}$}};%
\node at (2.75,.25) {\rotatebox[origin=tr]{-45}{$\scalebox{.75}{$>$}$}};
\node at (2.25,1.75) {\rotatebox[origin=tr]{-45}{$\scalebox{.75}{$>$}$}};%
\node at (1.75,1.65) {\rotatebox[origin=tr]{-135}{$\scalebox{.75}{$>$}$}};%

\node at (1.75,-1.75) {\rotatebox[origin=tr]{135}{$\scalebox{.75}{$>$}$}};
\node at (2.25,-1.25) {\rotatebox[origin=tr]{135}{$\scalebox{.75}{$>$}$}};
\node at (2.75,-.75) {\rotatebox[origin=tr]{135}{$\scalebox{.75}{$>$}$}};
\node at (.75,-2.75) {\rotatebox[origin=tr]{135}{$\scalebox{.75}{$>$}$}};
\node at (2.75,-1.25) {\rotatebox[origin=tr]{45}{$\scalebox{.75}{$>$}$}};
\node at (3.25,-.75) {\rotatebox[origin=tr]{45}{$\scalebox{.75}{$>$}$}};
\node at (1.25,-2.25) {\rotatebox[origin=tr]{135}{$\scalebox{.75}{$>$}$}};
\node at (1.25,-2.75) {\rotatebox[origin=tr]{45}{$\scalebox{.75}{$>$}$}};
\node at (1.75,-2.25) {\rotatebox[origin=tr]{45}{$\scalebox{.75}{$>$}$}};
\node at (3.75,-.25) {\rotatebox[origin=tr]{45}{$\scalebox{.75}{$>$}$}};
\node at (2.25,-1.75) {\rotatebox[origin=tr]{45}{$\scalebox{.75}{$>$}$}};

  \fill (0,0)  circle[radius=1.5pt];
  \fill (1,0)  circle[radius=1.5pt];  
  \fill (2,0)  circle[radius=1.5pt];
  \fill (3,0)  circle[radius=1.5pt];
  \fill (4,0)  circle[radius=1.5pt];
  \fill (-1,0)  circle[radius=1.5pt];
  \fill (-2,0)  circle[radius=1.5pt];

 \node[red] at (1.7,2.8) {$\scalebox{.75}{$3$}$};
 \node[red] at (2.1,-2.3) {$\scalebox{.75}{$5$}$};
 \node[red] at (2.4,2.2) {$\scalebox{.75}{$5$}$};
 \node[green] at (-.3,0) {$\scalebox{.75}{$8$}$};
 \node[red] at (4.3,0) {$\scalebox{.75}{$5$}$};
 \node[red] at (2.7,-1.7) {$\scalebox{.75}{$3$}$};
 \node[red] at (3.2,1.2) {$\scalebox{.75}{$3$}$};
  \node[green] at (1.7,0) {$\scalebox{.75}{$8$}$};
  \node[green] at (3.3,0) {$\scalebox{.75}{$7$}$};
  \node[red] at (3.6,-.8) {$\scalebox{.75}{$3$}$};
 \node[red] at (3.2,-1.2) {$\scalebox{.75}{$5$}$};
 \node[red] at (2.25,.8) {$\scalebox{.75}{$4$}$};
 \node[green] at (.7,0) {$\scalebox{.75}{$7$}$};
 \node[red] at (-1.3,0) {$\scalebox{.75}{$4$}$};
 \node[red] at (-2.3,0) {$\scalebox{.75}{$2$}$};
 \node[red] at (.7,-3.1) {$\scalebox{.75}{$1$}$};
 \node[red] at (.7,3.1) {$\scalebox{.75}{$1$}$};
 \node[red] at (.5,.9) {$\scalebox{.75}{$4$}$};
 \node[red] at (1.7,-2.7) {$\scalebox{.75}{$3$}$};

\node at (0,-1.2) {$\;$};

\end{tikzpicture}
 \]
 we see that $\theta^{-1}(M_2)\neq \stab(t)$ for all $t\in (0,1)$.
 
 In order to prove that $\theta^{-1}(M_2)\neq \beta^{-1}(\vec{F})$, it suffices to show that
$\theta(x_0x_1x_2^{-1})$ does not belong to $M_2$ (recall that $x_0x_1x_2^{-1}\in\beta^{-1}(\vec{F})$)
\[
\begin{tikzpicture}[x=.75cm, y=.75cm,
    every edge/.style={
        draw,
      postaction={decorate,
                    decoration={markings}
                   }
        }
]

\node at (-5,-.25) {$\scalebox{1}{$\theta(x_0x_1x_2^{-1})=x_0x_1x_3^3x_5x_8^{-1}x_0^{-7}=$}$};
 
\draw[thick] (0,0)--(5,5)--(10,0)--(5,-5)--(0,0); 
\draw[thick] (5,5)--(5,5.5);
\draw[thick] (5,-5)--(5,-5.5);
\draw[thick] (1,1)--(2,0)--(1,-1);
\draw[thick] (.5,-.5)--(1.5,.5);
\draw[thick] (2,-2)--(4,0)--(3.5,.5);
\draw[thick] (2.5,-2.5)--(5.5,.5);
\draw[thick] (3,-3)--(6,0)--(4.5,1.5);
\draw[thick] (6.5,3.5)--(1.5,-1.5);
\draw[thick] (5,2)--(7,0)--(3.5,-3.5);
\draw[thick] (8.5,-.5)--(9.5,.5);
\draw[thick] (9,-1)--(8,0)--(9,1);


\node at (.75,.65) {\rotatebox[origin=tr]{-135}{$\scalebox{.75}{$>$}$}};
\node at (1.25,.75) {\rotatebox[origin=tr]{-45}{$\scalebox{.75}{$>$}$}}; 

\node at (1.25,.15) {\rotatebox[origin=tr]{-135}{$\scalebox{.75}{$>$}$}};
\node at (1.75,.25) {\rotatebox[origin=tr]{-45}{$\scalebox{.75}{$>$}$}}; 

\node at (3.25,.15) {\rotatebox[origin=tr]{-135}{$\scalebox{.75}{$>$}$}};
\node at (3.75,.25) {\rotatebox[origin=tr]{-45}{$\scalebox{.75}{$>$}$}}; 

\node at (5.25,.15) {\rotatebox[origin=tr]{-135}{$\scalebox{.75}{$>$}$}};
\node at (5.75,.25) {\rotatebox[origin=tr]{-45}{$\scalebox{.75}{$>$}$}}; 

\node at (9.25,.15) {\rotatebox[origin=tr]{-135}{$\scalebox{.75}{$>$}$}};
\node at (9.75,.25) {\rotatebox[origin=tr]{-45}{$\scalebox{.75}{$>$}$}}; 
 
\node at (8.75,.65) {\rotatebox[origin=tr]{-135}{$\scalebox{.75}{$>$}$}};
\node at (9.25,.75) {\rotatebox[origin=tr]{-45}{$\scalebox{.75}{$>$}$}}; 

\node at (4.75,1.65) {\rotatebox[origin=tr]{-135}{$\scalebox{.75}{$>$}$}};
\node at (5.25,1.75) {\rotatebox[origin=tr]{-45}{$\scalebox{.75}{$>$}$}}; 

\node at (4.25,1.15) {\rotatebox[origin=tr]{-135}{$\scalebox{.75}{$>$}$}};
\node at (4.75,1.25) {\rotatebox[origin=tr]{-45}{$\scalebox{.75}{$>$}$}}; 

\node at (6.25,3.15) {\rotatebox[origin=tr]{-135}{$\scalebox{.75}{$>$}$}};
\node at (6.75,3.25) {\rotatebox[origin=tr]{-45}{$\scalebox{.75}{$>$}$}}; 

\node at (4.75,4.65) {\rotatebox[origin=tr]{-135}{$\scalebox{.75}{$>$}$}};
\node at (5.25,4.75) {\rotatebox[origin=tr]{-45}{$\scalebox{.75}{$>$}$}}; 


\node at (.25,-.25) {\rotatebox[origin=tr]{135}{$\scalebox{.75}{$>$}$}};
\node at (.75,-.25) {\rotatebox[origin=tr]{45}{$\scalebox{.75}{$>$}$}}; 

\node at (.75,-.75) {\rotatebox[origin=tr]{135}{$\scalebox{.75}{$>$}$}};
\node at (1.25,-.75) {\rotatebox[origin=tr]{45}{$\scalebox{.75}{$>$}$}}; 

\node at (1.25,-1.25) {\rotatebox[origin=tr]{135}{$\scalebox{.75}{$>$}$}};
\node at (1.75,-1.25) {\rotatebox[origin=tr]{45}{$\scalebox{.75}{$>$}$}}; 

\node at (1.75,-1.75) {\rotatebox[origin=tr]{135}{$\scalebox{.75}{$>$}$}};
\node at (2.25,-1.75) {\rotatebox[origin=tr]{45}{$\scalebox{.75}{$>$}$}}; 

\node at (2.25,-2.25) {\rotatebox[origin=tr]{135}{$\scalebox{.75}{$>$}$}};
\node at (2.75,-2.25) {\rotatebox[origin=tr]{45}{$\scalebox{.75}{$>$}$}}; 

\node at (2.75,-2.75) {\rotatebox[origin=tr]{135}{$\scalebox{.75}{$>$}$}};
\node at (3.25,-2.75) {\rotatebox[origin=tr]{45}{$\scalebox{.75}{$>$}$}}; 

\node at (3.25,-3.25) {\rotatebox[origin=tr]{135}{$\scalebox{.75}{$>$}$}};
\node at (3.75,-3.25) {\rotatebox[origin=tr]{45}{$\scalebox{.75}{$>$}$}}; 

\node at (4.75,-4.75) {\rotatebox[origin=tr]{135}{$\scalebox{.75}{$>$}$}};
\node at (5.25,-4.75) {\rotatebox[origin=tr]{45}{$\scalebox{.75}{$>$}$}}; 

\node at (8.25,-.25) {\rotatebox[origin=tr]{135}{$\scalebox{.75}{$>$}$}};
\node at (8.75,-.25) {\rotatebox[origin=tr]{45}{$\scalebox{.75}{$>$}$}}; 

\node at (8.75,-.75) {\rotatebox[origin=tr]{135}{$\scalebox{.75}{$>$}$}};
\node at (9.25,-.75) {\rotatebox[origin=tr]{45}{$\scalebox{.75}{$>$}$}};

  \fill (0,0)  circle[radius=1.5pt];
  \fill (1,0)  circle[radius=1.5pt];  
  \fill (2,0)  circle[radius=1.5pt];
  \fill (3,0)  circle[radius=1.5pt];
  \fill (4,0)  circle[radius=1.5pt]; 
  \fill (5,0)  circle[radius=1.5pt];
  \fill (6,0)  circle[radius=1.5pt];  
  \fill (7,0)  circle[radius=1.5pt];
  \fill (8,0)  circle[radius=1.5pt];
  \fill (9,0)  circle[radius=1.5pt]; 
  \fill (10,0)  circle[radius=1.5pt];

 \node[red] at (4.8,5.1) {$\scalebox{.75}{$1$}$};
 \node[red] at (.8,1.1) {$\scalebox{.75}{$2$}$};
 \node[red] at (-.3,0) {$\scalebox{.75}{$6$}$};
 \node[red] at (6.8,3.7) {$\scalebox{.75}{$3$}$};
 \node[red] at (9.8,.7) {$\scalebox{.75}{$3$}$};
 \node[red] at (9.3,1.2) {$\scalebox{.75}{$5$}$};
 \node[red] at (10.3,0) {$\scalebox{.75}{$5$}$};
 \node[green] at (8.7,0) {$\scalebox{.75}{$4$}$};
 \node[green] at (7.7,0) {$\scalebox{.75}{$4$}$};
 \node[red] at (6.7,0) {$\scalebox{.75}{$7$}$};
 \node[red] at (5.6,0) {$\scalebox{.75}{$8$}$};
 \node[green] at (4.7,0) {$\scalebox{.75}{$4$}$};
 \node[green] at (3.7,0) {$\scalebox{.75}{$7$}$};
 \node[green] at (2.7,0) {$\scalebox{.75}{$8$}$};
 \node[green] at (1.7,0) {$\scalebox{.75}{$4$}$};
 \node[red] at (.7,0) {$\scalebox{.75}{$7$}$};
 \node[red] at (1.6,.75) {$\scalebox{.75}{$7$}$};
 \node[red] at (3.25,.75) {$\scalebox{.75}{$4$}$};
 \node[red] at (5.75,.75) {$\scalebox{.75}{$8$}$};
 \node[red] at (4.25,1.75) {$\scalebox{.75}{$8$}$};
 \node[red] at (4.75,2.25) {$\scalebox{.75}{$4$}$};

 \node[red] at (4.8,-5.1) {$\scalebox{.75}{$1$}$};
 \node[red] at (.1,-.5) {$\scalebox{.75}{$2$}$};
 \node[red] at (.6,-1) {$\scalebox{.75}{$6$}$};
 \node[red] at (1.1,-1.5) {$\scalebox{.75}{$2$}$};
 \node[red] at (1.6,-2) {$\scalebox{.75}{$6$}$};
 \node[red] at (2.1,-2.5) {$\scalebox{.75}{$2$}$};
 \node[red] at (2.6,-3) {$\scalebox{.75}{$6$}$};
 \node[red] at (3.1,-3.5) {$\scalebox{.75}{$2$}$};
 \node[red] at (9.3,-1.2) {$\scalebox{.75}{$3$}$};
 \node[red] at (8.3,-.6) {$\scalebox{.75}{$4$}$};

\node at (0,-1.2) {$\;$};

\end{tikzpicture}
\]
 Finally, we deal with  $K_1$, $K_2$, and $K_3$. 
 $H$ is also not contained in $\theta^{-1}(M_0)$ because $\theta(x_1)\in M_0$ (if also $\theta(x_0)\in M_0$, then $M_0=K_{(2,2)}$).
 For the same reason $K_3$ is distinct from $\theta^{-1}(M_0)$.
 As for $M_1$ and $M_2$, it suffices to check that $\theta(x_0)=x_0x_1x_4^{-1}x_0^{-3}\not \in M_1$ and $\theta(x_0)\not \in M_2=\sigma(M_1)$ (which is equivalent to showing that $\sigma(\theta(x_0))\not \in M_1=\sigma(M_2)$). This is done in the following figure.   
 \[
 \begin{tikzpicture}[x=.75cm, y=.75cm,
    every edge/.style={
        draw,
      postaction={decorate,
                    decoration={markings}
                   }
        }
]

\node at (-5.25,-.25) {$\scalebox{1}{$\theta(x_0)=x_0x_1x_4^{-1}x_0^{-3}=$}$};
\node at (5.5,-.25) {$\scalebox{1}{$\not\in M_1$}$};
 
\draw[thick] (1,3)--(1,3.5); 
\draw[thick] (1,-3)--(1,-3.5); 
\draw[thick] (-2,0)--(1,3)--(4,0)--(1,-3)--(-2,0);

\draw[thick] (-1,1)--(0,0)--(-1,-1); 
\draw[thick] (2.5,1.5)--(-.5,-1.5); 
\draw[thick] (-1.5,-.5)--(-0.5,0.5);
\draw[thick] (3,-1)--(2,0)--(3,1); 
\draw[thick] (2.5,-.5)--(3.5,.5);

\node at (2.75,.65) {\rotatebox[origin=tr]{-135}{$\scalebox{.75}{$>$}$}};%
\node at (3.25,.75) {\rotatebox[origin=tr]{-45}{$\scalebox{.75}{$>$}$}};%
\node at (-.75,.15) {\rotatebox[origin=tr]{-135}{$\scalebox{.75}{$>$}$}};%
\node at (-.75,.75) {\rotatebox[origin=tr]{-45}{$\scalebox{.75}{$>$}$}};%
\node at (-1.25,.65) {\rotatebox[origin=tr]{-135}{$\scalebox{.75}{$>$}$}};%
\node at (.75,2.65) {\rotatebox[origin=tr]{-135}{$\scalebox{.75}{$>$}$}};%
\node at (1.25,2.75) {\rotatebox[origin=tr]{-45}{$\scalebox{.75}{$>$}$}};
\node at (3.25,.15) {\rotatebox[origin=tr]{-135}{$\scalebox{.75}{$>$}$}};
\node at (-.25,.25) {\rotatebox[origin=tr]{-45}{$\scalebox{.75}{$>$}$}};%
\node at (3.75,.25) {\rotatebox[origin=tr]{-45}{$\scalebox{.75}{$>$}$}};
\node at (2.75,1.25) {\rotatebox[origin=tr]{-45}{$\scalebox{.75}{$>$}$}};%
\node at (2.25,1.15) {\rotatebox[origin=tr]{-135}{$\scalebox{.75}{$>$}$}};%

\node at (-1.75,-.25) {\rotatebox[origin=tr]{135}{$\scalebox{.75}{$>$}$}};
\node at (2.25,-.25) {\rotatebox[origin=tr]{135}{$\scalebox{.75}{$>$}$}};
\node at (2.75,-.75) {\rotatebox[origin=tr]{135}{$\scalebox{.75}{$>$}$}};
\node at (.75,-2.75) {\rotatebox[origin=tr]{135}{$\scalebox{.75}{$>$}$}};
\node at (2.75,-.25) {\rotatebox[origin=tr]{45}{$\scalebox{.75}{$>$}$}};
\node at (3.25,-.75) {\rotatebox[origin=tr]{45}{$\scalebox{.75}{$>$}$}};
\node at (-1.25,-.75) {\rotatebox[origin=tr]{135}{$\scalebox{.75}{$>$}$}};
\node at (1.25,-2.75) {\rotatebox[origin=tr]{45}{$\scalebox{.75}{$>$}$}};
\node at (-.75,-1.25) {\rotatebox[origin=tr]{135}{$\scalebox{.75}{$>$}$}};
\node at (-.75,-.75) {\rotatebox[origin=tr]{45}{$\scalebox{.75}{$>$}$}};
\node at (-1.25,-.25) {\rotatebox[origin=tr]{45}{$\scalebox{.75}{$>$}$}};
\node at (-.25,-1.25) {\rotatebox[origin=tr]{45}{$\scalebox{.75}{$>$}$}};

  \fill (0,0)  circle[radius=1.5pt];
  \fill (1,0)  circle[radius=1.5pt];  
  \fill (2,0)  circle[radius=1.5pt];
  \fill (3,0)  circle[radius=1.5pt];
  \fill (4,0)  circle[radius=1.5pt];
  \fill (-1,0)  circle[radius=1.5pt];
  \fill (-2,0)  circle[radius=1.5pt];

 \node[red] at (-1.3,1.2) {$\scalebox{.75}{$2$}$};
 \node[red] at (-1.2,-1.2) {$\scalebox{.75}{$4$}$};
 \node[red] at (-.8,-1.6) {$\scalebox{.75}{$2$}$};
 \node[red] at (-1.7,-.7) {$\scalebox{.75}{$2$}$};
  \node[red] at (2.6,1.8) {$\scalebox{.75}{$3$}$};
 \node[green] at (-.3,0) {$\scalebox{.75}{$6$}$};
 \node[red] at (4.3,0) {$\scalebox{.75}{$7$}$};
  \node[red] at (3.2,1.2) {$\scalebox{.75}{$7$}$};
  \node[green] at (1.7,0) {$\scalebox{.75}{$6$}$};
  \node[red] at (3.3,0) {$\scalebox{.75}{$5$}$};
  \node[red] at (3.6,.8) {$\scalebox{.75}{$3$}$};
 \node[red] at (3.2,-1.2) {$\scalebox{.75}{$3$}$};
 \node[red] at (2.25,-.8) {$\scalebox{.75}{$5$}$};
 \node[green] at (.7,0) {$\scalebox{.75}{$6$}$};
 \node[green] at (-1.3,0) {$\scalebox{.75}{$6$}$};
 \node[red] at (-2.3,0) {$\scalebox{.75}{$4$}$};
 \node[red] at (.7,-3.1) {$\scalebox{.75}{$1$}$};
 \node[red] at (.7,3.1) {$\scalebox{.75}{$1$}$};
 \node[red] at (-.5,.9) {$\scalebox{.75}{$6$}$};
 
\node at (0,-1.2) {$\;$};

\end{tikzpicture}
 \]
 \[
 \begin{tikzpicture}[x=.75cm, y=.75cm,
    every edge/.style={
        draw,
      postaction={decorate,
                    decoration={markings}
                   }
        }
]

\node at (-5.75,-.25) {$\scalebox{1}{$\sigma(\theta(x_0))=x_0^3x_4x_1^{-1}x_0^{-1}=$}$};
\node at (5.5,-.25) {$\scalebox{1}{$\not\in M_1$}$};
 
\draw[thick] (1,3)--(1,3.5); 
\draw[thick] (1,-3)--(1,-3.5); 
\draw[thick] (-2,0)--(1,3)--(4,0)--(1,-3)--(-2,0);

\draw[thick] (-1,1)--(0,0)--(-1,-1); 
\draw[thick] (2.5,-1.5)--(-.5,1.5); 
\draw[thick] (-1.5,.5)--(-0.5,-0.5);
\draw[thick] (3,-1)--(2,0)--(3,1); 
\draw[thick] (2.5,.5)--(3.5,-.5);

\node at (2.75,.65) {\rotatebox[origin=tr]{-135}{$\scalebox{.75}{$>$}$}};%
\node at (3.25,.75) {\rotatebox[origin=tr]{-45}{$\scalebox{.75}{$>$}$}};%
\node at (-1.75,.15) {\rotatebox[origin=tr]{-135}{$\scalebox{.75}{$>$}$}};%
\node at (-.75,.75) {\rotatebox[origin=tr]{-45}{$\scalebox{.75}{$>$}$}};%
\node at (-1.25,.65) {\rotatebox[origin=tr]{-135}{$\scalebox{.75}{$>$}$}};%
\node at (.75,2.65) {\rotatebox[origin=tr]{-135}{$\scalebox{.75}{$>$}$}};%
\node at (1.25,2.75) {\rotatebox[origin=tr]{-45}{$\scalebox{.75}{$>$}$}};
\node at (2.25,.15) {\rotatebox[origin=tr]{-135}{$\scalebox{.75}{$>$}$}};
\node at (-1.25,.25) {\rotatebox[origin=tr]{-45}{$\scalebox{.75}{$>$}$}};%
\node at (2.75,.25) {\rotatebox[origin=tr]{-45}{$\scalebox{.75}{$>$}$}};
\node at (-.25,1.25) {\rotatebox[origin=tr]{-45}{$\scalebox{.75}{$>$}$}};%
\node at (-.75,1.15) {\rotatebox[origin=tr]{-135}{$\scalebox{.75}{$>$}$}};%

\node at (-.75,-.25) {\rotatebox[origin=tr]{135}{$\scalebox{.75}{$>$}$}};
\node at (3.25,-.25) {\rotatebox[origin=tr]{135}{$\scalebox{.75}{$>$}$}};
\node at (2.75,-.75) {\rotatebox[origin=tr]{135}{$\scalebox{.75}{$>$}$}};
\node at (.75,-2.75) {\rotatebox[origin=tr]{135}{$\scalebox{.75}{$>$}$}};
\node at (3.75,-.25) {\rotatebox[origin=tr]{45}{$\scalebox{.75}{$>$}$}};
\node at (3.25,-.75) {\rotatebox[origin=tr]{45}{$\scalebox{.75}{$>$}$}};
\node at (-1.25,-.75) {\rotatebox[origin=tr]{135}{$\scalebox{.75}{$>$}$}};
\node at (1.25,-2.75) {\rotatebox[origin=tr]{45}{$\scalebox{.75}{$>$}$}};
\node at (2.25,-1.25) {\rotatebox[origin=tr]{135}{$\scalebox{.75}{$>$}$}};
\node at (-.75,-.75) {\rotatebox[origin=tr]{45}{$\scalebox{.75}{$>$}$}};
\node at (-.25,-.25) {\rotatebox[origin=tr]{45}{$\scalebox{.75}{$>$}$}};
\node at (2.75,-1.25) {\rotatebox[origin=tr]{45}{$\scalebox{.75}{$>$}$}};

  \fill (0,0)  circle[radius=1.5pt];
  \fill (1,0)  circle[radius=1.5pt];  
  \fill (2,0)  circle[radius=1.5pt];
  \fill (3,0)  circle[radius=1.5pt];
  \fill (4,0)  circle[radius=1.5pt];
  \fill (-1,0)  circle[radius=1.5pt];
  \fill (-2,0)  circle[radius=1.5pt];

 \node[red] at (-1.3,1.2) {$\scalebox{.75}{$4$}$};
 \node[red] at (-1.2,-1.2) {$\scalebox{.75}{$2$}$};
 \node[red] at (-.6,1.8) {$\scalebox{.75}{$2$}$};
 \node[red] at (-1.7,.7) {$\scalebox{.75}{$2$}$};
  \node[red] at (2.5,.8) {$\scalebox{.75}{$5$}$};
 \node[green] at (-.3,0) {$\scalebox{.75}{$8$}$};
 \node[red] at (4.3,0) {$\scalebox{.75}{$7$}$};
  \node[red] at (3.2,1.2) {$\scalebox{.75}{$3$}$};
  \node[green] at (1.7,0) {$\scalebox{.75}{$8$}$};
  \node[red] at (3.3,0) {$\scalebox{.75}{$5$}$};
  \node[red] at (3.6,-.8) {$\scalebox{.75}{$3$}$};
 \node[red] at (3.2,-1.2) {$\scalebox{.75}{$7$}$};
 \node[red] at (2.7,-1.7) {$\scalebox{.75}{$3$}$};
 \node[green] at (.7,0) {$\scalebox{.75}{$5$}$};
 \node[green] at (-1.3,0) {$\scalebox{.75}{$5$}$};
 \node[red] at (-2.3,0) {$\scalebox{.75}{$4$}$};
 \node[red] at (.7,-3.1) {$\scalebox{.75}{$1$}$};
 \node[red] at (.7,3.1) {$\scalebox{.75}{$1$}$};
 \node[red] at (-.3,-.7) {$\scalebox{.75}{$6$}$};
 
\node at (0,-1.2) {$\;$};

\end{tikzpicture}
 \]
\end{proof}
%
  
Up to isomorphism, there are at least four maximal infinite index subgroups of $F$: Stab($1/2$), Stab($1/3$), Stab($\sqrt{2}/2$), $\beta^{-1}(\vec{F})$ 
\begin{question}
Are $M_0$, $M_1$, and $M_2$ isomorphic to the other known maximal infinite index subgroups of $F$? What about Golan's subgroups $K_1$, $K_2$, $K_3$?
\end{question}
So far we have exhibited three subgroups containing $\CF$
: $M_0$, $M_1$ and $M_2$. 
It is natural to wonder if $\CF$ is of quasi-finite index in $K_{(2,2)}$, that is, if there are only finitely many subgroups of $K_{(2,2)}$ containing $\CF$. 
Here we prove some partial results suggesting that $M_0$, $M_1$ and $M_2$ might be the only subgroups between $\CF$ and $K_{(2,2)}$.

\begin{lemma}\label{exist-block}
Let $g$ be a normal form  in $F_+\cap K_{(2,2)}$ containing at least two letters, then there exists a block $h$ in $\langle g, \CF\rangle$.
\end{lemma}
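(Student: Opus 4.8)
The statement asserts that if $g \in F_+ \cap K_{(2,2)}$ is a positive normal form with at least two letters, then the subgroup $\langle g, \CF\rangle$ contains a \emph{block}, i.e.\ a positive normal form $h = x_{i_1}\cdots x_{i_n}$ with $x_{i_1} \neq x_{i_n}$ over which $x_{i_1+1}$ skips. The rough idea is to produce a block by conjugating or multiplying $g$ by suitable elements of $\CF$, exploiting that $\CF$ contains plenty of positive elements (the $w_i$ and, after the $\varphi$-shift, all their translates $\varphi^k(w_i)$) and that $\CF$ is closed under the shift $\varphi$ (Lemma \ref{lemmashift}), so that $\langle g,\CF\rangle$ contains $\varphi^k(g)$-conjugates and products with arbitrarily high-index elements.

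\textbf{Key steps.} First I would reduce to a convenient form of $g$: since $g \in K_{(2,2)}$ its normal form has even length $2m$ with $m \geq 1$, and writing $g = x_{i_1}\cdots x_{i_{2m}}$ we may, by replacing $g$ with $\varphi^k(g) = x_{i_1+k}\cdots x_{i_{2m}+k}$ for $k$ large (still in $\langle g,\CF\rangle$ up to the subgroup generated, or rather using $\langle g,\CF\rangle \supseteq \langle \varphi^k(g),\CF\rangle$ after noting $\varphi(\CF)\subseteq\CF$ — here one must be slightly careful and instead argue that it suffices to find a block in some $\langle \varphi^k(g),\CF\rangle$, since a block there is still a block), assume all indices $i_j$ are large compared to $2m$. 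Second, the case analysis splits on whether $g$ is already a block: if $x_{i_1}\neq x_{i_{2m}}$ and $x_{i_1+1}$ skips over $g$ (equivalently, by Lemma \ref{lemma-block-prop}(1), $i_j < i_1 + j$ for all $j$), we are done with $h = g$. If $g$ is ``too steep'' for $x_{i_1+1}$ to skip, one multiplies $g$ on the right or left by a translate of a positive generator $w_i$ of $\CF$ (whose explicit normal forms $w_0 = x_0^2 x_1 x_2 x_2^{-1}$... — rather $w_1 = x_0 x_1^2 x_0^{-1}$ is not positive, but $w_0 w_i$-type products and the elements $x_{2k}^2$, $x_{2k+1}x_{2k+2}$ from Proposition \ref{lemmaH}'s proof are), chosen so that the product, after reduction via the relations $x_n x_k = x_k x_{n+1}$ ($k<n$), has its first and last letters distinct and has a small enough ``slope profile'' to admit a skip. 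The point is that right-multiplying by a high-index positive element of $\CF$ appends letters of large index, which can only help the skip condition $i_j < i_1 + j$ for the trailing part, while choosing the appended element to end in a letter different from $x_{i_1}$ fixes the $x_{i_1}\neq x_{i_n}$ requirement. Third, if $g$ is a single repeated letter block-obstruction (e.g.\ $g = x_i^{2m}$, all letters equal, so not a block by definition), one instead multiplies by an element of $\CF$ introducing a genuinely different letter: e.g.\ $x_i^{2m}\cdot(\text{translate of }x_{2k}^2\text{ or }x_{2k+1}x_{2k+2})$ for appropriate parity, landing a normal form with two distinct letters and controllable profile.

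\textbf{Main obstacle.} The delicate part is the bookkeeping in the second and third steps: after multiplying $g$ by a shifted generator of $\CF$ and putting the result in normal form, one must verify \emph{both} that the first and last letters differ \emph{and} that the skip inequalities $i_j < i_1 + j$ hold for every $j$. The reductions $x_n x_k \to x_k x_{n+1}$ shift indices upward, which is exactly the direction that can destroy $i_j < i_1+j$; so the argument must be organized around choosing the auxiliary $\CF$-element with index high enough (using $\varphi^k$) that no problematic collision occurs, while still ensuring it is genuinely in $\CF$ and genuinely changes the first/last letter. I expect this to require a short lemma isolating which positive words admit a block after one such multiplication, perhaps phrased via Lemma \ref{lemma-block-prop}(1) applied to the concatenation. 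Once the profile is controlled, minimality of length is not needed — any block in $\langle g,\CF\rangle$ suffices — so the proof reduces to exhibiting one explicit product, and the remaining work is the (routine but fiddly) normal-form computation for each parity case of the leading and trailing letters of $g$.
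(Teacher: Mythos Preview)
Your proposal has a genuine gap. The core of your strategy is to multiply $g$ by a \emph{positive} element of $\CF$ (you mention ``a translate of a positive generator $w_i$'' and then the candidates $x_{2k}^2$, $x_{2k+1}x_{2k+2}$). But this cannot work: the elements $x_{2k}^2$ and $x_{2k+1}x_{2k+2}$ lie in $M_0 = \langle \CF, x_0^2\rangle$, not in $\CF$ itself (Proposition~\ref{lemmaH} is a statement about $M_0$), and in fact $\CF \cap F_+ = \{1\}$, as the paper remarks immediately after the proposition following this lemma. So there are no nontrivial positive elements of $\CF$ to multiply by. Your reduction step via $\varphi^k(g)$ is also problematic: there is no reason $\varphi^k(g) \in \langle g,\CF\rangle$, and a block in $\langle \varphi^k(g),\CF\rangle$ is not automatically in $\langle g,\CF\rangle$, so this does not buy you anything. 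Finally, even setting aside membership issues, right-multiplying by high-index letters only affects the tail of the word and does nothing to repair a failure of the skip inequality $i_j < i_1 + j$ occurring among the original letters of $g$.

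The paper's argument avoids all of this with a single clean trick: it left-multiplies $g = x_{i_1}^a x_{i_2}\cdots x_{i_n}$ by the \emph{non-positive} element $\varphi^{i_1}(w_1^k) = x_{i_1} x_{i_1+1}^{2k} x_{i_1}^{-1} \in \CF$ (membership follows from Lemma~\ref{lemmashift}). The trailing $x_{i_1}^{-1}$ cancels against the leading $x_{i_1}$ of $g$, producing the positive word $h = x_{i_1}^a x_{i_1+a}^{2k} x_{i_2}\cdots x_{i_n}$. The inserted padding of length $2k$ shifts all subsequent positions forward by $2k$, so taking $k$ large enough (the paper takes $k = \sum_j k_j$ where $i_j < i_1 + a + k_j + 1$) forces every skip inequality to hold. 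No case analysis is needed, and the difficulty you identified as the ``main obstacle'' simply does not arise. The key idea you are missing is precisely this: use a non-positive element of $\CF$ whose negative part is designed to cancel against the start of $g$.
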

\begin{proof}
Consider the normal form of $g=x_{i_1}^ax_{i_2}\cdots x_{i_n}$, $a\in \IN$.
 For every $j$, there exists a $k_j\in\IN$ such that $i_j<i_1+a+k_j+1$. Let $k$ be $\sum_j k_j$ and consider 
\begin{align*}
h:=\varphi^{i_1}(w_1^k)g&=\varphi^{i_1}(x_0x_1^{2k}x_0^{-1})g=x_{i_1}x_{i_1+1}^{2k}x_{i_1}^{a-1}x_{i_2}\cdots x_{i_n}\\
&=x_{i_1}^ax_{i_1+a}^{2k}x_{i_2}\cdots x_{i_n}\; .
\end{align*}
Now $h$ is a block. Indeed, 
\begin{align*}
& i_1+a<i_1+a+1, \\
& i_2<i_1+a+k_2+1<i_1+a+2k+1\\
& i_3<i_1+a+k_3+1<i_1+a+2k+2\\
& i_j<i_1+a+k_j+1<i_1+a+2k+(j-1) \qquad j\geq 3
\end{align*}
\end{proof}

\begin{proposition}
Let  $g\in F_+\cap K_{(2,2)}$, then $\langle g, \CF\rangle$ is equal to $M_0$, $M_1$, or $K_{(2,2)}$.
\end{proposition}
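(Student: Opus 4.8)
The plan is to reduce an arbitrary positive element $g\in F_+\cap K_{(2,2)}$ to one of three model situations, using the structural machinery of blocks that was just assembled in Lemma \ref{exist-block} together with the maximality results of Theorem \ref{thmM1}. First I would dispose of the trivial case: if $g$ has length $\le 1$, then since $g\in K_{(2,2)}$ and $g$ is positive, either $g$ is empty (so $\langle g,\CF\rangle=\CF$ — but wait, this is not one of $M_0,M_1,K_{(2,2)}$, so I should be careful) — actually a positive normal form of even length that is a single power $x_i^a$ forces $a$ even, and then $\langle x_i^a,\CF\rangle$ equals $M_0$ or $M_1$ by Lemma \ref{lemma601}. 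So the genuinely new content is when $g$ contains at least two distinct letters in its normal form.

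In that case, Lemma \ref{exist-block} produces a block $h\in\langle g,\CF\rangle$. The key step is then to show that any block, together with $\CF$, already generates something containing $x_0^2$, $x_1^2$, or a length-two positive element outside $M_0$ and $M_1$; more precisely, I would argue that $\langle h,\CF\rangle$ contains a positive element of length $2$ that is not in $\CF$ and then invoke Theorem \ref{thmM1} (maximality of $M_0$, $M_1$, $M_2$ in $K_{(2,2)}$) to conclude that $\langle g,\CF\rangle\supseteq\langle M_i,\,\cdot\,\rangle=K_{(2,2)}$ unless the extra element happens to already lie in the relevant $M_i$. The mechanism for extracting a short element from a block is exactly the conjugation/cancellation trick used repeatedly in the proof of Lemma \ref{positivity} and in the proof of Theorem \ref{thmM1} for $M_0$: using property (6) of Lemma \ref{lemma-block-prop}, a minimal block $B=x_{i_1}\cdots x_{i_n}$ has some $i_j=i_1+j-1$, so multiplying $B$ on the left by $(x_{i_1}x_{i_1+1})^{-1}$ (if $i_1$ is even) or $x_{i_1}^{-2}$ (if $i_1$ is odd) — both of which lie in the appropriate $M_\ell$ and, after conjugating into $\CF$ where possible, in $\langle g,\CF\rangle$ — produces cancellation and shortens the word. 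Iterating, one lands on a length-$2$ positive element $x_px_q\in\langle g,\CF\rangle$.

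Then I would split according to the parity type of $x_px_q$, exactly mirroring the case analysis of Lemmas \ref{lemmaH1} and \ref{lemma-length-two}: if $x_px_q\notin M_0$ we get $\langle g,M_0\rangle=K_{(2,2)}$ by Lemma \ref{lemmaH1} (after checking $M_0\subseteq\langle g,\CF\rangle$, which holds once we know $g$ produced some $x_{2k}^2$, using Lemma \ref{M0even}); symmetrically for $M_1$ via Lemma \ref{lemma-length-two}; and if the short element lands inside both $M_0$ and $M_1$ it must be of the form forcing $\langle g,\CF\rangle$ to equal one of them. The remaining bookkeeping is to make sure that the block $h$ we produced is itself ``large enough'' to force at least one $x_{2k}^2$ or $x_{2k+1}^2$ into the subgroup — but this is automatic because the block $h$ in Lemma \ref{exist-block} already contains a square $x_{i_1+a}^{2k}$, and conjugating by powers of $x_0$ (even) or by $\varphi$-images inside $\CF$ shifts it to $x_0^2$ or $x_1^2$ modulo $\CF$ by Lemma \ref{lemma601}.

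The main obstacle I anticipate is the careful tracking of which conjugating elements actually lie in $\langle g,\CF\rangle$ rather than merely in $K_{(2,2)}$: the reductions above use elements like $x_jx_{j+1}$ and $x_j^2$, which are in $M_0$ or $M_1$ but need not a priori be in $\langle g,\CF\rangle$ before we know $M_0$ or $M_1\subseteq\langle g,\CF\rangle$. Resolving this requires ordering the steps correctly — first extract \emph{some} square from the block (which only uses $\CF$ and $g$, via Lemma \ref{exist-block} and its built-in $x_{i_1+a}^{2k}$ factor), deduce $M_0\subseteq\langle g,\CF\rangle$ or $M_1\subseteq\langle g,\CF\rangle$ via Lemmas \ref{M0even}/\ref{lemma-squared-bis-1} and \ref{lemma601}, and only \emph{then} run the block-shortening argument with the full strength of $M_0$ or $M_1$ available, finishing with Lemma \ref{lemmaH1} or \ref{lemma-length-two}. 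A secondary subtlety is the possibility that after extracting a square we are stuck inside $M_0$ (resp. $M_1$) with no way out, which is precisely the conclusion $\langle g,\CF\rangle=M_0$ (resp. $M_1$); one has to verify that this genuinely happens only when $g$ itself was already in that subgroup, which follows from $M_0,M_1$ being closed (the remarks after Corollary \ref{cor41} and Corollary \ref{cor42}) together with Lemma \ref{exist-block} showing $g\notin M_0\cap M_1$ would force a block escaping both.
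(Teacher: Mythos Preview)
Your proposal has a genuine gap at the crucial step. You correctly identify that the case of a single-letter power is handled by Lemma~\ref{lemma601}, and that Lemma~\ref{exist-block} produces a block $h\in\langle g,\CF\rangle$ when $g$ has at least two distinct letters. But your mechanism for extracting a square from $h$ is not valid: you write that ``the block $h$ in Lemma~\ref{exist-block} already contains a square $x_{i_1+a}^{2k}$,'' and propose to conjugate this into $x_0^2$ or $x_1^2$. Having $x_{i_1+a}^{2k}$ as a \emph{subword} of the normal form of $h$ does not put $x_{i_1+a}^{2k}$ into the subgroup $\langle g,\CF\rangle$; you would need to strip off the surrounding factors, and those factors are not known to lie in $\langle g,\CF\rangle$. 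Your fallback plan of iterated block-shortening via $(x_{i_1}x_{i_1+1})^{-1}$ or $x_{i_1}^{-2}$ has the same problem you yourself flag: those multipliers lie in $M_0$ or $M_1$, not yet in $\langle g,\CF\rangle$, so the ordering of the argument is circular.

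The paper's proof bypasses all of this with a single conjugation. Write $h=x_{i_1}\cdots x_{i_n}$. Because $h$ is a block, $x_{i_1+1}$ skips over $x_{i_2}\cdots x_{i_n}$, and hence
\[
h^{-1}\,\varphi^{i_1}(w_1)\,h \;=\; h^{-1}\,x_{i_1}x_{i_1+1}^2x_{i_1}^{-1}\,h \;=\; h^{-1}\,x_{i_1}x_{i_2}\cdots x_{i_n}\,x_{i_1+n}^2 \;=\; x_{i_1+n}^2,
\]
which lies in $\langle g,\CF\rangle$ since $\varphi^{i_1}(w_1)\in\CF$ and $h\in\langle g,\CF\rangle$. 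Then Lemma~\ref{lemma601} gives $M_0\le\langle g,\CF\rangle$ or $M_1\le\langle g,\CF\rangle$ according to the parity of $i_1+n$, and maximality (Theorem~\ref{thmM1}) finishes immediately. No block-shortening, no case analysis on length-two elements, and no appeal to closedness is needed; your final paragraph about ``verifying this genuinely happens only when $g$ itself was already in that subgroup'' is addressing a question the proposition does not ask.
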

\begin{proof}
Thanks to Lemma \ref{lemma601} we may assume that the normal form of $g$ contains at least two letters.
By Lemma \ref{exist-block} there exists a block $h\in\langle g, \CF\rangle$.
Consider its normal form $h=x_{i_1}\cdots x_{i_n}$, $w_1=x_0x_1^2x_0^{-1}$.
Recall that if $h$ is a block, then $x_{k}^{\pm 1}h=hx_{k+|h|}^{\pm 1}$ for all $k>i_1$ (here $|h|$ is the length of the normal form of $h$).
Then, 
\begin{align*}
h^{-1}\varphi^{i_1}(w_1)h&=h^{-1}x_{i_1}x_{i_1+1}^2x_{i_1}^{-1}x_{i_1}x_{i_2}\cdots x_{i_n}\\
&=h^{-1}x_{i_1}x_{i_2}\cdots x_{i_n}x_{i_1+n}^2=h^{-1}hx_{i_1+n}^2=x_{i_1+n}^2\in \langle g, \CF\rangle
\end{align*}
where we used that $x_{i_1+1}$ skips $x_{i_2}\cdots x_{i_n}$.
Now if $i_1+n\equiv_2 0$, then $M_0\leq\langle g, \CF\rangle\leq K_{(2,2)}$. Otherwise, we have $M_1\leq \langle g, \CF\rangle\leq K_{(2,2)}$. In both cases now the claim follows from the maximality of $M_0$ and $M_1$ in $K_{(2,2)}$.
\end{proof}

\section*{Acknowledgements}
We would like to thank the referee for their attentive perusal of the manuscript, which resulted in many improvements in the presentation of the results of this paper.
V.A. acknowledges the support from the Swiss National Science foundation through the SNF project no. 178756 (Fibred links, L-space covers and algorithmic knot theory) and from the Department of Mathematics of the University of Geneva.  T.N. acknowledges support of  Swiss NSF grants 200020-178828 and 200020-200400.

\section*{References}
\begin{biblist}
\bibselect{bib}
\end{biblist}

\end{document}